\documentclass[a4paper,reqno]{amsart}
\usepackage{amsfonts}
\usepackage{amssymb}
\usepackage{amsmath}
\numberwithin{equation}{section}
\usepackage{cite}
\usepackage{fancyhdr}
\usepackage{xcolor}
\usepackage{enumitem}
\usepackage{setspace}
\usepackage{amsmath}
\usepackage{makecell}
\usepackage{appendix}
\usepackage{array}
\usepackage[utf8]{inputenc}
\usepackage{bm}
\makeatletter
\newtheorem{thm}{Theorem}[section]
\newtheorem{lemma}{Lemma}[section]
\newtheorem{prop}{Proposition}[section]

\newtheorem{cond}{Condition}[section]
\newtheoremstyle{myremark}
  {}{}                
  {\normalfont}       
  {}                  
  {\bfseries}         
  {.}                 
  { }                 
  {}                  

\theoremstyle{myremark}
\newtheorem{Rmk}{Remark}[section]
\theoremstyle{plain}
\newtheorem{coro}{Corollary}[section]

\newcommand{\ML}{\mathcal{L}}

\newcommand{\WG}{\Vec{W}_{\boldsymbol{{\Gamma}}}}
\newcommand{\WTG}{\Vec{W}_{\boldsymbol{\Tilde{\Gamma}}}}
\newcommand{\MLWG}{\mathcal{L}_{W_{\boldsymbol{\Gamma}}}}

\newcommand{\TPG}{{P}_{\widetilde{\Gamma}}}
\newcommand{\PG}{{P}_{\Gamma}}

\newcommand{\MI}{\mathcal{I}}

\newcommand{\ME}{\mathcal{E}}

\newcommand{\lelan}{\left\langle}
\newcommand{\rilan}{\right\rangle}

\newcommand{\rmnum}[1]{\romannumeral #1}
\newcommand{\Rmnum}[1]
{\expandafter\@slowromancap\romannumeral #1@}
\makeatother

\title[ Multi-bubble solutions for four dimensional wave equation]{Construction of multi-bubble solutions for the energy-critical wave equation in dimension four}
\author[J. Gu]{Jingyuan Gu}
\address{School of Mathematical Sciences,
University of Science and Technology of China, Hefei 230026, Anhui, China}
\email{gjy4869@mail.ustc.edu.cn}
\author[J. Jendrej]{Jacek Jendrej}
\address{Institut de Math\'{e}matiques de Jussieu, Sorbonne Universit\'{e}, Universit\'{e} Paris Cit\'{e},
4~place Jussieu, 75005 Paris, France
}
\email{jendrej@imj-prg.fr}
 \author[L.Zhao]{Lifeng Zhao}
\address{School of Mathematical Sciences,
University of Science and Technology of China, Hefei 230026, Anhui, China}
\email{zhaolf@ustc.edu.cn}	
\begin{document}
\maketitle
\begin{abstract}
For any \(N\geq 2\), we construct a global solution of the energy-critical
focusing wave equation in dimension four which blows up in infinite time at
\(N\) prescribed points \(z_1,\ldots,z_N\in \mathbb R^4\), provided that the
points form one orbit under a finite group of orthogonal symmetries. We denote by $c:=2\sum_{j\ne k}|z_j-z_k|^{-2}>0$
the corresponding interaction coefficient, which is independent of \(k\). The
common concentration scale satisfies
\[
\log\frac{1}{\lambda(t)}
=
\left(\frac{9c}{4}\right)^{1/3}t^{2/3}+O(t^{1/3})
\qquad \text{as } t\to+\infty .
\]
This concentration rate comes from a genuinely four-dimensional effect: the
borderline decay of the ground state makes the interaction between different
bubbles enter the leading order parameter dynamics.
\end{abstract}

    	\tableofcontents

\section{Introduction}
\subsection{Setting of the problem}
We consider the energy-critical focusing wave equation in dimension four
\begin{equation}\label{NLW 4}
    \partial_{tt}u(t,x)-\Delta u(t,x)=f(u(t,x))\text{,  }\text{  }\text{ }t\in \mathbb R\text{, }x\in \mathbb R^{4},
\end{equation}
where $f(u):=|u|^2u$. Let \(F(u)=\frac14 |u|^4\). The conserved energy is
\[
E(u,\partial_tu)
=
\int_{\mathbb R^4}
\left(
\frac12|\partial_tu|^2+\frac12|\nabla u|^2-F(u)
\right)\,dx,
\]
which is well-defined on the energy space $\dot H^1(\mathbb R^4)\times L^2(\mathbb R^4).$ Writing \(\boldsymbol{u}=(u,\partial_tu)\), the equation can be viewed as the
Hamiltonian flow
\[
\partial_t\boldsymbol{u}=J\circ DE(\vec u),
\qquad
J=\begin{pmatrix}0&1\\ -1&0\end{pmatrix}.
\]
It is invariant under the energy-critical scaling
\[
u(t,x)\mapsto \lambda^{-1}u(t/\lambda,x/\lambda).
\]

Stationary solutions are obtained from the elliptic equation
\[
-\Delta W=f(W).
\]
We recall the explicit positive ground state
\[
W(x)=\left(1+\frac{|x|^2}{8}\right)^{-1}.
\]
Up to scaling and translation, \(W\) is the unique positive finite-energy
solution of this elliptic equation. For \(\lambda>0\) and \(z\in\mathbb R^4\),
we set
\[
W_{\lambda,z}(x):=\frac1\lambda W\left(\frac{x-z}{\lambda}\right),
\qquad
\boldsymbol{W}_{\lambda,z}:=(W_{\lambda,z},0).
\]
The ground state \(W\) is not only a stationary solution, but also a variational
threshold object. It achieves the sharp constant in the critical Sobolev
inequality \cite{Aubin,Talenti}, and its rescalings and translations describe
the basic lack of compactness of the energy-critical problem. Together with the
sign and Lorentz invariances, they generate the soliton family of the equation.

The dynamical role of \(W\) is reflected in the threshold theory for the
focusing energy-critical wave equation. Below the ground state threshold, Kenig
and Merle \cite{KenigMerle08} established the scattering/blow-up dichotomy. At and
above this threshold, much richer non-scattering dynamics may occur. In
particular, one may have type II solutions, whose energy norm remains bounded while
one or several scales concentrate. The leading coherent objects in such dynamics
are precisely rescaled copies of the ground state.

The construction of one-bubble type II solutions for the energy-critical wave
equation has been studied in several dimensions. In dimension three, \cite{DonningerKreiger,KriegerSchlag,KriegerSchlagTataru} constructed type II blow-up
solutions with prescribed blow-up rates. In dimension four, Hillairet and
Raphaël \cite{HR} constructed smooth type II blow-up solutions with a logarithmically
corrected rate. In dimension five, Jendrej \cite{Jtype2d5} constructed type II blow-up
solutions by a different modulation approach. These works describe mechanisms
where the leading dynamics is essentially generated by a single concentrating
copy of the ground state. A complementary line of work concerns the dynamics near a single copy of the
ground state. Invariant or center-stable manifolds near the soliton family were
constructed by \cite{BeceanuCenterStable} and \cite{KriegerSchlagStableManifold}. Threshold and near-threshold dynamics were studied by
\cite{DuyckaertsKenigMerleUniversality,DM,DuyckaertsKenigMerleClassification,
KNSGlobal,KNSCenterStable}, while \cite{KenigMendelson} addressed related questions
by probabilistic methods. These works provide a detailed
description of the one-bubble regime.

The works mentioned above have revealed a variety of dynamical behaviors associated with a single concentrating or stationary bubble. A natural next step is to investigate configurations involving several bubbles. Besides their intrinsic interest, multi-bubble solutions are closely related to  the soliton resolution
program. In this picture, a bounded energy solution is expected to decompose, 
asymptotically and modulo radiation, into a finite sum of decoupled coherent
objects generated from the ground state.  Schematically,
\[
\vec u(t)=\vec v_L(t)+\sum_{j=1}^J \iota_j \vec Q_j(t)+o_{\mathcal E}(1),
\]
where \(\vec v_L\) is a solution to the linear wave equation, and each $\vec Q_j(t)$ belongs to the soliton family generated by $\boldsymbol{W}$ through
scaling, translation and Lorentz transformations.  The different coherent objects are asymptotically decoupled. For the focusing energy-critical wave equation, such
decompositions have been proved in several radial or asymptotic settings by
\cite{CDKM 2022, CDKM 2022.1, CDKM 2023, DuyckaertsKenigMartelMerle, DuyckaertsKenigMerleSolitonResolution, JL, DuyckaertsJiaKenigMerle}. 

Constructive results exhibit particular scenarios predicted by this picture. In
radial settings, the multi-bubble dynamics usually takes the form of a bubble
tree: several bubbles concentrate at the same spatial point but at separated
scales. For the energy-critical wave equation, Jendrej \cite{Jtb} constructed two-bubble
solutions of this type. Similar bubble-tree dynamics has also been studied in
critical geometric models, especially for equivariant wave maps, see \cite{HwangKimWaveMapsBubbleTower, JendrejKriegerWaveMapsBubbleTree, KriegerPalaciosWaveMapsBubbleTrees}.

In nonradial settings, there are two related but different types of constructive
results. The first one concerns moving multi-solitons. In dimension five,
Martel and Merle constructed multi-solitons for the energy-critical wave
equation \cite{MartelMerle5D}, where the solitons separate by translation and velocity parameters.
They also proved the inelasticity of two-soliton collisions in the same setting, see \cite{MartelMerleInelasticity}. More recently, \cite{MartelMerleAnyParameters} constructed five-dimensional multi-solitons
with general parameters, including Lorentz velocities, scales and translations. Recent three-dimensional constructions of Kadar provide related examples where
the slow decay of the ground state produces strong interactions in nonradial
multi-soliton dynamics \cite{Kadar3DMultiSolitons,KadarNoQuantization}. The second one, closer to the present paper, concerns fixed-point bubbling.
Jendrej and Martel constructed infinite-time multi-bubble solutions in
dimension five concentrating at arbitrary prescribed fixed points \cite{JM}. In that
construction the scales are polynomial and depend on the configuration of the
points.

\subsection{Main result}
The present paper treats the fixed-point multi-bubble problem in
dimension four, which is different from the five dimensional case in an essential way: the bubbles exhibit a common stretched-exponential concentration rate, due to the fact that the interaction between distinct bubbles enters the leading order scale dynamics. We state the theorem for configurations which are, up to translation, given by a single finite symmetry orbit.
\begin{thm}\label{multi-bubble solution}
Let \(G\subset O(4)\) be a finite subgroup, and let
\[
Z=\{z_1,\ldots,z_N\}\subset \mathbb R^4
\]
be one \(G\)-orbit, with \(N\ge2\) and with the points \(z_k\) distinct. Thus,
for every \(\rho\in G\), there is a permutation \(\pi_\rho\) such that
\[
\rho z_k=z_{\pi_\rho(k)} .
\]
Assume moreover that the action of \(G\) on \(Z\) is transitive. Then the
quantity
\begin{equation*}
    \sum_{j\ne k}\frac1{|z_j-z_k|^2}
\end{equation*}
is independent of \(k\). We define
\[
c:=2\sum_{j\ne k}\frac1{|z_j-z_k|^2}>0,
\qquad
\xi:=\left(\frac{9c}{4}\right)^{1/3}.
\]

Then there exists a \(G\)-invariant solution
\[
(u,\partial_tu):[0,\infty)\to \dot H^1(\mathbb R^4)\times L^2(\mathbb R^4)
\]
of \((1.1)\), and a positive \(C^1\) function \(\lambda(t)\), such that, as
\(t\to\infty\),
\begin{equation*}
\left|\log\frac1{\lambda(t)}-\xi t^{2/3}\right|
\lesssim t^{1/3},    
\end{equation*}
and
\[
\left\|
u(t)-\sum_{k=1}^N
\frac1{\lambda(t)}
W\left(\frac{\cdot-z_k}{\lambda(t)}\right)
\right\|_{\dot H^1(\mathbb R^4)}
+
\|\partial_tu(t)\|_{L^2(\mathbb R^4)}
\to 0 .
\]
\end{thm}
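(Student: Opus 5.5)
The plan is the standard backward-in-time construction with compactness, as in the fixed-point multi-bubble construction of \cite{JM}, adapted to the borderline decay $W(x)\sim 8|x|^{-2}$ in dimension four. I would first use the symmetry to reduce to a single scale. Transitivity of $G$ on $Z$, together with $|\rho z_j-\rho z_k|=|z_j-z_k|$, gives at once that $\sum_{j\ne k}|z_j-z_k|^{-2}$ does not depend on $k$. I would then work throughout in the closed subspace of $G$-invariant data, which the flow of \eqref{NLW 4} preserves. In that space a modulated ansatz $\sum_k W_{\lambda,z_k}$ needs only one scale $\lambda(t)$ and one momentum-type parameter $b(t)$, and the translation modes are automatically suppressed up to the symmetric combinations.

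\emph{Step 1: formal law.} Near $z_k$, the other bubbles contribute the nearly constant potential
\[
\sum_{j\neq k}W_{\lambda,z_j}(x)\approx 8\lambda\sum_{j\ne k}|z_j-z_k|^{-2}=4c\lambda .
\]
Projecting the equation onto $\Lambda W$, with $\Lambda W=(1+x\cdot\nabla)W$, gives the driving force of the scale. In dimension four $\Lambda W\sim -8|x|^{-2}\notin L^2$, so this projection has to be localized at physical distance $\sim1$ from $z_k$, i.e. at self-similar radius $1/\lambda$. The localized norm then satisfies
\[
\|\chi_{1/\lambda}\Lambda W\|_{L^2}^2\simeq \kappa\log\tfrac1\lambda
\]
for an explicit constant $\kappa$.

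The resulting modulation system is of the form $\lambda'=-b$, with a mass term growing like $\log(1/\lambda)$. Writing $\mu=\log(1/\lambda)$, I expect it to reduce at leading order to the conservation law
\[
\mu(\mu')^2=c+\text{(lower order)},
\]
once the constants are matched. This equation gives $\frac{d}{dt}\mu^{3/2}=\tfrac32\sqrt c$, hence $\mu=\xi t^{2/3}+O(t^{1/3})$ with $\xi=(9c/4)^{1/3}$. The $O(t^{1/3})$ error comes from the lower-order (non-logarithmic) parts of the mass and interaction terms. I would verify the constant by an exact computation of the interaction integral $\int f'(W)\,\Lambda W\cdot 1$ against the localized $\|\Lambda W\|^2$.

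\emph{Step 2: approximate solution and modulation.} I would build a refined profile $\sum_k(W_{\lambda,z_k}+b\,\chi\Lambda W_{\lambda,z_k}+\dots)$, with a cutoff $\chi$ at scale $\sim1$. Its error $\psi$ should be small enough in $L^2$ compared with $b^2\log(1/\lambda)$ to close Step 3. Next I would decompose $\boldsymbol u=\boldsymbol U(\lambda,b)+\boldsymbol g$ with orthogonality conditions against a localized $\mathcal Y$ near $\Lambda W$ and against the unstable direction $\mathcal Y^{\pm}$ of the linearized operator, one copy per bubble but symmetric. This yields modulation equations of the form
\[
|\lambda'+b|+\bigl|(\mu(\mu')^2)'\ \text{type defect}\bigr|\lesssim \|\boldsymbol g\|_{\mathcal E}\cdot(\cdots)+\text{profile error}.
\]

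\emph{Step 3: energy bootstrap.} I would take final data $\boldsymbol u(T_n)=\boldsymbol U(\lambda_n,b_n)$ on the exact trajectory at large times $T_n$, plus a one-parameter choice along the symmetric unstable direction, which is fixed by a Brouwer-type topological argument. The flow is then run backward to a fixed time $T_0$. A mixed energy–virial functional, namely the energy expanded to second order in $\boldsymbol g$ plus a localized virial correction $b\langle \mathcal A g,\partial_t g\rangle$, controls $\|\boldsymbol g\|_{\mathcal E}$. Coercivity of the linearized energy modulo the orthogonality directions then gives uniform bounds $\|\boldsymbol g(t)\|_{\mathcal E}\lesssim t^{-\delta}$ and the rate of Step 1 on $[T_0,T_n]$. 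Weak limits of $\boldsymbol u_n(T_0)$, together with local well-posedness and continuity of the flow, produce the solution claimed in Theorem \ref{multi-bubble solution}.

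The main obstacle is Step 2 and the modulation part of Step 3, caused by the logarithmic non-integrability of $\Lambda W$. The coefficient of $b'$ carries a factor $\log(1/\lambda)\sim t^{2/3}$. The cutoff errors of $\chi\Lambda W$, the time derivative of the cutoff, and the cross terms between neighbouring bubbles are all only logarithmically smaller than the main terms. I would first try to absorb them by choosing the cutoff radius slightly below $\min_{j\ne k}|z_j-z_k|/2$ and keeping all $\log$ factors explicit. If that fails, I would add a second-order profile correction solving $H T_1=\Lambda W$ locally, in the spirit of \cite{HR}, so that the residual error is $O(b^2)$ without a log loss.
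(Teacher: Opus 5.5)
Your overall architecture matches the paper's: symmetric reduction, the formal law $\alpha(\alpha')^2\simeq c$ for $\alpha=\log\frac{1}{\lambda}$, backward bootstrap from data at $T_n$, topological selection, and compactness. Step 1 is correct. The gap is in Steps 2--3, exactly at the obstacle you flag, and neither remedy you propose addresses it.

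First, all smallness must be measured against $\lambda(t)$, because the force driving the scale is $c\lambda$. Your bound $\|\vec g\|_{\mathcal E}\lesssim t^{-\delta}$ is useless as written; it has to be $\|\vec g\|_{\mathcal E}\lesssim t^{-\gamma}\lambda(t)$.

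Second, the truncation radius $r_0$ of the scaling velocity $b\,\chi\Lambda W$ cannot be fixed, and your cutoff slightly below $\min_{j\neq k}|z_j-z_k|/2$ is precisely what fails. Indeed $\partial_t g-\dot g$ contains $-\frac{b}{\lambda}(\Lambda_kW_k)(1-\chi_{r_0})$, and $\mathcal{L}_k\Lambda_kW_k=0$. So the energy identity, and also the projected equation for $b'$, contain the commutator defect $\frac{b}{\lambda}\langle\mathcal{L}_k((\Lambda_kW_k)\chi_{r_0}),g\rangle$, of size up to $\frac{b}{r_0}\|g\|_{\dot H^1}$. For $r_0\sim1$ and $b/\lambda\simeq\frac{2\xi}{3}t^{-1/3}$, this contributes $t^{-1/3-\gamma}\lambda^2$ to the time derivative of the energy. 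Since $\alpha'\gtrsim t^{-1/3}$ gives $\int_t^T s^a\lambda(s)^2\,ds\lesssim t^{a+1/3}\lambda(t)^2$, integrating backward produces $t^{-\gamma}\lambda^2\gg t^{-2\gamma}\lambda^2$. So the energy bootstrap cannot close for any $\gamma>0$. In the $b'$-equation the same defect is an error $\|g\|_{\dot H^1}/r_0$ against the driving term $c\lambda$. The paper therefore truncates at the growing radius $t$, using $\chi(x/t)$. This turns the defect into $\frac{b}{t}\|g\|_{\dot H^1}$ and only replaces the mass $\log\frac{1}{\lambda}$ by $\log\frac{t}{\lambda}$.

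The main missing idea is the second-order energy. A first-order energy, with or without your virial term, does not control $\dot g$ at the bubble scale, and that is what the modulation equations need.
\begin{itemize}
\item Away from the bubbles, the residual of the truncated ansatz is essentially $b'\lambda^{-1}(\Lambda_kW_k)\chi_t$, of $L^2$-size about $t^{-2/3}\sqrt{\log t}\,\lambda$. With this residual a first-order estimate closes only for $\gamma<\frac13$ (the paper takes $\gamma=\frac16$).
\item With bubble-localized orthogonality on $g$, $\lambda'+b$ is a bubble-scale projection of $\dot g$. First-order control then gives only $|\lambda'+b|/\lambda\lesssim t^{-\gamma}$, which is larger than $\alpha'\simeq\frac{2\xi}{3}t^{-1/3}$ itself.
\item The projection $\langle\dot g,\lambda^{-1}(\Lambda_kW_k)\chi_t\rangle$ defining a sharp scaling velocity is bounded only by $\|\dot g\|_{L^2}\sqrt{\log(t/\lambda)}$, a loss of $t^{1/3}$. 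The resulting bound on $\alpha-\xi t^{2/3}$ is then $t^{2/3-\gamma}\gg t^{1/3}$.
\end{itemize}

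The paper gets the needed control from $\mathcal{I}_2=\frac12\langle\mathcal{L}_\phi\dot g,\dot g\rangle+\frac12\langle\mathcal{L}_\phi^2h,h\rangle$. This energy exploits $\mathcal{L}_k\Lambda_kW_k=\mathcal{L}_k\nabla_kW_k=0$ and yields $\|\vec g\|_{\mathcal E_2}\lesssim t^{1/2}\lambda$. Hence $|\lambda'+b|\lesssim\lambda\|\dot g\|_{\dot H^1}\lesssim t^{1/2}\lambda^2$, and $|\beta-b|\lesssim t^{-5/6}\sqrt{\log t}\,\lambda$ for the corrected velocity $\beta$. Closing $\mathcal I_2$ requires two further ingredients:
\begin{itemize}
\item the second-order localized virial terms $\mathcal{J}_k$, built on the specially designed weight $q$ (the $\dot H^2$ Hardy inequality is borderline in dimension four);
\item a large $M$ in the orthogonality conditions, so that the positive term $\frac{2b}{\lambda}\|\mathcal{L}_\phi h\|^2_{L^2(|x-\boldsymbol y_k|\le R\lambda)}$ absorbs the $o_M(1)$ losses from the rough $b'$ and $\boldsymbol v'$.
\end{itemize}
The HR-type correction you mention does appear (the profiles $Q,S,Z_l$), but it treats the residual, not this issue.

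Finally, one shooting parameter is not enough. $A_0=t^{-1/3}(\alpha-\xi t^{2/3})$ satisfies $A_0'=-\frac{2}{3t}A_0+O(t^{-13/12})$, so it grows backward while errors accumulate. The paper also shoots on $\omega_0$ in $\lambda(T)=e^{-\xi T^{2/3}-\omega_0T^{1/3}}$, in a two-dimensional no-retraction argument together with $a^-$.

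Two smaller points. The components $a^\pm$ cannot be removed by orthogonality conditions, since no modulation parameter is attached to them. The equivariant translations $\boldsymbol y_k,\boldsymbol v_k$ must still be modulated: the stabilizer of $z_k$ fixes the direction $z_k$, along which the interaction force is nonzero.
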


\begin{Rmk}[The role of the symmetry assumption]
The symmetry assumption should be understood as the natural geometric setting
for the common-scale dynamics constructed in this paper. In dimension four, the
interaction between distinct bubbles enters the leading scaling equation.
Formally, each bubble sees an interaction strength determined by its distances
to all other concentration points. Thus, if all bubbles are to concentrate with
the same leading scale, they should see the same leading interaction strength.
A point-transitive configuration provides a canonical way to ensure this: all
concentration points are geometrically equivalent.

We carry out the construction in the corresponding \(G\)-invariant class. This
class is invariant under the flow, and the uniqueness of the modulation
decomposition then forces the modulation parameters to be equivariant. In
particular,
\[
\lambda_1(t)=\cdots=\lambda_N(t).
\]
By translation invariance of the equation, the symmetry center of the configuration need not be the origin. For instance, any two distinct points are covered: after translating their midpoint to the origin, they form an antipodal pair. Other examples include the vertices of a regular simplex and the vertices of a vertex-transitive regular polytope in \(\mathbb R^4\), after translating the configuration to its symmetry center.

This should be contrasted with the five-dimensional multi-bubble construction,
where the concentration scales are polynomial and the leading constants
associated with different bubbles can be adjusted according to the
configuration. In the present four-dimensional common-scale regime, the leading
exponential rate is tied to the common interaction strength, and the
point-transitive symmetry is the natural way to keep this leading rate common
for all bubbles.
\end{Rmk}
\begin{Rmk}[Interaction-driven scale law]
A main feature of the theorem is that the multi-bubble interaction is part of
the leading order dynamics. The constant \(c\) in Theorem~1.1 is obtained by
projecting the interaction terms onto the scaling direction, and the resulting
coefficient \(\xi\) determines the common concentration rate. Thus the leading
mechanism is genuinely multi-bubble, rather than a perturbation of a one-bubble
dynamics.

The estimate is formulated at the logarithmic level,
\[
\log\frac1{\lambda(t)}=\xi t^{2/3}+O(t^{1/3}),
\]
and we do not determine the multiplicative normalization of \(\lambda(t)\).
This is the natural level of precision for the construction, which controls the
logarithmic scale and a corrected normalized velocity rather than a fixed
reference scale.
\end{Rmk}

\begin{Rmk}[The borderline nature of dimension four]\label{The borderline nature of dimension four}
Dimension four is borderline for the scaling direction. Indeed, in dimension
\(d\), the ground state satisfies
\[
W(r)\sim r^{-(d-2)},\qquad \Lambda W(r)\sim r^{-(d-2)}
\]
as \(r\to\infty\). Hence
\[
\int^\infty |\Lambda W(r)|^2 r^{d-1}\,dr
\sim
\int^\infty r^{3-d}\,dr.
\]
Thus the scaling direction belongs to \(L^2\) at infinity for \(d\ge5\), while
in dimension four it has exactly a logarithmic divergence. This is the
borderline regime between the \(L^2\)-integrable scaling mode in higher
dimensions and the stronger non-integrability in lower dimensions.
\end{Rmk}

The constructive results mentioned above are complemented by rigidity results
near multi-bubble configurations. Such results ask to what extent the asymptotic signs, scales, and modulation parameters are forced once a solution remains near a prescribed
multi-bubble regime. For the energy-critical wave equation,  \cite{JendrejNonexistenceOppositeSigns}  proved a
nonexistence result for radial two-bubbles with opposite signs. More recently, a rigidity result was obtained in \cite{JendrejZhangZhao} for pure multi-bubble solutions of Jendrej--Martel in dimension five, showing that the concentration rates are uniquely determined by the bubble interactions. A closely
related energy-critical wave-type model is given by equivariant wave maps,
where the coherent objects are harmonic map bubbles and bubble-tree dynamics
arise naturally. In this setting, \cite{JendrejLawrieExpansion,JendrejLawrieUniqueness } studied threshold two-bubble dynamics and later proved refined asymptotics and uniqueness of the two-bubble solution. Thus, beyond existence, multi-bubble dynamics often exhibit a rigid asymptotic structure.

Multi-bubble and multi-soliton constructions have also been developed in other
dispersive and geometric models. In the mass-critical nonlinear Schrödinger
equation, \cite{Mkps} constructed solutions blowing up at a prescribed finite number
of points. Multi-soliton constructions for gKdV, NLS and nonlinear
Klein--Gordon equations were developed in works such as
\cite{CombetMartel,CMM,CoteMunoz,J2bNLS}. In critical geometric and parabolic problems,
bubble and bubble-tree dynamics also appear for wave maps, Yang--Mills
equations and energy-critical heat flows; see for instance
\cite{RaphaelRodnianski,Schweyer,DelPinoMussoWei,CortazarDelPinoMusso,DelPinoMussoWei3DInfiniteHeat,KriegerSchlagTataruWavemap,KriegerSchlagTataruYangmils}. These works form a broader
background for the construction of solutions whose leading dynamics is governed
by several coherent structures.
\subsection{Strategy of the proof}

We now describe the strategy of the proof. We first give an outline of the argument, introducing the main objects used in the construction. The proof is divided into four stages.

\medskip
\noindent\emph{Step 1. Formal dynamics.}
We start from a formal multi-bubble ansatz
\[
u(t)\simeq \sum_{k=1}^N \frac{1}{\lambda_k}W\left(\frac{x-\boldsymbol{y}_k(t)}{\lambda_k(t)}\right).
\]
Since the scaling direction is not square-integrable in dimension four, the
velocity component in the scaling direction has to be truncated. Thus the formal
velocity is taken in the form
\[
\partial_t u(t)
\simeq
\sum_{k=1}^N
\left(
\frac{b_k(t)}{\lambda_k^2(t)}(\Lambda W)\left(\frac{x-\boldsymbol{y}_k(t)}{\lambda_k(t)}\right)\chi\left(\frac{x}{t}\right)
+
\frac{\boldsymbol{v}_k(t)}{\lambda_k(t)}\cdot(\nabla W)\left(\frac{x-\boldsymbol{y}_k(t)}{\lambda_k(t)}\right)
\right).
\]
Where the $\chi$ is a standard smooth cut-off function satisfying 
\begin{equation*}
    \chi(x)=1\quad \text{ when }|x|\leq 1, \text{ and }\quad \chi(x)=0\quad \text{ when }|x|\geq 2.
\end{equation*}
Projecting the equation onto the scaling and translation directions gives the
leading order system for the parameters. In particular,
\[
\lambda_k'\simeq -b_k,\qquad \boldsymbol{y}_k'\simeq-\boldsymbol{v}_k
\]
and at the leading order, one obtains schematically
\[
b_k'\log \left(\frac{t}{\lambda_k}\right)\sim\kappa\sum_{j\ne k}\frac{\lambda_j}{|z_k-z_j|^2}.
\]
This is the formal origin of the condition on the points \(z_1,\ldots,z_N\): it is
the compatibility condition which allows all bubbles to follow the same
concentration rate.

\medskip
\noindent\emph{Step 2. Modulation and corrected parameters.}
We then pass from the formal ansatz to an actual modulation decomposition. On a
suitable time interval we write
\[
\vec u(t)=\vec W_{\Gamma(t)}+\vec g(t),
\]
where
\[
\Gamma(t)=(\lambda_1,b_1,\boldsymbol{y}_1,\boldsymbol{v}_1,\ldots,\lambda_N,b_N,\boldsymbol{y}_N,\boldsymbol{v}_N).
\]
The parameters are fixed by orthogonality conditions chosen so that the coercivity
estimates for the energy functionals can be applied. These conditions give
modulation estimates for \(\lambda,b,\boldsymbol{y},\boldsymbol{v}\).

However, the orthogonality conditions are not perfectly aligned with the formal
equations suggested by the ansatz. In particular, the coefficient \(b_k\) obtained
from the coercive decomposition does not directly satisfy a modulation equation
with the accuracy required later. To compensate for this mismatch, we introduce
corrected coefficients \(\beta_k\) and \(\boldsymbol{s}_k\). They are chosen so that the
projection of \(\dot g\) onto the truncated scaling and translation directions is
absorbed into the velocity variable. This yields a corrected modulation system for
\[
(\lambda,\beta,\boldsymbol{y},\boldsymbol{s}),
\]
which retains the leading dynamics obtained in the formal computation while being
compatible with the chosen orthogonality conditions. 

This modulation analysis provides two sets of estimates. The original parameters
\((\lambda,b,\boldsymbol{y},\boldsymbol{v})\) satisfy the bounds needed for the coercivity and the second
order energy estimate, while the corrected parameters \((\lambda,\beta,\boldsymbol{y},\boldsymbol{s})\)
satisfy a more accurate modulation system reflecting the formal dynamics.

\medskip
\noindent\emph{Step 3. Refined approximate solutions.}
The subsequent energy estimates are performed at two levels, and the two levels
require different parameter systems. We therefore construct two refined
approximate solutions. The correction \(\TPG\), associated with
\((\lambda,\beta,\boldsymbol{y},\boldsymbol{s})\), is used in the first order energy estimate. The approximate solution is then given by
\begin{equation*}
    \tilde{\phi}=\sum_{k=1}^N \frac{1}{\lambda_k}W\left(\frac{x-\boldsymbol{y}_k(t)}{\lambda_k(t)}\right)+\TPG.
\end{equation*}
The second correction
\(P_\Gamma\), associated with \((\lambda,b,\boldsymbol{y},\boldsymbol{v})\), is used in the second order
energy estimate and the corresponding approximate solution is denoted as
\begin{equation*}
    \phi=\sum_{k=1}^N \frac{1}{\lambda_k}W\left(\frac{x-\boldsymbol{y}_k(t)}{\lambda_k(t)}\right)+\PG.
\end{equation*}

These corrections are designed to cancel the leading residuals generated by the
modulation equations and by the interaction of different bubbles. We then estimate
the residuals obtained by substituting the two approximate solutions into the
equation. These estimates provide the error bounds needed in the energy argument.

\medskip
\noindent\emph{Step 4. Energy estimates and bootstrap closure.}
We then prove the energy estimates. The first order estimate is carried out for
the decomposition associated with $\tilde{\phi}$. In the present
argument this estimate is simplified by the second order control: the residual
bounds and the second order energy estimate provide the extra information needed to
close the first order energy.

The main part is the second order energy estimate for the decomposition associated with
$\phi$. We introduce
\[
\mathcal I_2
=
\frac12\langle \ML_\phi\dot g,\dot g\rangle
+
\frac12\langle \ML_\phi^2h,h\rangle .
\]
To close the estimate, this energy has to be supplemented by localized virial
corrections. The construction of these correction terms is the key point: their time derivatives produce the positive local control needed in the second order estimate. Thus we define
\[
\mathcal H_2
=
\mathcal I_2+\sum_{k=1}^N\mathcal J_k .
\]
The first and second order energy estimates, together with the modulation and coercivity estimates, improve the bootstrap bounds. The remaining finite-dimensional exit parameters are controlled by a standard
topological argument, see for instance \cite{wazewski}.

\medskip
\subsection{Main ingredients of the proof} We now explain the main difficulties specific to the four-dimensional multi-bubble setting. There are two main features behind the argument. The first one is the strength of
the multi-bubble interaction. In dimension four, the tails of different bubbles
interact strongly enough to enter the leading order modulation system. This leads to the stretched exponential concentration rate in the final dynamics, and also explains the compatibility condition imposed on the points $z_1,\ldots,z_N$.  This feature has already been incorporated in the formal
dynamics described above.

The second feature, which is the main analytic difficulty in the proof, is the slow
decay of the ground state and of the scaling direction. More precisely,
\[
W(r)\sim \frac{8}{r^2},\qquad
\Lambda W(r)\sim -\frac{8}{r^2},\qquad r\to\infty,
\]
and hence
\[
\int_1^R |W(r)|^2r^3\,dr\sim \log R,\qquad
\int_1^R |\Lambda W(r)|^2r^3\,dr\sim \log R.
\]
Thus both \(W\) and \(\Lambda W\) have borderline \(L^2\) tails. The rest of this
discussion explains how this slow decay affects the ansatz, the orthogonality
conditions, and the energy estimates.

\medskip
\noindent\emph{1. The growing cutoff and the need for a second order energy.}
Since the scaling direction is not square-integrable, the scaling component in
the velocity ansatz has to be truncated. The truncation radius cannot be fixed.
Indeed, both the first and the second order energy estimates contain terms
coming from the difference between the actual time derivative of the remainder
and the velocity variable chosen in the ansatz. These terms force us to estimate
the defect
\[
\ML_k((\Lambda W)\chi_R)_k .
\]
Without the cutoff, the leading contribution would vanish because
\[
\ML_k\Lambda_kW_k=0.
\]
With a cutoff, however,
\[
\ML(\Lambda W\chi_R)=[\ML,\chi_R]\Lambda W.
\]
If \(R\) is fixed, this commutator defect is not small enough. We therefore let
the truncation radius grow and use the cutoff \(\chi_t=\chi(\cdot/t)\). The
choice of the radius \(t\) is a convenient normalization, rather than a light-cone
localization. What matters is that the commutator defect produced by the
truncated scaling direction becomes lower order in the energy estimates, especially
in the terms measuring the mismatch between \(\partial_t g\) and the velocity
variable in the ansatz. The precise normalization of the growing radius is not
expected to affect the leading modulation dynamics.

The same borderline behavior also explains why a purely first order energy
argument is insufficient. The truncated scaling direction has logarithmic
\(L^2\)-size, and the first order energy does not fully exploit the cancellation
\(\ML_k\Lambda_kW_k=0\). The second order energy is introduced precisely to use
this cancellation at the level of the linearized operator. In particular, near the
\(k\)-th bubble one has
\[
\ML_\phi\simeq \ML_k,\qquad
\ML_k\Lambda_kW_k=0,\qquad
\ML_k\nabla_kW_k=0.
\]
Thus the role of the second order energy is not merely to control a stronger
norm, but to make the kernel cancellation available in the energy estimate.

\medskip
\noindent\emph{2. The choice of orthogonality conditions and corrected parameters.}
The modulation parameters are fixed by imposing orthogonality conditions on the
remainder. In particular, for the velocity part we impose orthogonality to the
truncated kernel directions, schematically
\[
\left\langle \dot g,\frac1{\lambda_k}(\Lambda_kW_k)\chi_k\right\rangle=0,
\qquad
\left\langle \dot g,\frac1{\lambda_k}(\nabla_kW_k)\chi_k\right\rangle=0 .
\]
Here $\chi_k:=\chi((\cdot-\boldsymbol{y}_k)/\lambda_k M)$, where $M$ is a large and fixed number. These orthogonality conditions are chosen
for two reasons. First, they are compatible with the coercivity estimates, and
therefore allow us to pass from estimates on the energy functionals to estimates
on the corresponding energy norms. Second, the defects created by applying the linearized operator to these truncated kernel directions are compatible with the later modulation and energy estimates, after the relevant normalization, they become small when \(M\) is chosen large. This property will be used in the second order energy
argument.

The price of this choice is that the orthogonality conditions are not perfectly
aligned with the formal modulation equations obtained from the ansatz. The
coefficient \(b_k\), \(\boldsymbol{v}_k\) fixed by the decomposition is the correct one for the coercive
orthogonality conditions, but it does not directly satisfy a modulation equation
with sufficient accuracy. To recover a sharper modulation system, we introduce
nearby corrected coefficients \(\beta_k\), and similarly \(\boldsymbol{s}_k\) for the translation
directions. The corrected coefficients remain close to the original ones, while
the pair \((\lambda,\beta)\) satisfies the refined scaling equation suggested by
the formal dynamics.

This correction does not change the orthogonality conditions imposed on
\(\dot g\). Rather, it leads to a corrected velocity variable, of the form
\[
\dot h
=
\dot g
+
\sum_k\frac{b_k-\beta_k}{\lambda_k}(\Lambda_kW_k)\chi_t
-
\sum_k\frac{v_k-s_k}{\lambda_k}\cdot\nabla_kW_k .
\]
The difference between \(\dot h\) and \(\dot g\) is controlled in \(L^2\) by the
energy bounds, so this correction is suitable for the first order estimate.

However, the same correction cannot be inserted into the second order energy.
Indeed, the scaling part of the correction has size
\[
\left\|
\frac{\beta_k-b_k}{\lambda_k}(\Lambda_kW_k)\chi_t
\right\|_{\dot H^1}
\sim
\frac{|\beta_k-b_k|}{\lambda_k},
\]
which is much larger than the scale allowed in the second order bootstrap.
Therefore the first order estimate uses the corrected velocity variable, while
the second order estimate is carried out with the original parameters
\((\lambda,b,\boldsymbol{y},\boldsymbol{v})\) and the original remainder \((g,\dot g)\).

\medskip
\noindent\emph{3. Rough modulation derivatives and the virial matching.}
The price of using the original parameters in the second order estimate is that
only rough bounds are available for \(b'\) and \(v'\). These bounds are not strong
enough by themselves. A large part of the corresponding terms is nevertheless
harmless, because the rough derivatives are coupled with the kernel directions
\(\Lambda W\) and \(\nabla W\), and are therefore eliminated by the second order
structure described above.

There remain some terms which are not removed directly by this cancellation. These
terms mainly arise from the time derivative of the refined correction
\(P_\Gamma\). Here the large cutoff \(M\) in the orthogonality conditions becomes
useful. After differentiating the orthogonality conditions, the corresponding
defect terms carry a small coefficient depending on \(M\). Schematically, if
\(\mathcal N(t)\) denotes the positive local quantity controlled in the second
order estimate, the remaining bad contribution has the form
\[
-o_M(1)\mathcal N(t),
\qquad o_M(1)\to0\quad\text{as }M\to\infty .
\]
On the other hand, the localized virial correction produces a positive local
contribution
\[
+c\mathcal N(t),\qquad c>0 .
\]
Choosing \(M\) sufficiently large allows this positive term to absorb the
\(o_M(1)\mathcal N(t)\) loss. This gives the differential inequality for the modified second order energy, and hence provides the main estimate needed to close the bootstrap.

\medskip
\noindent\emph{4. The localized virial operators.}
The localized virial correction is a standard ingredient in the multi-bubble
energy method. The delicate point here is its second order implementation in dimension four. The commutator has to provide a positive local term at the level
of the second order energy, while the four-dimensional \(\dot H^2\) Hardy
inequality is borderline and carries a logarithmic loss. For this reason the
radial weight \(q\) has to be chosen carefully. The resulting operators \(A_k\)
and \(\underline A_k\) yield the positive local term used in the second order
energy estimate.
\medskip

\medskip
\textbf{Organization of the paper.}
Section~2 derives the formal modulation system and the leading scale law. Section~3
collects the coercivity estimates. Section~4 establishes the modulation estimates
and the bootstrap framework. Section~5 constructs the refined approximate
solutions and proves the corresponding residual estimates. Section~6 proves the
first and second order energy estimates, including the localized virial correction.
Section~7 closes the bootstrap argument and proves the main theorem.

\subsection{Notations}
We first introduce the basic operators and spectral data associated with the
ground state. Set
\begin{equation*}
    \Lambda f=f+x\cdot \nabla f\text{ , and }\underline{\Lambda}f=2f+x\cdot \nabla f.
\end{equation*}
The linearized operator around \(W\) is
\begin{equation*}
    \ML g:=-\Delta g-f'(W)g=-\Delta g-3W^2 g,
\end{equation*} 
with quadratic form
\[
\langle g,\mathcal Lg\rangle
=
\int_{\mathbb R^4}\bigl(|\nabla g|^2-f'(W)g^2\bigr)\,dx .
\]
For \(\lambda>0\), we also write
\begin{equation*}
    \ML_{\lambda}g:=-\Delta g-f'(W_{\lambda})g=-\Delta g-3\frac{1}{\lambda^2}W^2(\frac{\cdot}{\lambda}) g.
\end{equation*}
 And we use the notation $\ML_k$ to denote $-\Delta g-f'(W_{\lambda_k,y_k})$.  It was shown in \cite{DM} that $\ML$ has a unique negative simple eigenvalue which we denote by $-\nu^2<0$. And the associated eigenfunction is denoted by $Y$. By elliptic regularity $Y$ is smooth and by Agmon estimates it decays exponentially.

Let $N\geq 2$ and $z_1,...,z_N$ be $N$ points of $\mathbb R^4$ distinct two by two. Set
\begin{equation*}
    d:=\frac{1}{2}\min_{j\neq k}|z_j-z_k|>0\text{ , and }\Vec{z}=(z_1,...,z_N).
\end{equation*}
The modulation parameters are denoted by
\begin{equation*}
    \Vec{\lambda}=(\lambda_1,...,\lambda_N)\in (0,\infty)^N\text{ , }\Vec{b}=(b_1,...,b_N)\in \mathbb R^N,
\end{equation*}
\begin{equation*}
  \boldsymbol{y}=(\boldsymbol{y}_1,...,\boldsymbol{y}_N)\in  (\mathbb R^4)^N\text{ , }\boldsymbol{v}=(\boldsymbol{v}_1,...,\boldsymbol{v}_N)\in  (\mathbb R^4)^N .
\end{equation*}
and denote $\Gamma=(\lambda_1,b_1,y_1,v_1...,\lambda_N,b_N,y_N,v_N)$.

For any profile \(F\) constructed from \(W\), we use the convention
\[
F_k:=F_{\lambda_k}(\cdot-\boldsymbol{y}_k)
=\frac1{\lambda_k}F\left(\frac{\cdot-\boldsymbol{y}_k}{\lambda_k}\right).
\]
When an operator carries the subscript \(k\), it is applied before scaling and
translation. Thus
\begin{equation*}
    \begin{aligned}
        W_k&:=W_{\lambda_k}(\cdot-\boldsymbol{y}_k)=\frac{1}{\lambda_k}W\Big(\frac{\cdot-\boldsymbol{y}_k}{\lambda_k}\Big),\\
        \nabla_kW_k&:=(\nabla W)_{\lambda_k}(\cdot-\boldsymbol{y}_k)=\frac{1}{\lambda_k}\nabla W\Big(\frac{\cdot-\boldsymbol{y}_k}{\lambda_k}\Big),
    \end{aligned}
\end{equation*}
and similarly
\begin{equation*}
\begin{aligned}
    \Lambda_k W_k&:=(\Lambda W)_{\lambda_k}(\cdot-\boldsymbol{y}_k)\text{ , }\Delta_k\Lambda_k W_k :=(\Delta \Lambda W)_{\lambda_k}(\cdot-\boldsymbol{y}_k),\\
   \underline{\Lambda}_k\Lambda_kW_k&:=(\underline{\Lambda} \Lambda W)_{\lambda_k}(\cdot-\boldsymbol{y}_k) \text{ and }\Delta_k\nabla_k W_k:=(\Delta \nabla W)_{\lambda_k}(\cdot-\boldsymbol{y}_k).\\
\end{aligned}
\end{equation*}

Throughout the paper, \(\chi\) denotes a smooth radial cut-off satisfying
\[
\chi(x)=1\quad\text{for }|x|\le1,\qquad
\chi(x)=0\quad\text{for }|x|\ge2.
\]
For \(M>0\), \(1\le k\le N\), and \(t>0\), set
\[
\chi_M:=\chi\left(\frac{\cdot}{M}\right),\qquad
\chi_k:=\chi\left(\frac{\cdot-\boldsymbol{y}_k}{\lambda_kM}\right),\qquad
\chi_t:=\chi\left(\frac{\cdot}{t}\right).
\]

We also denote
\begin{equation*}
    Y_k:=\frac{1}{\lambda_k}Y\left(\frac{x-\boldsymbol{y}_k}{\lambda_k}\right),
\end{equation*}
and define the localized stable and unstable directions by
\begin{equation}\label{Z_k and Y_k}
    \boldsymbol{y}_k^{\pm}=(\nu^{-1}Y_k,\pm \lambda_k^{-1}Y_k),\qquad\Vec{Z}_k^{\pm}=\frac{1}{2}\lambda_k^{-1}(\nu\lambda_k^{-1}Y_k,\pm Y_k),
\end{equation}
so that
\begin{equation*}
    \lelan \boldsymbol{y}_k^{\pm},\Vec{Z}_k^{\pm} \rilan=1,\qquad\lelan \boldsymbol{y}_k^{\pm}, \Vec{Z}_k^{\mp} \rilan=0.
\end{equation*}

The multi-bubble profiles are
\begin{equation*}
    W_{\boldsymbol{\Gamma}}=\sum_k W_k,\qquad \Vec{W}_{\boldsymbol{\Gamma}}=\sum_k\Vec{W}_k,
\end{equation*}
where
\begin{equation*}
  \Vec{W}_k=\left(W_k,\frac{b_k}{\lambda_k}(\Lambda_k W_k)\chi_{t}+\frac{\boldsymbol{v}_k\cdot\nabla_kW_k}{\lambda_k}\right).  
\end{equation*}
We also denote the linearized operator near the multi-bubble by
\begin{equation*}
    \ML_{W_{\boldsymbol{\Gamma}}}g:=\left(-\Delta -3\left(\sum_{k}W_k\right)^2\right) g.
\end{equation*}

All inner products \(\langle\cdot,\cdot\rangle\) are taken in \(L^2(\mathbb R^4)\),
unless otherwise specified. For \(\vec g=(g,\dot g)\), define
\[
\|\vec g\|_{\mathcal E}:=\|\vec g\|_{\dot H^1\times L^2},
\qquad
\|\vec g\|_{\mathcal E_2}:=\|g\|_{\dot H^2}
+\|\mathcal L_{W_\Gamma}g\|_{L^2}
+\|\dot g\|_{\dot H^1}.
\]
Finally, the space used in the construction is
\[
X:=(\dot H^1\cap\dot H^2)(\mathbb R^4)
\times (L^2\cap\dot H^1)(\mathbb R^4).
\]

\textbf{Acknowledgments}
    		J. Jendrej was supported by the ERC project INSOLIT (No. 101117126). L. Zhao was supported by National Natural Science Foundation of China (No. 12271497 and No. 12341102).
\section{Formal computation}\label{A formal computation}
We begin with a formal computation which explains the leading modulation
system. At this stage we write the ansatz with local parameters attached to
each bubble. The parameter \(\lambda_k(t)>0\) denotes the scale of the \(k\)-th
bubble, \(\boldsymbol y_k(t)\in\mathbb R^4\) denotes its center, \(b_k(t)\) is
the scaling velocity parameter, and \(\boldsymbol v_k(t)\in\mathbb R^4\) is the
translation velocity parameter. We set the ansatz
\begin{equation}\label{basic ansatz 0}
    \boldsymbol{u}_0:=(U^{(0)},U^{(1)})
\end{equation}
where
\[
U^{(0)}
:=
\sum_{k=1}^N
\frac1{\lambda_k}
W\left(\frac{x-\boldsymbol y_k}{\lambda_k}\right),
\]
and
\[
U^{(1)}
:=
\sum_{k=1}^N
\left[
\frac{b_k}{\lambda_k^2}
(\Lambda W)\left(\frac{x-\boldsymbol y_k}{\lambda_k}\right)
\chi\left(\frac{x}{t}\right)
+
\frac{\boldsymbol v_k}{\lambda_k^2}\cdot
(\nabla W)\left(\frac{x-\boldsymbol y_k}{\lambda_k}\right)
\right].
\]
Here \(\chi\) is a fixed smooth cut-off with $\chi(x)=1$ when $|x|\leq 1$ and $\chi(x)=0$ when $|x|\geq 2$. The truncation in the scaling
component is needed because \(\Lambda W\notin L^2(\mathbb R^4)\). Formally,
\(U^{(1)}\) should be compared with \(\partial_t U^{(0)}\). Since
\[
\partial_t
\left[
\frac1{\lambda_k}
W\left(\frac{x-\boldsymbol y_k}{\lambda_k}\right)
\right]
=
-\frac{\lambda_k'}{\lambda_k^2}
(\Lambda W)\left(\frac{x-\boldsymbol y_k}{\lambda_k}\right)
-
\frac{\boldsymbol y_k'}{\lambda_k^2}\cdot
(\nabla W)\left(\frac{x-\boldsymbol y_k}{\lambda_k}\right),
\]
this suggests the relations
\[
b_k\simeq-\lambda_k',
\qquad
\boldsymbol v_k\simeq-\boldsymbol y_k'.
\] 
The actual construction will later be performed in the symmetry class of the
configuration. More precisely, we introduce the following notation.
\begin{cond}[Symmetric configuration]\label{sym condition}
There exists a finite subgroup \(G\subset O(4)\) which acts transitively on
\[
Z=\{z_1,\ldots,z_N\}.
\]
For \(\rho\in G\), we denote by \(\pi_\rho\) the permutation determined by
\[
\rho z_k=z_{\pi_\rho(k)}.
\]
The construction is carried out in the \(G\)-invariant class. In this class the modulation parameters are restricted to the equivariant
modulation manifold:
\[
\lambda_{\pi_\rho(k)}=\lambda_k,\qquad b_{\pi_\rho(k)}=b_k,
\]
and
\[
\boldsymbol y_{\pi_\rho(k)}=\rho\boldsymbol y_k,\qquad
\boldsymbol v_{\pi_\rho(k)}=\rho\boldsymbol v_k.
\]
In particular, by transitivity,
\begin{equation*}
 \lambda_1=\cdots=\lambda_N=:\lambda,
\qquad
b_1=\cdots=b_N=:b.   
\end{equation*}   
\end{cond}

The translation parameters are not equal as vectors; rather, the whole
\(N\)-tuples \((\boldsymbol y_1,\ldots,\boldsymbol y_N)\) and
\((\boldsymbol v_1,\ldots,\boldsymbol v_N)\) are equivariant under the group
action. In the following formal computation we keep the indices \(k\) in the
local projection formulas, and only afterwards impose this symmetric reduction.
Next, inserting (\ref{basic ansatz 0}) into equation (\ref{NLW 4}), on the one hand, a direct computation formally gives the second order time derivative terms as
\begin{equation}\label{basic parameter left hand}
\begin{aligned}
    \partial_{tt}u=\frac{d}{dt}U^{(1)}=&\sum_{k=1}^N(b_k'(t)(\Lambda W)_{\underline{\lambda_k,y_k}}\chi\left(\frac{x}{t}\right) + \frac{b_k(t)^2}{\lambda(t)}(\underline{\Lambda}(\Lambda W))_{\underline{\lambda_k,y_k}} )\\
    &-\sum_{k=1}^N b_k(t)\frac{x}{t^2}\chi'\left(\frac{x}{t}\right)(\Lambda W)_{\underline{\lambda_k,y_k}}+\sum_{k=1}^N \sum_{j=1}^4 b_k(t)\frac{v_{k,j}}{t}\partial_j\chi\left(\frac{x}{t}\right)(\Lambda W)_{\underline{\lambda_k,y_k}}\\
    &+\sum_{k=1}^N \sum_{j=1}^4 b_k(t)\frac{v_{k,j}}{\lambda_k}\chi\left(\frac{x}{t}\right)(\partial_j\Lambda W)_{\underline{\lambda_k,y_k}}
    +\sum_{k=1}^N\sum_{j=1}^4\frac{v'_{k,j}(t)\partial_jW(\frac{\cdot-\boldsymbol{y}_k}{\lambda_k})}{\lambda_k^2}\\&+\sum_{k=1}^N\sum_{j=1}^4\frac{b_kv_{k,j}(\underline{\Lambda} \partial_j W)\left(\frac{\cdot-\boldsymbol{y}_k}{\lambda_k}\right)}{\lambda_k^3}
   -\sum_{k=1}^N\sum_{i,j=1}^4\frac{v_{k,j}(t)v_{k,i}(t)\partial_j\partial_iW(\frac{\cdot-\boldsymbol{y}_k}{\lambda_k})}{\lambda_k^3}.
\end{aligned}
\end{equation}
On the other hand, the remaining term in equation (\ref{NLW 4}) can be expressed by
\begin{equation}\label{basic paramter right hand}
    \Delta u+f(u)=\Delta U^{(0)}+f(U^{(0)}).
\end{equation}
To derive the equation that the parameters satisfy, we first separately take the inner products of (\ref{basic parameter left hand}) and (\ref{basic paramter right hand}), respectively with $\frac{1}{\lambda_k}\Lambda W(\frac{\cdot-\boldsymbol{y}_k}{\lambda_k})$. The leading terms are 
\begin{equation*}
   \begin{aligned}
    &\lelan \partial_{tt}u,\frac{1}{\lambda_k}\Lambda W(\frac{\cdot-\boldsymbol{y}_k}{\lambda_k}) \rilan\\
    \sim&
    b_k'(t)\lambda_k(t)\lelan \Lambda W, \Lambda W\chi (\frac{\cdot \lambda_k}{t}) \rilan+\sum_{j\neq k}b_j'(t)\lambda_k(t)\lelan \Lambda W(\frac{\cdot-\boldsymbol{y}_k}{\lambda_k}), \frac{1}{\lambda_j}(\Lambda W)(\frac{\cdot-\boldsymbol{y}_j}{\lambda_j})\chi_t  \rilan
    \\&+b_k(t)^2\lelan \underline{\Lambda}\Lambda W,\Lambda W \rilan
     -\frac{b_k(t)\lambda_k(t)}{t^2}\lelan \Lambda W,\Lambda W \chi'\rilan-\sum_{j=1}^4v_{k,j}^2\lelan \partial_{jj}W,\Lambda W \rilan,
    \end{aligned}
\end{equation*}
and
\begin{equation}
    \lelan \Delta u+f(u),\frac{1}{\lambda_k}\Lambda_k W_k \rilan\sim \lelan \sum_{j\neq k}f'(W_k)W_j,\frac{1}{\lambda_k}\Lambda_k W_k  \rilan= \sum_{j\neq k}\frac{8\lambda_j}{|z_k-z_j|^2}\lelan 3W^2,\Lambda W \rilan+O(\lambda_j^3).\label{est:W^2 Lambda W}
\end{equation}
A direct computation using the tail \(\Lambda W(r)=-8r^{-2}+O(r^{-4})\) gives
\begin{equation}
\label{eq:inner-prod-1}
    \lelan \Lambda W, \Lambda W\chi (\frac{\cdot \lambda}{t}) \rilan=128\pi^2\left(
\log\frac{1}{\lambda}
\right)
+
O\left(\log t\right),
\end{equation}
\begin{equation}
\label{eq:inner-prod-2}
\begin{aligned}
   \lelan \frac{1}{\lambda^2}\Lambda W(\frac{\cdot-\boldsymbol{y}_k}{\lambda}), \frac{1}{\lambda^2}(\Lambda W)(\frac{\cdot-\boldsymbol{y}_j}{\lambda})\chi_t  \rilan =128\pi^2\left(
\log\frac{t}{|\boldsymbol{y}_k-\boldsymbol{y}_j|}
\right)
+O\left(1\right)
\end{aligned}
\end{equation}
and
\begin{equation*}
  \lelan \underline{\Lambda}\Lambda W,\Lambda W \rilan=64\pi^2.  
\end{equation*}
Moreover, we denote 
\begin{equation*}
    \Xi_0 (t):=\frac{1}{128\pi^2}\left(\lelan \Lambda W, \Lambda W\chi (\frac{\cdot \lambda}{t}) \rilan+\sum_{j\neq k} \lelan \frac{1}{\lambda^2}\Lambda W(\frac{\cdot-\boldsymbol{y}_k}{\lambda}), \frac{1}{\lambda^2}(\Lambda W)(\frac{\cdot-\boldsymbol{y}_j}{\lambda})\chi_t  \rilan\right).
\end{equation*}
In particular, the computation (\ref{eq:inner-prod-1}), (\ref{eq:inner-prod-2}) and the symmetry condition \ref{sym condition} yield
\begin{equation}
\Xi_0(t)
=
\log\frac1{\lambda(t)}+O(\log t).
\label{Xi-alpha}
\end{equation}
Furthermore, we derive the following parameter system
\begin{equation}\label{equation of lambda and b}
    \begin{cases}
        \lambda'(t)=-b(t)\\
         \Xi_0(t)b'(t)+\frac{1}{2}\frac{b(t)^2}{\lambda(t)}=B_k(\boldsymbol{\lambda},\boldsymbol{y}),
    \end{cases}
\end{equation}
where
\begin{equation*}
    B_k(\boldsymbol
{\lambda},\boldsymbol{y})=\kappa \sum_{j\neq k}\left\{\lambda|\boldsymbol{y}_j-\boldsymbol y_k|^{-2}\right\}\text{ , }\kappa=\frac{24}{128\pi^2}\lelan W^2,\Lambda W \rilan=-2.
\end{equation*}
Indeed, we will prove later that $|\boldsymbol{y}_k-\boldsymbol{z}_k|\ll \lambda^{\frac{3}{2}}$. 
Define \(c>0\) by
\[
c=2\sum_{j\neq k}|\boldsymbol{z}_j-\boldsymbol{z}_k|^{-2} .
\]
Then by condition \ref{sym condition}, $B_k(\boldsymbol{\lambda}(t))$ is independent of $k$ and satisfies $B_k(\boldsymbol{\lambda},\boldsymbol{y})=-c\lambda+o(\lambda^{\frac{3}{2}})$.  With these notations we derive the following lemma.
\begin{lemma}[Formal scale law]\label{lem:formal-scale-law}
Assume that, after the symmetric reduction, the leading scale equation takes the form
\begin{equation*}
    \Xi_0(t)\lambda''-\frac12\frac{(\lambda')^2}{\lambda}
    =
    c\lambda,
\end{equation*}
where $c$ and $\Xi$ is given as above. Set
\[
    \alpha(t):=\log\frac1{\lambda(t)}.
\]
Then the formal leading scale is
\begin{equation}\label{eq:alpha-formal-law}
    \alpha(t)=\xi t^{2/3}+O(\log t),
    \qquad
    \xi:=\left(\frac{9c}{4}\right)^{1/3}.
\end{equation}
\end{lemma}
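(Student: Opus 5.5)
The plan is to rewrite the equation in terms of $\alpha=\log(1/\lambda)$ and then identify the leading balance. Since $\lambda=e^{-\alpha}$, we have $\lambda'=-\alpha'\lambda$ and $\lambda''=(\alpha'^2-\alpha'')\lambda$. Substituting and dividing by $\lambda$ gives
\[
\Xi_0(t)\bigl(\alpha'^2-\alpha''\bigr)-\tfrac12\alpha'^2=c .
\]
By \eqref{Xi-alpha}, $\Xi_0=\alpha+O(\log t)$. Since $\alpha\to\infty$, the term $\tfrac12\alpha'^2$ is lower order compared with $\Xi_0\alpha'^2$. This leaves the leading ODE
\[
\alpha\,\alpha'^2\simeq c .
\]
It integrates to $(\alpha^{3/2})'\simeq \tfrac32 c^{1/2}$, hence $\alpha^{3/2}\simeq \tfrac32 c^{1/2}t$ and $\alpha\simeq (9c/4)^{1/3}t^{2/3}=\xi t^{2/3}$. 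As a check, $\alpha'\sim t^{-1/3}$ and $\alpha''\sim t^{-4/3}$, so $\alpha''$ is indeed negligible relative to $\alpha'^2\sim t^{-2/3}$.

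To make this quantitative, I would set $\eta:=\alpha^{3/2}$. Then $\alpha\alpha'^2=\tfrac49\eta'^2$, and the exact equation becomes
\[
\tfrac49\eta'^2\Bigl(1+\frac{\Xi_0-\alpha}{\alpha}-\frac{1}{2\alpha}\Bigr)-\Xi_0\alpha''=c .
\]
Next, I would substitute an ansatz $\alpha=\xi t^{2/3}+r$. Treating the lower-order terms as known forcing, I would estimate $\eta'=\tfrac32c^{1/2}\bigl(1+O(\log t/t^{2/3})+O(\Xi_0\alpha''/c)\bigr)$. With $\Xi_0\alpha''=O(t^{2/3}\cdot t^{-4/3})=O(t^{-2/3})$, this gives $\eta'=\tfrac32 c^{1/2}+O(t^{-2/3}\log t)$. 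Integrating yields $\eta=\tfrac32c^{1/2}t+O(t^{1/3}\log t)$. Raising to the power $2/3$ then gives $\alpha=\xi t^{2/3}\bigl(1+O(t^{-2/3}\log t)\bigr)=\xi t^{2/3}+O(\log t)$, as claimed in \eqref{eq:alpha-formal-law}.

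At the formal level, this is a consistency computation. The ansatz is self-consistent once the sizes $\alpha'\sim t^{-1/3}$ and $\alpha''\sim t^{-4/3}$ are assumed, and the integration constant coming from $\eta(t_0)$ is absorbed into the error.

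The main obstacle is justifying the a priori sizes of $\alpha'$ and $\alpha''$ used to discard the $\Xi_0\alpha''$ and $\tfrac12\alpha'^2$ terms. I would handle this by treating the equation as a first-order system for $(\alpha,\alpha')$, namely $\alpha''=\alpha'^2\bigl(1-\frac{1}{2\Xi_0}\bigr)-c/\Xi_0$. I would then run a short bootstrap near the explicit profile: assume $|\alpha-\xi t^{2/3}|\le K\log t$ and $|\alpha'-\tfrac23\xi t^{-1/3}|\le Kt^{-1}\log t$ on $[t_0,T]$, and improve these bounds. Linearizing around the profile, the perturbation of $\eta'$ satisfies a linear equation with coefficient of order $t^{-1}$. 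Its homogeneous solutions either decay or grow at most polynomially in a mild way. The growing mode, the unstable direction familiar in this kind of ODE, would be removed by choosing the initial value of $\alpha'$ appropriately. This is the same shooting mechanism that the paper later uses in the full problem.
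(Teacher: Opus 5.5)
Your main line is the paper's proof. You substitute $\lambda=e^{-\alpha}$, use $\Xi_0=\alpha+O(\log t)$ to reduce to the balance $\alpha(\alpha')^2\simeq c$, and integrate. The variable $\eta=\alpha^{3/2}$ makes $\alpha(\alpha')^2=\tfrac49(\eta')^2$ exact, so it is a clean way to make quantitative the paper's one-line ``combining with the asymptotics of $\Xi$''. Your bookkeeping, $\eta'=\tfrac32c^{1/2}+O(t^{-2/3}\log t)$ and hence $\alpha=\xi t^{2/3}+O(\log t)$, is correct once $\alpha'\sim t^{-1/3}$ and $\alpha''=O(t^{-4/3})$ are granted. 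You also correctly notice something the paper's formal proof passes over: the conclusion holds only for a selected one-parameter family of solutions.

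What needs correcting is your description of the linearized dynamics in the last paragraph.
\begin{itemize}
\item Taking $\Xi_0=\alpha$ for simplicity, the equation gives exactly $\eta''=\tfrac32\alpha^{-1/2}\bigl(\tfrac49(\eta')^2-c\bigr)$.
\item So a perturbation $\delta$ of $\eta'=\tfrac32c^{1/2}$ obeys $\delta'\approx 2c^{1/2}\alpha^{-1/2}\delta\approx\tfrac43\xi t^{-1/3}\delta$.
\item The coefficient is of order $t^{-1/3}$, not $t^{-1}$. The unstable mode therefore grows like $e^{2\xi t^{2/3}}\approx\lambda(t)^{-2}$, not polynomially.
\item Equivalently, $u=\alpha'$ satisfies the Riccati-type equation $u'\approx u^2-c/\alpha$. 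Above the curve $u=\sqrt{c/\alpha}$ it blows up in finite time; below it, it drives $\alpha'$ negative.
\end{itemize}
Consequently, a forward bootstrap cannot produce the a priori sizes of $\alpha'$ and $\alpha''$ for general data. The selection of the solution is indispensable, not a minor adjustment.

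Your shooting in $\alpha'(t_0)$ does work, precisely because this strong instability makes the exit through the $\delta$-direction transversal. The remaining neutral mode, the integration constant in $\eta$, only moves $\alpha$ by $O(t^{-1/3})$. The paper meets the same structure in its rigorous argument. In Lemma~\ref{close of the scaling and trans para}, the variable $w=m-\mu$ satisfies $w'=(m+\mu)w+\dots$ with $m+\mu\gtrsim t^{-1/3}$. It is controlled by integrating backward from the terminal time $T$, where this mode is contracting, and then letting $T\to\infty$.
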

\begin{proof}
Since \(\lambda=e^{-\alpha}\), we have
\[
    \frac{\lambda''}{\lambda}=(\alpha')^2-\alpha'',
    \qquad
    \frac{(\lambda')^2}{\lambda^2}=(\alpha')^2.
\]
Thus
\[
    \Xi_0(t)\big((\alpha')^2-\alpha''\big)
    -\frac12(\alpha')^2
    =
    c.
\]
The leading balance is
\[
    \alpha(\alpha')^2\simeq c,
\]
which gives
\[
    \alpha(t)\sim \xi t^{2/3},
    \qquad
    \xi=\left(\frac{9c}{4}\right)^{1/3}.
\]
Combining this with the asymptotics of $\Xi$ (\ref{Xi-alpha}) gives (\ref{eq:alpha-formal-law}).
\end{proof}
Next we turn to sketch the system of $(\boldsymbol{y}_k,\boldsymbol{v}_k)$. To achieve this,
 taking the inner product with $\frac{1}{\lambda_k}\partial_jW(\frac{\cdot-\boldsymbol{y}_k}{\lambda_k})$  yields
\begin{equation*}
   \begin{aligned}
    &\lelan \partial_{tt}u,\frac{1}{\lambda_k}\partial_jW(\frac{\cdot-\boldsymbol{y}_k}{\lambda_k}) \rilan\\
    \sim& 
     \frac{b_kv_{k,j}}{t}\lelan\partial_j\chi\left(\frac{x}{t}\right)(\Lambda W)_{\underline{\lambda_k,y_k}},\frac{1}{\lambda_k}\partial_jW(\frac{\cdot-\boldsymbol{y}_k}{\lambda_k}\rilan\\
     &+{b_kv_{k,j}}\lelan\chi\left(\frac{x}{t}\right)(\partial_j\Lambda W)_{\underline{\lambda_k,y_k}},\frac{1}{\lambda_k}\partial_jW(\frac{\cdot-\boldsymbol{y}_k}{\lambda_k}\rilan+(v_{k,j}'(t)\lambda_k(t))\lelan\partial_j W,\partial_j W \rilan.
    \end{aligned}
\end{equation*}
Here we used the symmetry property and the fact $\lelan \underline{\Lambda}(\nabla W), \nabla W   \rilan=0$. Indeed, we can also regard the term 
\begin{equation*}
     b_k(t)\frac{v_{k,j}}{t}\lelan\partial_j\chi\left(\frac{x}{t}\right)(\Lambda W)_{\underline{\lambda_k,y_k}},\frac{1}{\lambda_k}\partial_jW(\frac{\cdot-\boldsymbol{y}_k}{\lambda_k}\rilan\sim \frac{b_k v_k}{t}
\end{equation*}
as a small perturbation. Furthermore, notice that $\partial_j\Lambda f= \underline{\Lambda}\partial_j f$, the first term on the last line above is bounded by 
\begin{equation*}
    \int_{|x-\boldsymbol{y}_k|\geq t}\left(\frac{\lambda_k}{
    |x-\boldsymbol{y}_k|^3}\right)^2dx\lesssim \frac{\lambda_k^2}{t^2},
\end{equation*}
also  negligible. Consequently, we arrive at 
\begin{equation*}
  \lelan \partial_{tt}u,\frac{1}{\lambda_k}\partial_jW(\frac{\cdot-\boldsymbol{y}_k}{\lambda_k}) \rilan
    \sim  (v_{k,j}'(t)\lambda_k(t))\lelan\partial_j W,\partial_j W \rilan.
\end{equation*}
While on the right-hand side, we have
\begin{equation*}
     \lelan \Delta u+f(u),\frac{1}{\lambda_k}\partial_jW(\frac{\cdot-\boldsymbol{y}_k}{\lambda_k}) \rilan \sim \lelan \sum_{i\neq k}3W_{\lambda_k,y_k}^2W_{\lambda_i,y_i},\frac{1}{\lambda_k}\partial_jW(\frac{\cdot-\boldsymbol{y}_k}{\lambda_k}) \rilan\sim  \lambda_k^2\lambda_i.
\end{equation*}

Therefore, we get the system
\begin{equation}\label{system for y v}
    \begin{cases}
        {y}_{k,j}'(t)=-{v}_{k,j}(t)\\
        {v}_{k,j}'(t)=D_{k,j}(\boldsymbol{\lambda}(t)),
    \end{cases}
\end{equation}
where we denote
\begin{equation*}
    D_{k,j}(\boldsymbol{\lambda})=192 \sum_{i\neq k}\left\{\lambda_i\lambda_k\frac{z_{k,j}-z_{i,j}}{|z_k-z_i|^4}\right\}
\end{equation*}
$|\boldsymbol{v}_k'|\sim \lambda^2$, $|\boldsymbol{v}_k|\sim t^{\frac{1}{3}}\lambda^2$. For notational convenience, we also denote
\begin{equation*}
    \boldsymbol{D}_{k}(\boldsymbol{\lambda})=192 \sum_{i\neq k}\left\{\lambda_i\lambda_k\frac{\Vec{z}_k-\Vec{z}_i}{|z_k-z_i|^4}\right\}.
\end{equation*}

\section{Coercivity estimate}
This section is devoted to establishing coercivity estimates for $\lelan \ML g,g \rilan$ and $\lelan \ML g,\ML g \rilan$. These estimates will serve as the foundation for the first and second order energy estimates developed later. We begin with the estimate for $\lelan  \ML g,g\rilan$.
\subsection{Coercivity estimate for $\ML $}
We shall use the standard spectral coercivity of the linearized operator $\ML$ around \(W\), stated in the following lemma.
\begin{lemma}[\cite{DM}, Proposition 5.5]\label{DM coercivity} The operator $\ML$ on $L^2$ with domain $H^2$ is a self-adjoint operator with essential spectrum $[0,+\infty)$, no positive eigenvalue and only one negative eigenvalue $-\nu^2$. Furthermore, if we denote
\begin{equation*}
G_{\perp}:=\left\{f\in \dot{H}^1,\int_{\mathbb R^4}Yf =\int_{\mathbb R^4}\nabla f\cdot \nabla \Lambda W=\int_{\mathbb R^4}\nabla f\cdot\nabla \partial_{x_1} W=...=\int_{\mathbb R^4}\nabla f\cdot\nabla \partial_{x_4} W=0\right\}.
\end{equation*}
Then there exists $c>0$ such that for any $g\in G_{\perp}$, $ \lelan \ML g, g \rilan\geq c\left\|\nabla g\right\|_{L^2}^2.$
\end{lemma}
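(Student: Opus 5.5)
The statement is Lemma~\ref{DM coercivity}, quoted from \cite{DM}; my plan is to reconstruct its proof from the spectral structure of $\ML=-\Delta-3W^2$ on $\mathbb R^4$.

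\emph{Step 1: spectral facts.} Since $3W^2$ is smooth and decays like $|x|^{-4}$, it is a relatively compact perturbation of $-\Delta$. Weyl's theorem then gives essential spectrum $[0,\infty)$ and self-adjointness on $H^2$. Absence of positive eigenvalues follows from Kato's theorem, because the potential is $o(|x|^{-1})$. For the negative spectrum, I decompose into spherical harmonics. On degree $\ell\ge1$ modes the radial operator dominates the $\ell=1$ operator, and that operator has the positive zero mode $\partial_rW$ (it does not change sign). Sturm oscillation therefore rules out negative eigenvalues for $\ell\ge1$. In the radial sector, $\Lambda W$ solves $\ML\Lambda W=0$ and has exactly one zero, at $r=\sqrt8$. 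Oscillation theory therefore gives exactly one negative eigenvalue $-\nu^2$, with eigenfunction $Y$. The kernel in $\dot H^1$ is spanned by $\Lambda W$ and $\partial_jW$; this is the nondegeneracy of $W$, which I prove by ODE analysis in each harmonic sector using the explicit formula for $W$.

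\emph{Step 2: coercivity on $G_\perp$.} On $\dot H^1$ I write the quadratic form as $Q(g)=\|\nabla g\|^2-\langle 3W^2g,g\rangle$. Set $K g:=(-\Delta)^{-1/2}3W^2(-\Delta)^{-1/2}$, which is compact on $L^2$. Then $Q(g)=\|h\|^2-\langle Kh,h\rangle$ with $h=(-\Delta)^{1/2}g$. Working in the $\dot H^1$ picture avoids the problem at the bottom of the essential spectrum: $I-K$ has only finitely many nonpositive eigenvalues. Step 1 identifies these as one negative direction and the five-dimensional null space $\{\Lambda W,\partial_jW\}$, since $\dot H^1$ null vectors of $Q$ are exactly the kernel of $\ML$. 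Hence $Q\ge c\|\nabla g\|^2$ on the $\dot H^1$-orthogonal complement of the negative direction and of the kernel.

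The orthogonality conditions $\langle\nabla g,\nabla\Lambda W\rangle=\langle\nabla g,\nabla\partial_jW\rangle=0$ are exactly $\dot H^1$-orthogonality to the kernel. For the negative direction, the condition $\int Yg=0$ is not $\dot H^1$-orthogonality, so I use the standard argument, by contradiction or by the min-max quotient. Suppose $g_n\in G_\perp$ with $\|\nabla g_n\|=1$ and $Q(g_n)\to0$ (or $\le0$). Pass to a weak limit $g$. If $g=0$, compactness of $K$ gives $Q(g_n)\to1$, a contradiction. Otherwise the limit satisfies $Q(g)\le 0$ together with the orthogonality conditions. Write $g=aY+\tilde g$ with $\tilde g$ $Q$-orthogonal to $Y$ (possible because $Q(Y,\cdot)=-\nu^2\langle Y,\cdot\rangle$). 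Then $\int Yg=0$ gives $a=0$ after using $\langle Y,\tilde g\rangle=0$. From there, $Q(g)=Q(\tilde g)\ge0$ with equality only on the kernel, and the kernel is excluded by the remaining conditions. This forces $g=0$, a contradiction.

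\emph{Main obstacle.} The delicate point is dimension four, where $\Lambda W\notin L^2$. As a result, $0$ is a threshold resonance/eigenvalue situation, and $\partial_jW\in L^2$ only barely. The $L^2$ spectral picture therefore does not directly give a gap. I handle this by working entirely in $\dot H^1$ through the compact operator $K$: in that setting $\Lambda W\in\dot H^1$ is a genuine null vector, and the $\dot H^1$ orthogonality conditions exactly remove it. The remaining ingredient, nondegeneracy, I take from the explicit ODE analysis in Step 1.
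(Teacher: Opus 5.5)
Your proof is correct. The paper itself does not prove this lemma: it quotes it from \cite{DM}, and the only related argument it writes out is the compactness--contradiction sketch in Proposition~\ref{coercivity in our orthogonal for L}.

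\textbf{How your route relates to the paper.} Your Step~2 follows precisely that compactness--contradiction scheme:
\begin{itemize}
\item take a weak limit in $\dot H^1$;
\item use compactness of $g\mapsto\int W^2g^2$;
\item use nonnegativity of the form $Q(g)=\langle \ML g,g\rangle$ on $\{\int Yg=0\}$;
\item use that the $\dot H^1$ kernel is $\operatorname{span}\{\Lambda W,\nabla W\}$.
\end{itemize}
What you genuinely add is Step~1, where you obtain the Morse index and the nondegeneracy by hand, through spherical harmonics and Sturm oscillation. This makes the argument self-contained. The price is twofold. First, you need the oscillation count at the threshold energy $0$ in the radial sector, where $\Lambda W$ is only a resonance. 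Second, you must transfer the $L^2$ eigenvalue count to $\dot H^1$. Your weak limit lies only in $\dot H^1$, so approximate it in $\dot H^1$ by $H^1$ functions and subtract the appropriate multiple of $Y$; this works because $g\mapsto\int Yg$ is continuous on $\dot H^1$.

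\textbf{A shorter route to the index bound.} The variational argument that is usual in this setting avoids both difficulties. Since $W$ is a Sobolev extremizer \cite{Aubin,Talenti}, the second variation of $\|\nabla f\|_{L^2}^2/\|f\|_{L^4}^2$ at $W$ gives $Q(f)\ge 0$ whenever $\int\nabla f\cdot\nabla W=0$. So $Q$ has index at most one directly in $\dot H^1$. Combined with $Q(Y,f)=-\nu^2\int Yf$, this yields $Q\ge0$ on $\{\int Yf=0\}$: otherwise $Q$ would be negative definite on $\operatorname{span}\{Y,f\}$. Nondegeneracy of the kernel still has to be supplied separately on this route.

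\textbf{One step to write out.} Your phrase ``equality only on the kernel'' deserves a line of proof. Suppose $Q(g)=0$ and $\int Yg=0$. Then $g$ minimizes $Q$ on that hyperplane, so $Q(g,\varphi)=\mu\int Y\varphi$ for all $\varphi\in\dot H^1$. Testing with $\varphi=Y$ and using $Q(g,Y)=-\nu^2\int Yg=0$ gives $\mu=0$. Hence $\ML g=0$ weakly, and your ODE nondegeneracy argument applies.
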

This is the standard coercivity estimate for the linearized operator around the ground state; see \cite{DM}. Using this we derive the following proposition.

\begin{prop}[Coercivity estimate for $\ML$]\label{coercivity in our orthogonal for L} There exists $\eta>0$ such that for any $g \in \dot{H}^1(\mathbb R^4)$ we have
\begin{equation}\label{Lg g coercivity order 1}
\begin{aligned}
       \int_{\mathbb R^4}(|\nabla g|^2-f'(W)g^2)dx\geq& \eta \left\|\nabla g\right\|_{L^2}^2-\{(\nu^2+1)\lelan Y,g \rilan^2\\
       &+\lelan (\Delta \Lambda W), g \rilan^2+|\lelan (\Delta \nabla W),g \rilan|^2\}.
\end{aligned}
\end{equation}
\end{prop}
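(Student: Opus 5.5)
The plan is to reduce to Lemma~\ref{DM coercivity} by subtracting from $g$ its components along $Y$, $\Lambda W$ and $\partial_{x_i}W$, and then to absorb the finitely many coefficients using the freedom to take $\eta$ small. I normalize $\|Y\|_{L^2}=1$. Note that in dimension four $\Lambda W,\partial_{x_i}W\in\dot H^1$, since $|\nabla\Lambda W|\lesssim r^{-3}$. Integrating by parts gives $\int\nabla f\cdot\nabla\Lambda W=-\lelan \Delta\Lambda W,f\rilan$ and $\int\nabla f\cdot\nabla\partial_{x_i}W=-\lelan \Delta\partial_{x_i}W,f\rilan$. So the quantities subtracted on the right-hand side of (\ref{Lg g coercivity order 1}) are exactly the defects of $g$ from the orthogonality conditions defining $G_\perp$.

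\textbf{Step 1 (decomposition).} Write
\[
g=h+aY+b\Lambda W+\sum_{i=1}^4 c_i\partial_{x_i}W,
\]
and choose $(a,b,c)$ so that $h\in G_\perp$. The main point is that the resulting $6\times6$ linear system is invertible. We have $\ML\Lambda W=\ML\partial_{x_i}W=0$ and $\ML Y=-\nu^2Y$, and $Y$ decays exponentially. Self-adjointness then gives $\nu^2\lelan Y,\Lambda W\rilan=-\lelan \ML Y,\Lambda W\rilan=-\lelan Y,\ML\Lambda W\rilan=0$, and in the same way $\lelan Y,\partial_{x_i}W\rilan=0$. Consequently the $Y$-condition reads $a=\lelan Y,g\rilan$. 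The remaining five conditions form a triangular system: $a$ enters as a known term, and the square block is the $\dot H^1$ Gram matrix of $\Lambda W,\partial_{x_1}W,\dots,\partial_{x_4}W$. By parity ($\Lambda W$ is radial, $\partial_{x_i}W$ is odd in $x_i$), this Gram matrix is diagonal with positive entries. Hence
\[
|b|+|c|\lesssim |\lelan Y,g\rilan|+|\lelan \Delta\Lambda W,g\rilan|+|\lelan \Delta\nabla W,g\rilan|=:\mathcal P(g).
\]

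\textbf{Step 2 (expansion of the quadratic form).} Expand $\lelan \ML g,g\rilan$, understood as $\int|\nabla g|^2-f'(W)g^2$, which is well defined on $\dot H^1$ by the Sobolev embedding into $L^4$. The kernel directions drop out of all cross terms because $\ML\Lambda W=\ML\partial_{x_i}W=0$ holds weakly against $\dot H^1$ functions. The cross term $2a\lelan \ML Y,h\rilan=-2a\nu^2\lelan Y,h\rilan$ vanishes since $h\in G_\perp$. This leaves
\[
\lelan \ML g,g\rilan=\lelan \ML h,h\rilan-\nu^2a^2\ge c\|\nabla h\|_{L^2}^2-\nu^2\lelan Y,g\rilan^2,
\]
where the inequality uses Lemma~\ref{DM coercivity}.

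\textbf{Step 3 (absorbing the constants).} By the triangle inequality,
\[
\|\nabla g\|_{L^2}^2\le 2\|\nabla h\|_{L^2}^2+C_0\,\mathcal P(g)^2.
\]
Choose $\eta\le c/2$ and also $\eta\le 1/(3C_0)$. Then $\eta\|\nabla g\|^2\le c\|\nabla h\|^2+\tfrac13\mathcal P(g)^2$, and $\tfrac13\mathcal P(g)^2\le \lelan Y,g\rilan^2+\lelan \Delta\Lambda W,g\rilan^2+|\lelan \Delta\nabla W,g\rilan|^2$ by Cauchy--Schwarz. Combining this with Step 2 gives exactly (\ref{Lg g coercivity order 1}), including the constant $\nu^2+1$ in front of $\lelan Y,g\rilan^2$.

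The only delicate point is Step 1, specifically the justification that the cross terms vanish when $\Lambda W\notin L^2$. I would handle it by working throughout with the $\dot H^1$ bilinear form $\int\nabla u\cdot\nabla v-3W^2uv$ instead of $L^2$ pairings. For this form, $\ML\Lambda W=0$ in the weak sense is immediate, because $W^2\Lambda W\in L^{4/3}$ is dual to $\dot H^1\subset L^4$.
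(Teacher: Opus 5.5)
Your proof is correct, but it takes a different route from the paper.

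The paper argues by compactness. It notes that $\ML+\nu^2\lelan Y,\cdot\rilan Y$ is nonnegative, and checks that the functionals $\lelan Y,\cdot\rilan$, $\lelan\Delta\Lambda W,\cdot\rilan$, $\lelan\Delta\nabla W,\cdot\rilan$ are non-degenerate on $\operatorname{span}\{Y,\Lambda W,\nabla W\}$, since by parity the $6\times6$ matrix is triangular. It then invokes the usual contradiction argument with a normalized, weakly convergent sequence.

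You instead invert that same triangular matrix explicitly to project $g$ onto $G_\perp$. The exact identity $\lelan\ML g,g\rilan=\lelan\ML h,h\rilan-\nu^2\lelan Y,g\rilan^2$ then reduces everything to Lemma~\ref{DM coercivity}. That identity rests on $\lelan Y,\Lambda W\rilan=\lelan Y,\nabla W\rilan=0$ and on the weak kernel relations for the $\dot H^1$ bilinear form.

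\textbf{What your route buys.} You get a quantitative, compactness-free proof with $\eta=\min\{c/2,1/(3C_0)\}$. The origin of the constant $\nu^2+1$ also becomes transparent: $\nu^2$ comes from the exact negative direction, and $1$ from absorbing $\mathcal P(g)^2$. The $\nu^2$ part depends on the normalization $\|Y\|_{L^2}=1$, which the paper uses only implicitly. Your identity further shows that $\ML+\nu^2\lelan Y,\cdot\rilan Y$ degenerates on all of $\operatorname{span}\{Y,\Lambda W,\nabla W\}$. The paper's sketch states its kernel as $\operatorname{span}\{\Lambda W,\nabla W\}$ only, although its $6\times6$ check is correctly done over the larger span.

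\textbf{What the paper's route buys.} The compactness scheme does not care which non-degenerate family of functionals is used. That is why the paper can reuse it for Remark~\ref{eta irrelevant to choice of M} and Corollary~\ref{L dot g dot g coercivity}. Your argument exploits the fact that the functionals in this statement are exactly those defining $G_\perp$. For those variants you would additionally need to bound the new functionals evaluated on $h$ by a multiple of $\|\nabla h\|_{L^2}$, and absorb that into $\eta$.
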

\begin{proof}

 This follows from the standard coercivity of \(\ML\). By the standard spectral coercivity of \(\ML\), after adding the negative direction \(Y\), the quadratic form associated with 
\[
\widetilde \ML=\ML+\nu^2\langle Y,\cdot\rangle Y
\]
is nonnegative and its kernel is 
\(\operatorname{span}\{\Lambda W,\partial_{x_1}W,\ldots,\partial_{x_4}W\}\).
 
 Indeed, it suffices to check that the functionals
\[
g\mapsto \langle Y,g\rangle,\quad
g\mapsto \langle\Delta\Lambda W,g\rangle,\quad
g\mapsto \langle\Delta\nabla W,g\rangle
\]
are non-degenerate on the space
\(\operatorname{span}\{Y,\Lambda W,\nabla W\}\).
By symmetry the corresponding \(6\times6\) matrix is triangular, and its diagonal entries are nonzero. Hence the usual compactness-contradiction argument gives the claim.   
\end{proof}

\begin{Rmk}\label{eta irrelevant to choice of M}
We note that $\Delta \Lambda W\notin \dot{H}^{-2}(\mathbb R^4)$. To preserve the same structure in the second order coercivity estimate which will be introduced later, we need to rewrite (\ref{Lg g coercivity order 1}). Since $\Delta \Lambda W\in \dot{H}^{-1}(\mathbb R^4)$, there exists $M_0>0$, such that for any $M\geq M_0$ and $g\in\dot{H}^1(\mathbb R^4)$, it holds
\begin{equation*}\label{Lg g coercivity order 1 neo}
\begin{aligned}
       \int_{\mathbb R^4}(|\nabla g|^2-f'(W)g^2)dx\geq& \frac{\eta}{2} \left\|\nabla g\right\|_{L^2}^2-((\nu^2+1)\lelan Y,g \rilan^2\\
       &+\lelan (\Delta \Lambda W)\chi_M, g \rilan^2+|\lelan (\Delta \nabla W),g \rilan|^2).
\end{aligned}
\end{equation*} 
\end{Rmk}
The orthogonal conditions on the right-hand-side of (\ref{Lg g coercivity order 1}) are closely related to the first order energy estimate. 
We now derive an analogous estimate in a different functional framework in order to carry out the second-order energy analysis. 
\begin{coro}\label{L dot g dot g coercivity}
There exists $\eta>0$  and $M\geq 1$, such that for any $g \in \dot{H}^1(\mathbb R^4)\cap L^2(\mathbb R^4)$, we have
\begin{equation*}\label{coro of corcervity}
\begin{aligned}
  \int_{\mathbb R^4}(|\nabla g|^2-f'(W)g^2)dx\geq& \eta \left\|\nabla g\right\|_{L^2}^2-((\nu^2+1)\lelan Y,g \rilan^2\\
     &+\lelan (\Lambda W)\chi_{M}, g \rilan^2+|\lelan (\nabla W)\chi_{M},g \rilan|^2 ).   
\end{aligned}
\end{equation*}  
\end{coro}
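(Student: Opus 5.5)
We argue exactly as in Proposition~\ref{coercivity in our orthogonal for L}, replacing the functionals $\langle\Delta\Lambda W,\cdot\rangle$ and $\langle\Delta\nabla W,\cdot\rangle$ by the truncated $L^2$ functionals $\langle(\Lambda W)\chi_M,\cdot\rangle$ and $\langle(\nabla W)\chi_M,\cdot\rangle$. The point is that for $g\in\dot H^1\cap L^2$ these new functionals are well defined, since $(\Lambda W)\chi_M$ and $(\nabla W)\chi_M$ are compactly supported. The only issue is that they are not bounded on $\dot H^1$ uniformly in $M$. Indeed, $\|(\Lambda W)\chi_M\|_{\dot H^{-1}}\lesssim \|(\Lambda W)\chi_M\|_{L^{4/3}}\sim M$ grows, because $|\Lambda W|^{4/3}\sim r^{-8/3}$ and $\int^M r^{-8/3}r^3\,dr\sim M^{4/3}$. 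We therefore fix one large $M$ and run a compactness--contradiction argument at that fixed $M$, rather than a perturbative argument uniform in $M$.

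\textbf{Step 1: the $6\times 6$ matrix.} Take the basis $Y,\Lambda W,\partial_1W,\dots,\partial_4W$ of the generalized kernel/negative space of $\widetilde\ML$. Pair it against the functionals $\langle Y,\cdot\rangle$, $\langle(\Lambda W)\chi_M,\cdot\rangle$, $\langle(\partial_iW)\chi_M,\cdot\rangle$. By parity ($Y$, $\Lambda W$ radial; $\partial_iW$ odd in $x_i$) all cross terms between radial and odd elements vanish, and $\langle \partial_iW\chi_M,\partial_jW\rangle=0$ for $i\ne j$. The remaining $2\times2$ radial block has entries $\langle Y,Y\rangle$, $\langle Y,\Lambda W\rangle=0$ (since $\ML Y=-\nu^2Y$ and $\ML\Lambda W=0$), $\langle \Lambda W\chi_M,Y\rangle$ and $\langle\Lambda W\chi_M,\Lambda W\rangle\sim 128\pi^2\log M>0$. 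So this block is triangular with nonzero diagonal, and $\langle\partial_iW\chi_M,\partial_iW\rangle>0$. The matrix is therefore invertible for every $M\ge1$.

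\textbf{Step 2: contradiction argument.} Suppose the inequality fails for every $\eta>0$ at this fixed $M$. Then there is $g_n\in\dot H^1\cap L^2$ with $\|\nabla g_n\|_{L^2}=1$, $\langle\ML g_n,g_n\rangle\to 0^{\le}$ (more precisely $\limsup\le0$), and the three families of functionals of $g_n$ tending to $0$. Extract $g_n\rightharpoonup g$ weakly in $\dot H^1$. Since $W^2\in L^2$ decays, $\int f'(W)g_n^2\to\int f'(W)g^2$ by local compactness plus the tail bound $\|W^2\|_{L^2(|x|>R)}\|g_n\|_{L^4}^2$.

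The truncated functionals also pass to the limit. The function $(\Lambda W)\chi_M$ is compactly supported and in $L^{4/3}\subset \dot H^{-1}$, so it is a bounded functional on $\dot H^1$ and is continuous under weak convergence. Hence $\langle (\Lambda W)\chi_M,g\rangle=0$, and similarly for the other two families. Note that we do not need $g\in L^2$: the identity holds on all of $\dot H^1$ for this fixed $M$. This is the reason the proof should be written with $M$ fixed first.

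\textbf{Step 3: excluding $g=0$ and $g\ne0$.} If $g=0$, then $\int f'(W)g_n^2\to0$, so $\langle\ML g_n,g_n\rangle\to1$, which is a contradiction. If $g\neq0$, weak lower semicontinuity gives $\langle\ML g,g\rangle\le 0$ with $\langle Y,g\rangle=0$, so $\langle\widetilde\ML g,g\rangle\le0$. Lemma~\ref{DM coercivity} (nonnegativity of $\widetilde\ML$ with kernel $\mathrm{span}\{\Lambda W,\nabla W\}$) then forces $g\in\mathrm{span}\{\Lambda W,\partial_iW\}$. Step~1 together with the vanishing of the functionals gives $g=0$, again a contradiction.

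The main obstacle is the careful statement of the kernel characterization. Minimizers of $\widetilde\ML$ on $\dot H^1$ at level $0$ lie in the kernel, which is exactly the content behind Proposition~\ref{coercivity in our orthogonal for L}, so it can be imported from there. Alternatively one can deduce the corollary directly from Proposition~\ref{coercivity in our orthogonal for L} by writing $g=g^\perp+aY+b\Lambda W+c\cdot\nabla W$, where $g^\perp$ satisfies the orthogonality conditions of $\mathcal{G}_{\perp}$, and using Step~1 to control $a,b,c$. I would present the contradiction argument, noting that $\eta$ may depend on $M$, which the statement allows since $M$ is merely asserted to exist.
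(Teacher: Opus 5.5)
Your proposal is correct and follows essentially the paper's route. The paper also checks that $\langle Y,\cdot\rangle$ and the truncated functionals are non-degenerate on $\operatorname{span}\{Y,\Lambda W,\nabla W\}$: by parity the matrix is triangular, with only the harmless entry $\langle(\Lambda W)\chi_M,Y\rangle$ off the diagonal. It then runs the same compactness--contradiction argument as in Proposition~\ref{coercivity in our orthogonal for L}. Your additional observations make explicit what the paper's one-line proof leaves implicit, and they are accurate: the matrix is non-singular for every $M\ge 1$, the functionals are weakly continuous on $\dot H^1$ once $M$ is fixed, and $\eta$ must be allowed to depend on $M$.
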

\begin{proof}
The proof is identical to that of Proposition \ref{coercivity in our orthogonal for L}. The only point is that, for \(M\) sufficiently large,
\[
\langle(\Lambda W)\chi_M,\Lambda W\rangle\neq0,\qquad
\langle(\partial_jW)\chi_M,\partial_jW\rangle\neq0.
\]
By symmetry the corresponding matrix is triangular up to the harmless entry
\(\langle(\Lambda W)\chi_M,Y\rangle\), hence non-singular.
\end{proof}
 Next, we turn to the multiple potential case. For $\lambda ,\mu>0$ and $x,y\in \mathbb R^4$, we denote
\begin{equation*}
    \delta ((\lambda,x),(\mu,y)):=|\log(\frac{\lambda}{\mu})|+\frac{|x-y|}{\lambda}.
\end{equation*}
We say that two sequences $(\lambda_n,x_n)$ and $(\mu_n,y_n)$ are orthogonal if
\begin{equation*}
    \lim_{n\to \infty}\delta ((\lambda_n,x_n),(\mu_n,y_n))=\infty.
\end{equation*}
Let $K\geq 1$; in what follows $\sum_k$ denotes $\sum_{k=1}^K$. For $(\lambda^{(k)},x^{(k)})\in (0,\infty)\times \mathbb R^4$, we use the notation
\begin{equation*}
    W^{(k)}(x):=(\lambda^{(k)})^{-1}W((x-x^{(k)})/\lambda^{(k)})
\end{equation*}
and similarly for other functions. Especially, for $M>0$, we denote
\begin{equation*}
   \chi_M^{(k)}:=\chi((x-x_k)/\lambda^{(k)}M).
\end{equation*}
With these notations, we have the following coercivity lemma near multi-bubble.
\begin{lemma}\label{near multi bubble coercivity}
    There exist $\eta>0$ and $M\geq 1$ such that the following holds. Let $(\lambda^{(k)},x^{(k)})\in (0,\infty)\times \mathbb R^4$ for $k=1,...,K$ satisfy 
    \begin{equation*}
        \left\|U-\sum_k W^{(k)}\right\|_{\dot{H}^1}\leq \eta.
    \end{equation*}
   Then for any $g\in \dot{H}^1(\mathbb R^4)\cap \dot{H}^2(\mathbb R^4)$ and $h\in \dot{H}^1(\mathbb R^4)\cap L^2(\mathbb R^4)$
\begin{equation*}
\begin{aligned}
    \int_{\mathbb R^4}(|\nabla g|^2-f'(U)g^2)dx\geq& \eta \left\|\nabla g\right\|_{L^2}^2 -\sum_k\Big\{(\nu^2+1)\lelan  (\lambda^{(k)})^{-2}Y^{(k)},g \rilan^2 \\
    &+\lelan (\lambda^{(k)})^{-2}(\Delta \Lambda W)^{(k)}\chi_M^{(k)}, g \rilan^2+|\lelan (\lambda^{(k)})^{-2}(\Delta \nabla W)^{(k)},g \rilan|^2\Big\},
\end{aligned}
\end{equation*}
 and 
 \begin{equation*}
 \begin{aligned}
 \int_{\mathbb R^4}(|\nabla h|^2-f'(U)h^2)dx\geq& \eta \left\|\nabla h\right\|_{L^2}^2 -\sum_k\Big\{(\nu^2+1)\lelan  (\lambda^{(k)})^{-2}Y^{(k)},h \rilan^2 
    \\&+\lelan (\lambda^{(k)})^{-2}( \Lambda W)^{(k)}\chi_M^{(k)}, h \rilan^2+|\lelan (\lambda^{(k)})^{-2}( \nabla W)^{(k)}\chi_M^{(k)},h \rilan|^2\Big\}.
\end{aligned}
\end{equation*}
\end{lemma}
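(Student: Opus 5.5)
We argue by contradiction and compactness, in the spirit of the standard localization argument for multi-bubble coercivity. Two reductions come first. The statement must implicitly require the bubbles to be pairwise orthogonal, i.e. $\delta((\lambda^{(j)},x^{(j)}),(\lambda^{(k)},x^{(k)}))\ge \eta^{-1}$ for $j\ne k$; this holds in our application, where the centers are near distinct $z_k$ and the scales are small. Next, since $|f'(U)-f'(\sum_k W^{(k)})|\lesssim |U-\sum W^{(k)}|(|U|+\sum W^{(k)})$, Hölder and Sobolev give
\[
\Big|\int (f'(U)-f'(\textstyle\sum_k W^{(k)}))g^2\Big|\lesssim \eta\|\nabla g\|_{L^2}^2 .
\]
So it suffices to prove both inequalities with $U$ replaced by $\sum_k W^{(k)}$ and a fixed constant $2\eta$, and then take $\eta$ smaller.

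We first fix $M$ large so that Remark~\ref{eta irrelevant to choice of M} (first inequality) and Corollary~\ref{L dot g dot g coercivity} (second inequality) hold for the single bubble with constant $\eta_0$. Suppose the claim fails. Then there are configurations with $\delta\to\infty$ and functions $g_n$ with $\|\nabla g_n\|_{L^2}=1$ such that the quadratic form is at most $1/n$ plus the projection terms, and the projection terms tend to zero. We localize with a smooth partition of unity $\psi^{(k)}_n=\psi((x-x^{(k)})/(\lambda^{(k)}R_n))$ adapted to each bubble, with $R_n\to\infty$ slowly so that the supports are asymptotically disjoint. When two bubbles are concentric with separated scales, we use an annular decomposition in logarithmic scale instead. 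The error terms are of two kinds:
\begin{itemize}
\item gradient commutators $\int |\nabla\psi|^2 g^2$;
\item cross potentials $\int W^{(j)}W^{(k)}g^2$.
\end{itemize}
Both are made small by Hölder, Sobolev and the orthogonality of the bubbles. For the commutators, we average over dyadic choices of the cutoff radius among $\log R_n$ possibilities, using Hardy on annuli. This gives
\[
\int|\nabla g|^2-f'(\textstyle\sum W^{(k)})g^2\ \ge\ \sum_k\Big(\int|\nabla(\psi^{(k)}g)|^2-f'(W^{(k)})(\psi^{(k)}g)^2\Big)+\int|\nabla(\psi^{0}g)|^2-o(1).
\]
Rescaling each $\psi^{(k)}g$ to unit scale, we apply the single-bubble estimate. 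The projections of $\psi^{(k)}g$ differ from those of $g$ by $o(1)$, because the test functions $Y$, $(\Delta\Lambda W)\chi_M$, $\Delta\nabla W$ (resp.\ $\Lambda W\chi_M$, $\nabla W\chi_M$) are compactly supported or rapidly decaying, hence insensitive to the cutoff at radius $R_n\to\infty$. Summing and using $\sum\|\nabla(\psi^{(k)} g)\|^2+\|\nabla(\psi^0 g)\|^2\ge \|\nabla g\|^2-o(1)$ yields a contradiction.

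The main obstacle is the second inequality, where the test functions $\Lambda W\chi_M$ and $\nabla W\chi_M$ pair with $h$ in $L^2$ rather than through $\dot H^{-1}$. The $L^2$ pairing is controlled by $\|h\|_{L^4}$ only when $\Lambda W\chi_M\in L^{4/3}$, which holds since $\chi_M$ is compactly supported; so the truncation at fixed $M$ is exactly what makes the localization errors $o(1)$. A second delicate point, for $h\in L^2$, is the logarithmic Hardy loss in dimension four. I would avoid it by the averaging over dyadic cutoff scales described above, rather than by a pointwise Hardy bound.
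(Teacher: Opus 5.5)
Your argument is correct and follows the route the paper intends. The paper gives no details and defers to the standard contradiction/profile-decomposition argument of Jendrej--Martel, which amounts to the localization you describe: an IMS-type partition of unity with the cutoff radius chosen among logarithmically many dyadic scales, decoupling to kill the cross potentials, and the one-bubble estimate applied to each localized piece. You are also right that the statement tacitly needs pairwise orthogonality $\delta\ge\eta^{-1}$ of the bubbles; without it the inequality fails, for instance when two bubbles coincide.

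Two small corrections. First, you neither need nor can assert a priori that the projections of $g_n$ tend to zero; they cancel exactly, because the one-bubble inequalities carry the same coefficients $(\nu^2+1),1,1$ as the multi-bubble statement. Second, the difficulties you flag at the end do not arise: the first-order Hardy inequality has no logarithmic loss in $\mathbb R^4$, and $\Lambda W\chi_M,\nabla W\chi_M\in L^{4/3}\subset\dot H^{-1}$.
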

\begin{proof}
The proof follows from the standard profile decomposition argument and the one-bubble coercivity estimates above, exactly as in the multi-bubble coercivity lemma in \cite{JM}
We omit the details.
\end{proof}
Finally, as a complement, we also introduce the truncated coercivity lemma for $\lelan \ML g,g\rilan$, which is stated as follows.
\begin{lemma}\label{Lemma 9 of 5D multi}
  For any $\eta>0$ there exists $R=R(\eta)>0$ such that for all $g\in \dot{H}^1(\mathbb R^4)$,
\begin{equation*}
    \int_{|x|\leq R}|\nabla g|^2dx-\int_{\mathbb R^4}f'(W)g^2dx\geq -\eta \left\|\nabla g\right\|_{L^2}^2-\nu^2\lelan Y,g \rilan^2.
\end{equation*}
\end{lemma}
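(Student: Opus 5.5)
We argue by contradiction and compactness, as in the analogous truncated coercivity lemma of \cite{JM}. Suppose the statement fails for some $\eta>0$. Then there are radii $R_n\to\infty$ and functions $g_n\in\dot H^1(\mathbb R^4)$, normalized by $\|\nabla g_n\|_{L^2}=1$, such that
\[
\int_{|x|\le R_n}|\nabla g_n|^2\,dx-\int_{\mathbb R^4}f'(W)g_n^2\,dx<-\eta-\nu^2\langle Y,g_n\rangle^2 .
\]
After passing to a subsequence, $g_n\rightharpoonup g$ weakly in $\dot H^1$.

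\textbf{Step 1: pass to the limit in each term.}
\begin{itemize}
\item Since $f'(W)=3W^2\lesssim\langle x\rangle^{-4}$, we have $f'(W)\in L^2(\mathbb R^4)$ with decay at infinity. By Rellich on balls, together with Sobolev control of the tail (smallness of $\|W^2\|_{L^2(|x|>A)}$ and $\|g_n\|_{L^4}\lesssim1$), the map $g\mapsto\int f'(W)g^2$ is weakly continuous on $\dot H^1$. Hence $\int f'(W)g_n^2\to\int f'(W)g^2$.
\item Since $Y$ decays exponentially, $Y\in\dot H^{-1}$, so $\langle Y,g_n\rangle\to\langle Y,g\rangle$.
\item For the truncated gradient term, fix any $A>0$. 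For $n$ large, $R_n\ge A$, so
\[
\liminf_n\int_{|x|\le R_n}|\nabla g_n|^2\ge\liminf_n\int_{|x|\le A}|\nabla g_n|^2\ge\int_{|x|\le A}|\nabla g|^2
\]
by weak lower semicontinuity on the ball. Letting $A\to\infty$ gives $\liminf_n\int_{|x|\le R_n}|\nabla g_n|^2\ge\|\nabla g\|_{L^2}^2$.
\end{itemize}
Combining these three facts, we obtain
\[
\langle\ML g,g\rangle+\nu^2\langle Y,g\rangle^2\le-\eta<0 .
\]

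\textbf{Step 2: contradiction with spectral nonnegativity.} By Lemma \ref{DM coercivity}, the form $\langle\ML g,g\rangle+\nu^2\langle Y,g\rangle^2$ is nonnegative on $\dot H^1$, with $Y$ normalized in $L^2$. Indeed, decompose $g=aY+g^\perp$ with $g^\perp\perp Y$ in $L^2$. Then $\langle\ML g,g\rangle=-\nu^2a^2+\langle\ML g^\perp,g^\perp\rangle$, since $\ML Y=-\nu^2Y$ makes the cross terms vanish. The nonnegativity of $\ML$ on $Y^\perp$ holds because $-\nu^2$ is the only negative spectrum. This is exactly the fact used in the proof of Proposition \ref{coercivity in our orthogonal for L}. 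The two statements contradict each other, which proves the lemma.

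\textbf{Main obstacle: justifying the decomposition in $\dot H^1$.} The step needing care is that $g$ lies only in $\dot H^1$, not in $L^2$, so the spectral decomposition must be justified there. Two facts make this work. First, $Y\in L^{4/3}$, so $\langle Y,g\rangle$ makes sense. Second, the quadratic form $\langle\ML g,g\rangle$ extends continuously to $\dot H^1$. We therefore prove nonnegativity for $g\in H^1$ and conclude by density of $H^1$ in $\dot H^1$, using continuity of both terms in the $\dot H^1$ norm (the $f'(W)$ term via Hölder and Sobolev). The weak continuity of the potential term in Step 1 is routine given the $\langle x\rangle^{-4}$ decay.
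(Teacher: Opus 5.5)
Your proposal is correct and is essentially the standard compactness--contradiction argument that the paper defers to by citing \cite{JM}: take a weak limit in $\dot H^1$, use weak continuity of $g\mapsto\int f'(W)g^2$ and of $\lelan Y,g\rilan$, use lower semicontinuity of the truncated Dirichlet integral, and contradict $\lelan \ML g,g\rilan\ge-\nu^2\lelan Y,g\rilan^2$. You are also right that the constant $\nu^2$ needs $\|Y\|_{L^2}=1$, which is exactly the normalization forced by the paper's duality relation $\lelan \boldsymbol{y}_k^{\pm},\Vec{Z}_k^{\pm}\rilan=1$.
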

Lemma \ref{Lemma 9 of 5D multi} can be proved as in the corresponding truncated coercivity lemma in \cite{JM}. We will use it to handle the correction term in first order energy estimates.

\subsection{Coercivity estimate for $\ML^2$ }

In our proof, the second order energy will play a necessary role. Therefore, we also need to establish the coercivity estimate for $\int_{\mathbb R^4}(\ML_k g)^2dx$. As the beginning, we observe a rough computation yields the following lemma.
\begin{lemma}\label{coercivity proposition for second order operator} For any $g\in \dot{H}^1\cap \dot{H}^2$ and $\lambda_k>0$, we have
    \begin{equation*}\label{L g^2 coercivity}
    \left\|g\right\|_{\dot{H}^2}^2+ \int_{\mathbb R^4}W^4g^2dx+\int_{\mathbb R^4}|\nabla W|^2g^2dx\lesssim  \left\|\ML g\right\|_{L^2}^2+\left\|g\right\|_{\dot{H}^1}^2,
    \end{equation*}
    and
\begin{equation*}\label{Lk g^2 coercivity}
  \left\|g\right\|_{\dot{H}^2}^2+ \int_{\mathbb R^4}W_k^4g^2dx+\int_{\mathbb R^4}|\nabla (W_k)|^2g^2dx\lesssim  \left\|\ML_k g\right\|_{L^2}^2+\frac{1}{\lambda_k^2}\left\|g\right\|_{\dot{H}^1}^2.  
\end{equation*}    
\end{lemma}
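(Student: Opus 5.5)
The proof is a direct computation from the identity $-\Delta g=\ML g+3W^2g$, so that
\[
\|g\|_{\dot H^2}=\|\Delta g\|_{L^2}\le\|\ML g\|_{L^2}+3\|W^2g\|_{L^2}.
\]
It therefore suffices to bound the two weighted quantities $\int W^4g^2$ and $\int|\nabla W|^2g^2$ by $\|g\|_{\dot H^1}^2$ alone. The $\dot H^2$ bound then follows by squaring, with $\|W^2g\|_{L^2}^2=\int W^4g^2$. No orthogonality or spectral information enters; this is why the statement carries the lower-order term $\|g\|_{\dot H^1}^2$ on the right-hand side.

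For the weighted terms, the plan is to combine the explicit decay of $W$ with Hardy's inequality in $\mathbb R^4$. Since $W(x)=(1+|x|^2/8)^{-1}$, we have $W^4\lesssim(1+|x|)^{-8}\le |x|^{-2}$. Similarly $|\nabla W|\lesssim |x|(1+|x|)^{-4}$, hence $|\nabla W|^2\lesssim (1+|x|)^{-6}\le|x|^{-2}$. The four-dimensional Hardy inequality $\int|x|^{-2}g^2\le \frac{1}{(d-2)^2/4}\int|\nabla g|^2=\int|\nabla g|^2$ then gives
\[
\int W^4g^2+\int|\nabla W|^2g^2\lesssim\|g\|_{\dot H^1}^2 .
\]
If one prefers not to use Hardy, Sobolev embedding $\dot H^1\hookrightarrow L^4$ together with Hölder also works, using $W^4\in L^2$ and $|\nabla W|^2\in L^2$. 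Combining this with the first display proves the first estimate.

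For the rescaled estimate, apply the first one to $\tilde g(x)=g(\lambda_k x+\boldsymbol y_k)$ and track the scaling. The relevant identities are:
\[
(\ML_k g)(\lambda_k x+\boldsymbol y_k)=\lambda_k^{-2}(\ML\tilde g)(x),\qquad
\|\ML_k g\|_{L^2}^2=\lambda_k^{-4}\|\ML\tilde g\|_{L^2}^2,
\]
\[
\|g\|_{\dot H^2}^2=\lambda_k^{-4}\|\tilde g\|_{\dot H^2}^2,\qquad
\|g\|_{\dot H^1}^2=\lambda_k^{-2}\|\tilde g\|_{\dot H^1}^2,
\]
\[
\int W_k^4g^2=\lambda_k^{-4}\int W^4\tilde g^2,\qquad
\int|\nabla W_k|^2g^2=\lambda_k^{-4}\int|\nabla W|^2\tilde g^2 .
\]
Each of these is checked by the change of variables $dx=\lambda_k^4\,dy$. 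Multiplying the unscaled estimate for $\tilde g$ by $\lambda_k^{-4}$ turns its last term into $\lambda_k^{-4}\|\tilde g\|_{\dot H^1}^2=\lambda_k^{-2}\|g\|_{\dot H^1}^2$, which is exactly the stated right-hand side.

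There is no real obstacle here. The only point requiring care is the bookkeeping of powers of $\lambda_k$, in particular confirming that $\int|\nabla W_k|^2g^2$ scales like $\lambda_k^{-4}$, consistent with $\dot H^2$. Alternatively, one can skip the rescaling and repeat the direct argument with $W_k$ in place of $W$. Then Hardy centered at $\boldsymbol y_k$ gives
\[
\int W_k^4g^2\lesssim\lambda_k^{-2}\int|x-\boldsymbol y_k|^{-2}g^2\lesssim\lambda_k^{-2}\|g\|_{\dot H^1}^2,
\]
using $W_k^4\lesssim\lambda_k^{-2}|x-\boldsymbol y_k|^{-2}$, and the gradient term is handled the same way.
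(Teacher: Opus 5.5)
Your proof is correct, but it follows a different route from the paper. The paper expands $\|\ML g\|_{L^2}^2=\int(\Delta g)^2+6\int W^2g\,\Delta g+9\int W^4g^2$ and integrates by parts twice. This gives the exact identity (\ref{subcoercivity for L g^2}), in which $\int W^4g^2$ and $\int|\nabla W|^2g^2$ enter with positive coefficients and the only negative term is $-6\int W^2|\nabla g|^2$. That term is then bounded by $C\|g\|_{\dot H^1}^2$ using $W\le 1$, and the rescaled version is read off by scaling.

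You instead use the triangle inequality $\|\Delta g\|_{L^2}\le\|\ML g\|_{L^2}+3\|W^2g\|_{L^2}$. You then observe that both weighted terms are already controlled by the $\dot H^1$ term through Hardy, since $W^4+|\nabla W|^2\lesssim|x|^{-2}$. Your argument is more elementary and needs no sign bookkeeping. The paper's computation in fact has a harmless slip here: since $\Delta W=-W^3$, one gets $6\int W\Delta W\,g^2=-6\int W^4g^2$, so the net coefficient of $\int W^4g^2$ is $3$, which is still positive.

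What the paper's identity buys is structure reused in Proposition~\ref{Coercivity estimate with LG LG}. It shows that $\|\ML g\|_{L^2}^2$ controls both $\|\Delta g\|_{L^2}^2$ and the weighted quantity $\int g^2(1+|x|)^{-6}$. The only error is the localized, locally compact term $\int W^2|\nabla g|^2$. This is exactly the sub-coercivity (\ref{sub coercivity proper}) that drives the compactness argument there. Your intermediate bound $\|\Delta g\|_{L^2}^2\lesssim\|\ML g\|_{L^2}^2+\int W^4g^2$ also has a localized error, but it gives no weighted control of $g$ on the left.

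There is one bookkeeping slip in your rescaling. With $\tilde g(x)=g(\lambda_kx+\boldsymbol{y}_k)$, your pointwise identity $(\ML_kg)(\lambda_kx+\boldsymbol{y}_k)=\lambda_k^{-2}(\ML\tilde g)(x)$ is right. However, the squared-norm identities omit the Jacobian $\lambda_k^4$. In fact $\|\ML_kg\|_{L^2}=\|\ML\tilde g\|_{L^2}$, $\|g\|_{\dot H^2}=\|\tilde g\|_{\dot H^2}$, $\|g\|_{\dot H^1}^2=\lambda_k^2\|\tilde g\|_{\dot H^1}^2$ and $\int W_k^4g^2=\int W^4\tilde g^2$. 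The formulas you wrote are the ones for $\lambda_k^2g(\lambda_k\cdot+\boldsymbol{y}_k)$. Every identity is off by the same factor $\lambda_k^4$ and the estimate is quadratic in $g$, so the conclusion stands. Your direct Hardy argument centered at $\boldsymbol{y}_k$ avoids the issue altogether.
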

\begin{proof}
A direct computation yields that
   \begin{equation*}
   \begin{aligned}
       &\int_{\mathbb R^4}(-\Delta g-f'(W)g)^2dx\\
       =&\int_{\mathbb R^4}(\Delta g)^2dx+6\int_{\mathbb R^4}W^2g\Delta gdx + \int_{\mathbb R^4}9W^4g^2dx\\
       =& \int_{\mathbb R^4}(\Delta g)^2dx+ \int_{\mathbb R^4}9W^4g^2dx-6\int_{\mathbb R^4}W^2|\nabla g|^2dx-12\int_{\mathbb R^4}gW(\nabla g\cdot \nabla W) dx
   \end{aligned}
    \end{equation*}
For the last term, integration by parts yields 
\begin{equation*}\begin{aligned}
     -12\int_{\mathbb R^4}gW(\nabla g\cdot \nabla W) dx =&6 \int_{\mathbb R^4}|\nabla W|^2g^2dx+6 \int_{\mathbb R^4}W(\Delta W)g^2\\=&6 \int_{\mathbb R^4}|\nabla W|^2g^2dx+6 \int_{\mathbb R^4}W^4g^2.
\end{aligned}
\end{equation*}
Plugging this back we have 
\begin{equation}\label{subcoercivity for L g^2}
\begin{aligned}
   &\int_{\mathbb R^4}(-\Delta g-f'(W)g)^2dx\\=& \int_{\mathbb R^4}(\Delta g)^2dx+ 9\int_{\mathbb R^4}W^4g^2dx
   +6 \int_{\mathbb R^4}|\nabla W|^2g^2dx+6 \int_{\mathbb R^4}W^4g^2-6\int_{\mathbb R^4}W^2|\nabla g|^2dx
\end{aligned}
\end{equation}
Using the pointwise decay of $W$, this term is bounded below by 
\begin{equation*}
   \int_{\mathbb R^4}(\Delta g)^2dx+ \int_{\mathbb R^4}W^4g^2dx+\int_{\mathbb R^4}|\nabla W|^2g^2dx-C\int_{\mathbb R^4}|\nabla g|^2dx, 
\end{equation*}
and as a result it holds
\begin{equation*}
 \int_{\mathbb R^4}(\Delta g)^2dx+ \int_{\mathbb R^4}W^4g^2dx+\int_{\mathbb R^4}|\nabla W|^2g^2dx\lesssim  \left\|\ML g\right\|_{L^2}^2+\left\|g\right\|_{\dot{H}^1}^2.
\end{equation*}
For any $\lambda_k>0$, the following inequality directly yields
\begin{equation}\label{Lg norm and dot 1 norm together}
 \int_{\mathbb R^4}(\Delta g)^2dx+ \int_{\mathbb R^4}W_k^4g^2dx+\int_{\mathbb R^4}|\nabla (W_k)|^2g^2dx\lesssim  \left\|\ML_k g\right\|_{L^2}^2+\frac{1}{\lambda_k^2}\left\|g\right\|_{\dot{H}^1}^2.
\end{equation}
Moreover, using the Riesz transform,  the left hand side of (\ref{Lg norm and dot 1 norm together}) can be refined as 
\begin{equation*}\label{Lg norm and dot 1 norm together 2.0}
 \int_{\mathbb R^4}\sum_{1\leq i,j\leq 4}|\partial_i\partial_j g|^2dx+ \int_{\mathbb R^4}W_k^4g^2dx+\int_{\mathbb R^4}|\nabla (W_k)|^2g^2dx\lesssim  \left\|\ML_k g\right\|_{L^2}^2+\frac{1}{\lambda_k^2}\left\|g\right\|_{\dot{H}^1}^2,
\end{equation*}
which completes the proof.
\end{proof}
 However, due to the bootstrap assumption which will be introduced later. Lemma \ref{coercivity proposition for second order operator} is not always sufficient. To fix this, similar to Proposition \ref{coercivity in our orthogonal for L}, the following coercivity property holds for $\lelan \ML g,\ML g \rilan$.

\begin{prop}[Coercivity estimate with $\lelan \ML g,\ML g \rilan$]\label{Coercivity estimate with LG LG} Let $M\geq 1$ be a large constant, then there exists $\eta(M)>0$ such that for any $g \in \dot{H}^1(\mathbb R^4)\cap \dot{H}^2(\mathbb R^4)$, we have
\begin{equation}\label{second order coercivity for L with H 2}
\begin{aligned}
 \int_{\mathbb R^4}(\ML g)^2dx \geq& \eta \left(\left\|g\right\|_{\dot{H}^2}^2+\int_{\mathbb R^4}\frac{|\nabla g|^2}{|x|^2}+\frac{g^2}{|x|^4(1+|\log |x||)^2}dx\right)\\&- \lelan (\Delta\Lambda W)\chi_M, g \rilan^2-|\lelan (\Delta \nabla W),g \rilan|^2.    
\end{aligned}
\end{equation}   
\end{prop}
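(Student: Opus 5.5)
Plan: argue by contradiction via compactness, using the structure of $\ker\ML$ in $\dot H^2$ together with the weighted Hardy inequalities in dimension four.

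\emph{Step 1: the weighted norm is controlled by $\dot H^2$.} Define
\[
\|g\|_{Z}^2:=\|g\|_{\dot H^2}^2+\int_{\mathbb R^4}\frac{|\nabla g|^2}{|x|^2}+\frac{g^2}{|x|^4(1+|\log|x||)^2}\,dx .
\]
The first weighted term is the standard Hardy inequality applied to $\nabla g$. The second is the logarithmic Hardy inequality. It follows by applying the critical-dimension Hardy inequality $\int \frac{h^2}{|x|^2(1+|\log|x||)^2}\lesssim \int|\nabla h|^2$ (valid in $\mathbb R^2$-type borderline situations; here the radial computation with $r^{3}dr$ and weight $r^{-4}$) to $g$, combined with $\int|\nabla g|^2|x|^{-2}\lesssim\|g\|_{\dot H^2}^2$. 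The $\dot H^1$ decay of $g$ removes any constant mode. Hence $\|g\|_Z\simeq\|g\|_{\dot H^2}$, and it suffices to prove
\[
\|\ML g\|_{L^2}^2\ge \eta\|g\|_{\dot H^2}^2-\langle(\Delta\Lambda W)\chi_M,g\rangle^2-|\langle\Delta\nabla W,g\rangle|^2 .
\]

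\emph{Step 2: the pairings and the kernel.} Both pairings make sense on $\dot H^1\cap\dot H^2$. Since $\Delta\nabla W=O(|x|^{-5})$, it lies in $L^{4/3}\cap\dot H^{-2}$. Since $(\Delta\Lambda W)\chi_M$ is compactly supported, the truncation is what makes this pairing continuous in the $Z$-norm; this is exactly the point of Remark \ref{eta irrelevant to choice of M}.

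The kernel of $\ML$ in the closure of $\dot H^1\cap\dot H^2$ with respect to $\|\cdot\|_Z$ is $\operatorname{span}\{\Lambda W,\partial_1W,\dots,\partial_4W\}$. Indeed, $\Lambda W\sim r^{-2}$ has finite $Z$-norm, because $\int^\infty r^{-4}r^{-4}(\log r)^{-2}r^3dr<\infty$ and $|\nabla\Lambda W|^2r^{-2}\sim r^{-8}$. Nondegeneracy of $W$ excludes any other element of the kernel.

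The $5\times5$ matrix of pairings of these kernel elements with the two functionals is triangular by parity. Its diagonal entries are:
\[
\langle(\Delta\Lambda W)\chi_M,\Lambda W\rangle=-\int\nabla\Lambda W\cdot\nabla(\chi_M\Lambda W)\to-\|\nabla\Lambda W\|_{L^2}^2\ne0,
\qquad
\langle\Delta\partial_jW,\partial_jW\rangle=-\|\nabla\partial_jW\|_{L^2}^2\ne0 .
\]
So for $M$ large the matrix is invertible.

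\emph{Step 3: contradiction.} Suppose there are $g_n$ with $\|g_n\|_{\dot H^2}=1$, $\|\ML g_n\|_{L^2}\to0$, and both pairings tending to zero.

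First note that $\|\Delta g_n\|_{L^2}\le\|\ML g_n\|_{L^2}+3\|W^2g_n\|_{L^2}$, and $W^2\lesssim\langle x\rangle^{-4}$. Hence $\|W^2g_n\|_{L^2}\ge c>0$, so the sequence cannot vanish.

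Next extract a weak limit $g_n\rightharpoonup g$ in $Z$. Compactness of multiplication by $W^2$ from $Z$ to $L^2$ gives $W^2g_n\to W^2g$ in $L^2$. This uses that $W^2$ decays like $|x|^{-4}$, so one gains the factor $(1+|\log|x||)^{-1}$ at infinity and at the origin relative to the $Z$-weight; with local Rellich compactness this gives compactness. Note that the weight $|x|^{-4}(\log)^{-2}$ only controls $\|W^2g\|$ up to $\sup W^2|x|^{2}(1+\log)$, which is finite near zero but unbounded at infinity. So the tail must actually be handled with $|x|^{-2}|\nabla g|$-type control plus the decay $W^2\lesssim|x|^{-4}$; this is the delicate point.

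Then $\ML g=0$, so $g\in\operatorname{span}\{\Lambda W,\nabla W\}$. The pairings pass to the limit, so $g=0$ by Step 2. But $\|W^2g_n\|_{L^2}\to\|W^2g\|_{L^2}\ge c$, a contradiction.

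\emph{Main obstacle.} The hard part is the compactness and tail control at infinity in the borderline dimension. One has to show, uniformly in $n$, that
\[
\int_{|x|>R}W^4g_n^2\lesssim\int_{|x|>R}|x|^{-8}g_n^2
\]
is small. I would bound this by $\sup_{|x|>R}|x|^{-4}(\log|x|)^2\cdot\|g_n\|_Z^2\to0$, which in fact works because $|x|^{-4}(\log|x|)^{2}\to0$. With that, the argument closes; the same estimate near $0$ uses that $W$ is bounded and Rellich applies. The dependence of $\eta$ on $M$ comes from Step 2.
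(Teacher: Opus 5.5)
\textbf{Step 1 is false, and the rest of your argument depends on it.} In dimension four the logarithmic term is \emph{not} controlled by $\|g\|_{\dot H^2}$ on $\dot H^1\cap\dot H^2$. Membership in $\dot H^1$ is only qualitative, so it does not rule out near-constant (logarithmic) modes.

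For radial $g(x)=\psi(\log|x|)$ one has $\Delta g=|x|^{-2}(\psi''+2\psi')$, hence
\[
\|g\|_{\dot H^2}^2=2\pi^2\int_{\mathbb R}(\psi''+2\psi')^2\,ds,\qquad \int_{\mathbb R^4}\frac{g^2\,dx}{|x|^4(1+|\log|x||)^2}=2\pi^2\int_{\mathbb R}\frac{\psi^2\,ds}{(1+|s|)^2}.
\]
Take $\psi_L$, smoothed at the corners, equal to $-L$ on $(-\infty,0]$, to $s-L$ on $[0,L]$, and to $0$ on $[L,\infty)$. Then $g_L\in\dot H^1\cap\dot H^2$ and $\|g_L\|_{\dot H^2}^2\simeq L$, while the logarithmic integral is at least $2\pi^2L^2$. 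So $\|\cdot\|_Z\not\simeq\|\cdot\|_{\dot H^2}$. The borderline Hardy inequality in the variable $s=\log|x|$ needs a boundary term, and that is exactly the $\int_{|x|\le2}|v|^2$ term in \eqref{Hardy 2}.

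This has three consequences for your proof:
\begin{itemize}
\item Proving the $\dot H^2$-only inequality does not give the stated one.
\item The pairing with $(\Delta\Lambda W)\chi_M$ is not continuous in $\dot H^2$: on $g_L/\sqrt L$ it is of size $\sqrt L$.
\item In Step 3, the uniform bound $\|g_n\|_Z\lesssim1$ does not follow from $\|g_n\|_{\dot H^2}=1$. You use that bound for the tail estimate, for Rellich compactness, and for passing the $\chi_M$-pairing to the limit.
\end{itemize}

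\textbf{The missing idea is that local zeroth-order control comes from the potential, not from $\dot H^2$.} Since $W\ge 2/3$ on $\{|x|\le2\}$ and $3W^2g=-\Delta g-\mathcal Lg$,
\[
\int_{|x|\le2}g^2\lesssim\int W^4g^2\lesssim\|\Delta g\|_{L^2}^2+\|\mathcal Lg\|_{L^2}^2 .
\]
Combined with \eqref{Hardy 1}--\eqref{Hardy 2}, this gives $\|g\|_Z^2\lesssim\|g\|_{\dot H^2}^2+\|\mathcal Lg\|_{L^2}^2$.

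With this in place your scheme closes. Along your sequence, $\|W^2g_n\|_{L^2}\le\frac13(\|\Delta g_n\|_{L^2}+\|\mathcal Lg_n\|_{L^2})$ is bounded. Hence $\|g_n\|_Z$ is bounded, and your tail and compactness arguments go through. The resulting $\dot H^2$ inequality then upgrades to the stated one, with a smaller $\eta$ and coefficient one on the pairings preserved.

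The paper does the same thing in the opposite order. It normalizes the weighted quantity itself. It takes the zeroth-order control from the $W^4g^2$ and $|\nabla W|^2g^2$ terms of the exact expansion of $\int(\mathcal Lg)^2$, i.e.\ the subcoercivity bound \eqref{sub coercivity proper}, before invoking \eqref{Hardy 1}--\eqref{Hardy 2}.
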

\begin{proof}
First we claim it only suffices to verify 
\begin{equation}\label{second order coercivity for L}
\begin{aligned}
 \int_{\mathbb R^4}(\ML g)^2dx \geq& \eta \left(\int_{\mathbb R^4}\frac{|\nabla g|^2}{|x|^2}+\frac{g^2}{|x|^4(1+|\log |x||)^2}dx\right)\\&- \lelan (\Delta\Lambda W)\chi_M, g \rilan^2-|\lelan (\Delta \nabla W),g \rilan|^2.     
\end{aligned}
\end{equation}
Once (\ref{second order coercivity for L}) is established, via computation (\ref{subcoercivity for L g^2}) and the fact 
\begin{equation*}
    \int_{\mathbb R^4}W^2|\nabla g|^2dx=\int_{\mathbb R^4}\frac{|\nabla g|^2}{(1+\frac{x^2}{8})^2}dx\leq \int_{\mathbb R^4}\frac{64|\nabla g|^2}{|x|^2}dx,
\end{equation*}
immediately we have
\begin{equation*}
  \begin{aligned}
 \left(1+\frac{\eta}{128}\right)\int_{\mathbb R^4}(\ML g)^2dx \geq& \frac{\eta}{128}\left\|g\right\|_{\dot{H}^2}^2+ \eta \left(\int_{\mathbb R^4}\frac{|\nabla g|^2}{2|x|^2}+\frac{g^2}{|x|^4(1+|\log |x||)^2}dx\right)\\&- \lelan (\Delta\Lambda W)\chi_M, g \rilan^2-|\lelan (\Delta \nabla W),g \rilan|^2.    
\end{aligned}  
\end{equation*}
Hence, (\ref{second order coercivity for L with H 2}) holds by taking $\eta$ sufficiently small. Now in order to prove (\ref{second order coercivity for L}), again from the computation (\ref{subcoercivity for L g^2}), we obtain
\begin{equation}\label{sub coercivity proper}
\begin{aligned}
    \int_{\mathbb   R^4}(\ML g)^2dx\geq  c\left(\int_{\mathbb R^4}(\Delta g)^2+\frac{g^2}{1+|x|^6}dx\right)-\frac{1}{c}\left(\int_{\mathbb R^4}\frac{|\nabla g|^2}{1+|x|^4}+\frac{|g|^2}{1+|x|^8}dx\right).
\end{aligned}
\end{equation}
Now we argue by contradiction, assuming the proposition is false. Then there exists a sequence $g_n\in \dot{H}^1\cap \dot{H}^2$ such that for $n=1,2...$,
\begin{equation}\label{contra assum 1 for L g^2}
 \int_{\mathbb R^4}\frac{|\nabla g_n|^2}{|x|^2}+\frac{g_n^2}{|x|^4(1+|\log |x||)^2}dx=1   
\end{equation}
 and    
\begin{equation}\label{contra assum 2 for L g^2}
  \int_{\mathbb R^4}(\ML g_n)^2dx  \leq \frac{1}{n}\text{ , }\lelan (\Delta\Lambda W)\chi_M, g_n\rilan=\lelan (\Delta\partial_j W),g_n \rilan_{j=1,2,3,4}=0.
\end{equation}
Then by semicontinuity of the norm, a subsequence of $g_n$ weakly converges to a solution $g_{\infty}\in H_{loc}^1$ of $\ML g_{\infty}=0$. The solution $g_{\infty}$ is smooth away from the origin and hence explicit integration of the ODE and the regularity assumption at the origin $g_{\infty}\in H_{loc}^1$ indicate that  
\begin{equation*}
    g_{\infty}=\alpha_0\Lambda W+\sum_{j=1}^4\alpha_j (\partial_jW)
\end{equation*}
for some $\alpha_i\in \mathbb R$, $i=0,1,2,3,4$. On the one hand, (\ref{contra assum 1 for L g^2}) together with the local compactness of Sobolev embeddings ensure that, up to a subsequence,
\begin{equation*}
    \int_{\mathbb R^4}\frac{|\nabla g_n|^2}{1+|x|^4}dx+ \int_{\mathbb R^4}\frac{| g_n|^2}{1+|x|^8}dx\to \int_{\mathbb R^4}\frac{|\nabla g_{\infty}|^2}{1+|x|^4}dx+ \int_{\mathbb R^4}\frac{| g_{\infty}|^2}{1+|x|^8}dx
\end{equation*}
and 
\begin{equation*}
 \lelan (\Delta\Lambda W)\chi_M, g_n\rilan\to \lelan (\Delta\Lambda W)\chi_M, g_{\infty}\rilan \text{ , }  \lelan (\Delta\partial_j W),g_n \rilan\to\lelan (\Delta\partial_j W),g_{\infty} \rilan,
\end{equation*}
thanks to the $\chi$ localization. Therefore, using symmetry property, we conclude
\begin{equation*}
    \alpha_0 \lelan \Lambda W, (\Delta \Lambda W)\chi_M \rilan=\alpha_i \lelan \partial_i W, (\Delta\partial_i W)\rilan_{i=1,2,3,4}=0,
\end{equation*}
and thus $\alpha_i=0$, $i=0,1,2,3,4$. On the other hand, the subcoercivity (\ref{sub coercivity proper}) and  (\ref{contra assum 2 for L g^2}) together yield
\begin{equation*}
  \int_{\mathbb R^4}\frac{|\nabla g_n|^2}{1+|x|^4}dx+ \int_{\mathbb R^4}\frac{| g_n|^2}{1+|x|^8}dx\geq    c\left(\int_{\mathbb R^4}(\Delta g_n)^2+\frac{g_n^2}{1+|x|^6}dx\right).
\end{equation*}
Applying (\ref{Hardy 1}), (\ref{Hardy 2}) and (\ref{contra assum 1 for L g^2}) to the right hand side above gives
\begin{equation*}
 \left(\int_{\mathbb R^4}(\Delta g_n)^2+\frac{g_n^2}{1+|x|^6}dx\right)\geq C \left( \int_{\mathbb R^4}\frac{|\nabla g_n|^2}{|x|^2}+\frac{g_n^2}{|x|^4(1+|\log |x||)^2}dx\right) =C>0.
\end{equation*}
As a result the below bound $ \int_{\mathbb R^4}\frac{|\nabla g_n|^2}{1+|x|^4}dx+ \int_{\mathbb R^4}\frac{| g_n|^2}{1+|x|^8}dx\geq C>0$ and furthermore,
\begin{equation*}
\begin{aligned}
   &\alpha_0^2\left( \int_{\mathbb R^4}\frac{|\nabla \Lambda W|^2}{1+|x|^4}dx+ \int_{\mathbb R^4}\frac{| \Lambda W|^2}{1+|x|^8}dx\right)+\sum_{i=1}^4\alpha_i^2 \left(\int_{\mathbb R^4}\frac{|\nabla \partial_i  W|^2}{1+|x|^4}dx+ \int_{\mathbb R^4}\frac{|  \partial_i  W|^2}{1+|x|^8}dx\right) \\
   \geq & \int_{\mathbb R^4}\frac{|\nabla g_{\infty}|^2}{1+|x|^4}dx+ \int_{\mathbb R^4}\frac{| g_{\infty}|^2}{1+|x|^8}dx\geq C>0,
\end{aligned}
\end{equation*}
which leads to a contradiction. This concludes the proof of the proposition.
\end{proof}

In application, we always use the scaling version of Proposition \ref{Coercivity estimate with LG LG}, which is listed as below.
\begin{coro}\label{coercivity est with LKG LKG} Let $M\geq 1$ be a large constant, then  there exists $\eta>0$ such that for any $\lambda>0$  and $g \in \dot{H}^1(\mathbb R^4)\cap \dot{H}^2(\mathbb R^4)$, we have
\begin{equation}\label{second order coercivity for L scaling verison}
\begin{aligned}
\int_{\mathbb R^4}(\ML_{\lambda} g)^2dx \geq& \eta \left(\left\|g\right\|_{\dot{H}^2}^2+\int_{\mathbb R^4}\frac{|\nabla g|^2}{|x|^2}+\frac{g^2}{|x|^4(1+|\log |x/\lambda||)^2}dx\right)\\
&-\frac{1}{\lambda^6} \left\{\lelan((\Delta\Lambda W)\chi_M)_{\lambda}, g \rilan^2+|\lelan (\Delta\nabla W)_
{\lambda},g \rilan|^2\right\}.     
\end{aligned}
\end{equation}    
\end{coro}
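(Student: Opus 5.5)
This corollary should follow from Proposition \ref{Coercivity estimate with LG LG} by a scaling change of variables. Given $g\in\dot H^1\cap\dot H^2$ and $\lambda>0$, set $\tilde g(y):=\lambda g(\lambda y)$, so that $g=\tilde g_\lambda$ in the paper's notation $F_\lambda=\lambda^{-1}F(\cdot/\lambda)$. We first compute the commutation of $\ML_\lambda$ with this rescaling:
\[
(\ML_\lambda g)(x)=-\Delta g(x)-\frac{3}{\lambda^2}W^2(x/\lambda)\,g(x)=\lambda^{-3}(\ML\tilde g)(x/\lambda).
\]
Integrating in $\mathbb R^4$ then gives $\int(\ML_\lambda g)^2dx=\lambda^{-2}\int(\ML\tilde g)^2dy$. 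We apply \eqref{second order coercivity for L with H 2} to $\tilde g$ with the same $M$ and $\eta=\eta(M)$, and track how each term transforms.

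For the positive terms, the same substitution $x=\lambda y$ gives
\begin{align*}
\|\tilde g\|_{\dot H^2}^2&=\lambda^2\|g\|_{\dot H^2}^2,\\
\int\frac{|\nabla\tilde g|^2}{|y|^2}dy&=\lambda^2\int\frac{|\nabla g|^2}{|x|^2}dx,\\
\int\frac{\tilde g^2}{|y|^4(1+|\log|y||)^2}dy&=\lambda^2\int\frac{g^2}{|x|^4(1+|\log|x/\lambda||)^2}dx.
\end{align*}
This accounts for the appearance of $\log|x/\lambda|$ in \eqref{second order coercivity for L scaling verison}. Every term in the bracket therefore carries the common factor $\lambda^2$, which cancels against the $\lambda^{-2}$ from $\int(\ML_\lambda g)^2$.

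For the orthogonality terms, write $\phi=(\Delta\Lambda W)\chi_M$ or $\phi=\Delta\partial_jW$. Then
\[
\langle\phi,\tilde g\rangle=\int\phi(y)\,\lambda g(\lambda y)\,dy=\lambda^{-3}\int\phi(x/\lambda)g(x)\,dx=\lambda^{-2}\langle\phi_\lambda,g\rangle.
\]
Hence $\langle\phi,\tilde g\rangle^2=\lambda^{-4}\langle\phi_\lambda,g\rangle^2$, and multiplying by $\lambda^{-2}$ yields the coefficient $\lambda^{-6}$ stated in the corollary. The cutoff transforms consistently, since $((\Delta\Lambda W)\chi_M)_\lambda$ is the rescaling of the whole product.

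Combining these three computations gives \eqref{second order coercivity for L scaling verison} with the same $\eta$, independent of $\lambda$. No genuine obstacle is expected. The only care needed is bookkeeping of the powers of $\lambda$: the $\dot H^2$ seminorm and both weighted Hardy-type terms have the same scaling as $\|\ML g\|_{L^2}^2$ in dimension four, while the pairings are measured against the $L^1$-normalized-type rescaling $\phi_\lambda$, which produces the extra factor $\lambda^{-6}$.
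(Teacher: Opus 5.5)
Your proposal is correct and is exactly the argument the paper intends: the paper states this corollary without proof as the scaling version of Proposition \ref{Coercivity estimate with LG LG}, and your bookkeeping of the powers of $\lambda$ is right (a common factor $\lambda^{2}$ on the positive terms and $\lambda^{-4}$ on the squared pairings). The only slip is cosmetic: $\phi_\lambda=\lambda^{-1}\phi(\cdot/\lambda)$ is the energy-critical ($\dot H^1$-invariant) rescaling rather than an $L^1$-normalized one, which does not affect the computation.
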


\section{Modulation and estimate of the parameters}
In this section, we construct solutions to (\ref{NLW 4}) of the form  
\begin{equation}\label{g(t) equation}
    \Vec{u}(t)=\Vec{W}_{\boldsymbol{\Gamma}(t)}+\Vec{g}(t)
\end{equation}
with $\left\|\Vec{g}(t)\right\|_{\mathcal{E}}\ll 1$ on suitable time intervals. Let $T_0>1$ be a sufficiently large constant. For any $T\geq T_0$, we consider initial data prescribed at time $t=T$, whose precise definition will be given in Lemma \ref{choice of u(T)} below, and we study the corresponding solution $\Vec{u}(t)$. 

We aim to control this solution on a backward time interval. To  this end, we introduce bootstrap estimates (see (\ref{bootstap a for energy})-(\ref{bootstrap a for stable})) and define
\begin{equation*}
\begin{aligned}
   T_* :=& \inf \{ t \in [T_0, T] \ : \ \vec u \text{ is defined on } [t,T]
\text{ and admits}\\&\text{ a decomposition of the form (\ref{g(t) equation}) with }
(\Gamma, \vec g) \text{ satisfying } (\ref{bootstap a for energy})-(\ref{bootstrap a for stable}) \}.
\end{aligned}  
\end{equation*}
{The choice of the time-dependent $\mathcal{C}^1$ parameter vector $\boldsymbol{\Gamma}(t)$ will ensure the orthogonality conditions
\begin{equation}\label{orthogonal condition 1}
   \lelan \frac{1}{\lambda_k}(\Delta_k\Lambda_k W_k) \chi_k,g \rilan=0\text{ , } \lelan\frac{1}{\lambda_k}\left(\Delta_k\nabla_k W_k \right),g\rilan=0,
\end{equation}
and 
\begin{equation}\label{orthogonal condition 2}
 \lelan {\frac{1}{\lambda_k}(\Lambda_k W_k)\chi_{k}},\dot{g}\rilan=0\text{ , }\lelan \frac{1}{\lambda_k}(\nabla_kW_k)\chi_k,\dot{g} \rilan=0.    
\end{equation}
Recall that $\chi_k:=\chi((\cdot-\boldsymbol{y}_k)/\lambda_k M)$}. The $M$ here is a large constant which will be fixed in the energy estimate later. This can be achieved by implicit function theorem and will be proved in step 0 of Lemma \ref{close the bootstrap for parameters system}. For the stable/unstable direction we denote
\begin{equation*}
    a_k^{\pm}:=\lelan \Vec{Z}_k^{\pm},\Vec{g} \rilan.
\end{equation*}
Next, we construct well-prepared initial conditions at $t=T\geq T_0$ with a family of free parameters {$(\omega_0,\omega_1)$}  related to instabilities. 
\begin{lemma}\label{choice of u(T)}
    For any $T>T_0$ and any $(\omega_0,\omega_1)\in [-1,1]^2$, there exists a data $\Vec{u}(T)=\Vec{u}[T,(\omega_0,\omega_1)]\in X$ such that
    \begin{equation*}
        \Vec{u}(T)=\Vec{W}_{\boldsymbol{\Gamma}(T)}+\Vec{g}(T),
    \end{equation*}
    with $\Gamma (T)$ defined by 
    \begin{equation*}\label{initial data for lambda b y v}
    \begin{aligned}
         \lambda_k(T)=\lambda(T)=\exp \left(-\xi T^{\frac{2}{3}}-\omega_0 T^{\frac{1}{3}}\right),&\quad 
       b_k(T)=b(T)=\lambda(T)\left(\frac{2\xi}{3}T^{-\frac{1}{3}}-\frac{\omega_0}{3}T^{-\frac{2}{3}}\right),\\
      \text{and }\quad\boldsymbol{y}_k(T)=\boldsymbol{z}_k,&\quad\boldsymbol{v}_k(T)={0}.
    \end{aligned}
    \end{equation*}
    and $\Vec{g}(T)$ is G-invariant satisfies (\ref{orthogonal condition 1}), (\ref{orthogonal condition 2}). For all $k=1,...,N$, it holds
    \begin{equation}\label{initial data of energy}
         \left\|\Vec{g}(T)\right\|_{\ME}\lesssim T^{\frac{1}{4}}\lambda_k(T)^2,\qquad\left\|\Vec{g}(T)\right\|_{\ME_2}\lesssim  T^{\frac{1}{4}}\lambda_k(T),
    \end{equation}
    \begin{equation}\label{initial data of stable unstable}
       \lelan \Vec{Z}_k^+(T),\Vec{g}(T) \rilan=0\text{ , }\lelan \Vec{Z}_k^-(T),\Vec{g}(T) \rilan=T^{\frac{1}{4}}\lambda_k(T)^2\omega_1,
    \end{equation}
    where $\Vec{Z}_k^{\pm}(T)$ are defined as in (\ref{Z_k and Y_k}) for $\boldsymbol{\Gamma}=\boldsymbol{\Gamma}(T)$. Moreover, $\Vec{u}(T)$ is continuous in $X$ with respect to $T$ and $(\omega_0,\omega_1)$. 
\end{lemma}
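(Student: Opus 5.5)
The plan is to take $\Gamma(T)$ as prescribed, with the equal scales and scaling velocities and $\boldsymbol y_k(T)=z_k$, $\boldsymbol v_k(T)=0$. Since $\rho z_k=z_{\pi_\rho(k)}$, $\lambda_k,b_k$ are constant in $k$, and $\chi_t$ is radial, the profile $\vec W_{\Gamma(T)}$ is $G$-invariant. We then look for $\vec g(T)$ as a finite linear combination of localized profiles:
\[
g(T)=\sum_k \big(c^0\,\nu^{-1}Y_k+\text{(corrections along the orthogonality directions)}\big),\qquad \dot g(T)=\sum_k c^1\lambda_k^{-1}Y_k+\cdots,
\]
with coefficients independent of $k$, so that $G$-invariance is automatic. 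Because $Y$ is radial, it suffices to take the coefficients equal across $k$. Concretely, I would set
\[
\vec g(T)=\sum_k T^{1/4}\lambda_k(T)^2\,\omega_1\,\boldsymbol y_k^-+\vec r,
\]
where $\vec r$ is a small correction chosen to enforce the orthogonality conditions (\ref{orthogonal condition 1}), (\ref{orthogonal condition 2}) and $\langle \vec Z_k^{+},\vec g\rangle=0$, $\langle \vec Z_k^{-},\vec g\rangle=T^{1/4}\lambda_k^2\omega_1$.

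The key step is a linear algebra argument. Write $\vec r$ as a combination, with coefficients $\mu$, of the $G$-symmetrized profiles $\sum_k \boldsymbol y_k^\pm$, $\sum_k((\Delta_k\Lambda_kW_k)\chi_k,0)$-type localized functions, and the analogous $\nabla$-directions. For the vector directions we use $\sum_k \rho$-equivariant combinations. For instance, $\sum_k (\nabla_kW_k\chi_k)\cdot e_k$ with $e_k$ equivariant is $G$-invariant, and these directions can be taken as the finite-dimensional $G$-fixed subspace. The map $\mu\mapsto$ (values of the orthogonality functionals, $a_k^\pm$) is then a square matrix. Its diagonal blocks are the single-bubble pairings: $\langle Y,Y\rangle$, $\langle(\Delta\Lambda W)\chi_M,\tilde\Lambda\rangle$, $\langle\Delta\nabla W,\nabla W\chi_M\rangle$, and so on, with a suitable choice of test profiles $\tilde \Lambda$, for example $(\Lambda W)\chi_M$ in the $g$-component and $\Lambda W\chi_M$ in the $\dot g$-component. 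These are nonzero for $M$ large, as in Corollary \ref{L dot g dot g coercivity} and Remark \ref{eta irrelevant to choice of M}. The off-diagonal entries coming from distinct bubbles are $O(\lambda^{2})$, since the profiles are localized at scale $\lambda M$ around points at distance $\ge 2d$, and $Y$ decays exponentially. The matrix is therefore invertible uniformly in $T\ge T_0$. Solving gives $\mu=O(T^{1/4}\lambda^2|\omega_1|)$, and all coefficients depend continuously on $(T,\omega_0,\omega_1)$.

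For the bounds, each profile is a rescaled fixed function. Hence $\|\boldsymbol y_k^-\|_{\mathcal E}\lesssim1$, while its $\dot H^2\times\dot H^1$ norm, and $\|\ML_{W_\Gamma}\cdot\|_{L^2}$, scale like $\lambda^{-1}$. This gives $\|\vec g(T)\|_{\mathcal E}\lesssim T^{1/4}\lambda^2$ and $\|\vec g(T)\|_{\mathcal E_2}\lesssim T^{1/4}\lambda$, which is (\ref{initial data of energy}). Membership in $X$ follows since all profiles are Schwartz-like at fixed scale; $\chi_t\Lambda W$ is compactly supported, so $\vec W_\Gamma\in X$ as well. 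Continuity in $X$ with respect to $(T,\omega_0,\omega_1)$ follows from continuity of $\Gamma(T)$ and of the inverse matrix.

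The main obstacle is the uniform invertibility of the Gram-type matrix. This requires care with the truncated directions: $\Lambda W\notin L^2$, so pairings must be computed with the $\chi_M$ cutoffs, and the cross terms between the scaling and $Y$ directions must not be degenerate. I would first choose test directions so that the matrix is block triangular, using radial versus odd parity. This leaves only the $\langle(\Lambda W)\chi_M,Y\rangle$ entry, which is harmless.
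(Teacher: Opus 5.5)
Your proposal is correct and follows essentially the paper's argument. Both write $\vec g(T)$ as a combination of $\boldsymbol y_k^\pm$ and the localized scaling and translation profiles in both components, and obtain the coefficients by inverting a linear map that is an $O(\lambda^2)$ perturbation of a block-diagonal matrix whose blocks are triangular up to the small entry $\langle Y,(\Lambda W)\chi_M\rangle$ (using $\langle Y,\Lambda W\rangle=0$); the $\ME$ and $\ME_2$ bounds then follow by scaling. The only difference is cosmetic: you impose $G$-invariance by restricting to the $G$-fixed coefficient subspace, while the paper solves the full system and deduces invariance from the equivariance of $\Phi$ and uniqueness of $\Phi^{-1}$.
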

\begin{proof}
    For $\boldsymbol{\Gamma}=\boldsymbol{\Gamma}(T)$, we consider $\Vec{g}(T)=(g(T),\dot{g}(T))$ of the form
    \begin{equation}\label{definition of g T}
    \begin{aligned}
        \Vec{g}(T)=\sum_k\Bigg\{& b_k^+\boldsymbol{y}_k^++b_k^-\boldsymbol{y}_k^- +\frac{((\boldsymbol{c}_k\cdot \nabla_k)W_k,0)}{\lelan\Delta\partial_{x_1}W,\partial_{x_1}W\rilan}+\frac{d_k(\Lambda_kW_k,0)}{\lelan\Delta\Lambda W,\Lambda W\chi_M\rilan}
        \\&+\frac{(0,(\boldsymbol{e}_k\cdot\nabla_k)W_k)}{\lambda_k\lelan (\partial_{x_1} W)\chi_M, \partial_{x_1} W\rilan}+\frac{f_k(0,(\Lambda_kW_k)\chi_k)}{\lambda_k\lelan \Lambda W\chi_M,\Lambda W\chi_M \rilan} \Bigg\}  
    \end{aligned}
    \end{equation}
Consider the linear map $\Phi:(\mathbb R^{12})^N\to (\mathbb R^{12})^N$ defined as follows:
\begin{equation*}
    \begin{aligned}
        &\Phi(( b_k^+,b_k^-,\boldsymbol{c}_k,d_k,\boldsymbol{e}_k,f_k )_{k=1,...,N})\\
        =\Big(&\lelan \Vec{Z}_k^+,\Vec{g} \rilan,\lelan\Vec{Z}_k^-,\Vec{g} \rilan,\lambda_k^{-2}\lelan (\Delta_k\nabla_k W_k) ,g  \rilan,\lambda_k^{-2}\lelan (\Delta_k\Lambda_k W_k) \chi_k,g \rilan,\\ &\lambda_k^{-1}\lelan (\nabla_k W_k)\chi_k,\dot{g} \rilan,\lambda_k^{-1}\lelan (\Lambda_k W_k) \chi_k,\dot{g} \rilan\Big)_{k=1,...,N}
    \end{aligned}
\end{equation*}
    For $T$ large enough, one verifies that the interaction between different bubbles is negligible for large $T$ and  $G-$invariant , which implies that $\Phi$ is a small perturbation of a block-diagonal matrix $diag_N(A)$ where the $12\times 12$ matrix $A$ can be expressed as
    \begin{equation*}
       A= I+(m_{ij})_{12\times 12}.
    \end{equation*}
    Here all the $m_{ij}$, $1\leq i,j\leq 12$ vanish,  except
    \begin{equation*}
    \begin{aligned}
      m_{1,7}=m_{2,7}=\lelan Y, (\Delta \Lambda W)\chi_M\rilan\text{ , }&m_{1,12}=-m_{2,12}= \lelan Y, ( \Lambda W)\chi_M\rilan;\\
      \text{ and }&m_{12,1}=-m_{12,2}=\lelan Y, ( \Lambda W)\chi_M \rilan    .
    \end{aligned}   
    \end{equation*}
    Then due to the exponential decay of $Y$ and $\lelan Y,\Lambda W \rilan=0$, we obtain that $\Phi$ is invertible. Moreover, in order to impose the conditions (\ref{initial data of stable unstable}), we choose the coefficients by inverting the linear map $\Phi$.
\begin{equation*}
    \left|\Phi^{-1}((0,T^{\frac{1}{4}}\lambda_k(T)^2\omega_1,0,...,0)_{k=1,...,N})\right|\lesssim T^{\frac{1}{4}}\lambda_k(T)^2,
\end{equation*}
    and so  $\left\|\Vec{g}(T)\right\|_{\dot{H}^1\times L^2}\lesssim T^{\frac{1}{4}}\lambda_k(T)^2$. Notice that the target vector is \(G\)-invariant. Since the profiles in (\ref{definition of g T}) and the
orthogonality directions are equivariant, the map \(\Phi\) commutes with the
action of \(G\) on the coefficient vector. By the uniqueness of
\(\Phi^{-1}\), the coefficients chosen above are \(G\)-invariant, and therefore
the correction \(\vec g(T)\) defined by (\ref{definition of g T}) is \(G\)-invariant. Furthermore, from the scaling property and the definition of $\Vec{g}(T)$ (formula (\ref{definition of g T})), the second order energy $\left\|\vec g(T)\right\|_{\ME_2}$ is bounded by
    \begin{equation*}
    \begin{aligned}
      \left\|\vec{g}(T)\right\|_{\ME_2}\lesssim& \sum_k\Bigg\{(|b_k^+|+|b_k^-|)\left(\left\|Y_k\right\|_{\dot{H}^2}+\left\|\frac{1}{\lambda_k}Y_k\right\|_{\dot{H}^1}\right)\\
      &+|\boldsymbol{c}_k|\frac{\left\|\nabla_k W_k\right\|_{\dot{H}^2}+\left\|(W_{\boldsymbol{\Gamma}}^2-W_k^2)\nabla_k W_k\right\|_{L^2}}{\lelan\Delta\partial_{x_1}W,\partial_{x_1}W\rilan}+|d_k|\frac{\left\|\Lambda_kW_k\right\|_{\dot{H}^2}+\left\|(W_{\boldsymbol{\Gamma}}^2-W_k^2)\Lambda_kW_k\right\|_{L^2}}{\lelan\Delta\Lambda W,\Lambda W\chi_M\rilan}\\
      &+|\boldsymbol{e}_k|\frac{\left\|\nabla_k W_k \right\|_{\dot{H}^1}}{\lambda_k\lelan \partial_{x_1} W\chi(\lambda_k\cdot), \partial_{x_1} W\rilan } +|f_k|\frac{\left\|(\Lambda_kW_k)\chi_k)\right\|_{\dot{H}^1}}{\lambda_k\lelan \Lambda W\chi(\lambda_k,\cdot),\Lambda W\chi(\lambda_k,\cdot) \rilan}\Bigg\}\\
      \lesssim& \frac{1}{\lambda_k}\left|\Phi^{-1}((0,T^{\frac{1}{4}}\lambda_k(T)^2\omega_1,0,...,0)_{k=1,...,N})\right|\lesssim T^{\frac{1}{4}}\lambda_k(T).
    \end{aligned}
    \end{equation*}
    which completes the choice of initial data.
\end{proof}
 Let \(\vec u(t)\) be the solution corresponding to the terminal datum given by
Lemma \ref{choice of u(T)}. Since the equation is invariant under the action of \(G\), and since
the terminal datum is \(G\)-invariant, uniqueness of the Cauchy problem implies
that
\[
\vec u(t,\rho x)=\vec u(t,x),\qquad \rho\in G,
\]
on its interval of existence. This together with the orthogonal conditions (\ref{orthogonal condition 1}), (\ref{orthogonal condition 2}) gives the existence and the symmetry of modulation parameters.

\begin{lemma}[Symmetry of the modulation parameters]\label{lem:sym-mod}
Assume that the \(G\)-invariant solution \(\vec u(t)\) admits a decomposition
of the form (\ref{g(t) equation}), with \(\boldsymbol\Gamma(t)\) chosen by the orthogonality conditions
(\ref{orthogonal condition 1}), (\ref{orthogonal condition 2}). Then the modulation parameters satisfy the equivariance
relations of Condition \ref{sym condition}. In particular,
\[
\lambda_1=\cdots=\lambda_N=:\lambda,\qquad
b_1=\cdots=b_N=:b,
\]
and
\[
\boldsymbol y_{\pi_\rho(k)}=\rho\boldsymbol y_k,\qquad
\boldsymbol v_{\pi_\rho(k)}=\rho\boldsymbol v_k.
\]
Moreover, the remainder \(\vec g\) is \(G\)-invariant, and
\[
 a_k^{\pm}:=\lelan \Vec{Z}_k^{\pm},\Vec{g} \rilan
\]
is independent of $k$.
\end{lemma}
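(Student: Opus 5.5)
The plan is to combine the $G$-invariance of $\vec u(t)$ with the uniqueness part of the implicit-function-theorem construction of the modulation parameters. Fix $\rho\in G$ and set $\vec u^\rho(t,x):=\vec u(t,\rho^{-1}x)$, which equals $\vec u(t,x)$ by invariance. From the decomposition $\vec u=\vec W_{\boldsymbol\Gamma}+\vec g$ we obtain a second decomposition
\[
\vec u(t,x)=\vec u(t,\rho^{-1}x)=\vec W_{\boldsymbol\Gamma}(\rho^{-1}x)+\vec g(t,\rho^{-1}x).
\]
Since $W$, $\Lambda W$, $\chi$, $\chi_t$ are radial and $\rho\in O(4)$, we have $W_{\lambda_k,\boldsymbol y_k}(\rho^{-1}x)=W_{\lambda_k,\rho\boldsymbol y_k}(x)$, $(\Lambda_kW_k\chi_t)(\rho^{-1}x)=(\Lambda W)_{\lambda_k,\rho\boldsymbol y_k}(x)\chi_t(x)$, and $(\boldsymbol v_k\cdot\nabla_kW_k)(\rho^{-1}x)=(\rho\boldsymbol v_k)\cdot(\nabla W)_{\lambda_k,\rho\boldsymbol y_k}(x)$. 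Hence $\vec W_{\boldsymbol\Gamma}(\rho^{-1}\cdot)=\vec W_{\rho\boldsymbol\Gamma}$, where $\rho\boldsymbol\Gamma$ is the parameter vector whose $\pi_\rho(k)$-th entry is $(\lambda_k,b_k,\rho\boldsymbol y_k,\rho\boldsymbol v_k)$. The bubbles are relabelled via $\pi_\rho$, which is legitimate because $\rho\boldsymbol y_k$ is close to $\rho z_k=z_{\pi_\rho(k)}$.

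Next, check that $\vec g^\rho:=\vec g(\rho^{-1}\cdot)$ satisfies the orthogonality conditions (\ref{orthogonal condition 1})--(\ref{orthogonal condition 2}) relative to $\rho\boldsymbol\Gamma$. For the scalar conditions, change variables $x\mapsto\rho x$ (with $|\det\rho|=1$) and use radiality of $\Delta\Lambda W$ and $\chi$. For the vector conditions, use $(\Delta\nabla W)(\rho^{-1}y)=\rho^{-1}(\Delta\nabla W)(y)$ up to the orientation convention, so that the vector of inner products is $\rho$ applied to the original vector and therefore vanishes. The smallness bounds are preserved since all norms are $O(4)$-invariant. By the uniqueness in the implicit function theorem (step 0 of Lemma~\ref{close the bootstrap for parameters system}), applied in a neighbourhood of $\boldsymbol\Gamma$ small enough to contain $\rho\boldsymbol\Gamma$, we get $\rho\boldsymbol\Gamma=\boldsymbol\Gamma$ and $\vec g^\rho=\vec g$.

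Comparing components gives $\lambda_{\pi_\rho(k)}=\lambda_k$, $b_{\pi_\rho(k)}=b_k$, $\boldsymbol y_{\pi_\rho(k)}=\rho\boldsymbol y_k$, and $\boldsymbol v_{\pi_\rho(k)}=\rho\boldsymbol v_k$. Transitivity of $G$ on $Z$ then forces all $\lambda_k$ and all $b_k$ to coincide. Finally, $Y$ is radial (it is the ground state of a radial Schrödinger operator, hence simple and radial), so $\vec Z^{\pm}_{\pi_\rho(k)}=\vec Z^\pm_k(\rho^{-1}\cdot)$. A change of variables together with $\vec g=\vec g^\rho$ gives $a^\pm_{\pi_\rho(k)}=a^\pm_k$, and transitivity yields independence of $k$.

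The main obstacle is the uniqueness step. We must make sure the implicit function theorem gives uniqueness within a neighbourhood that contains $\rho\boldsymbol\Gamma$ with the permuted labelling. This needs $|\rho\boldsymbol y_k-z_{\pi_\rho(k)}|=|\boldsymbol y_k-z_k|$ to be small, and it needs $\lambda$ and $b$ to be unchanged, so that $\rho\boldsymbol\Gamma$ lies in the same small ball around the reference configuration. We also need to track the $t$-dependent cutoff $\chi_t$, which is radial about the origin. This is exactly why the symmetry center is placed at the origin; otherwise the transformed ansatz would not be of the same form.
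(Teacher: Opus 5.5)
Your proposal follows the paper's proof. You transport the decomposition by $\rho$ and use the equivariance of the profiles, the cutoffs $\chi_k,\chi_t$ and the orthogonality directions (with relabelling by $\pi_\rho$); local uniqueness of the modulation parameters then gives $\rho\boldsymbol\Gamma=\boldsymbol\Gamma$ and $\vec g(\rho^{-1}\cdot)=\vec g$, and transitivity plus radiality of $Y$ finish the argument, including the statement on $a_k^\pm$. The neighbourhood issue you flag is also left implicit in the paper, but you should not settle it by requiring $\lambda$ and $b$ to be unchanged, since $\lambda_{\pi_\rho(k)}=\lambda_k$ is the conclusion. Instead, note that $\|\vec W_{\boldsymbol\Gamma}-\vec W_{\rho\boldsymbol\Gamma}\|_{\mathcal E}\le 2\|\vec g\|_{\mathcal E}$ is small, which forces the ratios $\lambda_{\pi_\rho(k)}/\lambda_k$ to be close to $1$ and puts $\rho\boldsymbol\Gamma$ inside the uniqueness neighbourhood; alternatively, argue by continuity in $t$ from the exactly equivariant data at $t=T$.
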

\begin{proof}
Apply any \(\rho\in G\) to the decomposition (\ref{g(t) equation}). Since \(\vec u\) is
\(G\)-invariant and the profiles, cutoffs, and orthogonality directions are
equivariant, this gives another decomposition of the same solution satisfying
the same orthogonality conditions. By the local uniqueness in the implicit
function theorem for (\ref{orthogonal condition 1})--(\ref{orthogonal condition 2}), the transformed parameters must
coincide with the original ones. This gives the equivariance relations for
\(\boldsymbol\Gamma(t)\), hence the common scale and scaling velocity by transitivity.
The \(G\)-invariance of \(\vec g\) follows by subtracting the invariant
multi-bubble profile from \(\vec u\). Finally, the equivariance of
\(\vec Z_k^\pm\) gives \(a_{\pi_\rho(k)}^\pm=a_k^\pm\), so \(a_k^\pm\) is
independent of \(k\).
\end{proof}
We shall therefore write
\[
a^\pm:=a_k^\pm,
\]
while keeping the index \(k\) in local estimates when it is useful to indicate
the bubble under consideration.

We now introduce the quantities used in the bootstrap estimates. First, we introduce corrections $\beta_k$ and $\boldsymbol{s}_k$. Let $\boldsymbol{\beta}:=(\beta_1,...,\beta_N)$ and satisfies linear system
\begin{equation*}
  {\lelan \frac{1}{\lambda_k}(\Lambda_k W_k)\chi_{t},\sum_{j=1}^N \frac{\beta_j-b_j}{\lambda_j}(\Lambda_j W_j)\chi_{t} \rilan}  ={\lelan \dot{g},\frac{1}{\lambda_k}(\Lambda_k W_k)\chi_{t} \rilan},
\end{equation*}
for all $k\in \left\{1,...,N\right\}$.  Since
\[
\left\langle 
\frac1{\lambda_k}(\Lambda_kW_k)\chi_t,
\frac1{\lambda_k}(\Lambda_kW_k)\chi_t
\right\rangle
\sim \log(t/\lambda)\sim t^{2/3},
\]
whereas the off-diagonal terms are \(O(\log t)\), the coefficient matrix is
diagonally dominant and \(\beta\) is well-defined. Moreover, the system is
\(G\)-equivariant and the right-hand side is \(G\)-invariant. By uniqueness of
the solution of this linear system,
\[
\beta_{\pi_\rho(k)}=\beta_k,\qquad \rho\in G.
\]
Thus, by transitivity,
\[
\beta_1=\cdots=\beta_N=:\beta.
\]
 Furthermore, the continuity of $\boldsymbol{\lambda},\boldsymbol{b}$ and $\dot{g}$ yields $\boldsymbol{\beta}\in \mathcal{C}(I_{\max})$. The translation correction \(\boldsymbol s=(\boldsymbol s_1,\ldots,\boldsymbol s_N)\)
is defined analogously by
\[
\left\langle 
\frac1{\lambda_k}\nabla W_k,
\sum_{j=1}^N\frac{\boldsymbol s_j-\boldsymbol v_j}{\lambda_j}\cdot\nabla_j W_j
\right\rangle
=
\left\langle 
\dot g,\frac1{\lambda_k}\nabla W_k
\right\rangle .
\]
The same argument shows that \(\boldsymbol s\) is well-defined and equivariant:
\[
\boldsymbol s_{\pi_\rho(k)}=\rho\boldsymbol s_k,\qquad \rho\in G.
\]
In fact, \(\beta_k\) and \(\boldsymbol s_k\) can be regarded as corrected
versions of \(b_k\) and \(\boldsymbol v_k\). With these parameters we define 
\begin{equation*}
    \dot{h}:=\dot{g}+\sum_{j=1}^N\frac{b_j-\beta_j}{\lambda_j}(\Lambda_j W_j)\chi_t-\sum_{j=1}^N\frac{(\boldsymbol{v}_j-\boldsymbol{s}_j)}{\lambda_j}\cdot\nabla_j W_j.
\end{equation*}
And furthermore we have
\begin{equation*}
    \Vec{u}=(g,\dot{h})+\Vec{W}_{\boldsymbol{\Tilde{\Gamma}}},
\end{equation*}
where 
\begin{equation*}
    \Vec{W}_{\boldsymbol{\Tilde{\Gamma}}}=\sum_{j=1}^N\left(W_j,\frac{\beta_j}{\lambda_j}(\Lambda_j W_j)\chi_{t}+\frac{\boldsymbol{s}_j\cdot\nabla_jW_j}{\lambda_j}\right).
\end{equation*}
By the equivariance of \(\beta,\boldsymbol s\) and of the original modulation
parameters, both \(\dot h\) and \(\Vec{W}_{\boldsymbol{\Tilde{\Gamma}}}\) are \(G\)-invariant.
We also denote 
\begin{equation*}
    \Tilde{a}_k^{\pm}:=\lelan \Vec{Z}_k^{\pm}, (g,\dot{h}) \rilan,
\end{equation*}
which indicates
\begin{equation*}
    \Tilde{a}_k^{\pm}=a_k^{\pm}+\lelan \Vec{Z}_k^{\pm}, (0, \sum_{j=1}^N\frac{b_j-\beta_j}{\lambda_j}(\Lambda_j W_j)\chi_t+\sum_{j=1}^N\frac{(\boldsymbol{v}_j-\boldsymbol{s}_j)}{\lambda_j}\cdot\nabla_j W_j) \rilan.
\end{equation*}
Since \((g,h)\) is \(G\)-invariant and the localized stable/unstable directions
are equivariant,
\[
\widetilde a_{\pi_\rho(k)}^\pm=\widetilde a_k^\pm.
\]
Hence \(\widetilde a_k^\pm\) is independent of \(k\); we write
\[
\widetilde a^\pm:=\widetilde a_k^\pm.
\]
We now introduce the scalar quantities used to formulate the scale bootstrap. As in Section \ref{A formal computation}, we set
\[
\alpha(t):=\log\frac1{\lambda(t)},
\]
and 
\begin{equation*}
    \Xi (t):=\frac{1}{128\pi^2}\left(\lelan \Lambda W\chi \left(\frac{\cdot \lambda}{t}\right), \Lambda W\chi \left(\frac{\cdot \lambda}{t}\right) \rilan+\sum_{j\neq k} \lelan\Lambda W({\cdot-\boldsymbol{y}_k})\chi \left(\frac{\cdot \lambda}{t}\right), \Lambda W({\cdot-\boldsymbol{y}_j})\chi \left(\frac{\cdot \lambda}{t}\right) \rilan\right).
\end{equation*}
By the
equivariance of \((\boldsymbol y_1,\ldots,\boldsymbol y_N)\) and (\ref{eq:inner-prod-1})--(\ref{eq:inner-prod-2}), this quantity is
independent of \(k\), and 
\begin{equation}\label{est of Xi}
  \Xi(t)=\alpha(t)+O(\log t).  
\end{equation}
Finally, we set
\[
m(t):=\frac{\beta(t)}{\lambda(t)},
\qquad
\mu(t):=\left(\frac{c}{\Xi(t)}\right)^{1/2},
\]
where \(c>0\) is the constant defined in Section 2. The bootstrap will control \(m-\mu\), rather than comparing \(\lambda(t)\) with
a fixed reference profile up to a multiplicative constant.

Next we introduce the following bootstrap estimates
\begin{equation}\label{bootstap a for energy}
      \left\|\Vec{g}(t)\right\|_{\ME}\leq t^{-\frac{1}{6}}\lambda(t), 
\end{equation}
\begin{equation}\label{bootstap a for 2nd energy}
        \left\|\Vec{g}(t)\right\|_{\ME_2}\leq t^{\frac{1}{2}} \lambda(t), 
\end{equation}
\begin{equation}\label{bootstrap a for lambda}
   \left|\alpha(t)-\xi t^{\frac{2}{3}}\right|\leq t^{\frac{1}{3}}, 
\end{equation}
\begin{equation}\label{bootstrap a for b}
    \left|m(t)-\mu(t)\right|\leq Ct^{-\frac{2}{3}},
\end{equation}
\begin{equation}\label{bootstrap a for y}
   |\boldsymbol{y}_k(t)-\boldsymbol{z}_k|\leq t^{\frac{5}{2}}\lambda(t)^2, 
\end{equation}
\begin{equation}\label{bootstrap a for v}
  |\boldsymbol{v}_k(t)|\leq t^{\frac{7}{6}}\lambda(t)^2,  
\end{equation}
\begin{equation}\label{bootstrap a for unstable}
   (a^+(t))^2\leq  t^{\frac{1}{2}}\lambda(t)^4.
\end{equation}
\begin{equation}\label{bootstrap a for stable}
 t^{-\frac{1}{2}}\lambda(t)^{-4}(a^-(t))^2\leq  1
\end{equation}
Here $C>0$ is a constant. Let $\Vec{u}\in \mathcal{C}(I_{\max};X)$, where $I_{\max}\ni T$, be the maximal solution of (\ref{NLW 4}) corresponding to any data $\Vec{u}(T)$ as given by Lemma \ref{choice of u(T)}. Since $\Vec{u}(T)\in X$, the persistence of the regularity indicates that $\Vec{u}\in \mathcal{C}(I_{\max};X)$. Such regularity will allow energy computations without density argument. We record a consequence of the scale bootstrap which will be used later. From (\ref{est of Xi}) and (\ref{bootstrap a for lambda}), \[
\Xi(t)=\alpha(t)+O(\log t)
      =\xi t^{\frac23}+\bigl(\alpha(t)-\xi t^{\frac23}\bigr)+O(\log t).
\]
Using $(1+x)^{-\frac12}=1-\frac12x+O(x^2)$ we have
\begin{equation*}
\begin{aligned}
   \mu(t)
=\left(\frac{c}{\Xi(t)}\right)^{\frac12} 
=\left(\frac{c}{\xi}\right)^{\frac12}t^{-\frac13}
\left(
1-\frac12
\frac{\alpha(t)-\xi t^{\frac23}+O(\log t)}
{\xi t^{\frac23}}
+O(t^{-\frac23})
\right).
\end{aligned}
\end{equation*}
Since \(c=\frac49\xi^3\), this gives
\begin{equation}\label{rough of mu}
    \mu(t)
=
\frac{2\xi}{3}t^{-\frac13}
-
\frac{\alpha(t)-\xi t^{\frac23}}{3t}
+
O(t^{-1}\log t)=\frac{2\xi}{3}t^{-\frac13}+O(t^{-\frac{2}{3}}).
\end{equation}
In particular, $\mu(t)\sim t^{-\frac13}.$

In the rest of this section, we will prove the following lemma.
\begin{lemma}\label{close the bootstrap for parameters system} It holds $T_0\leq T_*\leq T$ and if $T_*>T_0$ then

(\rmnum{1}) Equality is reached at $t=T_*$ in at least one of the inequalities (\ref{bootstap a for energy})-(\ref{bootstrap a for stable}).

(\rmnum{2}) On $[T_*,T]$, it holds   
\begin{equation}\label{lambda'+b}
   |{\lambda}'(t)+{b}(t)|\lesssim t^{\frac{1}{2}}\lambda(t)^2, 
\end{equation}
\begin{equation}\label{y'+v}
   |\boldsymbol{y}'_k(t)+\boldsymbol{v}_k(t)|\lesssim t^{\frac{1}{2}}\lambda(t)^2,  
\end{equation}
\begin{equation}\label{b and beta}
    |\beta(t)-b(t)|\lesssim (t^{-\frac{5}{6}}\sqrt{\log t})\lambda(t),
\end{equation}
\begin{equation}\label{beta'+=to}
   \left| \Xi(t)\beta'(t)+\frac{1}{2}\frac{\beta(t)^2}{\lambda(t)}+c\lambda(t)\right|\lesssim  (t^{-\frac{7}{6}}\sqrt{\log t})\lambda(t),
\end{equation}
\begin{equation}\label{rough est on b'}
    |b'(t)|\lesssim t^{\frac{1}{2}}\lambda(t),
\end{equation}
\begin{equation}\label{v and s}
    |\boldsymbol{v}_k(t)-\boldsymbol{s}_k(t)|\lesssim t\lambda(t)^2,
\end{equation}
\begin{equation}\label{s'+=to}
 \left| -{s}_{k,j}'(t)+D_{k,j}(\boldsymbol{\lambda},\boldsymbol{y}) \right|\lesssim t^{\frac{2}{3}}\lambda(t)^2 ,  
\end{equation}
\begin{equation}\label{rough est on v'}
    |\boldsymbol{v}_k'(t)|\lesssim t^{\frac{1}{2}}\lambda(t),
\end{equation}
\begin{equation}\label{a and tilde a}
    |a^{\pm}(t)-\Tilde{a}^{\pm}(t)|\lesssim (t^{-\frac{5}{6}}\sqrt{\log t})\lambda(t)^3,
\end{equation}
\begin{equation}\label{a' and tilde a'}
   |(a^{\pm}(t))'-(\Tilde{a}^{\pm}(t))'|\lesssim t^{\frac{1}{2}}\lambda(t)^3,
\end{equation}
\begin{equation}\label{a' also satisfies}
    \left|({a}^{\pm}(t))'\mp \nu \lambda^{-1}{a}^{\pm}(t)\right|\lesssim \lambda(t).
\end{equation}
\end{lemma}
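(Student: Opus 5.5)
The plan has three parts: a preliminary modulation step, the main projections of the equation for $\vec g$, and the corrected and stable/unstable bounds.

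\textbf{Step 0: existence of the modulation.} Near $t=T$ we apply the implicit function theorem to the map sending $(\vec u,\boldsymbol\Gamma)$ to the left-hand sides of (\ref{orthogonal condition 1})--(\ref{orthogonal condition 2}). Its Jacobian in $\boldsymbol\Gamma$ is a small perturbation of a block-diagonal matrix. The blocks are built from $\langle \Delta\Lambda W\chi_M,\Lambda W\rangle$, $\langle\Delta\partial_jW,\partial_jW\rangle$, $\langle \Lambda W\chi_M,\Lambda W\chi_M\rangle$ and $\langle\partial_jW\chi_M,\partial_jW\rangle$, all nonzero for large $M$; this is the same matrix as in Lemma \ref{choice of u(T)}. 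This gives a $\mathcal C^1$ decomposition with $\vec g$ small, and Lemma \ref{lem:sym-mod} gives the symmetry. At $t=T$ every bootstrap inequality holds, strictly in all cases except possibly (\ref{bootstrap a for stable}), by Lemma \ref{choice of u(T)} and the choice of $\Gamma(T)$. In particular, with the formula for $b(T)$ and (\ref{rough of mu}), $m(T)-\mu(T)=O(T^{-2/3})$, which is absorbed by taking $C$ large. Continuity of the flow in $X$ then gives $T_*<T$. Part (i) follows from the definition of $T_*$: if $T_*>T_0$ and every inequality were strict at $T_*$, continuity would let the decomposition extend below $T_*$, a contradiction.

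\textbf{Step 1: the equations for $\vec g$ and the first projections.} We write
$\partial_t g=\dot g+U^{(1)}-\partial_tW_{\boldsymbol\Gamma}$ and $\partial_t\dot g=-\mathcal L_{W_\Gamma}g+(\text{interaction})+N(g)-\partial_t(\text{velocity part})$, keeping terms up to second order. Differentiating (\ref{orthogonal condition 1}) in time gives, to leading order,
\[
\frac{\lambda_k'+b_k}{\lambda_k}\,\langle \Delta\Lambda W\chi_M,\Lambda W\rangle=O\bigl(\langle\dot g,\lambda_k^{-2}\Delta_k\Lambda_kW_k\chi_k\rangle\bigr)+\ldots .
\]
The right-hand side is bounded by $\lambda^{-1}\|\dot g\|_{\dot H^1}\lesssim t^{1/2}$ using (\ref{bootstap a for 2nd energy}). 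This yields (\ref{lambda'+b}) and, in the same way, (\ref{y'+v}). The extra factor $\lambda$ comes from the normalization $\lambda_k^{-1}\Delta_k\Lambda_kW_k$ with $\|\cdot\|_{\dot H^{-1}}\sim\lambda$.

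Differentiating (\ref{orthogonal condition 2}) gives $b'$ and $v'$ paired with $\langle\Lambda W\chi_M,\Lambda W\rangle$ and $\langle\partial W\chi_M,\partial W\rangle$. The error terms are $\langle \mathcal L g,\Lambda_kW_k\chi_k\rangle$, controlled by $\|g\|_{\dot H^2}$ through (\ref{bootstap a for 2nd energy}), together with interaction terms of size $O(\lambda)$. This gives the rough bounds (\ref{rough est on b'}) and (\ref{rough est on v'}).

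\textbf{Step 2: corrected parameters and the sharp scale law.} For (\ref{b and beta}), we invert the diagonally dominant system defining $\beta$. The right-hand side $\langle\dot g,\lambda^{-1}\Lambda W_k\chi_t\rangle$ vanishes on $|x-\boldsymbol y_k|\le\lambda M$ by (\ref{orthogonal condition 2}). On the remaining region, Cauchy--Schwarz with $\|\Lambda W\chi_t\|_{L^2(|x|\ge \lambda M)}\lesssim\lambda\sqrt{\log(t/\lambda)}$ gives
\[
|\beta-b|\lesssim \frac{\lambda\,\|\dot g\|_{L^2}\sqrt{\alpha}}{\alpha}\lesssim t^{-1/6}\lambda^{2}\,t^{-1/3}\cdot\lambda^{-1}\ldots,
\]
and I aim to arrange the exponents to reach $t^{-5/6}\sqrt{\log t}\,\lambda$, using the $\dot H^1$ bound on $\dot g$ through Hardy on the annulus if needed.

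The estimate (\ref{beta'+=to}) is the heart of the lemma. By construction of $\beta$,
\[
\frac{d}{dt}\Bigl\langle \partial_t u-\sum_j\tfrac{\beta_j}{\lambda_j}\Lambda_jW_j\chi_t-\ldots,\ \lambda_k^{-1}\Lambda_kW_k\chi_t\Bigr\rangle=0.
\]
So we compute $\langle\partial_{tt}u,\lambda^{-1}\Lambda W_k\chi_t\rangle$ exactly as in Section \ref{A formal computation}, with $\Xi$ in place of $\Xi_0$, and the term $\langle \Delta u+f(u),\lambda^{-1}\Lambda_kW_k\chi_t\rangle$ produces $-c\lambda$ via (\ref{est:W^2 Lambda W}). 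What remains is bounding the error terms: the linear term $\langle \mathcal L_{W_\Gamma} g,\Lambda_kW_k\chi_t\rangle=\langle g,[\mathcal L,\chi_t]\Lambda W_k\rangle+O(\text{interaction}\cdot g)$, the commutator being $O(\lambda/t)$ in $\dot H^{-1}$; the quadratic terms in $g$; the time derivative of $\chi_t$; and the $\boldsymbol y$-errors from (\ref{bootstrap a for y}).

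I expect this step to be the main obstacle. Every error must beat $t^{-7/6}\lambda$, while the natural size of the terms is $\lambda t^{-1/3}$ per unit of $\Xi\sim t^{2/3}$, so the commutator and cutoff terms must be tracked carefully. Here I would use the first-order bound (\ref{bootstap a for energy}) rather than the second-order one.

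The translation analogues (\ref{v and s}) and (\ref{s'+=to}) follow the same way, with easier $L^2$-integrable $\nabla W$.

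\textbf{Step 3: stable and unstable directions.} For (\ref{a and tilde a}) and (\ref{a' and tilde a'}) we use the exponential decay of $Y$, which makes the pairings with the $\Lambda W\chi_t$ and $\nabla W$ corrections $O(1)$ times the coefficient differences, combined with (\ref{b and beta}), (\ref{v and s}) and the derivative bounds. For (\ref{a' also satisfies}) we differentiate $\langle\vec Z_k^\pm,\vec g\rangle$, use $\mathcal L Y=-\nu^2Y$, and bound the remaining terms by the modulation estimates of Step 1 and the interaction size $\lambda$.
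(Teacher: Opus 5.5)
\textbf{Main gap: Step~3.} You never use $\langle Y,\Lambda W\rangle=0$ and $\langle Y,\partial_jW\rangle=0$, and without them the stated powers of $\lambda$ are out of reach. The second holds by parity, since $Y$ is radial. The first follows from $-\nu^2\langle Y,\Lambda W\rangle=\langle \mathcal{L}Y,\Lambda W\rangle=\langle Y,\mathcal{L}\Lambda W\rangle=0$, which is legitimate because $Y$ decays exponentially.

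Exponential decay alone, i.e.\ ``$O(1)$ times the coefficient differences'', only gives
$|a^{\pm}-\tilde a^{\pm}|\lesssim|\beta-b|+|\boldsymbol v_k-\boldsymbol s_k|\lesssim t^{-5/6}\sqrt{\log t}\,\lambda$
and
$|(a^{\pm})'-(\tilde a^{\pm})'|\lesssim|b'|+|\beta'|+\dots\lesssim t^{1/2}\lambda$.
Both are a factor $\lambda^2$ short of (\ref{a and tilde a}) and (\ref{a' and tilde a'}).

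The same problem hits (\ref{a' also satisfies}). If you differentiate $\langle\vec Z_k^{\pm},\vec g\rangle$ directly, the diagonal contributions $\tfrac12 b_k'\langle Y,\Lambda W\,\chi_t(\lambda_k\cdot+\boldsymbol y_k)\rangle$ and $\tfrac{\nu}{2}\lambda_k^{-1}(\lambda_k'+b_k)\langle Y,\Lambda W\rangle$ are, with your Step~1 bounds, of size $t^{1/2}\lambda\gg\lambda$.

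With the orthogonality, each diagonal pairing reduces to a tail $\langle Y,\Lambda W(1-\chi_t(\lambda_k\cdot+\boldsymbol y_k))\rangle$, which is exponentially small in $t/\lambda$. The off-diagonal pairings carry $\|\lambda_k^{-1}Y_k\|_{L^1}=\lambda_k^2\|Y\|_{L^1}$ against $\lambda_j^{-1}\Lambda_jW_j\approx-8|\boldsymbol y_k-\boldsymbol y_j|^{-2}$. This is where the $\lambda^3$ comes from, and it is what the paper uses. Once you add it, your direct differentiation of $a^{\pm}$ is a valid shortcut to (\ref{a' also satisfies}); the paper instead differentiates $\tilde a^{\pm}$ in the corrected decomposition $(g,\dot h)$ and transfers back.

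\textbf{Step~2 is still a plan.} Your displayed chain for $|\beta-b|$ does not close. Pairing $\dot g$ with $\lambda_k^{-1}\Lambda_kW_k\chi_t\sim|x-\boldsymbol y_k|^{-2}$ in $L^2$ over $\lambda M\lesssim|x-\boldsymbol y_k|\lesssim t$ loses $\sqrt{\log(t/\lambda)}\sim t^{1/3}$ and yields only $|\beta-b|\lesssim t^{-1/2}\lambda$. To confine the logarithmic loss to $\sqrt{\log t}$, split at $|x-\boldsymbol y_k|\sim t^{-1}$:
inside, pair $\|\dot g\|_{\dot H^1}\lesssim t^{1/2}\lambda$ with $\|r^{-2}\|_{L^{4/3}(r\le 1/t)}\sim t^{-1}$ (Hardy works too);
outside, pair $\|\dot g\|_{L^2}$ with $\|r^{-2}\|_{L^2(1/t\le r\le 2t)}\sim\sqrt{\log t}$;
then divide by the Gram diagonal $\sim t^{2/3}$.

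For (\ref{beta'+=to}), your reorganisation through $\frac{d}{dt}\langle\dot h,\lambda_k^{-1}\Lambda_kW_k\chi_t\rangle\approx 0$ is equivalent to the paper's use of the $\beta$-system; both eliminate the rough $b'$. However, the first-order bound (\ref{bootstap a for energy}) suffices only for the commutator $[\mathcal{L}_k,\chi_t]$ and the $\partial_t\chi_t$ terms. Two other terms need more:
\begin{itemize}
\item The pairing of $\dot g$ with $\lambda_k'\lambda_k^{-2}(\underline{\Lambda}_k\Lambda_kW_k)\chi_t$ is only $O(t^{-1/2}\lambda)$ from first-order bounds. It needs $\|\dot g\|_{\dot H^1}$ against an $L^{4/3}$ norm of size $|\lambda_k'|$.
\item The term $\langle\lambda_k^{-1}\Lambda_kW_k\chi_t,3W_{\boldsymbol{\Gamma}}g^2\rangle$ is only $O(t^{-1/3}\lambda)$ from first-order bounds. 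It needs $\|g\|_{\dot H^{3/2}}\lesssim t^{1/6}\lambda$, obtained by interpolation with (\ref{bootstap a for 2nd energy}).
\end{itemize}

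\textbf{Two smaller points.} In Step~1, the pairing with your $\lambda_k^{-2}$-normalised profile is $O(\|\dot g\|_{\dot H^1})$, because the relevant $L^{4/3}$ norm is scale invariant; this is what yields $t^{1/2}\lambda^2$, not the factor $\lambda^{-1}\|\dot g\|_{\dot H^1}$ you wrote. Also, at $t=T$ the bound (\ref{bootstrap a for lambda}) is an equality when $|\omega_0|=1$, so $T_*<T$ can fail; the lemma only claims $T_*\le T$.
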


\begin{Rmk}
Here the estimate on $\boldsymbol{v}_k'$ appears rather rough at first sight and may seem insufficient to control the parameter $\boldsymbol{v}_k$. However, this does not cause any difficulty.  Indeed, the terms involving $\boldsymbol{v}_k'$ are always coupled with $\nabla_k W_k$, which lies in the kernel of the linearized operator. As a consequence, this contribution is sufficiently small for our purposes despite the lack of precision in the bound on $\boldsymbol{v}_k'$.   
\end{Rmk}

\begin{proof}\textbf{Step 0. Proof of the continuity.}
At $t=T$, Lemma \ref{choice of u(T)} provides suitable initial data for decomposition (\ref{g(t) equation}) which meets all the inequalities of (\ref{bootstap a for energy})-(\ref{bootstrap a for stable}). Here we check (\ref{bootstrap a for b}), which does not follow directly at $T$ from Lemma \ref{choice of u(T)}. By (\ref{rough of mu}) and the initial data of $\alpha(T)$
, we obtain
\[
\begin{aligned}
\mu(T)=
\frac{2\xi}{3}T^{-\frac13}
-\frac{\omega_0}{3}T^{-\frac23}
+O(T^{-1}\log T).
\end{aligned}
\]
On the other hand, Lemma \ref{choice of u(T)} gives
\[
\frac{b(T)}{\lambda(T)}
=
\frac{2\xi}{3}T^{-\frac13}
-\frac{\omega_0}{3}T^{-\frac23}.
\]
Therefore
\[
\left|\frac{b(T)}{\lambda(T)}-\mu(T)\right|
\lesssim T^{-1}\log T
=o(T^{-\frac23}).
\]
It remains only to replace \(b\) by the corrected coefficient \(\beta\).
By the definition of \(\beta\), the diagonal dominance of the corresponding
linear system, and the estimate on \(\dot g(T)\) from Lemma \ref{choice of u(T)}, we have
\[
|\beta(T)-b(T)|\lesssim \left\|\dot{g}(T)\right\|_{L^2}T^{-\frac{1}{3}}\lesssim \lambda(T)^2.
\]
Hence
\begin{equation}\label{m T- mu T}
 |m(T)-\mu(T)|
=
\left|\frac{\beta(T)}{\lambda(T)}-\mu(T)\right|=O(T^{-1}\log T)
=o(T^{-\frac23}),   
\end{equation}
and so the bootstrap estimate (\ref{bootstrap a for b}) is satisfied at \(t=T\), after increasing
\(T_0\) if necessary.

By the local Cauchy theory for (\ref{NLW 4}), it is clear that if a solution $\Vec{u}$ satisfies (\ref{g(t) equation}) with (\ref{bootstap a for energy})-(\ref{bootstrap a for stable}) on some interval $[t,T]$, then the solution $\Vec{u}$ also exists on $[t-\tau,T]$, for some $\tau>0$.

Next in order to decompose $\Vec{u}(t)$ for $t<T$, we denote function $\boldsymbol{F}$ as 
\begin{equation*}
\begin{aligned}
   \boldsymbol{F}(t,\boldsymbol{\Gamma}(t)):=\Bigg(  \lelan \frac{1}{\lambda_k}(\Delta_k\Lambda_k W_k),g \rilan&\text{ , } \lelan\frac{1}{\lambda_k}\left(\Delta_k\nabla_k W_k \right)\chi_k,g\rilan,\\
   \lelan {\frac{1}{\lambda_k}(\Lambda_k W_k)\chi_{k}},\dot{g}\rilan&\text{ , }\lelan \frac{1}{\lambda_k}(\nabla_kW_k)\chi_k,\dot{g} \rilan\Bigg)_{k=1,...,N}.
\end{aligned}
\end{equation*}
It is easy to verify that $\boldsymbol{F}$ is continuous in $t$ and locally Lipschitz in $\boldsymbol{\Gamma}$. Now consider the differential  system $\boldsymbol{D\Gamma}'(t)=\boldsymbol{F}(t,\boldsymbol{\Gamma}(t))$. The matrix $\boldsymbol{D}$ is a perturbation of the block matrix $diag_N(D_0)\in \mathbb R^{10N\times10N}$, where 
\begin{equation*}
    D_0=diag(\left\|\nabla\Lambda W\right\|_{L^2}^2,(\left\|\nabla \partial_{x_j}W\right\|_{L^2}^2)_{j=1,...,4},\left\|\Lambda W \chi\right\|_{L^2}^2,(\left\|\partial_{x_j} W\right\|_{L^2}^2)_{j=1,..,4}).
\end{equation*}
From the definition of multi-bubble and the control of $\Vec{g}$ (which is (\ref{bootstap a for energy})), the perturbation is sufficiently small. Therefore, (\rmnum{1}) follows from Cauchy-Lipschitz theorem and continuity arguments.

To prove the estimates in (\rmnum{2}) hold on the whole interval $[T_*,T]$, first we need to rewrite the equation of $\Vec{g}(t)$ as
\begin{equation}\label{formal dt g}
    \partial_t\Vec{g}=J\circ DE(\Vec{W}_{\boldsymbol{\Gamma}}+\Vec{g})-\boldsymbol{\lambda}'\partial_{\boldsymbol{\lambda}}\Vec{W}_{\boldsymbol{\Gamma}}-\boldsymbol{b}'\partial_{\boldsymbol{b}}\Vec{W}_{\boldsymbol{\Gamma}}-\boldsymbol{y}'\partial_{\boldsymbol{y}}\Vec{W}_{\boldsymbol{\Gamma}}-\boldsymbol{v}'\partial_{\boldsymbol{v}}\Vec{W}_{\boldsymbol{\Gamma}}.
\end{equation}
To be specific, $\partial_t g$ satisfies
\begin{equation}\label{g-dot g}
    \partial_t g=\dot{g}+\sum_{j=1}^N \lambda_j^{-1}(\lambda_j'\Lambda_jW_j+b_j(\Lambda_j W_j)\chi_{t})+\sum_{j=1}^N \lambda_j^{-1}(\boldsymbol{y}_j'+\boldsymbol{v}_j)\cdot\nabla_j W_j,
\end{equation} 
while $\partial_t \dot{g}$ satisfies
\begin{equation}\label{partial g dot}
    \begin{aligned}
        \partial_t \dot{g}=&\Delta g+f(W_{\boldsymbol{\Gamma}}+g)-\sum_{j=1}^Nf(W_j)+\sum_{j=1}^N\frac{\lambda_j'b_j}{\lambda_j^2} (\underline{\Lambda}_j\Lambda_jW_j)\chi_{t}\\
        &-\sum_{j=1}^N\frac{b_j'}{\lambda_j}(\Lambda_jW_j\chi_{t})+\sum_
        {j=1}^N\frac{b_j|x-\boldsymbol{y}_j|}{\lambda_jt^2}(\Lambda_jW_j\chi'_{t})+\sum_{j=1}^N\frac{b_j}{\lambda_j^2}(\boldsymbol{y}_j'\cdot\nabla)(\Lambda_j W_j\chi_{t})\\
         & -\sum_{j=1}^N\sum_{l=1}^4\frac{v'_{j,l}(\partial_lW)(\frac{\cdot-\boldsymbol{y}_j}{\lambda_j})}{\lambda_j^2}-\sum_{j=1}^N\sum_{l=1}^4\frac{\lambda'_jv_{j,l}(\underline{\Lambda} \partial_l W)\left(\frac{\cdot-\boldsymbol{y}_j}{\lambda_j}\right)}{\lambda_j^3}\\
   &+\sum_{j=1}^N\sum_{l,m=1}^4\frac{v_{j,l}y'_{j,m}(\partial_l\partial_mW)(\frac{\cdot-\boldsymbol{y}_j}{\lambda_j})}{\lambda_j^3}.
    \end{aligned}
\end{equation}

\textbf{Step 1. Estimate of $\lambda'+b$ (\ref{lambda'+b}).} We derive the equation for $\lambda_k'+b_k$ by taking time derivative on the orthogonal condition $\lelan \frac{1}{\lambda_k}(\Delta_k\Lambda_k W_k) \chi_k,g \rilan$. From (\ref{g-dot g}), it holds
\begin{equation}\label{orth with Lambda W}
    \begin{aligned}
        0=&\frac{d}{dt}\lelan \frac{1}{\lambda_k}(\Delta_k\Lambda_k W_k) \chi_k,g \rilan\\
        =
        &-\lelan \frac{\lambda_k'}{\lambda_k^3}\left(\underline{\Lambda}[(\Delta\Lambda W\right)\chi_M])\left(\frac{x-\boldsymbol{y}_k}{\lambda_k}\right),g \rilan-\lelan \sum_{i=1}^4\frac{(y_k')_i}{\lambda_k^3}(\partial_i[(\Delta \Lambda W)\chi_M])\left(\frac{x-\boldsymbol{y}_k}{\lambda_k}\right),g \rilan\\
        &+\lelan \frac{1}{\lambda_k}(\Delta_k\Lambda_kW_k) \chi_k,\dot{g}+\sum_{j=1}^N \lambda_j^{-1}(\lambda_j'\Lambda_jW_j+b_j(\Lambda_j W_j)\chi_{t})+\sum_{j=1}^N\lambda_j^{-1}(\boldsymbol{y}_j'+\boldsymbol{v}_j)\cdot\nabla_j W_j\rilan.
    \end{aligned}
\end{equation}
We start from the first line of (\ref{orth with Lambda W}). Using Cauchy inequality and Sobolev inequality gives
\begin{equation*}
    \begin{aligned}
       &\left|\lelan \frac{\lambda_k'}{\lambda_k^3}\left(\underline{\Lambda}[(\Delta\Lambda W\right)\chi_M])\left(\frac{x-\boldsymbol{y}_k}{\lambda_k}\right),g \rilan\right| \\
       \lesssim &|\lambda_k'|\left\|\lambda_k^{-3}\left(\underline{\Lambda}[(\Delta\Lambda W\right)\chi_M])\left(\frac{x-\boldsymbol{y}_k}{\lambda_k}\right)\right\|_{L^{\frac{4}{3}}}\left\|g\right\|_{\dot{H}^1}\lesssim |\lambda_k'| \left\|g\right\|_{\dot{H}^1}.
    \end{aligned}
\end{equation*}
Similarly, the second term on the right hand side of (\ref{orth with Lambda W}) is controlled by  
\begin{equation*}
    \begin{aligned}
      &\left|\lelan \sum_{i=1}^4\frac{(y_k')_i}{\lambda_k^3}(\partial_i[(\Delta \Lambda W)\chi_M])\left(\frac{x-\boldsymbol{y}_k}{\lambda_k}\right),g \rilan\right|\\
      &\lesssim |\boldsymbol{y}_k'|\left\|\partial_i[(\Delta\Lambda W)\chi_M]\right\|_{L^{\frac{4}{3}}}\left\|g\right\|_{\dot{H}^1}\lesssim |\boldsymbol{y}_k'|\left\|g\right\|_{\dot{H}^1}
    \end{aligned}
\end{equation*}
 Next, we check all the terms on the last line of (\ref{orth with Lambda W}). The term involving $\dot{g}$ is estimated by Cauchy inequality:
\begin{equation*}
  \left|\lelan  \frac{1}{\lambda_k}(\Delta_k\Lambda_k W_k) \chi_k, \dot{g} \rilan \right|\lesssim \left\|\frac{1}{\lambda_k}(\Delta_k\Lambda_k W_k) \chi_k\right\|_{L^2} \left\|\dot{g}\right\|_{L^2}\lesssim  \left\|\dot{g}\right\|_{L^2}.
\end{equation*}
Moreover, by symmetry one has $\lelan (\Delta_k\Lambda_kW_k)\chi_k,(y_k')_i \partial_i W_k \rilan=0$. Hence, separating the contribution with $j=k$ and using the symmetry identity above, we obtain
\begin{equation*}
    \begin{aligned}
        &\lelan \frac{1}{\lambda_k}(\Delta_k\Lambda_kW_k )\chi_k,\sum_{j=1}^N \lambda_j^{-1}\left(\lambda_j'\Lambda_jW_j+b_j(\Lambda_j W_j)\chi_{t}\right)+\sum_{j=1}^N\frac{\boldsymbol{y}_j'}{\lambda_j}\cdot\nabla_j W_j \rilan\\
        =&(\lambda_k'+b_k)\lelan (\Delta \Lambda W)\chi_M,\Lambda W \rilan+\lelan \frac{1}{\lambda_k}(\Delta_k\Lambda_kW_k )\chi_k,\sum_{j\neq k} \frac{1}{\lambda_j}(\lambda_j'\Lambda_jW_j+b_j(\Lambda_j W_j)\chi_{t}+\boldsymbol{y}_j'\cdot\nabla_j W_j )\rilan.
    \end{aligned}
\end{equation*}
The interaction term in here is estimated by the pointwise decay of stationary $W$:
\begin{equation*}
    \begin{aligned}
      &\left|\lelan \frac{1}{\lambda_k}(\Delta_k\Lambda_kW_k )\chi_k,\sum_{j\neq k} \frac{1}{\lambda_j}(\lambda_j'\Lambda_jW_j+b_j(\Lambda_j W_j)\chi_{t}+\boldsymbol{y}_j'\cdot\nabla_j W_j )\rilan\right|\\
      \lesssim&\sum_{j\neq k}\left(\frac{\lambda_k^2\log\left(\frac{1}{\lambda_k}\right)}{|\boldsymbol{y}_j-\boldsymbol{y}_k|^2}(|\lambda_j'|+|b_j|)+\frac{\lambda_j \lambda_k^2\log\left(\frac{1}{\lambda_k}\right)}{|\boldsymbol{y}_j-\boldsymbol{y}_k|^3}|\boldsymbol{y}_j'|\right).
    \end{aligned}
\end{equation*}
Collecting the previous estimates yields
\begin{equation}\label{ortho 1 modu get for b and lambda'}
\begin{aligned}
      |\lambda_k'+b_k|\lesssim& (1+|\lambda_k'|+|\boldsymbol{y}_k'|)\left\|\Vec{g}\right\|_{\ME}+(|\lambda_k'|+|b_k|)\lambda_k^2
    \\&\sum_{j\neq k}\frac{\lambda_k^2\log\left(\frac{1}{\lambda_k}\right)}{|\boldsymbol{y}_k-\boldsymbol{y}_j|^2}(|\lambda_j'|+|b_j|)+\sum_{j\neq k}|y_k'|\frac{\lambda_j \lambda_k^2\log\left(\frac{1}{\lambda_k}\right)}{|\boldsymbol{y}_k-\boldsymbol{y}_j|^3}.
\end{aligned}
\end{equation}
An analogous but better estimate can be derived in the norm adapted to the second order energy. For this, it is enough to treat the terms involving $g$ and $\dot g$. We first consider the contribution of $g$.  By Cauchy's inequality and let $\epsilon>0$ be a small constant, it can be re-estimated as 
\begin{equation*}
     \begin{aligned}
       &\left|\lelan \frac{\lambda_k'}{\lambda_k^3}\left(\underline{\Lambda}[(\Delta\Lambda W\right)\chi_M])\left(\frac{x-\boldsymbol{y}_k}{\lambda_k}\right),g \rilan\right|\\ 
     \lesssim&\frac{|\lambda_k'|}{\lambda_k^{1+\epsilon}}{\left\|\left({\underline{\Lambda}[(\Delta\Lambda W))\chi_M]}\ |\cdot|^{2-\epsilon}\right)\left(\frac{x-\boldsymbol{y}_k}{\lambda_k}\right)\right\|_{L^2}\left\|\frac{g}{|x-\boldsymbol{y}_k|^{2-\epsilon}}\right\|_{L^2}}.
    \end{aligned} 
\end{equation*}
Here we allow for a small loss $\epsilon > 0$, which arises from the critical Hardy and Sobolev inequalities in dimension four and only affects lower order terms. Given the pointwise decay $\Delta \Lambda W(x)\sim \lelan x\rilan^{-6}$, the first $L^2$ norm is bounded by 
\begin{equation*}
    \left\|\left({\underline{\Lambda}[(\Delta\Lambda W))\chi_M]}\ |\cdot|^{2-\epsilon}\right)\left(\frac{x-\boldsymbol{y}_k}{\lambda_k}\right)\right\|_{L^2}\lesssim \left(\int_{\lambda_k}^{\infty}\left(\frac{\lambda_k^{4+\epsilon}}{r^{4+\epsilon}}\right)^2r^3dr\right)^{\frac{1}{2}}\lesssim \lambda_k^2.
\end{equation*}
While the second $L^2$ norm, from Hardy's inequality, is dominated by
\begin{equation}\label{hardy 2-episllon}
    \left\|\frac{g}{|x-\boldsymbol{y}_k|^{2-\epsilon}}\right\|_{L^2}\lesssim \left\|g\right\|_{\dot{H}^2}+\left\|g\right\|_{\dot{H}^1}.
\end{equation}
Combining these estimates together gives
\begin{equation}\label{orth 1.1 re}
   \left|\lelan \frac{\lambda_k'}{\lambda_k^3}\left(\underline{\Lambda}[(\Delta\Lambda W\right)\chi_M])\left(\frac{x-\boldsymbol{y}_k}{\lambda_k}\right),g \rilan\right|\lesssim {|\lambda_k'|}{\lambda_k^{1-\epsilon}} (\left\|g\right\|_{\dot{H}^2}+\left\|g\right\|_{\dot{H}^1}).
\end{equation}
Using an identical method, we also derive
\begin{equation*}
   \begin{aligned}
      \left|\lelan \sum_{i=1}^4\frac{(y_k')_i}{\lambda_k^3}(\partial_i[(\Delta \Lambda W)\chi_M])\left(\frac{x-\boldsymbol{y}_k}{\lambda_k}\right),g \rilan\right|\lesssim |\boldsymbol{y}_k'|\lambda_k^{1-\epsilon}(\left\|g\right\|_{\dot{H}^2}+\left\|g\right\|_{\dot{H}^1}).
    \end{aligned}   
\end{equation*}
Next, for terms involving $\dot{g}$, applying Cauchy inequality and Sobolev embedding we obtain
\begin{equation*}
    \left|\lelan  \frac{1}{\lambda_k}(\Delta_k\Lambda_k W_k) \chi_k, \dot{g} \rilan\right| \lesssim \left\|\frac{1}{\lambda_k}(\Delta_k\Lambda_k W_k) \chi_k\right\|_{L^{\frac{4}{3}}}\left\|\dot{g}\right\|_{\dot{H}^1} \lesssim \lambda_k \left\|\dot{g}\right\|_{\dot{H}^1}.
\end{equation*}
Therefore, combining these estimates together we arrive at  
\begin{equation*}
\begin{aligned}
    |\lambda_k'+b_k|\lesssim& {\lambda_k^{1-\epsilon}}(\lambda_k^{\epsilon}+|\lambda_k'|+|\boldsymbol{y}_k'|)(\left\|\Vec{g}\right\|_{
    \ME_2}+\left\|\Vec{g}\right\|_{\ME})
    \\
    &+(|\lambda_k'|+|b_k|)\lambda_k^2+\sum_{j\neq k}\frac{\lambda_k^2\log\left(\frac{1}{\lambda_k}\right)}{|\boldsymbol{y}_k-\boldsymbol{y}_j|^2}(|\lambda_j'|+|b_j|)+\sum_{j\neq k}|\boldsymbol{y}_k'|\frac{\lambda_j \lambda_k^2\log\left(\frac{1}{\lambda_k}\right)}{|\boldsymbol{y}_k-\boldsymbol{y}_j|^3}.  
\end{aligned}
\end{equation*}
This completes the proof of (\ref{lambda'+b}). We note that the corresponding estimate at the level of the second order energy yields an improved control compared with (\ref{ortho 1 modu get for b and lambda'}). 

\textbf{Step 2. Estimate of $\boldsymbol{y}_k'+\boldsymbol{v}_k$ (\ref{y'+v}).} Analogously, for the orthogonal condition involving the translation, taking the time derivative we have
\begin{equation*}
\begin{aligned}
  0=&\frac{d}{dt}\lelan\frac{1}{\lambda_k}\left(\Delta_k\nabla_k W_k \right),g\rilan\\
  =& -\lelan \frac{\lambda_k'}{\lambda_k^3}\left(\underline{\Lambda}(\Delta\nabla W\right))\left(\frac{x-\boldsymbol{y}_k}{\lambda_k}\right),g \rilan-\lelan \sum_{i=1}^4\frac{(y_k')_i}{\lambda_k^3}\partial_i(\Delta\nabla W)\left(\frac{x-\boldsymbol{y}_k}{\lambda_k}\right), g\rilan\\
  &+\lelan \frac{1}{\lambda_k}\left(\Delta_k\nabla_k W_k \right),\dot{g}+\sum_{j=1}^N \lambda_j^{-1}(\lambda_j'\Lambda_jW_j+b_j(\Lambda_j W_j)\chi_{t})+\sum_{j=1}^N\lambda_j^{-1}(\boldsymbol{y}_j'+\boldsymbol{v}_j)\cdot\nabla_j W_j\rilan .
\end{aligned}
\end{equation*}
Again applying Sobolev inequality to the first term gives
\begin{equation*}
    \left|\lelan \frac{\lambda_k'}{\lambda_k^3}\left(\underline{\Lambda}(\Delta\nabla W\right))\left(\frac{x-\boldsymbol{y}_k}{\lambda_k}\right),g \rilan\right|\lesssim |\lambda_k'|\left\|\underline{\Lambda}(\Delta\nabla W)\right\|_{L^{\frac{4}{3}}}\left\|g\right\|_{\dot{H}^1}\lesssim |\lambda_k'|\left\|g\right\|_{\dot{H}^1}.
\end{equation*}
When using second order energy norm, this estimate can be refined as
\begin{equation*}
\begin{aligned}
   &\left|\lelan \frac{\lambda_k'}{\lambda_k^3}\left(\underline{\Lambda}(\Delta\nabla W\right))\left(\frac{x-\boldsymbol{y}_k}{\lambda_k}\right),g \rilan\right|\\
   \lesssim& \frac{|\lambda_k'|}{\lambda_k^{1+\epsilon}}\left\|\left(\underline{\Lambda}(\Delta\nabla W)|\cdot|^{2-\epsilon}\right)\left(\frac{x-\boldsymbol{y}_k}{\lambda_k}\right)\right\|_{L^2}\left\|\frac{g}{|x-\boldsymbol{y}_k|^{2-\epsilon}}\right\|_{L^2}.
\end{aligned}    
\end{equation*}
Follow the analysis of (\ref{orth 1.1 re}), this term is furthermore dominated by 
\begin{equation*}
    \frac{|\lambda_k'|}{\lambda_k^{1+\epsilon}}\left(\int_{\lambda_k}^{\infty}\left(\frac{\lambda_k^{3+\epsilon}}{r^{3+\epsilon}}\right)^2r^3dr\right)^{\frac{1}{2}}(\left\|g\right\|_{\dot{H}^2}+\left\|g\right\|_{\dot{H}^1})\lesssim |\lambda_k'|\lambda_k^{1-\epsilon}(\left\|g\right\|_{\dot{H}^2}+\left\|g\right\|_{\dot{H}^1}).
\end{equation*}
For the second term, identically we obtain
\begin{equation*}
    \begin{aligned}
     \left|\lelan \sum_{i=1}^4\frac{(y_k')_i}{\lambda_k^3}\partial_i(\Delta\nabla W)\left(\frac{x-\boldsymbol{y}_k}{\lambda_k}\right), g\rilan\right|\lesssim |\boldsymbol{y}_k'|\lambda_k^{1-\epsilon}(\left\|g\right\|_{\dot{H}^2}+\left\|g\right\|_{\dot{H}^1}).
    \end{aligned}
\end{equation*}
Next, we estimate the term involving $\dot{g}$.  By Cauchy's inequality, it holds
\begin{equation*}
    \lelan \frac{1}{\lambda_k}\left(\Delta_k\nabla_k W_k \right),\dot{g}\rilan\lesssim \left\|\dot{g}\right\|_{L^2}\text{ ,  and } \lelan \frac{1}{\lambda_k}\left(\Delta_k\nabla_k W_k \right),\dot{g}\rilan\lesssim \lambda_k\left\|\dot{g}\right\|_{\dot{H}^1}.
\end{equation*}
Finally for the remaining part, by symmetry $\lelan \Delta \nabla W, \Lambda W  \rilan=0$, the summation equals to 
\begin{equation*}
\begin{aligned}
     &\lelan \frac{1}{\lambda_k}\left(\Delta_k\nabla_k W_k \right),\sum_{j=1}^N \lambda_j^{-1}(\lambda_j'\Lambda_jW_j+b_j(\Lambda_j W_j)\chi_{t})+\sum_{j=1}^N\lambda_j^{-1}(\boldsymbol{y}_j'+\boldsymbol{v}_j)\cdot\nabla_j W_j\rilan \\
     =&(\boldsymbol{y}_k'+\boldsymbol{v}_k)\left\|\nabla W\right\|_{\dot{H}^1}^2+\lelan \frac{1}{\lambda_k}\left(\Delta_k\nabla_k W_k \right),\sum_{j\neq k}\lambda_j^{-1}((\boldsymbol{y}_j'+\boldsymbol{v}_j)\cdot\nabla_j W_j+\lambda_j'\Lambda_jW_j+b_j(\Lambda_j W_j)\chi_{t})\rilan.
\end{aligned}
\end{equation*}
The diagonal $j=k$ term contributes to the leading term. For the off-diagonal $j\neq k$ case, since $\Delta \nabla W\sim \lelan r \rilan^{-7}$, in the near $k$-bubble region $|x-\boldsymbol{y}_k|\leq d$, it holds
\begin{equation*}
    \begin{aligned}
 &\left| \lelan \frac{1}{\lambda_k}\left(\Delta_k\nabla_k W_k \right),\sum_{j\neq k}\lambda_j^{-1}((\boldsymbol{y}_j'+\boldsymbol{v}_j)\cdot\nabla_j W_j+\lambda_j'\Lambda_jW_j+b_j(\Lambda_j W_j)\chi_{t})\rilan\right|\\
\lesssim& \sum_{j\neq k} \int_{\lambda_k}^{d}\frac{1}{\lambda_k^2}\Big(\frac{\lambda_k}{r}\Big)^7\frac{\lambda_j|\boldsymbol{y}_j'+\boldsymbol{v}_j|}{|\boldsymbol{y}_k-\boldsymbol{y}_j|^3}r^3dr+\sum_{j\neq k} \int_{\lambda_k}^{\infty}\frac{1}{\lambda_k^2}\Big(\frac{\lambda_k}{r}\Big)^7\frac{|b_j|+|\lambda_j'|}{|\boldsymbol{y}_k-\boldsymbol{y}_j|^2}r^3dr\\
 \lesssim&\sum_{j\neq k}\frac{|\boldsymbol{y}_j'+\boldsymbol{v}_j|\lambda_k^2\lambda_j}{|\boldsymbol{y}_j-\boldsymbol{y}_k|^3}+\frac{(|b_j|+|\lambda_j'|)\lambda_k^2}{|\boldsymbol{y}_j-\boldsymbol{y}_k|^2}.
    \end{aligned}
\end{equation*}
Here we assume that $d\leq \inf_{j\neq k}\frac{|\boldsymbol{y}_j-\boldsymbol{y}_k|}{4}$. In the region near $j$-bubble $|x-\boldsymbol{y}_j|\leq d$, similarly we have
\begin{equation*}
    \begin{aligned}
         &\left| \lelan \frac{1}{\lambda_k}\left(\Delta_k\nabla_k W_k \right),\sum_{j\neq k}\lambda_j^{-1}((\boldsymbol{y}_j'+\boldsymbol{v}_j)\cdot\nabla_j W_j+\lambda_j'\Lambda_jW_j+b_j(\Lambda_j W_j)\chi_{t})\rilan\right|\\
         \lesssim&\sum_{j\neq k}\int_{\lambda_j}^{d}\frac{\lambda_k^5}{|\boldsymbol{y}_k-\boldsymbol{y}_j|^7}\frac{|\boldsymbol{y}_j'+\boldsymbol{v}_j|}{\lambda_j^2}\left(\frac{\lambda_j}{r}\right)^3r^3dr+\sum_{j\neq k}\int_{\lambda_j}^{d}\frac{\lambda_k^5}{|\boldsymbol{y}_k-\boldsymbol{y}_j|^7}\frac{|b_j|+|\lambda_j'|}{\lambda_j^2}\left(\frac{\lambda_j}{r}\right)^2r^3dr\\
         \lesssim& \sum_{j\neq k}\frac{\lambda_k^5\lambda_j|\boldsymbol{y}_j'+\boldsymbol{v}_j|}{|\boldsymbol{y}_k-\boldsymbol{y}_j|^7}+\frac{\lambda_k^5(|b_j|+|\lambda_j'|)}{|\boldsymbol{y}_k-\boldsymbol{y}_j|^7}.
    \end{aligned}
\end{equation*}
Finally, in the exterior region $\mathbb R^4\setminus \bigcup_{j=1}^N B(|x-\boldsymbol{y}_j|\leq d)$, the decay of $W$ yields
\begin{equation*}
    \begin{aligned}
         &\left| \lelan \frac{1}{\lambda_k}\left(\Delta_k\nabla_k W_k \right),\sum_{j\neq k}\lambda_j^{-1}((\boldsymbol{y}_j'+\boldsymbol{v}_j)\cdot\nabla_j W_j+\lambda_j'\Lambda_jW_j+b_j(\Lambda_j W_j)\chi_{t})\rilan\right|\\ 
         \lesssim&\int_{d}^{\infty}\frac{\lambda_k^5}{r^7}\frac{\lambda_j}{r^3}\left(\boldsymbol{y}_j'+\boldsymbol{v}_j\right)r^3dr+\int_{d}^{\infty}\frac{\lambda_k^5}{r^7}\frac{|\lambda_j'|+|b_j|}{r^2}r^3dr\lesssim \lambda_k^5.
    \end{aligned}
\end{equation*}
In conclusion, collecting the estimates above gives
\begin{equation*}
    |\boldsymbol{y}_k'+\boldsymbol{v}_k|\lesssim (1+|\lambda_k'|+|\boldsymbol{y}_k'|) \left\|\Vec{g}\right\|_{\ME}+\sum_{j\neq k}\frac{|\boldsymbol{y}_j'+\boldsymbol{v}_j|\lambda_k^2\lambda_j}{|\boldsymbol{y}_j-\boldsymbol{y}_k|^3}+\frac{(|b_j|+|\lambda_j'|)\lambda_k^2}{|\boldsymbol{y}_j-\boldsymbol{y}_k|^2},
\end{equation*}
and
\begin{equation}\label{y'+v 2nd}
\begin{aligned}
   |\boldsymbol{y}_k'+\boldsymbol{v}_k|\lesssim & {\lambda_k^{1-\epsilon}}(\lambda_k^{\epsilon}+|\lambda_k'|+|\boldsymbol{y}_k'|) (\left\|\Vec{g}\right\|_{\ME_2}+\left\|\Vec{g}\right\|_{\ME})\\&+\sum_{j\neq k}\frac{|\boldsymbol{y}_j'+\boldsymbol{v}_j|\lambda_k^2\lambda_j}{|\boldsymbol{y}_j-\boldsymbol{y}_k|^3}+\frac{(|b_j|+|\lambda_j'|)\lambda_k^2}{|\boldsymbol{y}_j-\boldsymbol{y}_k|^2}.  
\end{aligned}
\end{equation}
 Furthermore, (\ref{y'+v}) follows from (\ref{y'+v 2nd}). We note that the corresponding estimate at the level of the second order energy provides a sharper control.

\textbf{Step 3. Control of the correction parameter $\beta$ (\ref{b and beta}) and (\ref{beta'+=to}).} We begin with $(\ref{b and beta})$. In view of the definition of $\beta_k$, it holds
\begin{equation*}
    |b_k-\beta_k|\lesssim\left|\frac{\sum_{j=1}^N\lelan \dot{g},\frac{1}{\lambda_j}(\Lambda_j W_j)\chi_{t} \rilan}{\lelan \frac{1}{\lambda_k}(\Lambda_k W_k)\chi_{t}, \frac{1}{\lambda_k}(\Lambda_kW_k)\chi_{t} \rilan}\right|\lesssim t^{-\frac{2}{3}}\left|\lelan \dot{g},\frac{1}{\lambda_k}(\Lambda_k W_k)\chi_{t} \rilan\right|.
\end{equation*}
Invoking the bootstrap assumptions (\ref{bootstap a for energy}) and (\ref{bootstap a for 2nd energy}), we compute
\begin{equation*}
\begin{aligned}
    \left|\lelan \dot{g},\frac{1}{\lambda_j}(\Lambda_j W_j)\chi_{t} \rilan\right|\lesssim& \left|\int_{0}^{\frac{1}{t}}\dot{g}\frac{1}{\lambda_j}(\Lambda_j W_j)\chi_{t}   \right|+\left|\int_{\frac{1}{t}}^{2t}\dot{g}\frac{1}{\lambda_j}(\Lambda_j W_j)\chi_{t}   \right|\\
    \lesssim& \left\|\dot{g}\right\|_{\dot{H}^1}\left(\int_{\lambda_j}^{\frac{1}{t}}\left(\frac{1}{r^2}\right)^{\frac{4}{3}}r^3dr\right)^{\frac{3}{4}}+\left\|\dot{g}\right\|_{L^2}\left(\int_{\frac{1}{t}}^t\left(\frac{1}{r^2}\right)^2r^3dr\right)^{\frac{1}{2}}\\
    \lesssim& \left\|\dot{g}\right\|_{\dot{H}^1}\frac{1}{t}+\left\|\dot{g}\right\|_{L^2}\sqrt{\log t}\lesssim (t^{-\frac{1}{6}}\sqrt{\log t})\lambda,
\end{aligned}
\end{equation*}
which completes the proof (\ref{b and beta}).  We next turn to (\ref{beta'+=to}). Taking inner product with $\frac{1}{\lambda_k}(\Lambda_k W_k)\chi_{t}$ on both sides of (\ref{partial g dot}) yields
\begin{equation}\label{g and W inner with Lambda W}
    \begin{aligned}
        &\lelan \partial_t \dot g, \frac{1}{\lambda_k}(\Lambda_k W_k)\chi_{t}\rilan-\lelan \Delta g+f(W_{\boldsymbol{\Gamma}}+g)-\sum_{j=1}^Nf(W_j) ,\frac{1}{\lambda_k}(\Lambda_k W_k)\chi_{t}\rilan\\
        =
        &\lelan  \sum_{j=1}^N\left(\frac{\lambda_j'b_j}{\lambda_j^2} (\underline{\Lambda}_j\Lambda_jW_j)\chi_{t}
        -\frac{b_j'}{\lambda_j}(\Lambda_jW_j\chi_{t})+\frac{b_j|x-\boldsymbol{y}_j|}{\lambda_jt^2}(\Lambda_jW_j\chi'_{t})\right),\frac{1}{\lambda_k}(\Lambda_k W_k)\chi_{t}\rilan\\
        &+\lelan \sum_{j=1}^N\frac{b_j}{\lambda_j^2}(y_j'\cdot\nabla)(\Lambda_j W_j\chi_{t})
          -\sum_{j=1}^N\sum_{l=1}^4\frac{v'_{j,l}(\partial_lW)(\frac{\cdot-\boldsymbol{y}_j}{\lambda_j})}{\lambda_j^2} ,\frac{1}{\lambda_k}(\Lambda_k W_k)\chi_{t}\rilan\\
        &-\lelan \sum_{j=1}^N\sum_{l=1}^4\frac{\lambda'_jv_{j,l}(\underline{\Lambda} \partial_l W)\left(\frac{\cdot-\boldsymbol{y}_j}{\lambda_j}\right)}{\lambda_j^3}-\sum_{j=1}^N\sum_{l,m=1}^4\frac{v_{j,l}y'_{j,m}(\partial_l\partial_mW)(\frac{\cdot-\boldsymbol{y}_j}{\lambda_j})}{\lambda_j^3} ,\frac{1}{\lambda_k}(\Lambda_k W_k)\chi_{t}\rilan. 
    \end{aligned}
\end{equation}
Proceeding term by term, we estimate both sides of (\ref{g and W inner with Lambda W}). Starting from the left hand side, the term containing $\partial_t \dot{g}$ can be expanded as
\begin{equation}\label{dot g and Lambda k Lambda W chi t}
  \lelan \partial_t \dot g, \frac{1}{\lambda_k}(\Lambda_k W_k)\chi_{t}\rilan=\frac{d}{dt}\lelan \dot{g},\frac{1}{\lambda_k}(\Lambda_k W_k)\chi_{t} \rilan-\lelan \dot{g}, \frac{d}{dt}\left(\frac{1}{\lambda_k}(\Lambda_k W_k)\chi_{t}\right) \rilan . 
\end{equation}
The first term here, from the definition of $\beta_k$, equals to
\begin{equation}\label{d dt beta k- bk L2 equal to}
\begin{aligned}
    &\frac{d}{dt}\left(\lelan \frac{1}{\lambda_k}(\Lambda_k W_k)\chi_{t},\sum_{j=1}^N \frac{(\beta_j-b_j)}{\lambda_j}(\Lambda_j W_j)\chi_{t} \rilan\right)\\
    =&\lelan \frac{1}{\lambda_k}(\Lambda_k W_k)\chi_{t},\sum_{j=1}^N \frac{(\beta_j'-b_j')}{\lambda_j}(\Lambda_j W_j)\chi_{t} \rilan+\sum_{j=1}^N(\beta_j-b_j)\frac{d}{dt}\left(\lelan \frac{1}{\lambda_k}(\Lambda_k W_k)\chi_{t}, \frac{1}{\lambda_j}(\Lambda_j W_j)\chi_{t} \rilan\right). 
\end{aligned}
\end{equation}
We keep the term $\lelan \frac{1}{\lambda_k}(\Lambda_k W_k)\chi_{t},\sum_{j=1}^N \frac{(\beta_j'-b_j')}{\lambda_j}(\Lambda_j W_j)\chi_{t} \rilan$. For the remaining part, differentiating the inner product gives
\[
\left|
\frac{d}{dt}
\left\langle
\frac1{\lambda_k}(\Lambda_k W_k)\chi_t,
\frac1{\lambda_j}(\Lambda_j W_j)\chi_t
\right\rangle
\right|
\lesssim
\frac1t+\frac{|\lambda_k'|}{\lambda_k}
+\frac{|\lambda_j'|}{\lambda_j}
+|\boldsymbol{y}_k'|+|\boldsymbol{y}_j'|.
\]
Here the contribution of $\partial_t\chi_t$ is of order $t^{-1}$, while the derivatives of the scaling parameters give the terms
$|\lambda_k'|/\lambda_k$ and $|\lambda_j'|/\lambda_j$. By (\ref{rough of mu}), these quantities are bounded by $t^{-1/3}$. Hence
\[
\left|
\frac{d}{dt}
\left\langle
\frac1{\lambda_k}(\Lambda_k W_k)\chi_t,
\frac1{\lambda_j}(\Lambda_j W_j)\chi_t
\right\rangle
\right|
\lesssim t^{-1/3}.
\]
Therefore, from (\ref{b and beta}), the corresponding contribution is controlled by
\begin{equation}\label{beta-b d dt Lambda W}
\begin{aligned}
     &\left|\sum_{j=1}^N(\beta_j-b_j)\frac{d}{dt}\left(\lelan \frac{1}{\lambda_k}(\Lambda_k W_k)\chi_{t}, \frac{1}{\lambda_j}(\Lambda_j W_j)\chi_{t} \rilan\right)\right|\\
     \lesssim& (t^{-\frac{5}{6}}\sqrt{\log t})\lambda t^{-\frac{1}{3}}\lesssim (t^{-\frac{7}{6}}\sqrt{\log t})\lambda.   
\end{aligned}
\end{equation}
 Then for the second term in (\ref{dot g and Lambda k Lambda W chi t}), a direct computation gives 
\begin{equation*}
 \frac{d}{dt}\left(\frac{1}{\lambda_k}(\Lambda_k W_k)\chi_{t} \right)= -\frac{\lambda_k'}{\lambda_k^2}(\underline{\Lambda}_k\Lambda_kW_k)\chi_{t}-\frac{\boldsymbol{y'_k}}{\lambda_k^2}\cdot(\nabla_k\Lambda_kW_k)\chi_{t}+\frac{1}{\lambda_k}(\Lambda_k W_k)(\frac{d}{dt}\chi_{t}). 
\end{equation*}
Notice that $\underline{\Lambda}\Lambda W\in L^{\frac{4}{3}}$, $\nabla \Lambda W\in L^2$, using bootstrap assumption yields
\begin{equation*}
    \left|\lelan \dot{g}, \frac{\lambda_k'}{\lambda_k^2}(\underline{\Lambda}_k\Lambda_kW_k)\chi_{t}\rilan\right|\lesssim \left\|\dot{g}\right\|_{\dot{H}^1}\left\|\frac{\lambda_k'}{\lambda_k^2}(\underline{\Lambda}_k\Lambda_kW_k)\chi_{t}\right\|_{L^{\frac{4}{3}}}\lesssim \left\|\dot{g}\right\|_{\dot{H}^1}|\lambda_k'|\ll \lambda^{\frac{3}{2}},
\end{equation*}
and 
\begin{equation*}
    \left|\lelan  \dot{g}, \frac{\boldsymbol{y'_k}}{\lambda_k^2}\cdot(\nabla_k\Lambda_kW_k)\chi_{t} \rilan\right|\lesssim \left\|\dot{g}\right\|_{L^2}\frac{|\boldsymbol{y}_k'|}{\lambda_k}\ll \lambda^{\frac{3}{2}}.
\end{equation*}
Furthermore,  from (\ref{bootstrap a for y}), the term concerning the derivative of cut-off function is dominated by 
\begin{equation*}
    \left\|\frac{1}{\lambda_k}(\Lambda_k W_k)(\frac{d}{dt}\chi_{t})\right\|_{L^2}\lesssim \left(\int_t^{2t}\left(\frac{1}{r^2t}\right)^2r^3dr\right)^{\frac{1}{2}}\lesssim \frac{1}{t},
\end{equation*}
which, combined with the two inequalities above leads to
\begin{equation}\label{sharp est of b' left 1.2}
\begin{aligned}
   \left|\lelan \dot{g}, \frac{d}{dt}\left(\frac{1}{\lambda_k}(\Lambda_k W_k)\chi_{t}\right) \rilan\right|\lesssim \lambda^{\frac{3}{2}}+\left\|\dot{g}\right\|_{L^2} \left\|\frac{1}{\lambda_k}(\Lambda_k W_k)(\frac{d}{dt}\chi_{t})\right\|_{L^2}\lesssim  (t^{-\frac{7}{6}})\lambda.
\end{aligned}
\end{equation}
Therefore, collecting (\ref{d dt beta k- bk L2 equal to}) , (\ref{beta-b d dt Lambda W}) and (\ref{sharp est of b' left 1.2}) together, the first term of (\ref{g and W inner with Lambda W}) is then estimated by 
\begin{equation}\label{sharp est of b left 1}
     \left|\lelan \partial_t \dot g, \frac{1}{\lambda_k}(\Lambda_k W_k)\chi_{t}\rilan-\lelan \frac{1}{\lambda_k}(\Lambda_k W_k)\chi_{t},\sum_{j=1}^N \frac{(\beta_j'-b_j')}{\lambda_j}(\Lambda_j W_j)\chi_{t} \rilan\right|\lesssim (t^{-\frac{7}{6}}\sqrt{\log t})\lambda.
\end{equation}
Next, we estimate the contribution of $\Delta g+f(W_{\boldsymbol{\Gamma}}+g)-\sum_{j=1}^Nf(W_j)$. It is natural to split this term into
\begin{equation}\label{sharp est of b left 2}
   \begin{aligned}
       &-\lelan \frac{1}{\lambda_k}  (\Lambda_kW_k)\chi_{t}, \Delta g+f(W_{\boldsymbol{\Gamma}}+g)-\sum_{j=1}^Nf(W_j)\rilan\\
       =&-\lelan \frac{1}{\lambda_k} (\Lambda_kW_k)\chi_{t}, \Delta g+f(W_{\boldsymbol{\Gamma}}+g)-f(W_{\boldsymbol{\Gamma}})\rilan-\lelan \frac{1}{\lambda_k} (\Lambda_kW_k)\chi_{t},f(W_{\boldsymbol{\Gamma}})-\sum_{j=1}^Nf(W_j)\rilan.
   \end{aligned} 
\end{equation}
Using the definition of $W_{\boldsymbol{\Gamma}}$ we derive
\begin{equation}\label{sharp est of b left 2.0}
    \begin{aligned}
   &-\lelan \frac{1}{\lambda_k} (\Lambda_kW_k)\chi_{t}, \Delta g+f(W_{\boldsymbol{\Gamma}}+g)-f(W_{\boldsymbol{\Gamma}})\rilan  \\
   =&\lelan \frac{1}{\lambda_k} (\Lambda_kW_k)\chi_{t}, \ML_k g \rilan+\lelan \frac{1}{\lambda_k} (\Lambda_kW_k)\chi_{t}, (3W_{\boldsymbol{\Gamma}}^2-3W_k^2)g+3W_{\boldsymbol{\Gamma}}g^2+g^3 \rilan,
    \end{aligned}
\end{equation}
which can be regarded as the major linear part plus the multi-bubble and nonlinear part. Due to the fact that $\ML_k$ is a self-adjoint operator and using Cauchy inequality, the linear part can be estimated by
\begin{equation}\label{sharp est of b left 2.1.0}
    \begin{aligned}
    \lelan \frac{1}{\lambda_k} (\Lambda_kW_k)\chi_{t}, \ML_k g \rilan=&\lelan \frac{1}{\lambda_k}\ML_k[(\Lambda_kW_k)\chi_{t}],g \rilan
    \lesssim \left\|\frac{1}{\lambda_k}\ML_k[(\Lambda_kW_k)\chi_{t}]\right\|_{L^{\frac{4}{3}}}\left\|g\right\|_{\dot{H}^1}.
    \end{aligned}
\end{equation}
Provided that $\ML_k(\Lambda_k W_k)=0$, the norm $\left\|\frac{1}{\lambda_k}\ML_k[(\Lambda_kW_k)\chi_{t}]\right\|_{L^{\frac{4}{3}}}$ is bounded by
\begin{equation*}
    \begin{aligned}
        \left\|\frac{1}{\lambda_k}\ML_k[(\Lambda_kW_k)\chi_{t}]\right\|_{L^{\frac{4}{3}}}=\left(\int_{t\leq |x-\boldsymbol{y}_k|\leq 2t}\left(\frac{1}{\lambda_k}\nabla(\Lambda_kW_k)\cdot \nabla(\chi_{t})+\frac{1}{\lambda_k}(\Lambda
        _kW_k)\Delta(\chi_{t})\right)^{\frac{4}{3}}dx\right)^{\frac{3}{4}}.
    \end{aligned}
\end{equation*}
Since $|\nabla \Lambda W(x)|\sim \langle x\rangle^{-3}$, $|\nabla (\chi_{t})|\lesssim \frac{1}{t}$ and $|\Delta(\chi_{t})|\lesssim \frac{1}{t^2}$, it holds
\begin{equation*}
    \begin{aligned}
  &\left(\int_{t\leq |x-\boldsymbol{y}_k|\leq 2t}\left(\frac{1}{\lambda_k}\nabla(\Lambda_kW_k)\cdot \nabla(\chi_{t})+\frac{1}{\lambda_k}(\Lambda
        _kW_k)\Delta(\chi_{t})\right)^{\frac{4}{3}}dx\right)^{\frac{3}{4}}\\
  \lesssim&\left(\int_{t}^{2t}\left(\frac{1}{r^3}\cdot\frac{1}{t}\right)^{\frac{4}{3}}r^3dr\right)^{\frac{3}{4}}  +\left(\int_{t}^{2t}\left(\frac{1}{r^2}\cdot\frac{1}{t^2}\right)^{\frac{4}{3}}r^3dr\right)^{\frac{3}{4}}          \lesssim \frac{1}{t}.
    \end{aligned}
\end{equation*}
Plugging this back into (\ref{sharp est of b left 2.1.0}) yields 
\begin{equation*}
   \left|\lelan \frac{1}{\lambda_k} (\Lambda_kW_k)\chi_{t}, \ML_k g \rilan\right|
    \lesssim \frac{1}{t}\left\|g\right\|_{\dot{H}^1} \lesssim t^{-\frac{7}{6}}\lambda,
\end{equation*}
 Returning to (\ref{sharp est of b left 2.0}), it suffices to handle the multi-bubble and nonlinear part. Indeed, the nonlinear interaction, using Cauchy, is bounded by   
\begin{equation*}
    \begin{aligned}
    \lelan \frac{1}{\lambda_k} (\Lambda_kW_k)\chi_{t}, (3W_{\boldsymbol{\Gamma}}^2-3W_k^2)g \rilan \lesssim&\left\|\frac{1}{\lambda_k} (\Lambda_kW_k)\chi_{t}, (3W_{\boldsymbol{\Gamma}}^2-3W_k^2)\right\|_{L^{\frac{4}{3}}}\left\|g\right\|_{\dot{H}^1}\\
    \lesssim&\sum_{j\neq k}\left(\int_{\lambda_k}^t\left(\frac{1}{r^2}\cdot\frac{\lambda_k\lambda_j}{r^2|\boldsymbol{y}_k-\boldsymbol{y}_j|^2}\right)^{\frac{4}{3}}r^3dr\right)^{\frac{3}{4}}\left\|g\right\|_{\dot{H}^1}\\
    \lesssim&\sum_{j\neq k}\frac{\lambda_j}{|\boldsymbol{y}_k-\boldsymbol{y}_j|^2}\left\|g\right\|_{\dot{H}^1}\lesssim t^{-\frac{1}{6}}\lambda^2.
    \end{aligned}
\end{equation*}
Moreover, from (\ref{bootstap a for energy}), (\ref{bootstap a for 2nd energy}) and interpolation inequality, it holds
\begin{equation}\label{h dot 3 2 for g}
    \left\|g\right\|_{\dot{H}^{\frac{3}{2}}}\lesssim \left\|g\right\|_{\dot{H}^2}^{\frac{1}{2}}\left\|g\right\|_{\dot{H}^1}^{\frac{1}{2}}\lesssim t^{\frac{1}{6}}\lambda.
\end{equation}
Applying Cauchy inequality to the higher order nonlinear part then yields
\begin{equation*}
\begin{aligned}
  \left| \lelan \frac{1}{\lambda_k} (\Lambda_kW_k)\chi_{t}, 3W_{\boldsymbol{\Gamma}}g^2+g^3 \rilan \right|\lesssim& \left\|\frac{1}{\lambda_k} (\Lambda_kW_k)\chi_{t}\right\|_{L^{\frac{8}{3}}} \left\|W_{\boldsymbol{\Gamma}}\right\|_{L^{\frac{8}{3}}}\left\|g\right\|_{\dot{H}^{\frac{3}{2}}}^2+\frac{1}{\lambda_k}\left\|g\right\|_{\dot{H}^1}^3\\
  \lesssim& \sum_{j=1}^N\frac{\sqrt{\lambda_j}}{\sqrt{\lambda_k}}\left\|g\right\|_{\dot{H}^{\frac{3}{2}}}^2 +\frac{1}{\lambda_k}\left\|g\right\|_{\dot{H}^1}^3\ll\lambda^{\frac{3}{2}},  
\end{aligned}
\end{equation*}
where $\epsilon>0$ is a small constant as before. Collecting these estimates we arrive at 
\begin{equation*}\label{sharp est of b left 2.1 final}
    \left|\lelan \frac{1}{\lambda_k} (\Lambda_kW_k)\chi_{t}, \Delta g+f(W_{\boldsymbol{\Gamma}}+g)-f(W_{\boldsymbol{\Gamma}})\rilan\right|\lesssim t^{-\frac{7}{6}}\lambda,
\end{equation*}
which completes the control of the first term of (\ref{sharp est of b left 2}). To tackle the second term, in view of (\ref{est:W^2 Lambda W}),  we get
\begin{equation*}\label{sharp est of b' left 2.2}
\begin{aligned}
   &\lelan \frac{1}{\lambda_k} (\Lambda_kW_k)\chi_{t},f(W_{\boldsymbol{\Gamma}})-\sum_{j=1}^Nf(W_j)\rilan\\
   =&-128\pi^2c\lambda+\lelan \frac{1}{\lambda_k}(\Lambda_k W_k)(1-\chi_{t}),\sum_{j\neq k}3W_k^2W_j \rilan+\sum_{j\neq k}\kappa\Big(\frac{ \lambda_j}{| \boldsymbol{y}_k-\boldsymbol y_j|^2}-\frac{ \lambda_j}{|\boldsymbol z_k-\boldsymbol z_j|^2}\Big)\\
   &+\lelan \frac{1}{\lambda_k} (\Lambda_kW_k)\chi_{t}, f(W_{\boldsymbol{\Gamma}})-\sum_{j=1}^Nf(W_j)-\sum_{j\neq k}3W_k^2W_j\rilan+O(\lambda^3).
\end{aligned}
\end{equation*}
The third term appears because \(B_k(\boldsymbol{\lambda},\boldsymbol{y})\) is defined using the fixed centers \(\boldsymbol{y}_j\). The expansion of \(W_j\) around the \(k\)-th bubble gives \(\lambda_j/|\boldsymbol{y}_k-\boldsymbol{y}_j|^2\); adding and subtracting \(\lambda_j/|\boldsymbol{z}_k-\boldsymbol{z}_j|^2\) isolates the leading contribution \(128\pi^2B_k(\boldsymbol{\lambda},\boldsymbol{y})\) and leaves precisely this difference. Applying (\ref{bootstrap a for y}), this term is bounded by
\begin{equation*}
\begin{aligned}
     \left|\sum_{j\neq k}\kappa\Big(\frac{ \lambda_j}{|\boldsymbol{y}_k-\boldsymbol y_j|^2}-\frac{ \lambda_j}{|\boldsymbol z_k-\boldsymbol z_j|^2}\Big)\right|\leq& \sum_{j\neq k}\kappa \lambda_j \left(\frac{1}{|\boldsymbol z_k-\boldsymbol z_j|^2}-\frac{1}{|\boldsymbol z_k-\boldsymbol{z}_j|^2+|\boldsymbol{y}_k-\boldsymbol{z}_k|^2+|\boldsymbol{y}_j-\boldsymbol{z}_j|^2}\right) \\
     \lesssim&\frac{\lambda(|\boldsymbol{y}_k-\boldsymbol{z}_k|^2+|\boldsymbol{y}_j-\boldsymbol{z}_j|^2)}{|\boldsymbol{z}_k-\boldsymbol{z}_j|^2(|\boldsymbol{z}_k-\boldsymbol{z}_j|^2+|\boldsymbol{y}_k-\boldsymbol{z}_k|^2+|\boldsymbol{y}_j-\boldsymbol{z}_j|^2)}\ll \lambda^4,
\end{aligned}
\end{equation*}
which is negligible. For the second term, a computation yields
\begin{equation*}
   \left|\lelan \frac{1}{\lambda_k}(\Lambda_k W_k)(1-\chi_{t}),\sum_{j\neq k}3W_k^2W_j \rilan\right| \lesssim \int_t^{\infty}\frac{1}{r^2}\frac{\lambda_k^2}{r^4}\frac{\lambda_j}{t^2}r^3dr\lesssim \lambda^3,
\end{equation*}
which is also sufficiently small. As for the remaining part, again using the decay of $\Lambda W$ and $W$ we have
\begin{equation*}
    \begin{aligned}
      &\left|\lelan \frac{1}{\lambda_k} (\Lambda_kW_k)\chi_{t}, f(W_{\boldsymbol{\Gamma}})-\sum_{j=1}^Nf(W_j)-\sum_{j\neq k}3W_k^2W_j\rilan\right|\\
      \lesssim& \left|\int\frac{1}{\lambda_k} (\Lambda_kW_k)\chi_{t}W_j^2W_kdx  \right|
      \lesssim \int_{\lambda_k}^t\frac{1}{r^2}\frac{\lambda_j^2}{|z_k-z_j|^4}\frac{\lambda_k}{r^2}r^3dr\\\lesssim& \lambda_k\lambda_j^2\log(t/\lambda_k)\lesssim t\lambda^3.
    \end{aligned}
\end{equation*}
Consequently, collecting these inequalities yields
\begin{equation*}
 \left|\lelan \frac{1}{\lambda_k} (\Lambda_kW_k)\chi_{t},f(W_{\boldsymbol{\Gamma}})-\sum_{j=1}^Nf(W_j)\rilan+128\pi^2c\lambda\right|\lesssim   t\lambda^3. 
\end{equation*}
In conclusion, the left hand side (we denote it as $LHS$) of (\ref{g and W inner with Lambda W}) is estimated by
\begin{equation*}\label{right hand side of key beta'}
    \left|LHS-\lelan \frac{1}{\lambda_k}(\Lambda_k W_k)\chi_{t},\sum_{j=1}^N \frac{(\beta_j'-b_j')}{\lambda_j}(\Lambda_j W_j)\chi_{t} \rilan-128\pi^2c\lambda\right|\lesssim (t^{-\frac{7}{6}}\sqrt{\log t})\lambda,
\end{equation*}
where the leading order of the error term comes from (\ref{beta-b d dt Lambda W}). We emphasize that this left-hand side contains the leading modulation term, which compensates the corresponding $b_k'$ contribution on the right-hand side. We now turn to the remaining terms. One can therefore replace the $b_j$ with $\beta_j$ and leave a small error which can be absorbed by $(t^{-\frac{7}{6}}\sqrt{\log t})\lambda$ . Then we invest the whole summation. The major contribution comes from 
\begin{equation*}
    \begin{aligned}
     &\lelan \left(\frac{\lambda_k'\beta_k}{\lambda_k^2} (\underline{\Lambda}_k\Lambda_kW_k)\chi_{t}
        -\sum_{j=1}^N\frac{\beta_j'}{\lambda_j}(\Lambda_jW_j\chi_{t})+\frac{\beta_k|x-\boldsymbol{y}_k|}{\lambda_kt^2}(\Lambda_kW_k\chi'_{t})\right),\frac{1}{\lambda_k}(\Lambda_k W_k)\chi_{t}\rilan \\
    +&\lelan \sum_{l=1}^4\frac{v_{k,l}(t)y'_{k,l}(t)\partial_l\partial_lW(\frac{\cdot-\boldsymbol{y}_k}{\lambda_k})}{\lambda_k^3},\frac{1}{\lambda_k}(\Lambda_k W_k)\chi_{t}\rilan:=I_{k}.
    \end{aligned}
\end{equation*}
Compare this with the equation of (\ref{equation of lambda and b}) , we claim that 
\begin{equation*}
    \left|I_k+128\pi^2\left(\Xi(t)\beta'(t)+\frac{1}{2}\frac{\beta(t)^2}{\lambda(t)}\right)\right|\lesssim t^{-\frac{4}{3}}\lambda.
\end{equation*}
holds. To prove the claim, recall the computation in Section \ref{A formal computation} and the definition of $\Xi(t)$, it suffices to prove 
\begin{equation*}
  \left|\lelan \left(\frac{\beta_k|x-\boldsymbol{y}_k|}{\lambda_kt^2}(\Lambda_kW_k\chi'_{t})+\sum_{l=1}^4\frac{v_{k,l}(t)y'_{k,l}(t)\partial_l\partial_lW(\frac{\cdot-\boldsymbol{y}_k}{\lambda_k})}{\lambda_k^3}\right),\frac{1}{\lambda_k}(\Lambda_k W_k)\chi_{t}\rilan \right|  \lesssim t^{-\frac{4}{3}}\lambda.
\end{equation*}
This can be achieved by the computation
\begin{equation*}
    \begin{aligned}
        &\left|\lelan \left(\frac{\beta_k|x-\boldsymbol{y}_k|}{\lambda_kt^2}(\Lambda_kW_k\chi'_{t})+\sum_{l=1}^4\frac{v_{k,l}(t)y'_{k,l}(t)\partial_l\partial_lW(\frac{\cdot-\boldsymbol{y}_k}{\lambda_k})}{\lambda_k^3}\right),\frac{1}{\lambda_k}(\Lambda_k W_k)\chi_{t}\rilan \right|\\
        \lesssim&\int_{t}^{2t}\frac{\beta_k}{t}\frac{1}{r^4}r^3dr+\int_{\lambda_k}^t\sum_{l=1}^4\frac{|v_{k,l}y_{k,l}'|\lambda_k}{r^4}\frac{1}{r^2}r^3dr\lesssim t^{-\frac{4}{3}}\lambda.
    \end{aligned}
\end{equation*}
Therefore,  the claim is proved. For the remaining terms, since the symmetry property gives
\begin{equation*}
   \lelan \nabla_k \Lambda_k W_k,\Lambda_kW_k\rilan=\lelan \nabla_k W_k, \Lambda_k W_k \rilan=\lelan (\underline{\Lambda}\partial_l)_k W_k ,\Lambda_k W_k\rilan=\lelan (\partial_l \partial_m)_k W_k,\Lambda_k W_k \rilan=0, 
\end{equation*}
where $l\neq m$. Thus, this part equals to 
\begin{equation}\label{remaining beta sys}
\begin{aligned}
      \sum_{j\neq k}\Bigg\{&\lelan \frac{\lambda_j'b_j}{\lambda_j^2}(\underline{\Lambda}_j\Lambda_jW_j)\chi_t+\frac{b_j|x-\boldsymbol{y}_j|}{\lambda_jt^2}(\Lambda_jW_j\chi'_{t})+\frac{b_j}{\lambda_j^2}(\boldsymbol{y}_j'\cdot \nabla)(\Lambda_j W_j \chi_t), \frac{1}{\lambda_k}(\Lambda_k W_k)\chi_t \rilan\\ 
      -&\lelan\frac{\boldsymbol{v}_j'\cdot \nabla_j W_j}{\lambda_j} +\sum_{l=1}^4\frac{\lambda'_jv_{j,l}(\underline{\Lambda} \partial_l W)\left(\frac{\cdot-\boldsymbol{y}_j}{\lambda_j}\right)}{\lambda_j^3}+\sum_{l,m=1}^4\frac{v_{j,l}y'_{j,m}(\partial_l\partial_mW)(\frac{\cdot-\boldsymbol{y}_j}{\lambda_j})}{\lambda_j^3} ,\frac{1}{\lambda_k}(\Lambda_k W_k)\chi_{t}\rilan\Bigg\}.
\end{aligned}
\end{equation}
To estimate these terms, we divide the space $\mathbb R^4$ into $\Tilde{B}:=\bigcup_{1\leq j\leq N}B(|x-\boldsymbol{y}_j|\leq d)$ and $\mathbb R^4\setminus \widetilde B$, where $d:\inf_{j\neq k}\frac{1}{4}|\boldsymbol{y}_k-\boldsymbol{y}_j|$. We start from the inner region $\Tilde{B}$. First, consider the region  $B(|x-\boldsymbol{y}_k|\leq d)$, the decay of stationary function gives 
\begin{equation*}
(\underline{\Lambda}_j\Lambda_jW_j)\chi_{t}\sim\frac{\lambda_j^3}{|\boldsymbol{y}_k-\boldsymbol{y}_j|^4}\text{ , }\Lambda_jW_j\sim \frac{\lambda_j}{|\boldsymbol{y}_k-\boldsymbol{y}_j|^2}\text{ , }\nabla (\Lambda_j W_j)\sim \frac{\lambda_j^2}{|\boldsymbol{y}_k-\boldsymbol{y}_j|^3},
\end{equation*} 
\begin{equation*}
    \nabla_j W_j\sim \frac{\lambda_j^2}{|\boldsymbol{y}_k-\boldsymbol{y}_j|^3}\text{ , }(\underline{\Lambda}\partial_l W)\left(\frac{\cdot-\boldsymbol{y}_j}{\lambda_j}\right)\sim \frac{\lambda_j^3}{|\boldsymbol{y}_k-\boldsymbol{y}_j|^3}\text{ , }(\partial_l\partial_mW)\left(\frac{\cdot-\boldsymbol{y}_j}{\lambda_j}\right)\sim \frac{\lambda_j^4}{|\boldsymbol{y}_k-\boldsymbol{y}_j|^4}.
\end{equation*}
As a consequence, on this region (\ref{remaining beta sys}) is dominated by
\begin{equation*}
    \begin{aligned}
        &\sum_{j\neq k}\Bigg\{\frac{|\lambda_j'|b_j\lambda_j^3}{\lambda_j^2|\boldsymbol{y}_k-\boldsymbol{y}_j|^4}+\frac{b_j\lambda_j^2}{\lambda_jt|\boldsymbol{y}_k-\boldsymbol{y}_j|}+\frac{b_j|\boldsymbol{y}_j'|\lambda_j}{\lambda_j^2|\boldsymbol{y}_k-\boldsymbol{y}_j|^2}+\frac{|\boldsymbol{v}_j'|\lambda_j^2}{\lambda_j|\boldsymbol{y}_k-\boldsymbol{y}_j|^3}+\frac{|\lambda_j'||\boldsymbol{v}_j|\lambda_j^3}{\lambda_j^3|\boldsymbol{y}_k-\boldsymbol{y}_j|^3}+\frac{|\boldsymbol{y}_j'||\boldsymbol{v}_j|\lambda_j^4}{\lambda_j^3|\boldsymbol{y}_k-\boldsymbol{y}_j|^4}\Bigg\}\\
        \lesssim& \sum_{j\neq k}\Bigg\{|\lambda_j'|b_j\lambda_j+\frac{b_j\lambda_j}{t}+\frac{b_j|\boldsymbol{y_j'}|}{\lambda_j}+{|\boldsymbol{v}_j'|\lambda_j}+|\lambda_j'||\boldsymbol{v}_j|+|\boldsymbol{y}_j'||\boldsymbol{v}_j|\lambda_j\Bigg\}\ll \lambda^{\frac{3}{2}}.
    \end{aligned}
\end{equation*}
Then, on the region $B(|x-\boldsymbol{y}_j|\leq d)$, using the fact that $\frac{1}{\lambda_k}\Lambda_k W_k\sim \frac{1}{|\boldsymbol{y}_k-\boldsymbol{y}_j|^2}$ on this region and adopting the same method as above yields these terms are bounded by
\begin{equation*}
    \begin{aligned}
    \int_{\lambda_j}^d\Bigg(\frac{|\lambda_j'b_j\lambda_j|}{r^4}+\frac{b_j|\boldsymbol{y}_j'|}{\lambda_jr^3}+\frac{|\boldsymbol{v}_j'\lambda_j|}{r^3}+\frac{|\lambda_j'\boldsymbol{v}_j\lambda_j|}{r^3}+\frac{|\boldsymbol{v}_j|| y_j'\lambda_j|}{r^4}   \Bigg)\frac{r^3}{|\boldsymbol{y}_k-\boldsymbol{y}_j|^2}dr \ll\lambda^{\frac{3}{2}}.
    \end{aligned}
\end{equation*}
Finally, on the exterior region $\mathbb R^4\setminus \widetilde B$, with the same approach the integration is dominated by 
\begin{equation*}
    \sum_{j\neq k}\int_{d}^{2t}\left(\frac{|\lambda_j'|b_j\lambda_j}{r^6}+\frac{b_j|\boldsymbol{y}_j'|}{\lambda_jr^5}+\frac{|\boldsymbol{v}_j'\lambda_j|}{r^5}+\frac{|\lambda_j'\boldsymbol{v}_j\lambda_j|}{r^5}+\frac{|\boldsymbol{v}_j|| y_j'\lambda_j|}{r^6}  \right)r^3dr\ll \lambda^{\frac{3}{2}}.
\end{equation*}
Collecting all the inequalities in this step together, it holds
\begin{equation*}
   \left| \Xi(t)\beta'(t)+\frac{1}{2}\frac{\beta(t)^2}{\lambda(t)}+c\lambda\right|\lesssim  (t^{-\frac{7}{6}}\sqrt{\log t})\lambda,
\end{equation*}
on the bootstrap interval, which proves (\ref{beta'+=to}).

\textbf{Step 4. A rough estimate of $b_k'$ (\ref{rough est on b'}).}
In this step we estimate $b_k'$. Taking time derivative of 
$\lelan {\frac{1}{\lambda_k}(\Lambda_k W_k)\chi_{k}},\dot{g}\rilan=0$ and using the equation \eqref{partial g dot} yields
\begin{equation}\label{d dt ortho 3}
    \begin{aligned}
        0=&\frac{d}{dt} \lelan {\frac{1}{\lambda_k}(\Lambda_k W_k)\chi_{k}},\dot{g}\rilan\\
        =&-\lelan \frac{\lambda_k'}{\lambda_k^3}\left(\underline{\Lambda}[\Lambda W \chi]\right)\left(\frac{x-\boldsymbol{y}_k}{\lambda_k}\right),\dot{g} \rilan-\lelan  \sum_{i=1}^4\frac{1}{\lambda_k^3}[\partial_i(\Lambda W\chi)]\left(\frac{x-\boldsymbol{y}_k}{\lambda_k}\right)(y_k')_i,\dot{g}  \rilan\\
        &+\lelan \frac{1}{\lambda_k} (\Lambda_kW_k)\chi_{k}, \Delta g+f(W_{\boldsymbol{\Gamma}}+g)-\sum_{j=1}^Nf(W_j)\rilan\\
        &+\lelan \frac{1}{\lambda_k} (\Lambda_kW_k)\chi_{k}, \sum_{j=1}^N\frac{\lambda_j'b_j}{\lambda_j^2} (\underline{\Lambda}_j\Lambda_jW_j)\chi_{t}\rilan-\lelan \frac{1}{\lambda_k} (\Lambda_kW_k)\chi_{k}, \sum_{j=1}^N\frac{b_j'}{\lambda_j}(\Lambda_jW_j)\chi_{t} \rilan\\
        &+\lelan \frac{1}{\lambda_k} (\Lambda_kW_k)\chi_{k}, \sum_
        {j=1}^N\frac{b_j|x-\boldsymbol{y}_j|}{\lambda_jt^2}((\Lambda_jW_j)\chi_{t}') \rilan+\lelan \frac{1}{\lambda_k} (\Lambda_kW_k)\chi_{k}, \sum_{j=1}^N\frac{b_j}{\lambda_j^2}(y_j'\cdot\nabla)((\Lambda_j W_j)\chi_{t})\rilan\\
        &-\lelan \frac{1}{\lambda_k} (\Lambda_kW_k)\chi_{k}, \sum_{j=1}^N\sum_{l=1}^4\frac{v'_{j,l}(\partial_lW)(\frac{\cdot-\boldsymbol{y}_j}{\lambda_j})}{\lambda_j^2} \rilan -\lelan \frac{1}{\lambda_k} (\Lambda_kW_k)\chi_{k}, \sum_{j=1}^N\sum_{l=1}^4\frac{\lambda'_jv_{j,l}(\underline{\Lambda} \partial_l W)\left(\frac{\cdot-\boldsymbol{y}_j}{\lambda_j}\right)}{\lambda_j^3} \rilan\\
        &+\lelan \frac{1}{\lambda_k} (\Lambda_kW_k)\chi_{k}, \sum_{j=1}^N\sum_{l,m=1}^4\frac{v_{j,l}y'_{j,m}(\partial_l\partial_mW)(\frac{\cdot-\boldsymbol{y}_j}{\lambda_j})}{\lambda_j^3}\rilan.
\end{aligned}
\end{equation}
Most terms in (\ref{d dt ortho 3}) share the same estimates in Step 3. Here we stress the major differences coming from the terms
\begin{equation*}
    \lelan \frac{1}{\lambda_k} (\Lambda_kW_k)\chi_{k}, \ML_k g\rilan\text{ and } \lelan \frac{1}{\lambda_k} (\Lambda_kW_k)\chi_{k}, \sum_{j=1}^N\frac{b_j'}{\lambda_j}(\Lambda_jW_j)\chi_{t} \rilan.
\end{equation*}
Due to the fact that $\ML_k [(\Lambda_k W_k)\chi_k]$ vanishes on $\lambda_k M\leq |x-\boldsymbol{y}_k|\leq 2\lambda_k M $, it holds
\begin{equation*}
    \lelan \frac{1}{\lambda_k} (\Lambda_kW_k)\chi_{k}, \ML_k g\rilan\lesssim \left\|g(t)\right\|_{\ME_2}\left(\int_{\lambda_kM}^{2\lambda_kM}\frac{1}{r^4}r^3dr\right)^{\frac{1}{2}}\lesssim t^{\frac{1}{2}}\lambda.
\end{equation*}
For the other term, using the definition of $\chi_k$ we have
\begin{equation*}
   \lelan \frac{1}{\lambda_k} (\Lambda_kW_k)\chi_{k}, \sum_{j=1}^N\lambda_j^{-1}b_j'((\Lambda_jW_j)\chi_{t}) \rilan=b_k'\lelan \Lambda W, \Lambda W \chi_M\rilan+O\left(\sum_{j\neq k}|b_j'|\lambda_k^2\right)
\end{equation*}
As a consequence, we obtain
\begin{equation*}
    |b'|\lesssim t^{\frac{1}{2}}\lambda.
\end{equation*}
Indeed, since $\lelan \Lambda W, \Lambda W \chi_M\rilan\sim O(\log M)$, (\ref{rough est on b'}) can be refined as 
\begin{equation}\label{slight refine of b'}
    |b_k'|\lesssim \frac{\left\|\ML_k g\right\|_{|x-\boldsymbol{y}_k|\leq 2\lambda_k M }}{\log M}+\lambda.
\end{equation}
\textbf{Step 5. Control of the correction parameter $\boldsymbol{s}_k$  (\ref{v and s}), (\ref{s'+=to}).} We now treat the parameter $\boldsymbol{s}$. The definition and the main structure of the argument follow closely those for $\beta_k$,  and we only describe the parts that require additional care. Recall the definition of $\boldsymbol{s}_k$, we have
\begin{equation*}
    |\boldsymbol{v}_k-\boldsymbol{s}_k|\lesssim\left|\frac{\lelan \dot{g}, \frac{1}{\lambda_k}(\nabla_k W_k) \rilan}{\left\|\nabla W\right\|_{L^2}^2}\right| .
\end{equation*}
By Cauchy inequality, the difference is controlled by the following expression
\begin{equation*}
 \left\|\dot{g}\right\|_{\dot{H}^1}\left\|\frac{1}{\lambda_k}(\nabla_k W_k)\right\|_{\dot{H}^{-1}(|x-\boldsymbol{y}_k|\leq 1)}+\left\|\dot{g}\right\|_{L^2}\left\|\frac{1}{\lambda_k}(\nabla_k W_k)\right\|_{L^2(|x-\boldsymbol{y}_k|\geq 1)}.   
\end{equation*}
By Sobolev and integration, we obtain 
\begin{equation*}
   \left\|\frac{1}{\lambda_k}(\nabla_k W_k)\right\|_{\dot{H}^{-1}(|x-\boldsymbol{y}_k|\leq 1)} \lesssim \lambda_k\|\nabla W\|_{L^\frac43(|\cdot|\leq 1/\lambda_k)}\lesssim \lambda |\log (1/\lambda)|^{\frac{3}{4}}\lesssim t^{\frac{1}{2}}\lambda,
\end{equation*}
and
\begin{equation*}
    \left\|\frac{1}{\lambda_k}(\nabla_k W_k)\right\|_{L^2(|x-\boldsymbol{y}_k|\geq 1)}\lesssim \|\nabla W\|_{L^2(|\cdot|\geq 1/\lambda_k)}\lesssim \lambda.
\end{equation*}
Combining these with the energy bootstrap assumption of $g$ gives
\begin{equation*}
   |\boldsymbol{v}_k-\boldsymbol{s}_k|\lesssim\left\|\dot{g}\right\|_{\dot{H}^1}t^{\frac{1}{2}}\lambda+\left\|\dot{g}\right\|_{L^2}\lambda\lesssim t\lambda^2,
\end{equation*}
which proves (\ref{v and s}). Then we turn to the proof of (\ref{s'+=to}), taking inner product with $\frac{1}{\lambda_k}\nabla_k W_k$ yields
\begin{equation}\label{partial dot g innner with nabla w}
     \begin{aligned}
        &\lelan \partial_t \dot g, \frac{1}{\lambda_k}\nabla_k W_k\rilan-\lelan \Delta g+f(W_{\boldsymbol{\Gamma}}+g)-\sum_{j=1}^Nf(W_j) ,\frac{1}{\lambda_k}\nabla_k W_k\rilan\\
        =
        &\lelan  \sum_{j=1}^N\left(\frac{\lambda_j'b_j}{\lambda_j^2} (\underline{\Lambda}_j\Lambda_jW_j)\chi_{t}
        -\frac{b_j'}{\lambda_j}(\Lambda_jW_j\chi_{t})+\frac{b_j|x-\boldsymbol{y}_j|}{\lambda_jt^2}(\Lambda_jW_j\chi'_{t})\right),\frac{1}{\lambda_k}\nabla_k W_k\rilan\\
        &+\lelan \sum_{j=1}^N\frac{b_j}{\lambda_j^2}(\boldsymbol{y}_j'\cdot\nabla)(\Lambda_j W_j\chi_{t})
          -\sum_{j=1}^N\sum_{l=1}^4\frac{v'_{j,l}(\partial_lW)(\frac{\cdot-\boldsymbol{y}_j}{\lambda_j})}{\lambda_j^2} ,\frac{1}{\lambda_k}\nabla_k W_k\rilan\\
        &-\lelan \sum_{j=1}^N\sum_{l=1}^4\frac{\lambda'_jv_{j,l}(\underline{\Lambda} \partial_l W)\left(\frac{\cdot-\boldsymbol{y}_j}{\lambda_j}\right)}{\lambda_j^3}-\sum_{j=1}^N\sum_{l,m=1}^4\frac{v_{j,l}y'_{j,m}(\partial_l\partial_mW)(\frac{\cdot-\boldsymbol{y}_j}{\lambda_j})}{\lambda_j^3} ,\frac{1}{\lambda_k}\nabla_k W_k\rilan. 
    \end{aligned} 
\end{equation}
We examine term by term in (\ref{partial dot g innner with nabla w}). Starting from the $\partial_t \dot{g}$ term, it holds
\begin{equation*}
    \lelan \partial_t \dot g, \frac{1}{\lambda_k}\nabla_k W_k\rilan=\frac{d}{dt}\lelan \dot g, \frac{1}{\lambda_k}\nabla_k W_k\rilan-\lelan \dot{g}, \frac{d}{dt}\left(\frac{1}{\lambda_k}\nabla_k W_k\right)\rilan.
\end{equation*}
The first term, using the notation $\boldsymbol{s}_k$ and the fact $\lelan \frac{1}{\lambda_k}\nabla_kW_k, \frac{1}{\lambda_k}\nabla_kW_k\rilan=\left\|\nabla W\right\|_{L^2}^2$ is a constant, can be expanded as
\begin{equation*}
\begin{aligned}
&\frac{d}{dt}\lelan \dot g, \frac{1}{\lambda_k}\nabla_k W_k\rilan=\frac{d}{dt}\lelan \frac{1}{\lambda_k}\nabla_kW_k, \sum_{j=1}^N\frac{\boldsymbol{s}_j-\boldsymbol{v}_j}{\lambda_j}\cdot\nabla_j W_j \rilan\\
=&  \lelan \frac{1}{\lambda_k}\nabla_kW_k, \sum_{j=1}^N\frac{\boldsymbol{s}_j'-\boldsymbol{v}_j'}{\lambda_j}\cdot\nabla_j W_j \rilan+\sum_{j\neq k}(\boldsymbol{s}_j-\boldsymbol{v}_j)\frac{d}{dt}\lelan \frac{1}{\lambda_k}\nabla_kW_k,  \frac{1}{\lambda_j}\nabla_jW_j\rilan.  
\end{aligned} 
\end{equation*}
Still we keep the $(\boldsymbol{s}_j'-\boldsymbol{v}_j')$ term. For the second term, using the separation of the centers and the off-diagonal decay estimates, we have
\[
\left|
\frac{d}{dt}
\left\langle
\frac1{\lambda_k}\nabla_k W_k,
\frac1{\lambda_j}\nabla_j W_j
\right\rangle
\right|
\lesssim
\lambda_k\lambda_j
\left(
\frac{|\lambda_k'|}{\lambda_k}
+\frac{|\lambda_j'|}{\lambda_j}
+|y_k'|+|y_j'|
\right)
\lesssim
t^{-\frac{1}{3}}\lambda^2 .
\]
Therefore, by the bootstrap estimates,
\[
\left|
\sum_{j\neq k}(\boldsymbol{s}_j-\boldsymbol{v}_j)
\frac{d}{dt}
\left\langle
\frac1{\lambda_k}\nabla_k W_k,
\frac1{\lambda_j}\nabla_j W_j
\right\rangle
\right|
\lesssim
\sum_{j\neq k}|\boldsymbol{s}_j-\boldsymbol{v}_j|\lambda_k\lambda_j t^{-\frac{1}{3}}
\lesssim \lambda^3.
\] 
Then, for $\lelan \dot{g}, \frac{d}{dt}\left(\frac{1}{\lambda_k}\nabla_k W_k\right)\rilan$ notice that
\begin{equation*}
  -\frac{d}{dt}\left(\frac{1}{\lambda_k}\nabla_k W_k\right)=  \frac{\lambda_k'}{\lambda_k^2}\underline{\Lambda}_k \nabla_k W_k+\frac{\boldsymbol{y}_k'}{\lambda_k^2}\cdot\nabla_k(\nabla_k W_k).
\end{equation*}
Applying the analysis we used to handle $|\boldsymbol{v}_k-\boldsymbol{s}_k|$, we have
\begin{equation*}
\begin{aligned}
    \left| \lelan \dot{g}, \frac{\lambda_k'}{\lambda_k^2}\underline{\Lambda}_k \nabla_k W_k\rilan\right| \lesssim&  t^{-\frac{1}{3}}\left(\left\|\dot{g}\right\|_{\dot{H}^1}\left(\int_{\lambda_k}^1\left(\frac{\lambda_k}{r^3}\right)^{\frac{4}{3}}r^3dr\right)^{\frac{3}{4}}+\left\|\dot{g}\right\|_{L^2}\left(\int_{1}^{\infty}\left(\frac{\lambda_k}{r^3}\right)^2r^3dr\right)^{\frac{1}{2}}\right)\\
    \lesssim& t^{-\frac{1}{3}}\left((t^{\frac{1}{2}}\lambda)(\lambda (\log (1/\lambda))^{\frac{3}{4}})+(t^{-\frac{1}{6}}\lambda)\lambda \right)\lesssim t^{\frac{2}{3}}\lambda^2,
\end{aligned}
\end{equation*}
and 
\begin{equation*}
     \left| \lelan \dot{g}, \frac{\boldsymbol{y}_k'}{\lambda_k^2}\cdot\nabla_k(\nabla_k W_k)\rilan\right|\lesssim {|\boldsymbol{y}_k'|}\left\|\dot{g}\right\|_{\dot{H}^1}\left\|\frac{1}{\lambda_k^3}\sum_{1\leq i,j\leq 4}(\partial_i \partial_j W)\left(\frac{x-{y}_k}{\lambda_k}\right)\right\|_{L^{\frac{4}{3}}} \ll \lambda^{\frac{5}{2}}.
\end{equation*}
Therefore, the first term on the left-hand-side of (\ref{partial dot g innner with nabla w}) can be estimated by
\begin{equation}\label{sharp est for s' left 1}
    \left|\lelan \partial_t \dot g, \frac{1}{\lambda_k}(\nabla_k W_k)\rilan-\lelan \frac{1}{\lambda_k}\nabla_kW_k, \sum_{j=1}^N\frac{\boldsymbol{s}_j'-\boldsymbol{v}_j'}{\lambda_j}\cdot\nabla_j W_j \rilan\right|\lesssim  t^{\frac{2}{3}}\lambda^2.
\end{equation}
Next, we turn to the second term. Using the the fact that $\nabla_k W_k\in \ker \ML_k$ we know 
\begin{equation*}
   \lelan \Delta g+f(W_{\boldsymbol{\Gamma}}+g)-\sum_{j=1}^Nf(W_j) ,\frac{1}{\lambda_k}\nabla_k W_k\rilan= \lelan f(W_{\boldsymbol{\Gamma}}+g)-3W_k^2g-\sum_{j=1}^Nf(W_j) ,\frac{1}{\lambda_k}\nabla_k W_k\rilan.
\end{equation*}
Furthermore, the definition of $f$ gives
\begin{equation*}
\begin{aligned}
    &\lelan f(W_{\boldsymbol{\Gamma}}+g)-3W_k^2g-\sum_{j=1}^Nf(W_j) ,\frac{1}{\lambda_k}\nabla_k W_k\rilan\\
    =&\lelan \sum_{j\neq k}3W_jW_{\Gamma}g+\sum_{j=1}^N 3W_j g^2+g^3,\frac{1}{\lambda_k}\nabla_k W_k\rilan\\
    &\quad+\lelan \sum_{j\neq k}3W_k^2W_j+\sum_{i,j\neq k}W_kW_iW_j+\sum_{i,j,l\neq k}W_iW_jW_l ,\frac{1}{\lambda_k}\nabla_k W_k\rilan\\
    :=&\Rmnum{1}+\Rmnum{2}.
\end{aligned}
\end{equation*}
We estimate $\Rmnum{1}$ first. Using that $W_j\sim\frac{\lambda_j}{|\boldsymbol{y}_k-\boldsymbol{y}_j|^2}$ in the region $|x-\boldsymbol{y}_k|\leq d$ and $\frac{1}{\lambda_k}\nabla_k W_k\sim \frac{\lambda_k}{|\boldsymbol{y}_k-\boldsymbol{y}_j|^3}$ when  $|x-\boldsymbol{y}_j|\leq d$ we have $\left|\lelan \sum_{j\neq k}3W_jW_{\boldsymbol{\Gamma}}g,\frac{1}{\lambda_k}\nabla_k W_k \rilan\right|$ is controlled by 
\begin{equation*}
\begin{aligned}
&\left|\lelan \sum_{j\neq k}3W_jW_{\boldsymbol{\Gamma}}g,\frac{1}{\lambda_k}\nabla_k W_k \rilan\right|\lesssim \sum_{j\neq k}\left\|W_jW_{\boldsymbol{\Gamma}}\frac{1}{\lambda_k}\nabla_kW_k\right\|_{L^{\frac{4}{3}}}\left\|g\right\|_{\dot{H}^1}\\
   \lesssim&\left(\left(\int_{\lambda_k}^d\left(\frac{\lambda_j\lambda_k^2}{r^4|\boldsymbol{y}_k-\boldsymbol{y}_j|^3}\right)^{\frac{4}{3}}r^3dr\right)^{\frac{3}{4}}+\left(\int_{\lambda_j}^d\left(\frac{\lambda_j^2\lambda_k}{r^4|\boldsymbol{y}_k-\boldsymbol{y}_j|^3}\right)^{\frac{4}{3}}r^3dr\right)^{\frac{3}{4}}\right)\left\|g\right\|_{\dot{H}^1}\\  
  \lesssim& (\lambda_j\lambda_k +\lambda_j\lambda_k)\left\|g\right\|_{\dot{H}^1}\lesssim t^{-\frac{1}{6}}\lambda^3,
\end{aligned}
\end{equation*}
where the last two inequalities come from bootstrap assumption (\ref{bootstap a for energy}) and (\ref{bootstrap a for lambda}). On the exterior region, this term can also be proved small in the same way. Then we turn to the quadratic $g$ term. In the summation, when $j=k$, in view of the decay of $W$, it holds 
\begin{equation*}
    \begin{aligned}
       \left| \int_{\mathbb R^4} \frac{1}{\lambda_k}\nabla_kW_k  W_k g^2dx\right|\leq  \int_{\mathbb R^4} \frac{1}{\lambda_k^3}\frac{1}{\lelan \frac{x-\boldsymbol{y}_k}{\lambda_k}\rilan^5} g^2dx 
    \end{aligned}
\end{equation*}
Since when $|x-\boldsymbol{y}_k|\leq \lambda_k$ let $\epsilon>0$ be a small constant, a direct computation shows 
\begin{equation*}
     \frac{1}{\lambda_k^3}\frac{1}{\lelan \frac{x-\boldsymbol{y}_k}{\lambda_k}\rilan^5}\lesssim \frac{1}{\lambda_k^3}\lesssim \frac{\lambda_k^{1-2\epsilon}}{|x-\boldsymbol{y}_k|^{4-2\epsilon}}.
\end{equation*}
When $|x-\boldsymbol{y}_k|\geq \lambda_k$,  it holds
\begin{equation*}
    \left|\frac{1}{\lambda_k^3}\frac{1}{\lelan \frac{|x-\boldsymbol{y}_k|}{\lambda_k}\rilan^5}\right|\lesssim \frac{\lambda_k^2}{|x-\boldsymbol{y}_k|^5}\lesssim \frac{\lambda_k^{1-2\epsilon}}{|x-\boldsymbol{y}_k|^{4-2\epsilon}}.
\end{equation*}
Therefore, using (\ref{hardy 2-episllon}) we arrive at the estimate 
\begin{equation*}
\begin{aligned}
     \left| \int_{\mathbb R^4} \frac{1}{\lambda_k}\nabla_kW_k  W_k g^2dx\right| \lesssim& \int_{\mathbb R^4}\frac{\lambda_k^{1-2\epsilon}g^2}{|x-\boldsymbol{y}_k|^{4-2\epsilon}}dx\lesssim\lambda_k^{1-2\epsilon}\left\|\frac{g}{|x-\boldsymbol{y}_k|^{2-\epsilon}}\right\|_{L^2}^2
     \\\lesssim&\lambda_k^{1-2\epsilon}\left(\left\|g\right\|_{\dot{H}^2}^2+\left\|g\right\|_{\dot{H}^1}^2\right)\ll \lambda^{\frac{5}{2}}.
\end{aligned}
\end{equation*}
  The estimates when $j\neq k$ can be completed with the same approach. Finally, the cubic term is roughly bounded by Cauchy as 
 \begin{equation*}
     \lelan g^3,\frac{1}{\lambda_k}\nabla_k W_k \rilan\lesssim \left\|g\right\|_{\dot{H}^1}^3\left\|\frac{1}{\lambda_k}\nabla_k W_k\right\|_{\dot{H}^1}\lesssim \frac{1}{\lambda_k}\left\|g\right\|_{\dot{H}^1}^3\lesssim t^{-\frac{1}{2}}\lambda^2
 \end{equation*}
 As a result, we conclude that 
\begin{equation}\label{lemma 4 I}
   \left|\Rmnum{1}\right|\lesssim t^{-\frac{1}{2}}\lambda^2. 
\end{equation} 
Next we turn to $\Rmnum{2}$. As in Step 3, the major part comes from $\lelan \sum_{j\neq k}3W_k^2W_j,\frac{1}{\lambda_k}\nabla_k W_k\rilan$. In particular, the argument from (\ref{system for y v}) indicates 
\begin{equation*}
    \left|\lelan \sum_{j\neq k}3W_k^2W_j,\frac{1}{\lambda_k}\nabla_k W_k\rilan-\boldsymbol{D}_k(\boldsymbol{\lambda})\right|\lesssim O(\lambda^4).
\end{equation*}
For the remaining terms, we take $\lelan W_kW_iW_j,\frac{1}{\lambda_k}\nabla_k W_k \rilan$ for instance, where $i,j\neq k$. Using Cauchy's inequality we get
\begin{equation*}
  \left|\lelan W_kW_iW_j,\frac{1}{\lambda_k}\nabla_k W_k \rilan\right|  \leq \left\|W_kW_i\right\|_{L^2}\left\|W_j\frac{1}{\lambda_k}\nabla_k W_k\right\|_{L^2}\lesssim \lambda_k\lambda_i\lambda_j \sqrt{\log\left(\frac{1}{\lambda_k}\right)}\ll \lambda^{\frac{5}{2}}.
\end{equation*}
The other terms can be estimated in the same way and we obtain
\begin{equation}\label{lemma 4 II}
    \left|\Rmnum{2}-\boldsymbol{D}_k(\boldsymbol{\lambda})\right|\lesssim \lambda^{\frac{5}{2}}.
\end{equation}
Collecting (\ref{lemma 4 I}) and (\ref{lemma 4 II}), we conclude that
\begin{equation}\label{sharp est for s' left 2}
    \left|\lelan \Delta g+f(W_{\boldsymbol{\Gamma}}+g)-\sum_{j=1}^Nf(W_j) ,\frac{1}{\lambda_k}\nabla_k W_k\rilan-\boldsymbol{D}_k(\boldsymbol{\lambda})\right|\lesssim t^{-\frac{1}{2}}\lambda^2.
\end{equation}
We have now finished the estimate for terms on the left hand side of (\ref{partial dot g innner with nabla w}). 

Now we consider terms on the right hand side. In order to handle the summation of $j$, as in Step 3, again we split the discussion into the $j=k$ case and $j\neq k$ case. When $j=k$, the symmetry property and the fact $\lelan \underline{\Lambda} (\nabla W), \nabla W\rilan=0$ shows it equals to 
\begin{equation*}
    \begin{aligned}
      &\lelan \frac{b_k}{\lambda_k^2}(\boldsymbol{y}_k'\cdot\nabla)(\Lambda_k W_k\chi_{t})
          -\sum_{l=1}^4\frac{v'_{k,l}(t)\partial_lW(\frac{\cdot-\boldsymbol{y}_k}{\lambda_k})}{\lambda_k^2} ,\frac{1}{\lambda_k}\nabla_k W_k\rilan.
    \end{aligned}
\end{equation*}
By similar in (\ref{system for y v}), combining this term with (\ref{sharp est for s' left 1}) and (\ref{sharp est for s' left 2}) we have the major part of $\boldsymbol{s}_k$ system which is 
\begin{equation*}
 -{s}_{k,j}'+D_{k,j}(\lambda,\Vec{y_k}).   
\end{equation*}
It suffices for us to show the remaining part contributes  small perturbation of the ODE system. All these terms will be controlled by the right-hand side of (\ref{s'+=to}).  To view this, one should check the smallness when $j\neq k$. We stress that the largest term comes from 
\begin{equation*}
\begin{aligned}
    \left|\lelan \frac{b_j'}{\lambda_j}(\Lambda_jW_j\chi_{t}),\frac{1}{\lambda_k}\nabla_k W_k \rilan \right|\lesssim& |b_j'|\left(\int_{\lambda_j}^1\frac{1}{r^2}\cdot\frac{\lambda_k}{|z_k-z_j|^3}r^3dr+\int_{\lambda_k}^1\frac{1}{|z_k-z_j|^2}\cdot \frac{\lambda_k}{r^3}r^3dr\right) \\
    \lesssim& |b_j'||\lambda_k|\lesssim  t^{\frac{1}{2}}\lambda^2,
\end{aligned}
\end{equation*}
where (\ref{rough est on b'}) is used in the last line. As a result, we establish (\ref{s'+=to}), where the leading order comes from (\ref{sharp est for s' left 1}).

\textbf{Step 6. Proof of rough estimate of $\boldsymbol{v}_k'$ (\ref{rough est on v'}).} This step is analogous to the estimate of $b'$ in step 4. We stress that although $v$ is expected to decay much faster than $b$, the resulting bound for $v'$ shares the same rough bound with $b'$.  To see this, taking time derivative of $\lelan \frac{1}{\lambda_k}(\nabla_k W_k)\chi_k,\dot{g} \rilan=0$, we have
\begin{equation}\label{d dt nabla W dot g}
    \begin{aligned}
        0=&\frac{d}{dt}\lelan  \frac{1}{\lambda_k}(\nabla_k W_k)\chi_k,\dot{g} \rilan\\
          =&-\lelan \frac{\lambda_k'}{\lambda_k^3}\left(\underline{\Lambda}[(\nabla W) \chi]\right)\left(\frac{x-\boldsymbol{y}_k}{\lambda_k}\right),\dot{g} \rilan-\lelan  \sum_{i=1}^4\frac{1}{\lambda_k^3}[\partial_i((\nabla W)\chi)]\left(\frac{x-\boldsymbol{y}_k}{\lambda_k}\right)(y_k')_i,\dot{g}  \rilan\\
        &+\lelan  \frac{1}{\lambda_k}(\nabla_k W_k)\chi_k, \Delta g+f(W_{\boldsymbol{\Gamma}}+g)-\sum_{j=1}^Nf(W_j)\rilan\\
        &+\lelan  \frac{1}{\lambda_k}(\nabla_k W_k)\chi_k, \sum_{j=1}^N\frac{\lambda_j'b_j}{\lambda_j^2} (\underline{\Lambda}_j\Lambda_jW_j)\chi_{t}\rilan-\lelan  \frac{1}{\lambda_k}(\nabla_k W_k)\chi_k,  \sum_{j=1}^N\frac{b_j'}{\lambda_j}(\Lambda_jW_j)\chi_{t} \rilan\\
        &+\lelan  \frac{1}{\lambda_k}(\nabla_k W_k)\chi_k, \sum_
        {j=1}^N\frac{b_j|x-\boldsymbol{y}_j|}{\lambda_jt^2}((\Lambda_jW_j)\chi_{t}') \rilan+\lelan  \frac{1}{\lambda_k}(\nabla_k W_k)\chi_k, \sum_{j=1}^N\frac{b_j}{\lambda_j^2}(y_j'\cdot\nabla)((\Lambda_j W_j)\chi_{t})\rilan\\
        &-\lelan \frac{1}{\lambda_k}(\nabla_k W_k)\chi_k, \sum_{j=1}^N\sum_{l=1}^4\frac{v'_{j,l}(\partial_lW)(\frac{\cdot-\boldsymbol{y}_j}{\lambda_j})}{\lambda_j^2} \rilan -\lelan  \frac{1}{\lambda_k}(\nabla_k W_k)\chi_k, \sum_{j=1}^N\sum_{l=1}^4\frac{\lambda'_jv_{j,l}(\underline{\Lambda} \partial_l W)\left(\frac{\cdot-\boldsymbol{y}_j}{\lambda_j}\right)}{\lambda_j^3} \rilan\\
        &+\lelan  \frac{1}{\lambda_k}(\nabla_k W_k)\chi_k, \sum_{j=1}^N\sum_{l,m=1}^4\frac{v_{j,l}y'_{j,m}(\partial_l\partial_mW)(\frac{\cdot-\boldsymbol{y}_j}{\lambda_j})}{\lambda_j^3}\rilan.
    \end{aligned}
\end{equation}
Analogous to Step 4, the major contribution comes from
\begin{equation*}
   \lelan  \frac{1}{\lambda_k}(\nabla_k W_k)\chi_k, \ML_k g\rilan\text{ , and } -\lelan \frac{1}{\lambda_k}(\nabla_k W_k)\chi_k, \sum_{j=1}^N\sum_{l=1}^4\frac{v'_{j,l}(\partial_lW)(\frac{\cdot-\boldsymbol{y}_j}{\lambda_j})}{\lambda_j^2} \rilan.
\end{equation*}
Since $\ML_k[(\nabla_k W_k)\chi_k]$ vanishes except on the region $\lambda_kM\leq |x-\boldsymbol{y}_k|\leq 2\lambda_k M$, using Cauchy inequality, the $\ML_k g$ term is dominated by
\begin{equation*}
\begin{aligned}
    \left|\lelan  \frac{1}{\lambda_k}(\nabla_k W_k)\chi_k, \ML_k g\rilan\right|\lesssim \left\|g(t)\right\|_{\ME_2}\left(\int_{\lambda_kM}^{2\lambda_k M}\left(\frac{\lambda_k}{r^3}\right)^2r^3dr\right)^{\frac{1}{2}}\leq \frac{Ct^{\frac{1}{2}}\lambda}{M}.
    \end{aligned}
\end{equation*}
While for the other term, using the definition of $\chi_k$ and the decay of $\nabla W$ we obtain
\begin{equation*}
 -\lelan \frac{1}{\lambda_k}(\nabla_k W_k)\chi_k, \sum_{j=1}^N\sum_{l=1}^4\frac{v'_{j,l}(\partial_lW)(\frac{\cdot-\boldsymbol{y}_j}{\lambda_j})}{\lambda_j^2} \rilan=-\boldsymbol{v}_k'\lelan (\nabla W)\chi_M,\nabla W\rilan+O(\boldsymbol{v}_k'\lambda_j\lambda_k^2).   
\end{equation*}
Therefore, the step is completed if the remaining part of (\ref{d dt nabla W dot g}) is small enough to be absorbed by the right-hand  side of (\ref{rough est on v'}). As in step 4, most of these terms have already been treated in step 5. Here we emphasize the different terms
\begin{equation*}
    -\lelan  \frac{1}{\lambda_k}(\nabla_k W_k)\chi_k, \sum_{j=1}^N\sum_{l=1}^4\frac{\lambda'_jv_{j,l}(\underline{\Lambda} \partial_l W)\left(\frac{\cdot-\boldsymbol{y}_j}{\lambda_j}\right)}{\lambda_j^3} \rilan.
\end{equation*}
and
\begin{equation*}
 \lelan  \frac{1}{\lambda_k}(\nabla_k W_k)\chi_k, \sum_{j=1}^N\frac{b_j}{\lambda_j^2}(y_j'\cdot\nabla)((\Lambda_j W_j)\chi_{t})\rilan   
\end{equation*}
In step 5, these terms vanish due to $\lelan\nabla W, \underline{\Lambda}(\nabla W)\rilan=0$. The cut-off function here breaks the cancellation, thus we have to check them more carefully. Applying Cauchy and bootstrap assumption (\ref{bootstrap a for b}), (\ref{bootstrap a for v}) and (\ref{lambda'+b}) to the first term yields
\begin{equation*}
    \left|\lelan  \frac{1}{\lambda_k}(\nabla_k W_k)\chi_k, \sum_{j=1}^N\sum_{l=1}^4\frac{\lambda'_jv_{j,l}(\underline{\Lambda} \partial_l W)\left(\frac{\cdot-\boldsymbol{y}_j}{\lambda_j}\right)}{\lambda_j^3} \rilan\right|\lesssim \frac{|\boldsymbol{b}||\boldsymbol{v}|}{|\boldsymbol{\lambda}|}+\frac{|\boldsymbol{\lambda'}+\boldsymbol{b}||\boldsymbol{v}|}{|\boldsymbol{\lambda}|}\ll t^{2}\lambda^2,
\end{equation*}
which is negligible. The second term can be proved to be small in the same way. Consequently, inequality (\ref{rough est on v'}) is established. In particular, as (\ref{slight refine of b'}), the estimate of $v'$ can be refined as
\begin{equation}\label{slight refine of v'}
    |v_k'|\lesssim \frac{\left\|\ML_k g\right\|_{|x-\boldsymbol{y}_k|\leq 2\lambda_k M }}{M}+\lambda.
\end{equation}

\textbf{Step 7. Estimate of $a_k^{\pm}$ (\ref{a and tilde a})--(\ref{a' also satisfies}).} Lastly, we consider the stable/unstable direction. Starting from (\ref{a and tilde a}), the definition of $\Tilde{a}^{\pm}$ leads to
\begin{equation*}
    |a_k^{\pm}-\Tilde{a}_k^{\pm}|=\left|\lelan \Vec{Z}_k^{\pm}, (0, \sum_{j=1}^N\frac{b_j-\beta_j}{\lambda_j}(\Lambda_j W_j)\chi_t+\sum_{j=1}^N\frac{(\boldsymbol{v}_j-\boldsymbol{s}_j)}{\lambda_j}\cdot\nabla_j W_j) \rilan\right|.
\end{equation*}
Recall that the eigenfunction $Y$ is exponential decay and orthogonal to $\Lambda W$ and $\nabla W$. Hence it holds 
\begin{equation*}
    \begin{aligned}
     &\left|\lelan \Vec{Z}_k^{\pm}, (0, \sum_{j=1}^N\frac{b_j-\beta_j}{\lambda_j}(\Lambda_j W_j)\chi_t+\sum_{j=1}^N\frac{(\boldsymbol{v}_j-\boldsymbol{s}_j)}{\lambda_j}\cdot\nabla_j W_j) \rilan\right|\\
     \lesssim& |b_k-\beta_k|\left|\lelan Y,\Lambda W \left(1-\chi\left(\frac{\lambda_k \cdot}{t}\right)\right) \rilan\right|+\left|\lelan \frac{1}{\lambda_k}Y_k,\sum_{j\neq k}\frac{b_j-\beta_j}{\lambda_j}(\Lambda_j W_j)\chi_t+\sum_{j\neq k}\frac{(\boldsymbol{v}_j-\boldsymbol{s}_j)}{\lambda_j}\cdot\nabla_j W_j) \rilan\right|\\
     \lesssim& |b_k-\beta_k|\int_{\frac{t}{\lambda_k}}^{\infty}\frac{1}{r^{10}}r^3dr+\sum_{j\neq k}\left(\frac{|b_j-\beta_j|}{|\boldsymbol{y}_k-\boldsymbol{y}_j|^2}+\frac{|\boldsymbol{v}_j-\boldsymbol{s}_j|\lambda_j}{|\boldsymbol{y}_k-\boldsymbol{y}_j|^3}\right)\lambda_k^2\left\|Y\right\|_{L^1}
     \lesssim (t^{\frac{1}{6}}\sqrt{\log t})\lambda^3,
     \end{aligned}
\end{equation*}
where we used (\ref{b and beta}) and $|Y(x)|\lesssim\frac{1}{|x|^8}$ when $|x|\geq 1$ in the last line. Thus (\ref{a and tilde a}) is proved. Similarly, we can compute
\begin{equation*}
    \left| \frac{d}{dt}a_k^{\pm}-\frac{d}{dt}\tilde{a}_k^{\pm}\right|=\left|\frac{d}{dt}\lelan \Vec{Z}_k^{\pm}, (0, \sum_{j=1}^N\frac{b_j-\beta_j}{\lambda_j}(\Lambda_j W_j)\chi_t+\sum_{j=1}^N\frac{(\boldsymbol{v}_j-\boldsymbol{s}_j)}{\lambda_j}\cdot\nabla_j W_j) \rilan\right|.
\end{equation*}
Since $\Lambda Y$ and $\underline{\Lambda}Y$ are still exponentially decay functions, the leading order term comes from 
\begin{equation*}
 \left|\lelan \frac{1}{\lambda_k}Y_k,\frac{d}{dt}\left(\sum_{j\neq k}\frac{b_j-\beta_j}{\lambda_j}(\Lambda_j W_j)\chi_t+\sum_{j\neq k}\frac{(\boldsymbol{v}_j-\boldsymbol{s}_j)}{\lambda_j}\cdot\nabla_j W_j) \right)\rilan\right|.  
\end{equation*}
From (\ref{rough est on b'}) and (\ref{rough est on v'}), this term is dominated by
\begin{equation*}
    \sum_{j\neq k}\left(\frac{|b_j'|+|\beta_j'|}{|\boldsymbol{y}_k-\boldsymbol{y}_j|^2}+\frac{|\boldsymbol{v}_j'|+|\boldsymbol{s}_j'|\lambda_j}{|\boldsymbol{y}_k-\boldsymbol{y}_j|^3}\right)\lambda_k\left\|Y\right\|_{L^1}
     \lesssim t^{\frac{1}{2}}\lambda^3.
\end{equation*}
Therefore, (\ref{a' and tilde a'}) is proved.

Next, in order to prove (\ref{a' also satisfies}),  we first prove
\begin{equation}\label{a pm '=to}
  \left|(\Tilde{a}_k^{\pm})'\mp \nu \lambda_k^{-1}\Tilde{a}_k^{\pm}\right|\lesssim \lambda.
\end{equation}
Moreover, on account of narration convenience, we denote
\begin{equation*}
    \Vec{g}_c:=(g,\dot{h}).
\end{equation*}
Then, using the definition of $\dot{h}$, the energy norm of $\Vec{g}_c$ can be estimated by
\begin{equation*}
\begin{aligned}
      \left\|\Vec{g}_c\right\|_{\dot{H}^1\times L^2}\lesssim& \left\|\Vec{g}\right\|_{\dot{H^1}\times L^2}+\sum_{j=1}^N|b_j-\beta_j|\left\|\frac{1}{\lambda_j}(\Lambda_j W_j)\chi_t\right\|_{L^2}+\sum_{j=1}^N|\boldsymbol{v}_j-\boldsymbol{s}_j|\left\|\frac{1}{\lambda_j}\nabla_j W_j\right\| \\
      \lesssim& t^{-\frac 16}\lambda+t^{-\frac{5}{6}}\sqrt{\log t} \sqrt{\log (t/\lambda)}\lambda+t\lambda^2\lesssim t^{-\frac 16}\lambda.
\end{aligned}
\end{equation*}
With this,  we compute
\begin{equation*}
    \frac{d}{dt}\Tilde{a}_k^{\pm}=\lelan \partial_t \Vec{Z}_k^{\pm},\Vec{g}_c\rilan+\lelan \Vec{Z}_k^{\pm},\partial_t\Vec{g}_c \rilan.
\end{equation*}
For the first term,  
\begin{equation*}
    \partial_t \Vec{Z}_k^{\pm}=\lambda_k'\partial_{\lambda_k}\Vec{Z}_k^{\pm}+y_k'\cdot\partial_{y_k}\Vec{Z}_k^{\pm}.
\end{equation*}
Then due to the exponential decay of $Y$ and the bootstrap assumptions, we obtain
\begin{equation*}\label{d dt Z inner with gc}
    \left|\lelan \partial_t \Vec{Z}_k^{\pm},\Vec{g}_c \rilan\right|\lesssim \left(\left|\frac{\lambda_k'}{\lambda_k}\right|+\left|\frac{y_k'}{\lambda_k}\right|\right)\left\|\Vec{g}_c\right\|_{\dot{H}^1\times L^2}\lesssim t^{-\frac{1}{2}}\lambda.
\end{equation*}
For the second term, analogous to (\ref{formal dt g}) we have
\begin{equation}\label{unstable qual}
    \begin{aligned}
   \lelan \Vec{Z}_k^{\pm},\partial_t\Vec{g}_c \rilan=&\lelan \Vec{Z}_k^{\pm}, J\circ DE(\WTG+\Vec{g}_c)\rilan+\frac{1}{2}\lelan \frac{1}{\lambda_k}Y_k,\sum_{j=1}^N\frac{\beta_j}{\lambda_j}(\Lambda_jW_j)\left(\frac{|x-\boldsymbol{y}_k|}{t^2}\chi'(\frac{x-\boldsymbol{y}_k}{t})\right) \rilan\\& -\lelan \Vec{Z}_k^{\pm}, \boldsymbol{\lambda}'\partial_{\boldsymbol{\lambda}}\WTG\rilan
   -\lelan \Vec{Z}_k^{\pm}, \boldsymbol{\beta}'\partial_{\boldsymbol{\beta}}\WTG\rilan-\lelan \Vec{Z}_k^{\pm},\boldsymbol{y}'\partial_{\boldsymbol{y}}\WTG \rilan-\lelan \Vec{Z}_k^{\pm}, \boldsymbol{s}'\partial_{\boldsymbol{s}}\WTG\rilan.
    \end{aligned}
\end{equation}
We begin the analysis with the terms on the second line of (\ref{unstable qual}), from the definition, 
\begin{equation*}
   -\boldsymbol{\lambda}'\partial_{\boldsymbol{\lambda}}\WTG =\sum_{j=1}^N\left(\frac{\lambda_j'}{\lambda_j}\Lambda_j W_j,\frac{\beta_j \lambda_j'}{\lambda_j^{2}}(\underline{\Lambda}_j\Lambda_j W_j)\chi_{t}+\frac{\lambda_j'\boldsymbol{s}_j\cdot \underline{\Lambda}_j\nabla_j W_j}{\lambda_j^2}\right).
\end{equation*}
Since $\lelan Y_k, \Lambda_k W_k \rilan=0$, $\left|\lelan \frac{\nu}{\lambda_k^2}Y_k,\Lambda_jW_j \rilan\right|\lesssim \frac{\lambda_j \lambda_k}{|z_j-z_k|^2}$ (when $j\neq k$),  
\begin{equation*}
 \left|\lelan \frac{1}{\lambda_k}Y_k,\sum_{j=1}^N \frac{\beta_j \lambda_j'}{\lambda_j^{2}}(\underline{\Lambda}_j\Lambda_j W_j)\chi_{t}\rilan \right|\lesssim  \left|\frac{\beta_k \lambda_k'}{\lambda_k}\right|+\sum_{j\neq k}\left|\frac{\lambda_k^3\lambda_j^2\beta_j\lambda_j'}{|z_k-z_j|^4}\right|,  
\end{equation*}
and
\begin{equation*}
  \left|\lelan \frac{1}{\lambda_k}Y_k,\sum_{j=1}^N \frac{\lambda_j'\boldsymbol{s}_j\cdot \underline{\Lambda}_j\nabla_j W_j}{\lambda_j^2}\rilan \right|\lesssim  \left|\frac{\lambda_k'\boldsymbol{s}_k}{\lambda_k}\right|+\sum_{j\neq k}\left|\frac{\lambda_k^2\lambda_j'\boldsymbol{s}_j}{|z_k-z_j|}\right|,
\end{equation*}
we arrive at the estimate
\begin{equation}\label{Zk and lambda'partial lambda}
    \left|\lelan \Vec{Z}_k^{\pm}, \boldsymbol{\lambda}'\partial_{\boldsymbol{\lambda}}\WTG\rilan\right|\lesssim t^{-\frac{2}{3}}\lambda.
\end{equation}
Then for $\lelan \Vec{Z}_k^{\pm}, \boldsymbol{\beta}'\partial_{\boldsymbol{\beta}}\WTG\rilan$, the definition of $\WTG$ yields
\begin{equation*}
  -\boldsymbol{\beta}'\partial_{\boldsymbol{\beta}}\WTG=\left(0,\sum_{j=1}^N\frac{\beta_j'}{\lambda_j}(\Lambda_j W_j)\chi_{t}\right).  
\end{equation*}
From bootstrap assumption for $\lambda_k$ and $b_k$, together with the estimates (\ref{b and beta}), (\ref{beta'+=to}) that we obtain for $\beta$, the derivative of $\beta_k$ is controlled by
\begin{equation}\label{rough est of beta'}
    \left|\beta_k'\right|\lesssim \frac{|c\lambda|}{|\Xi(t)|}+\frac{b_k^2+|(b_k-\beta_k)(b_k+\beta_k)|}{\lambda_k|\Xi(t)|}\lesssim t^{-\frac{2}{3}}\lambda.
\end{equation}
Therefore, using Cauchy, decay of $Y$ and  (\ref{rough est of beta'}) we have
\begin{equation}\label{Zk and beta'}
    \left|\lelan \Vec{Z}_k^{\pm}, \boldsymbol{\beta}'\partial_{\boldsymbol{\beta}}\WTG\rilan\right|\lesssim \left|{\beta_k'}\right|\lesssim t^{-\frac{2}{3}}\lambda.
\end{equation}
For the $\boldsymbol{y}$ derivative term, it holds 
\begin{equation*}
\begin{aligned}
     -\boldsymbol{y}'\partial_{\boldsymbol{y}}\WTG=\sum_{j=1}^N\Bigg(&\frac{1}{\lambda_j}(\boldsymbol{y}_j'\cdot \nabla_j)W_j,\frac{\beta_j}{\lambda_j^2}((\boldsymbol{y}_j'\cdot \nabla_j)\Lambda_j W_j)\chi_{t})\\&+\frac{\beta_j}{\lambda_jt}\Lambda_j W_j\left(\boldsymbol{y}'\cdot (\nabla \chi)\left(\frac{\cdot-\boldsymbol{y}_k}{t}\right)\right)+\frac{\boldsymbol{y}_j'\cdot\nabla_j(\boldsymbol{s}_j\cdot\nabla_j W_j )}{\lambda_j^2}\Bigg) .
\end{aligned}
\end{equation*}
Again using $\lelan Y_k, \nabla_k W_k \rilan=0$, $\left|\lelan \frac{\nu}{\lambda_k^2}Y_k,\nabla_jW_j \rilan\right|\lesssim \frac{\lambda_j^2 \lambda_k}{|\boldsymbol{y}_j-\boldsymbol{y}_k|^3}$ (when $j\neq k$),  the exponential decay of $Y$, (\ref{bootstrap a for y}), (\ref{bootstrap a for v}) and (\ref{lambda'+b}), (\ref{y'+v}), we derive the bound
\begin{equation}\label{Zk and y'}
   \left |\lelan \Vec{Z}_k^{\pm},\boldsymbol{y}'\partial_{\boldsymbol{y}}\Vec{W}_{\boldsymbol{\Gamma}} \rilan\right|\lesssim \sum_{j\neq k}\frac{\lambda_j^2\lambda_k}{|\boldsymbol{y}_j-\boldsymbol{y}_k|^3}+ \left|\frac{b_k \boldsymbol{y}_k'}{\lambda_k}\right|+\left|\frac{b_k\boldsymbol{y}_k'}{t}\right|+\left|\frac{\boldsymbol{y}_k'\boldsymbol{s}_k}{\lambda_k}\right|\lesssim t^{\frac{2}{3}}\lambda^2.
\end{equation}
For the last term in (\ref{unstable qual}), the computation
\begin{equation*}
    -\boldsymbol{s}'\partial_{\boldsymbol{s}}\Vec{W}_{\boldsymbol{\Gamma}}=\sum_{j=1}^N(0,\lambda_j^{-1}(\boldsymbol{s}_j'\cdot\nabla_j)W_j).
\end{equation*}
The bootstrap assumption for $\boldsymbol{\lambda}$ and $\boldsymbol{v}$, together with estimate (\ref{s'+=to}) yields
\begin{equation*}
    |s_{k,j}'|\lesssim |D_{k,j}(\boldsymbol{\lambda},\boldsymbol{y})|+t^{\frac{2}{3}}\lambda^2\lesssim t^{\frac{2}{3}}\lambda^2,
\end{equation*}
and therefore,
\begin{equation}\label{Zk and s'}
  \left |\lelan \Vec{Z}_k^{\pm},\boldsymbol{s}'\partial_{\boldsymbol{s}}\WTG \rilan\right|  \lesssim |\boldsymbol{s}_k'|\lesssim t^{\frac{2}{3}}\lambda^2.
\end{equation}
In conclusion, putting (\ref{Zk and lambda'partial lambda}), (\ref{Zk and beta'}), (\ref{Zk and y'}), (\ref{Zk and s'}) together, the summation of the second line of (\ref{unstable qual}) is bounded by $t^{-\frac{2}{3}}\lambda$. Next, we invest the first line. From the estimate of ${\beta}$, we have
\begin{equation*}
    \left|\frac{1}{2}\lelan \frac{1}{\lambda_k}Y_k,\sum_{j=1}^N\frac{\beta_j}{\lambda_j}(\Lambda_jW_j)\left(\frac{|x-\boldsymbol{y}_k|}{t^2}\chi'\left(\frac{x-\boldsymbol{y}_k}{t}\right)\right) \rilan\right|\lesssim \left|\frac{\beta}{t}\right|\lesssim t^{-\frac{4}{3}}\lambda.
\end{equation*}
Now we focus on the term $\lelan \Vec{Z}_k^{\pm}, J\circ DE(\WTG+\Vec{g}_c)\rilan$. In fact, for fixed $k\in\left\{1,...,N\right\}$, we can rewrite $J\circ DE(\WTG+\Vec{g}_c)$ as
\begin{equation}\label{express of jde}
    \begin{aligned}
      J\circ DE(\WTG+\Vec{g}_c)= &\Bigg(\sum_{j=1}^N \frac{\beta_j}{\lambda_j}(\Lambda_j W_j)\chi_{t}+\sum_{j=1}^N\frac{\boldsymbol{s}_j\cdot \nabla_j W_j}{\lambda_j},0\Bigg)\\
        &+\Bigg(0,f(W_{\boldsymbol{\Gamma}}+g)-\sum_{j=1}^Nf(W_k)-f'(W_k)g\Bigg)+J\circ D^2E(W_k)\Vec{g}_c.
    \end{aligned}
\end{equation}
Plugging the definition of $\Vec{Z}_k$ into the expression and compute the inner product we derive
\begin{equation*}
\begin{aligned}
 &\left|\lelan \Vec{Z}_k^{\pm}, \left( \sum_{j=1}^N \frac{\beta_j}{\lambda_j}(\Lambda_j W_j)\chi_{t}+\sum_{j=1}^N\frac{\boldsymbol{s}_j\cdot \nabla_j W_j}{\lambda_j},0\right) \rilan \right|\\
 =&\left|\lelan\frac{\nu}{2}\lambda_k^{-2}Y_k,\sum_{j=1}^N \frac{\beta_j}{\lambda_j}(\Lambda_j W_j)\chi_{t}+\sum_{j=1}^N\frac{\boldsymbol{s}_j\cdot \nabla_j W_j}{\lambda_j}\rilan\right|.    
\end{aligned}
\end{equation*}
Using the fact $\lelan Y_k,\Lambda_kW_k \rilan=\lelan Y_k,\nabla_k W_k \rilan=0$ and applying the analysis deducing (\ref{Zk and lambda'partial lambda}) and (\ref{Zk and y'}) we assert 
\begin{equation*}
\begin{aligned}
  &\left|\lelan\frac{\nu}{2}\lambda_k^{-2}Y_k,\sum_{j=1}^N \frac{\beta_j}{\lambda_j}(\Lambda_j W_j)\chi_{t}+\sum_{j=1}^N\frac{\boldsymbol{s}_j\cdot \nabla_j W_j}{\lambda_j}\rilan\right|\\
  \lesssim& \sum_{j\neq k}\left(\frac{\beta_j\lambda_k\left\|Y\right\|_{L^1}}{|\boldsymbol{y}_j-\boldsymbol{y}_k|^2}+\frac{\lambda_j|\boldsymbol{s}_j|\lambda_k\left\|Y\right\|_{L^1}}{|\boldsymbol{y}_j-\boldsymbol{y}_k|^3}\right)+\left|\lelan \frac{\nu}{2\lambda_k^2}Y_k, \frac{b_k}{\lambda_k}(\Lambda_k W_k)(1-\chi_{t}) \rilan \right|\lesssim \lambda^2.
\end{aligned}
\end{equation*}
Then for the second term on the right hand side of (\ref{express of jde}), the definition of $\Vec{Z}_k$ again gives
\begin{equation*}
\begin{aligned}
     &\left|\lelan \Vec{Z}_k,\Bigg(0,f(W_{\boldsymbol{\Gamma}}+g)-\sum_{j=1}^Nf(W_k)-f'(W_k)g\Bigg) \rilan\right|\\
     =& \left|\lelan \lambda_k^{-1}Y_k, f(W_{\boldsymbol{\Gamma}}+g)-\sum_{j=1}^N f(W_j)-f'(W_k)g \rilan\right|.
\end{aligned}
\end{equation*}
By triangle inequality, this is bounded by 
\begin{equation*}
\begin{aligned}
   &\left|\lelan  \lambda_k^{-1}Y_k, f(W_{\boldsymbol{\Gamma}}+g)- f(W_{\boldsymbol{\Gamma}})-f'(W_{\boldsymbol{\Gamma}})g \rilan\right|\\+&\left|\lelan \lambda_k^{-1}Y_k, f(W_{\boldsymbol{\Gamma}})-\sum_{j=1}^N f(W_j) \rilan\right|+\left|\lelan \lambda_k^{-1}Y_k,(f'(W_{\boldsymbol{\Gamma}})-f'(W_k))g\rilan\right|
\end{aligned}  
\end{equation*}
The first term here contains $g^2$ and thus, using Cauchy and Sobolev,  is controlled by
\begin{equation*}
    \left\| \frac{1}{\lambda_k}Y_k\right\|_{L^2}\left\|W_{\boldsymbol{\Gamma}}\right\|_{\dot{H}^1}\left\|g\right\|_{\dot{H}^1}\left\|g\right\|_{\dot{H}^2}\lesssim t^{\frac{1}{3}}\lambda^2.
\end{equation*}
While for the second and third term, the exponential decay of $Y$ and Cauchy inequality yield
\begin{equation}\label{a' and a largest pertubation}
 \left|\lelan \lambda_k^{-1}Y_k, f(W_{\boldsymbol{\Gamma}})-\sum_{j=1}^N f(W_j) \rilan\right|\lesssim \sum_{j\neq k}\left\|\frac{1}{\lambda_k}Y_k W_j\right\|_{L^2} \left\|W_k\right\|_{L^4}^2\lesssim \lambda,
\end{equation}
and
\begin{equation*}
 \left|\lelan \lambda_k^{-1}Y_k,(f'(W_{\boldsymbol{\Gamma}})-f'(W_k))g\rilan\right|\lesssim  \sum_{j\neq k}\left\|\frac{1}{\lambda_k}Y_k W_j\right\|_{L^2} \left\|W_k\right\|_{L^4}\left\|g\right\|_{L^4}  \lesssim  t^{-\frac{1}{6}}\lambda^2.
\end{equation*}
We emphasize that the main order term on the right hand side of (\ref{a pm '=to}) comes from (\ref{a' and a largest pertubation}).  Lastly, since $\ML Y=-\nu^2 Y$, we have $\lelan \Vec{Z}_k^{\pm},J\circ D^2E(W_k)\Vec{g}_c \rilan=\pm \nu \lambda_k^{-1}\Tilde{a}_k^{\pm}$. As a result we have
\begin{equation*}
    \left|(\Tilde{a}_k^{\pm})'\mp \nu \lambda_k^{-1}\Tilde{a}_k^{\pm}\right|\lesssim \lambda
\end{equation*}
holds on the interval. Furthermore, combining (\ref{a pm '=to}) with (\ref{a and tilde a}), (\ref{a' and tilde a'}) gives
\begin{equation*}
\begin{aligned}
       \left|(a_k^{\pm})'\mp\nu \lambda_k^{-1}a_k^{\pm}\right|\lesssim&    \left|(\Tilde{a}_k^{\pm})'\mp \nu \lambda_k^{-1}\Tilde{a}_k^{\pm}\right|+|(a_k^{\pm})'-(\Tilde{a}_k^{\pm})'|+|\nu \lambda_k^{-1}{a}_k^{\pm}-\nu \lambda_k^{-1}\Tilde{a}_k^{\pm}|\\
       \lesssim&\lambda+t^{\frac{1}{2}}\lambda^2+(t^{-\frac{5}{6}}\log t)\lambda\lesssim \lambda,
\end{aligned}
\end{equation*}
which completes the proof of (\ref{a' also satisfies}).
\end{proof}
The following statement is the main part of the proof of Theorem \ref{multi-bubble solution}. 
\begin{prop}\label{main step} For any $T>T_0$, there exist $(\omega_0,\omega_1)\in [-1,1]^2 $ such that the solution $\Vec{u}$ of (\ref{NLW 4}) with data $\Vec{u}(T)$ given by Lemma \ref{choice of u(T)} satisfies $T_*=T_0$.
    
\end{prop}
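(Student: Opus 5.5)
Following the strategy announced in the introduction, the proof will be a bootstrap argument coupled with a two-parameter topological (Ważewski-type) argument. We argue by contradiction: suppose that for every $(\omega_0,\omega_1)\in[-1,1]^2$ the corresponding solution has $T_*=T_*(\omega_0,\omega_1)>T_0$. By Lemma \ref{close the bootstrap for parameters system}(i), at $t=T_*$ equality holds in at least one of (\ref{bootstap a for energy})--(\ref{bootstrap a for stable}). The first task is to show that all of these bounds except those involving the two ``exit'' quantities are strictly improved on $[T_*,T]$. The exit quantities are the scale deviation $\alpha(t)-\xi t^{2/3}$, paired with $\omega_0$, and the unstable coordinate $a^-$, paired with $\omega_1$. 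We will use the convention of (\ref{initial data of stable unstable}), where $a^-$ is the coordinate prescribed by $\omega_1$ and backward in time it is the unstable one.

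\textbf{Improving the non-exit bounds.} For the energy bounds (\ref{bootstap a for energy})--(\ref{bootstap a for 2nd energy}) we invoke the first and second order energy estimates of Section 6. These are the functional built with $\tilde\phi$ and $(g,\dot h)$, and $\mathcal H_2=\mathcal I_2+\sum_k\mathcal J_k$ built with $\phi$. Integrating their differential inequalities backward from $T$, and using (\ref{initial data of energy}) together with coercivity (Lemma \ref{near multi bubble coercivity}, Corollary \ref{coercivity est with LKG LKG}), modulo the $a^\pm$ and orthogonality directions, we expect to obtain $\|\vec g\|_{\mathcal E}\lesssim t^{-1/6-\delta}\lambda$ and $\|\vec g\|_{\mathcal E_2}\lesssim t^{1/2-\delta}\lambda$, which gives strict inequality for large $T_0$. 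The bound (\ref{bootstrap a for b}) on $m-\mu$ comes from (\ref{beta'+=to}). Writing $m'=\beta'/\lambda+m\alpha'$ and $\alpha'=-\lambda'/\lambda=m+O(\cdot)$ by (\ref{lambda'+b}) and (\ref{b and beta}), we get
\[
\Xi(m-\mu)'\approx -\Xi\mu(m-\mu)+O(t^{-7/6}\sqrt{\log t}),
\]
which is a damped equation with rate $\Xi\mu\sim t^{1/3}$. Backward from $T$, where (\ref{m T- mu T}) gives a smaller value, this forces $|m-\mu|\lesssim t^{-1}\sqrt{\log t}\ll Ct^{-2/3}$. For the positions, the bounds (\ref{bootstrap a for y})--(\ref{bootstrap a for v}) follow by integrating (\ref{y'+v}), (\ref{v and s}) and (\ref{s'+=to}) backward from $\boldsymbol y_k(T)=z_k$, $\boldsymbol v_k(T)=0$, using $|D_k|\lesssim\lambda^2$ and the fact that $\lambda^2$ grows fast backward in time. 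Finally, the stable coordinate $a^+$, which vanishes at $T$ and satisfies (\ref{a' also satisfies}), decays when integrated backward, giving $(a^+)^2\ll t^{1/2}\lambda^4$.

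\textbf{Exit map and topological argument.} Given the improvements, exit at $T_*$ can only occur through equality in (\ref{bootstrap a for lambda}) or in (\ref{bootstrap a for stable}). We then check transversality. For the scale, set $\zeta(t)=t^{-1/3}(\alpha(t)-\xi t^{2/3})$. Using $\alpha'=m+o(t^{-2/3})$, $m=\mu+O(t^{-1}\sqrt{\log t})$, and (\ref{rough of mu}), we get $(\alpha-\xi t^{2/3})'=-(\alpha-\xi t^{2/3})/(3t)+o(t^{-2/3})$. Hence $\zeta'=-\tfrac{2}{3}\zeta t^{-1}+o(t^{-1})$, so $(\zeta^2)'<0$ whenever $|\zeta|=1$. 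Thus $|\zeta|$ strictly increases as $t$ decreases near the boundary, which is the exit condition. For the unstable coordinate, (\ref{a' also satisfies}) with $|a^-|\sim t^{1/4}\lambda^2$ shows that $\nu\lambda^{-1}|a^-|$ dominates the $O(\lambda)$ error, so $\frac{d}{dt}\big(t^{-1/2}\lambda^{-4}(a^-)^2\big)$ has a definite sign at the boundary; the factor $\lambda^{-4}$ only contributes $O(t^{-1/3})$ relative terms against $\lambda^{-1}$. We then define the exit map
\[
(\omega_0,\omega_1)\mapsto\big(\zeta(T_*),\,t^{-1/4}\lambda^{-2}a^-(T_*)\big)\in\partial([-1,1]^2)
\]
(rescaled suitably). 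Transversality together with continuity of the flow and of $\vec u(T)$ (Lemma \ref{choice of u(T)}) makes $T_*$ and this map continuous. On $\partial[-1,1]^2$ the map is the identity, since then exit happens at $T_*=T$, after adjusting $\zeta(T)=\omega_0$, which holds exactly. This produces a retraction of the square onto its boundary, which contradicts Brouwer.

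\textbf{Main obstacle.} The main difficulty is the closing of the second order energy bound (\ref{bootstap a for 2nd energy}) with room to spare. The rough bounds (\ref{rough est on b'}) and (\ref{rough est on v'}) produce losses of size $o_M(1)\mathcal N(t)$, and these must be absorbed by the virial term $c\mathcal N(t)$, so $M$ must be fixed before $T_0$. One also has to check that the coupling between $\beta$ and $b$, in (\ref{b and beta}), is compatible with the exponents. A secondary delicate point is ensuring that the two exit quantities are coupled weakly enough for the boundary map to be homotopic to the identity.
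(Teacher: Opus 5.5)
Your plan follows the paper's proof. The energy bounds are closed by integrating $\mathcal I_1'$ and $\mathcal H_2'$ backward (Lemma \ref{final boots for energy}). The bounds on $m-\mu$, $\boldsymbol y_k$, $\boldsymbol v_k$ and $a^+$ are improved as in Lemma \ref{close of the scaling and trans para}. The exit quantities $t^{-1/3}(\alpha-\xi t^{2/3})$ and $t^{-1/4}\lambda^{-2}a^-$ satisfy the same transversality computations, which yields a retraction of $[-1,1]^2$ onto its boundary.

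There is one sign slip to fix in the $m-\mu$ step. The linearized equation is $w'=(m+\mu)w+O(t^{-7/6}\sqrt{\log t})\approx 2\mu w$ for $w=m-\mu$, not $-\mu w$. It is therefore unstable forward in time, and that is exactly why integrating backward from $T$ contracts $w$. With your sign, the backward integration would amplify $w$ instead. The correct computation gives $|w|\lesssim t^{-5/6}\sqrt{\log t}$ rather than $t^{-1}\sqrt{\log t}$, which is still $o(t^{-2/3})$.
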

\section{Construction of the refined approximate solution}\label{Construction of the refined approximate solution}
\subsection{Construction of the approximate solutions}
We construct refined approximate solutions by adding suitable correction profiles to the leading-order multi-bubble ansatz. These profiles are designed to capture the main contributions arising in the modulation equations and to improve the accuracy of the approximation.

We now introduce the profiles $Q$, $S$, and $Z_{l}$, for $l\in \left\{1,2,3,4\right\}$, as the solutions to the following equations
\begin{equation*}
    \ML Q=-\Lambda W \chi_t(\lambda \cdot)-\frac{\lelan \Lambda W \chi_t(\lambda \cdot),\Lambda W \rilan}{32\pi^2}f'(W),
\end{equation*} 
\begin{equation*}
    \ML S=-(\underline{\Lambda}\Lambda W)\chi_t(\lambda \cdot)+\frac{\lelan (\underline{\Lambda}\Lambda W)\chi_t(\lambda \cdot),\Lambda W \rilan}{\lelan \Lambda W \chi_t(\lambda \cdot),\Lambda W \rilan}\Lambda W \chi_t(\lambda \cdot),
\end{equation*}
\begin{equation*}
    \ML Z_{l}=-(\Lambda \partial_l W+\partial_l W)=-\partial_l \Lambda W,
\end{equation*}
where we stress that $\lelan f'(W),\Lambda W\rilan=-32\pi^2$.

These profiles will serve as the building blocks of the approximate solutions constructed below. The existence and smoothness of these profiles follow from standard ODE theory. We stress that $Q$ and $S$ are radially symmetric.
\begin{lemma}[Pointwise estimates for the correction profiles] For $m=0,1$, the profiles $Q$ and $S$ satisfy the following pointwise estimates:
\begin{equation*}
    \left|\frac{dQ}{dy^{m}}(y)\right|\lesssim\begin{cases}
    \log\left(\frac{2t}{\lambda}\right)|y|^{2-m}&\text{ , when } |y|\leq 1 \\
    \log(\frac{2t}{\lambda |y|})y^{-m}+\frac{|y|^2+(1+\log |y|) \log(2t/\lambda )}{1+|y|^{2+m}}&\text{ , when }1\leq |y|\leq 2t/\lambda,
    \end{cases}    
\end{equation*}
\begin{equation*}
    \left|\frac{d}{dy^{m}}\left(\frac{dQ}{dt}\right)(y)\right|\lesssim\begin{cases}
    \left(\frac{|\lambda'|}{\lambda}+\frac{1}{t}\right)|y|^{2-m}&\text{ , when } |y|\leq 1 \\
    \left(\frac{|\lambda'|}{\lambda}+\frac{1}{t}\right)\frac{|y|^2}{1+|y|^{2+m}}&\text{ , when }1\leq |y|\leq 2t/\lambda,
    \end{cases}    
\end{equation*}
\begin{equation*}
   \left|\frac{dS}{dy^{m}}(y)\right|\lesssim\begin{cases}
    |y|^{2-m}&\text{ , when } |y|\leq 1 \\
   \frac{1+\log y}{1+y^{2+m}}+\frac{\log(t/\lambda y)+1}{\log(t/
    \lambda)y^{m}}&\text{ , when }1\leq |y|\leq 2t/\lambda.
    \end{cases}      
\end{equation*}
\begin{equation*}
   \left|\frac{d}{dy^{m}}\left(\frac{dS}{dt}\right)(y)\right|\lesssim\begin{cases}
    \frac{y^{2-m}}{\log(2t/\lambda)^2}\left(\frac{|\lambda'|}{\lambda}+\frac{1}{t}\right)&\text{ , when } |y|\leq 1 \\
   \left(\frac{|\lambda'|}{\lambda}+\frac{1}{t}\right)\left(\frac{1}{y^{m}\log(t/\lambda)}+\frac{\log(t/\lambda y)}{y^{m}\log(t/\lambda)^2}+\frac{1}{1+y^{2+m}}\right)&\text{ , when }1\leq |y|\leq 2t/\lambda.
    \end{cases}      
\end{equation*}
\end{lemma}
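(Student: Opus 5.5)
The profiles are radial, so each of $Q$ and $S$ solves the radial ODE $-\partial_r^2 u-\frac3r\partial_r u-3W^2u=F$ with an explicit right-hand side $F$. I will write them by variation of parameters and read off the bounds. The radial kernel of $\ML$ is spanned by $\Lambda W$, which is regular at $0$ and satisfies $\Lambda W\sim -8r^{-2}$ at infinity. A second solution $\Gamma$ is fixed by the Wronskian normalization $r^3(\Lambda W\,\Gamma'-\Gamma\,(\Lambda W)')=1$. From this normalization $\Gamma\sim c r^{-2}$ as $r\to0$ and $\Gamma= c'+O(r^{-2}\log r)$ as $r\to\infty$: the product $r^3(\Lambda W)'\sim r^{3}\cdot r^{-3}$ is constant, so a bounded $\Gamma$ is consistent.

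The regular solution is
\[
u(r)=\Gamma(r)\int_0^r F\,\Lambda W\,s^3ds-\Lambda W(r)\int_0^r F\,\Gamma\, s^3ds ,
\]
which satisfies $u=O(r^2)$ and $u'=O(r)$ near $0$. Near the origin $F$ is bounded, with size $O(\log(2t/\lambda))$ for $Q$ because of the coefficient in front of $f'(W)$ and $O(1)$ for $S$. This gives the estimates for $|y|\le1$.

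The key structural point for $|y|\ge1$ is that both right-hand sides are built so that $\int_0^\infty F\,\Lambda W\,s^3ds=0$. For $Q$ this follows from $\langle f'(W),\Lambda W\rangle=-32\pi^2$. For $S$ the subtracted multiple of $\Lambda W\chi_t(\lambda\cdot)$ kills the projection. Hence I will rewrite $\int_0^rF\Lambda W s^3=-\int_r^{\infty}F\Lambda Ws^3$. For $Q$ we have $F\sim s^{-2}$ on $1\le s\le 2t/\lambda$, and $F$ reduces to the fast decaying $f'(W)$ term beyond. So this integral is $\lesssim\log(2t/(\lambda r))+1$, and multiplied by the bounded $\Gamma$ it gives the term $\log(\frac{2t}{\lambda|y|})$. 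In the second piece, $\int_0^rF\Gamma s^3$ contributes $\lesssim r^2$ from the $\Lambda W\chi_t$ part and $\log(2t/\lambda)(1+\log r)$ from the $f'(W)$ part. Multiplied by $|\Lambda W|\lesssim (1+r^2)^{-1}$, this yields $\frac{|y|^2+(1+\log|y|)\log(2t/\lambda)}{1+|y|^2}$. For $S$ the argument is the same. There $\underline\Lambda\Lambda W\sim r^{-4}$, and the subtracted term has coefficient $\langle(\underline\Lambda\Lambda W)\chi_t,\Lambda W\rangle/\langle\Lambda W\chi_t,\Lambda W\rangle\sim 1/\log(t/\lambda)$. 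This explains the factors $\frac{1+\log y}{1+y^2}$ and $\frac{\log(t/\lambda y)+1}{\log(t/\lambda)}$. The $m=1$ bounds follow by differentiating the formula: the boundary terms cancel by the Wronskian structure, so $u'=\Gamma'\int F\Lambda W s^3-(\Lambda W)'\int F\Gamma s^3$. One then uses $|\Gamma'|\lesssim r^{-3}$ at infinity and $|(\Lambda W)'|\lesssim r^{-3}$, which gives the extra factor $y^{-1}$.

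For the time derivatives I will apply the same representation to $\partial_tF$, since $\partial_t Q$ solves $\ML\partial_tQ=\partial_tF$ with the same normalization. Differentiating $\chi(\lambda y/t)$ produces $(\frac{\lambda'}{\lambda}-\frac1t)\,\frac{\lambda y}{t}\cdot\nabla\chi$, supported on $|y|\sim t/\lambda$ and of size $(\frac{|\lambda'|}{\lambda}+\frac1t)|y|^{-2}$ there. Differentiating the coefficients (a log-size inner product for $Q$, and a ratio of inner products for $S$) gives a factor $(\frac{|\lambda'|}{\lambda}+\frac1t)$, times $O(1)$ for $Q$ and times $O(\log(t/\lambda)^{-2})$ for $S$, because the derivative of $\frac{A}{B}$ with $B\sim\log(t/\lambda)$ gains $B^{-1}$. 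The vanishing projection is preserved under $\partial_t$, so the same tail trick applies. Now the source for $\partial_tQ$ lives only at $|y|\sim t/\lambda$ or is $\propto f'(W)$. This yields the cleaner bound $\frac{|y|^2}{1+|y|^{2+m}}$ without the logarithms.

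The main obstacle is bookkeeping the logarithms precisely in the intermediate region $1\le|y|\le2t/\lambda$, especially for $S$ and $\partial_tS$. There the leading $\log(t/\lambda)$ must cancel against the normalized projection coefficient. I would first establish the exact asymptotics $\Gamma=c'+O(r^{-2}\log r)$ and $\Lambda W=-8r^{-2}+O(r^{-4})$. I would then estimate $\int_r^{2t/\lambda}$ and $\int_1^r$ separately, tracking only the leading terms.
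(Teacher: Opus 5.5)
Your proposal is correct and is essentially the paper's own appendix argument. Both use the variation-of-parameters formula with $\Lambda W$ and a second kernel element $\Gamma$, the orthogonality $\langle F,\Lambda W\rangle=0$ to replace $\int_0^y$ by $-\int_y^\infty$ for $|y|\ge1$, and the same representation applied to $\partial_tF$. Defining $\Gamma$ through the Wronskian is, if anything, cleaner than the paper's reduction-of-order formula, whose integral runs through the zero of $\Lambda W$ at $|y|=\sqrt8$.

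There are two small slips, both absorbed by the stated right-hand sides. First, $\Gamma=c'+O(r^{-2}\log r)$ gives $|\Gamma'|\lesssim r^{-3}\log r$, not $r^{-3}$. Second, for $Q$ the tail $\int_r^\infty F\Lambda W s^3\,ds$ also contains a term of size $\log(2t/\lambda)\,r^{-2}$ from the $f'(W)$ part, which is dominated by $\frac{(1+\log r)\log(2t/\lambda)}{1+r^2}$.
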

The proof of this lemma relies on elementary ODE arguments and is postponed to Appendix \ref{pointwise est of refined profile}. Next we introduce rescaled profiles
\begin{equation*}
    Q_k:=Q_{\lambda_k}(\cdot-\boldsymbol{y}_k)\text{ , }S_k:=S_{\lambda_k}(\cdot-\boldsymbol{y}_k)\text{ , and }Z_{k,l}=(Z_{l})_{\lambda_k}(\cdot-\boldsymbol{y}_k).
\end{equation*}
Using the pointwise bounds above together with the scaling properties of the profiles, we derive estimates in the $L^2$, $\dot H^1$ and $\dot H^2$ norms. These bounds will be used repeatedly in the construction of the approximate solutions.
\begin{lemma}[Localized norm estimates for the rescaled profiles]\label{Norm estimates for the rescaled profiles}
Let $Q_k$, $S_k$ and $Z_{k,l}$ be defined as above. 
The following estimates
hold locally near the \(k\)-th bubble. Throughout this lemma, all \(L^2\),
\(\dot H^1\), and \(\dot H^2\) norms are taken over the region
\[
B(y_k,2d)=\{x\in\mathbb R^4:\ |x-y_k|\le 2d\}.
\]

\medskip

\noindent\textbf{Estimates for $Q_k$.}
\[
\|Q_k\|_{L^2} \lesssim \frac{\log t}{\lambda(t)},\quad
\|Q_k\|_{\dot H^1} \lesssim \frac{\log t}{\lambda(t)},\quad
\|Q_k\|_{\dot H^2} \lesssim \frac{\log (t/\lambda(t))}{\lambda(t)}.
\]

\medskip

\noindent\textbf{Estimates for $S_k$.}
\begin{equation*}
\begin{aligned}
    \left\|S_k\right\|_{L^2}\lesssim {\frac{1}{\lambda(t)\log(t/\lambda(t))}}\text{ , }\left\|S_k\right\|_{\dot{H}^1}\lesssim {\frac{1}{\log(t/\lambda(t))}}\text{ , }\left\|S_k\right\|_{\dot{H}^2}\lesssim \frac{1}{\lambda(t)}.
\end{aligned}
\end{equation*}

\medskip

\noindent\textbf{Estimates for $Z_{k,l}$.}
\[
\|Z_{k,l}\|_{L^2} \lesssim 1,\quad
\|Z_{k,l}\|_{\dot H^1} \lesssim (\log(2d/\lambda(t)))^{1/2},\quad
\|Z_{k,l}\|_{\dot H^2} \lesssim \frac{1}{\lambda(t)}.
\]
\end{lemma}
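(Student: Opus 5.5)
The plan is to work entirely at the unscaled level and then transfer by scaling. For a profile $F$ one has $\|F_{\lambda}(\cdot-\boldsymbol y_k)\|_{L^2(B(\boldsymbol y_k,2d))}=\lambda\|F\|_{L^2(|y|\le 2d/\lambda)}$, $\|F_\lambda\|_{\dot H^1(B)}=\|F\|_{\dot H^1(|y|\le 2d/\lambda)}$ and $\|F_\lambda\|_{\dot H^2(B)}=\lambda^{-1}\|F\|_{\dot H^2(|y|\le 2d/\lambda)}$. Each bound therefore reduces to an integral of the pointwise estimates of the preceding lemma over $|y|\le 2d/\lambda$, computed in polar coordinates with $dy\sim r^3dr$. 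The region $|y|\le 2d/\lambda$ is contained in $|y|\le 2t/\lambda$, since $d$ is fixed and $t$ is large, so the lemma applies throughout. I would split every integral into $|y|\le1$, where everything is trivially $O(\text{prefactor})$, and $1\le|y|\le 2d/\lambda$, where the logarithmic tails matter.

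For $\dot H^2$ the pointwise lemma only covers $m=0,1$, so I would not try to differentiate twice. Instead I would use the equations themselves. Since $\Delta F=-\ML F-f'(W)F$, I would bound $\|\Delta F\|_{L^2}$ by the $L^2$ norm of the explicit right-hand side plus $\|W^2F\|_{L^2}$, and then pass to the full Hessian by Riesz transforms, as in Lemma \ref{coercivity proposition for second order operator}. Strictly this requires localizing with a cutoff at scale $2d/\lambda$; the commutator terms are bounded by the $m=0,1$ estimates divided by $(d/\lambda)^2$ and $d/\lambda$, and these are harmless. The cases are as follows.
\begin{itemize}
\item For $Q$: the source $\Lambda W\chi_t(\lambda\cdot)$ is $O(\langle y\rangle^{-2})$, so its $L^2$ norm on $|y|\le2t/\lambda$ is $\sim\log(t/\lambda)^{1/2}$. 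The term $\langle\Lambda W\chi_t,\Lambda W\rangle f'(W)$ is $\log(t/\lambda)\cdot O(\langle y\rangle^{-4})$. Finally $W^2Q=O(\langle y\rangle^{-4}(|y|^2+\log))$, which has $L^2$ norm of order $\log$. Together these give $\|Q\|_{\dot H^2}\lesssim\log(t/\lambda)$, hence the stated bound after the factor $\lambda^{-1}$.
\item For $S$: the source is $O(\langle y\rangle^{-2})$, plus a ratio of size $O(1/\log)$ times $\Lambda W\chi_t$. The pointwise bound on $S$ is $O(1)$, so the $W^2S$ term is bounded. The result is $\|S\|_{\dot H^2}\lesssim1$, up to a $\sqrt{\log}$ that I expect cancels, since the leading $r^{-2}$ tails of $\underline\Lambda\Lambda W$ and $\Lambda W$ should match after subtraction. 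This has to be checked against the asymptotics $\Lambda W\sim-8r^{-2}$ and $\underline\Lambda\Lambda W=O(r^{-4})$. Indeed $\underline\Lambda(r^{-2})=0$, so the source really is $O(\langle y\rangle^{-4})+O(\log^{-1}\langle y\rangle^{-2})$, which gives $O(1)$ in $L^2$.
\item For $Z_l$: $\partial_l\Lambda W=O(\langle y\rangle^{-3})\in L^2$, and $W^2Z_l$ is fine once one has a pointwise bound $|Z_l|\lesssim\langle y\rangle^{-1}$. This bound comes from solving $\ML Z_l=-\partial_l\Lambda W$ in the $\ell=1$ harmonic sector, where the ODE gives decay $r^{-1}$ at infinity; note that $\partial_l(\text{radial})$ times $r^{-1}\log$ would also be acceptable.
\end{itemize}

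For $L^2$ and $\dot H^1$ the computations are the following.
\begin{itemize}
\item For $Q$: $|Q|\lesssim\log(2t/\lambda|y|)+1+\log|y|\log(t/\lambda)/|y|^2$. The first term gives $\lambda^2\int_1^{2d/\lambda}\log^2(t/\lambda r)r^3dr$, and on this range $\log(t/\lambda r)\approx\log(t/d)\sim\log t$. This yields $\|Q\|_{L^2(|y|\lesssim d/\lambda)}\lesssim\lambda^{-2}\log t$, so $\|Q_k\|_{L^2}\lesssim\lambda^{-1}\log t$. The cross term contributes $\lambda\log(1/\lambda)\cdot\lambda^{-1}$, which is $\ll\lambda^{-1}\log t$ only because $\lambda\log^2(1/\lambda)$ is tiny.
\item For $\nabla Q$: $|\nabla Q|\lesssim\log(2t/\lambda|y|)/|y|+\dots$, which gives $\int_1^{d/\lambda}\log^2t\,r\,dr\sim\lambda^{-2}\log^2t$, hence $\lambda^{-1}\log t$.
\item For $S$: the $L^2$ norm is dominated by $\frac{\log(t/\lambda y)}{\log(t/\lambda)}\lesssim\frac{\log t}{\log(t/\lambda)}$. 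This gives $\|S_k\|_{L^2}\lesssim\lambda\cdot\lambda^{-2}\log t/\log(t/\lambda)$. This is a factor $\log t$ worse than claimed. I would try to recover it by using the sharper $\frac{1+\log(t/\lambda y)}{\log(t/\lambda)}$ behaviour, or else accept that $\log t\lesssim1$ is absorbed in the paper's conventions. This is the step I flag as uncertain.
\item For $\nabla S$: the bound is $\frac{1}{\log(t/\lambda)\,y}$, and $\int_1^{d/\lambda}r\,dr/\log^2\sim\lambda^{-2}/\log^2$, which gives $\lambda^{-1}/\log$. This does not match the claimed $1/\log$ either, so the same issue arises. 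I suspect the intended reading normalizes differently, so I would recheck the scaling convention ($F_\lambda=\lambda^{-1}F(\cdot/\lambda)$) before anything else.
\item For $Z_l$: $|Z_l|\lesssim\langle y\rangle^{-1}$ gives $\lambda\,(\int^{d/\lambda}r\,dr)^{1/2}\sim1$ for the $L^2$ norm, and $|\nabla Z_l|\lesssim\langle y\rangle^{-2}$ gives $\sqrt{\log(2d/\lambda)}$ for the $\dot H^1$ norm, matching the statement.
\end{itemize}

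The main obstacle is twofold. First, the pointwise and decay bounds for $Z_l$ and the second derivatives are not provided by the preceding lemma, so they have to be supplied through the ODE or the equation route above. Second, the bookkeeping of logarithms for $S$ and $Q$ must be reconciled with the claimed rates.
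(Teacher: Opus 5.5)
Your route is the paper's. You rescale to $|y|\le 2d/\lambda$, integrate the pointwise bounds of the preceding lemma for the $L^2$ and $\dot H^1$ norms, and get $\dot H^2$ from the equation. The paper's interior elliptic estimate $\|Q_k\|_{\dot H^2(B(\boldsymbol{y}_k,2d))}\lesssim\|\Delta Q_k\|_{L^2(B(\boldsymbol{y}_k,3d))}+\|Q_k\|_{L^2(B(\boldsymbol{y}_k,3d))}$ is your cutoff-plus-Riesz argument. For $Z_{k,l}$ you supply, through the $\ell=1$ ODE, the decay $|Z_l|\lesssim\langle y\rangle^{-1}$ that the paper only asserts. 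Your scaling identities are correct for the convention $F_k=\lambda_k^{-1}F((\cdot-\boldsymbol{y}_k)/\lambda_k)$. Your computations for $Q_k$, for $\|S_k\|_{\dot H^2}$ (including $\underline{\Lambda}(r^{-2})=0$) and for $Z_{k,l}$ give the stated bounds.

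\textbf{The two $S_k$ mismatches.} These are not gaps in your argument; the statement is wrong there, so do not try to recover the factors.
\begin{itemize}
\item \emph{Tail of $S'$.} Integrate the radial equation once: $-y^3S'(y)=\int_0^y s^3(F+3W^2S)\,ds$. Use $F=-\frac{4}{s^2\log(t/\lambda)}(1+o(1))+O(s^{-4})$ on $1\ll s\le t/\lambda$, since the ratio in the source of $S$ is $\approx\frac{1}{2\log(t/\lambda)}$, together with $|S|\lesssim 1$. This gives $S'(y)=\frac{2(1+o(1))}{y\log(t/\lambda)}+O(y^{-3}\log y)$.
\item \emph{Size of $S$.} Integrate back from $y\sim t/\lambda$, where $S=O(1/\log(t/\lambda))$ under the paper's normalization. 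This gives $|S(y)|=\frac{2\log(t/(\lambda y))}{\log(t/\lambda)}+O\big(\frac{1}{\log(t/\lambda)}\big)$ on $d/\lambda\le y\le 2d/\lambda$.
\item \emph{$L^2$ norm.} Consequently $\|S_k\|_{L^2(B(\boldsymbol{y}_k,2d))}$ is of exact order $\frac{\log t}{\lambda\log(t/\lambda)}$. This is also what the paper's own proof concludes, so the statement is inconsistent with its proof.
\item \emph{$\dot H^1$ norm.} $\|S_k\|_{\dot H^1(B(\boldsymbol{y}_k,2d))}$ is of exact order $\frac{1}{\lambda\log(t/\lambda)}$, which is your value. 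The claimed $\frac{1}{\log(t/\lambda)}$ is impossible even without the tail. $\dot H^1$ is invariant under $F\mapsto F_\lambda$, so the norm is at least $\|S\|_{\dot H^1(|y|\le 1)}$. That quantity is of order one, because near the core $S$ is, up to $O(1/\log(t/\lambda))$, the fixed nonconstant regular solution of $\mathcal{L}S_0=-\underline{\Lambda}\Lambda W$ vanishing at $0$.
\item \emph{Source of the paper's error.} The paper's proof reaches $\frac{\log t}{\log(t/\lambda)}$ only because it writes the rescaled gradient as $\lambda_k^{-1}S'(s/\lambda_k)$ instead of $\lambda_k^{-2}S'(s/\lambda_k)$. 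Its correctly scaled $Q_k$ computation shows the right factor.
\item \emph{Downstream effect.} In (\ref{tpg h dot 1}) and the subsequent time-derivative bounds, only $\|S_k\|_{H^1}\lesssim\frac{\log t}{\lambda\log(t/\lambda)}$ is used. Your computation gives exactly this, so the fix is to state the lemma with that bound.
\end{itemize}

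\textbf{Two points to tighten.}
\begin{itemize}
\item Your tail bound $|S'|\lesssim\frac{1}{y\log(t/\lambda)}$ is sharper than the preceding lemma's $\frac{1+\log(t/\lambda y)}{y\log(t/\lambda)}$. Either justify it by the one-line integration above, or keep the extra $\log t$, which is harmless downstream.
\item The $r^{-1}$ decay of $Z_l$ requires ruling out the growing $\ell=1$ homogeneous mode $\sim r$ in the regular solution. The needed solvability condition is $\langle\partial_l\Lambda W,\partial_lW\rangle=\langle\underline{\Lambda}\partial_lW,\partial_lW\rangle=0$. It holds because $\underline{\Lambda}$ is antisymmetric in $L^2(\mathbb R^4)$ and $\partial_lW\in L^2$.
\end{itemize}
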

\begin{proof}
We begin with $Q_k$. By the pointwise estimate of $Q$, a direct computation yields
\begin{equation*}
\begin{aligned}
       \left\|Q_k\right\|_{L^2(|x-\boldsymbol{y}_k|\leq 2d)}\lesssim& \left(\int_0^{2\lambda_k }\left(\frac{\log(2t/\lambda_k)}{\lambda_k}\frac{s^2}{\lambda_k^2}\right)^2s^3ds\right)^{1/2}\\
       &+\left(\int_{\lambda_k}^{2d}\left(\frac{\log (2t/s)}{\lambda_k}+\frac{\frac{s^2}{\lambda_k^2}+\log (\frac{s}{\lambda_k})\log(2t/\lambda_k)}{\lambda_k(1+(\frac{s}{\lambda_k}))^2}\right)^2s^3ds\right)^{1/2}
       \lesssim {\frac{\log t}{\lambda}},
\end{aligned}
\end{equation*}
and
\begin{equation*}
\begin{aligned}
       \left\|Q_k\right\|_{\dot{H}^1(|x-\boldsymbol{y}_k|\leq 2d)}\lesssim&\left(\int_0^{2\lambda_k }\left(\frac{\log(2t/\lambda_k)}{\lambda_k}\frac{s}{\lambda_k^2}\right)^2s^3ds\right)^{1/2}\\&+\left(\int_{\lambda_k}^{2d}\left(\frac{\log(2t/s)}{\lambda_ks}+\frac{\frac{s^2}{\lambda_k^2}+\log(\frac{s}{\lambda_k})\log(2t/\lambda_k)}{\lambda_k(1+(\frac{s}{\lambda_k})^3)}\right)^2s^3ds\right)^{1/2}
       \lesssim {\frac{\log t}{\lambda}}.
\end{aligned}
\end{equation*}
To estimate the $\dot H^2$ norm of $Q_k$, we use interior elliptic regularity:
\begin{equation*}
\begin{aligned}
       \left\|Q_k\right\|_{\dot{H}^2(|x-\boldsymbol{y}_k|\leq 2d)}\lesssim&\left\|\Delta Q_k\right\|_{L^2(|x-\boldsymbol{y}_k|\leq 3d)}+\left\|Q_k\right\|_{L_2(|x-\boldsymbol{y}_k|\leq 3d)}.
\end{aligned}
\end{equation*}
Since $Q_k$ solves
\begin{equation*}
     -\Delta Q_k-3W_k^2Q_k=-\frac{1}{\lambda_k^3}\left((\Lambda W \chi_t(\lambda_k \cdot))+\frac{\lelan \Lambda W \chi_t(\lambda_k \cdot),\Lambda W \rilan}{32\pi^2}f'(W)\right)\left(\frac{y-\boldsymbol{y}_k}{\lambda_k}\right).
\end{equation*}
{Therefore, $\left\|\Delta Q_k\right\|_{L^2(|x-\boldsymbol{y}_k|\leq 3d)}$ is bounded by }
\begin{equation*}
  3\left\|W_k^2Q_k\right\|_{L^2(|x-\boldsymbol{y}_k|\leq 3d)} +\frac{1}{\lambda_k}\left\|\Lambda W\right\|_{L^2(|y|\leq \frac{3d}{\lambda_k})}+\frac{\lelan \Lambda W \chi_t(\lambda \cdot),\Lambda W \rilan}{32\pi^2\lambda_k}\left\|3W^2\right\|_{L^2(|y|\leq \frac{3d}{\lambda_k})}.
\end{equation*}
The last two terms on the right-hand side are bounded by $\frac{\log(2t/\lambda_k)}{\lambda_k}$. While for the first term, the pointwise estimate of $Q$ gives
\begin{equation*}
    \begin{aligned}
      &\left\|W_k^2Q_k\right\|_{L^2(|x-\boldsymbol{y}_k|\leq 3d)}\lesssim  \left(\int_0^{2\lambda_k }\left(\frac{\log(2t/\lambda_k)}{\lambda_k}\frac{s^2}{\lambda_k^2}\frac{1}{\lambda_k^2}\right)^2s^3ds\right)^{1/2}\\
      &+\left(\int_{2\lambda_k}^{2d}\left(\frac{\log (2t/s)}{\lambda_k}+\frac{\frac{s^2}{\lambda_k^2}+\log (\frac{s}{\lambda_k})\log(1/\lambda_k)}{\lambda_k(1+(\frac{s}{\lambda_k}))^2}\right)^2\left(\frac{\lambda_k^2}{s^4}\right)^2s^3ds\right)^{1/2}
      \lesssim \frac{\log (2t/\lambda)}{\lambda}.
    \end{aligned}
\end{equation*}
As a consequence, it holds the estimate 
\begin{equation*}
    \left\|Q_k\right\|_{\dot{H}^2(|x-\boldsymbol{y}_k|\leq 2d)}\lesssim \left\|\Delta Q_k\right\|_{L^2(|x-\boldsymbol{y}_k|\leq 3d)}\lesssim \frac{\log (2t/\lambda)}{\lambda}.
\end{equation*}
The same argument yields the corresponding bounds for $S_k$:
\begin{equation*}
    \begin{aligned}
        \left\|S_k\right\|_{L^2(|x-\boldsymbol{y}_k|\leq 2d)}\lesssim & \left(\int_0^{2\lambda_k }\left(\frac{s^2}{\lambda_k^2}\right)^2s^3ds\right)^{1/2}+\left(\int_{\lambda_k}^{2d}\left(\frac{1}{\lambda_k}\frac{1+\log\left(\frac{s}{\lambda_k}\right)}{1+(s/\lambda_k)^2}\right)^2s^3ds\right)^{1/2}\\
       &+\left(\int_{\lambda_k}^{2d}\left(\frac{\log(t/s)+1}{\lambda_k\log(t/\lambda_k)}\right)^2s^3ds\right)^{1/2}
       \lesssim {\frac{\log t}{\lambda\log(t/\lambda)}},
    \end{aligned}
\end{equation*}
and
\begin{equation*}
    \begin{aligned}
        \left\|S_k\right\|_{\dot{H}^1(|x-\boldsymbol{y}_k|\leq 2d)}\lesssim & \left(\int_0^{2\lambda_k }\left(\frac{s}{\lambda_k^2}\right)^2s^3ds\right)^{1/2}+\left(\int_{\lambda_k}^{2d}\left(\frac{1}{\lambda_k}\frac{1+\log\left(\frac{s}{\lambda_k}\right)}{1+(s/\lambda_k)^3}\right)^2s^3ds\right)^{1/2}\\
       &+\left(\int_{\lambda_k}^{2d}\left(\frac{\log(t/s)+1}{\lambda_k\log(t/\lambda_k)(s/\lambda_k)}\right)^2s^3ds\right)^{1/2}
       \lesssim {\frac{\log t}{\log(t/\lambda)}}.
    \end{aligned}
\end{equation*}
Furthermore, $\left\|S_k\right\|_{\dot{H}^2}$ is dominated by
\begin{equation*}
    \begin{aligned}
        &\left\|W_k^2S_k\right\|_{L^2(|x-\boldsymbol{y}_k|\leq 3d)} +\frac{1}{\lambda_k}\left\|\underline{\Lambda}\Lambda W\right\|_{L^2}+\frac{\lelan (\underline{\Lambda}\Lambda W)\chi_t(\lambda r),\Lambda W \rilan}{\lelan \Lambda W \chi_t(\lambda \cdot),\Lambda W \rilan\lambda_k}\left\|\Lambda W\right\|_{L^2(|y|\leq \frac{3d}{\lambda_k})}
        \lesssim\frac{1}{\lambda},
    \end{aligned}
\end{equation*}
which completes the estimate for $S_k$ 

We next derive local estimates for the profiles $Z_{k,l}$. Recall that $Z_l$ solves
\[
L Z_l = \partial_l \Lambda W,
\]
and $Z_{k,l}$ is obtained by scaling and translation. By standard elliptic regularity applied in the region $|x-\boldsymbol{y}_k|\le 2d$, together with the decay properties of the right-hand side, we obtain
\begin{equation*}
 \|Z_{k,l}\|_{L^2(|x-\boldsymbol{y}_k|\le 2d)} \lesssim 1\text{ , }\|Z_{k,l}\|_{\dot H^1(|x-\boldsymbol{y}_k|\le 2d)} \lesssim \left(\log\left(\frac{2d}{\lambda}\right)\right)^{1/2}   
\end{equation*}
and
\begin{equation*}
 \|Z_{k,l}\|_{\dot H^2(|x-\boldsymbol{y}_k|\le 2d)} \lesssim \frac{1}{\lambda}.   
\end{equation*}
\end{proof}
\begin{coro}\label{on the ring region}
   In the subregion $d\leq|x-\boldsymbol{y}_k|\leq 2d$, using the same approach we obtain
   \begin{equation*}
     \left\||\Delta Q_k|+|\nabla Q_k|+|Q_k|+|f'(W_k)Q_k|\right\|_{H^1}\lesssim \frac{\log t}{\lambda(t)},
   \end{equation*}
   \begin{equation*}
   \left\||\Delta S_k|+|\nabla S_k|+|S_k|+|f'(W_k)S_k|\right\|_{H^1}\lesssim   {\frac{\log t}{\lambda(t)\log(t/\lambda(t))}},
   \end{equation*}
   and 
    \begin{equation*}
     \left\||\Delta Z_{k,l}|+|\nabla Z_{k,l}|+|Z_{k,l}|+|f'(W_k)Z_{k,l}|\right\|_{H^1}\lesssim \left(\log\left(\frac{2d}{\lambda(t)}\right)\right)^{1/2} .
   \end{equation*} 
\end{coro}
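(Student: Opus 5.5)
The statement is Corollary \ref{on the ring region}, which bounds each profile in $H^1$ on the annulus $A_k:=\{d\le|x-\boldsymbol y_k|\le 2d\}$. I would repeat the computation of Lemma \ref{Norm estimates for the rescaled profiles}, but now on this annulus, away from the bubble core. Write $s=|x-\boldsymbol y_k|\in[d,2d]$ and $y=(x-\boldsymbol y_k)/\lambda_k$. Then $|y|\in[d/\lambda,2d/\lambda]$, which lies far inside $[1,2t/\lambda]$. So only the exterior branch of the pointwise estimates for $Q$ and $S$ is used, and the annulus has volume of order one.

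\textbf{Bounds on $Q_k$ and $\nabla Q_k$.} The exterior branch for $Q$ gives, on $A_k$,
\[
|Q_k|\lesssim \lambda^{-1}\Big(\log\tfrac{2t}{s}+1+\tfrac{\lambda^2\log(s/\lambda)\log(t/\lambda)}{s^2}\Big)\lesssim \lambda^{-1}\log t,
\]
because $\lambda^2\log(1/\lambda)\log(t/\lambda)\ll 1$. In the same way, the $m=1$ bound gives $|\nabla Q_k|\lesssim \lambda^{-1}\log t$. Integrating these pointwise bounds over a set of bounded volume controls the $L^2$ norms of $Q_k$ and $\nabla Q_k$.

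\textbf{Bounds on $\Delta Q_k$ and $f'(W_k)Q_k$.} For these I would use the equation satisfied by $Q_k$ (written in the proof of Lemma \ref{Norm estimates for the rescaled profiles}), namely
\[
\Delta Q_k=-3W_k^2Q_k+\lambda^{-3}(\text{RHS})\big(\tfrac{x-\boldsymbol y_k}{\lambda}\big).
\]
On $A_k$ we have $W_k^2\lesssim\lambda^2$. The source term $\lambda^{-3}\Lambda W(y)\chi$ is of size $\lambda^{-1}s^{-2}$. The $f'(W)$ part of the source carries the coefficient $\langle\Lambda W\chi_t(\lambda\cdot),\Lambda W\rangle\sim\log(t/\lambda)$ and has size $\lambda^{-3}\lambda^4 s^{-4}$. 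Together these give $|\Delta Q_k|+|f'(W_k)Q_k|\lesssim\lambda^{-1}$ on $A_k$.

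\textbf{Upgrading to $H^1$.} The statement asks for $H^1$ norms, so one more derivative must be controlled. I would apply interior elliptic regularity on a slightly larger annulus $\{d/2\le s\le 3d\}$ to the differentiated equation, or equivalently obtain the $m=2,3$ pointwise bounds from the same ODE representation used in Appendix \ref{pointwise est of refined profile}. Each radial derivative costs a factor $s^{-1}\sim d^{-1}=O(1)$, so the bound $\lambda^{-1}\log t$ persists.

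\textbf{$S_k$ and $Z_{k,l}$.} For $S_k$ the argument is identical, using the exterior branch of the $S$ estimate. The dominant term there is $\lambda^{-1}\,\frac{\log(t/s)+1}{\log(t/\lambda)}\lesssim \frac{\log t}{\lambda\log(t/\lambda)}$. For $Z_{k,l}$, I would write $Z_l$ via the radial/spherical-harmonic ODE with source $\partial_l\Lambda W=O(|y|^{-3})$ to get $|\partial^m Z_l(y)|\lesssim |y|^{-1-m}\log|y|$. After rescaling, $|Z_{k,l}|\lesssim \lambda^{-1}(s/\lambda)^{-1}\log(s/\lambda)\lesssim\log(2d/\lambda)$ on $A_k$, and similarly for derivatives. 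This is actually better than the stated $(\log(2d/\lambda))^{1/2}$ once the factor $\lambda$ from rescaling is tracked. If that ODE bound is not clean, I would fall back on the local $\dot H^1$ bound of Lemma \ref{Norm estimates for the rescaled profiles} combined with elliptic regularity on the annulus.

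\textbf{Main obstacle.} The main obstacle is the extra derivative required by the $H^1$ norm. The pointwise lemma only records $m=0,1$, so the $m=2,3$ bounds on the annulus must come from interior elliptic estimates. Because the annulus sits at distance $d$ from the singular scale, these estimates carry no loss in $\lambda$.
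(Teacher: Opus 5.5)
Your handling of $Q_k$ and $S_k$ is correct and follows the approach the paper intends; the paper itself only says ``the same approach'' as Lemma~\ref{Norm estimates for the rescaled profiles}. On the annulus only the exterior branch of the pointwise bounds enters. $\Delta Q_k$ and $\Delta S_k$ are read off from the equations, and the extra derivative comes from differentiating the equations. For these radial profiles you do not even need elliptic regularity for the Hessian, since $\partial_{rr}=\Delta-\frac3r\partial_r$.

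The $Z_{k,l}$ paragraph, however, does not prove what it claims.

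\textbf{The error.} From $|Z_l(y)|\lesssim|y|^{-1}\log|y|$ you correctly get $|Z_{k,l}|\lesssim s^{-1}\log(s/\lambda)\sim\log(2d/\lambda)$ on the annulus. But this is \emph{larger} than the target $(\log(2d/\lambda))^{1/2}$, not smaller. There is also no remaining factor of $\lambda$ to track: you already used $Z_{k,l}=\lambda^{-1}Z_l((\cdot-\boldsymbol{y}_k)/\lambda)$. Your log-decay would even contradict Lemma~\ref{Norm estimates for the rescaled profiles}, since it gives $\|Z_{k,l}\|_{L^2(B(\boldsymbol{y}_k,2d))}\sim\log(1/\lambda)$ instead of $\lesssim1$.

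\textbf{First fix: the logarithm is spurious.} Write $Z_l=z(r)\frac{x_l}{r}$. In this $\ell=1$ sector the radial operator $-\partial_{rr}-\frac3r\partial_r+\frac3{r^2}$ has homogeneous solutions $r$ and $r^{-3}$. The source $\partial_l\Lambda W\sim 16r^{-3}\frac{x_l}{r}$ is therefore non-resonant. It produces the tail $z\sim-4r^{-1}$ with no logarithm. The growing mode $r$ is absent because its coefficient is proportional to $\langle\partial_l\Lambda W,\partial_lW\rangle=\langle\underline{\Lambda}\partial_lW,\partial_lW\rangle=0$ in dimension four. Hence $|\nabla^mZ_l(y)|\lesssim\langle y\rangle^{-1-m}$, and $|\nabla^mZ_{k,l}|\lesssim s^{-1-m}\lesssim1$ on the annulus, which is better than the statement.

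\textbf{Second fix: your fallback works as written.} Restrict the bounds of Lemma~\ref{Norm estimates for the rescaled profiles}, $\|Z_{k,l}\|_{L^2}\lesssim1$ and $\|\nabla Z_{k,l}\|_{L^2}\lesssim(\log(2d/\lambda))^{1/2}$, to the annulus; computed on a slightly larger ball, they also hold on the enlarged annulus. On the annulus $W_k\sim\lambda$ and $\lambda^{-3}|\partial_l\Lambda W|((\cdot-\boldsymbol{y}_k)/\lambda)\sim s^{-3}$. The equation and its gradient then give $\|\Delta Z_{k,l}\|_{H^1}\lesssim1$ there. Interior elliptic regularity on a slightly larger annulus bounds $\|D^2Z_{k,l}\|_{L^2}$ by $O(1)$. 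Together these give the stated bound, with the dominant contribution $(\log(2d/\lambda))^{1/2}$ coming only from $\|\nabla Z_{k,l}\|_{L^2}$.
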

\begin{coro}\label{energy norm of d dt profile}
 Using Lemma 5.2 together with the pointwise bounds on $\frac{dQ}{dt}$ and $\frac{dS}{dt}$, one obtains, in the region $|x-\boldsymbol{y}_k|\le 2d$,
    \begin{equation*}
        \left\|\frac{dQ_k}{dt}\right\|_{H^1}\lesssim \frac{\log t }{\lambda(t) t^{\frac{1}{3}}}\text{ , } \left\|\frac{dS_k}{dt}\right\|_{{H}^1}\lesssim \frac{\log t  }{\lambda(t)\log(t/\lambda(t)) t^{\frac{1}{3}}} \text{ and }\left\|\frac{dZ_{k,l}}{dt}\right\|_{{H}^1}\lesssim \frac{(\log(2d/\lambda(t)))^{1/2}}{t^{\frac{1}{3}}},
    \end{equation*}
and
 \begin{equation*}
        \left\|\frac{dQ_k}{dt}\right\|_{H^2}\lesssim \frac{\log (t/\lambda(t)) }{\lambda(t)t^{\frac{1}{3}} }\text{ , } \left\|\frac{dS_k}{dt}\right\|_{{H}^2}\lesssim \frac{1 }{\lambda(t) t^{\frac{1}{3}}} \text{ and }\left\|\frac{dZ_{k,l}}{dt}\right\|_{{H}^2}\lesssim  \frac{1 }{\lambda(t)t^{\frac{1}{3}}} .
    \end{equation*}
\end{coro}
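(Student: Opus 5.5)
The plan is to differentiate the rescaled profiles by the chain rule, split the result into an intrinsic time-derivative part and a modulation part, and reduce every piece to the pointwise bounds of the previous lemma or to the localized norm estimates of Lemma \ref{Norm estimates for the rescaled profiles}. Write $P$ for any of $Q,S,Z_l$. The profile $P$ may itself depend on $t$ through the cut-off $\chi_t(\lambda\cdot)$; this happens for $Q$ and $S$, while $Z_l$ is $t$-independent. Then
\[
\frac{dP_k}{dt}=\Big(\frac{\partial P}{\partial t}\Big)_{\lambda_k}(\cdot-\boldsymbol y_k)-\frac{\lambda_k'}{\lambda_k}(\Lambda P)_{\lambda_k}(\cdot-\boldsymbol y_k)-\frac{\boldsymbol y_k'}{\lambda_k}\cdot(\nabla P)_{\lambda_k}(\cdot-\boldsymbol y_k).
\]
Each of the three terms is then estimated separately on $B(\boldsymbol y_k,2d)$.

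\textbf{Modulation terms.} By (\ref{rough of mu}), (\ref{bootstrap a for b}), (\ref{b and beta}) and (\ref{lambda'+b}), we have $|\lambda_k'|/\lambda_k\lesssim t^{-1/3}$. The pointwise bounds for $Q,S$ are stable under $y\cdot\nabla_y$, since the $m=1$ bounds times $|y|$ reproduce the $m=0$ bounds. Hence $\Lambda Q$ and $\Lambda S$ satisfy the same $L^2$, $\dot H^1$ and $\dot H^2$ localized estimates as $Q$ and $S$, and the scaling term gains exactly the factor $t^{-1/3}$.

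For $\dot H^2$ I would not differentiate the pointwise bounds a second time. Instead I apply $\ML$ to $\Lambda P$ via the commutator identity $\ML\Lambda P=\Lambda(\ML P)+2\ML P+(\text{terms in }\Lambda(f'(W))P)$, so the right-hand side is controlled by the source terms. Interior elliptic regularity on $B(\boldsymbol y_k,3d)$ then closes the bound, exactly as in the proof of Lemma \ref{Norm estimates for the rescaled profiles}. For $Z_l$, which is time-independent, this scaling term is the only contribution. It gives $t^{-1/3}(\log(2d/\lambda))^{1/2}$ in $H^1$ and $t^{-1/3}\lambda^{-1}$ in $H^2$.

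The translation term carries $|\boldsymbol y_k'|\lesssim t^{7/6}\lambda^2$, from (\ref{bootstrap a for v}) and (\ref{y'+v}). Against the extra factor $\lambda^{-1}$ coming from $\nabla P$, it is much smaller than $t^{-1/3}$ times the norm of $P$, so it is negligible.

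\textbf{Intrinsic $t$-derivative of $Q,S$.} Here I use the pointwise bounds on $\frac{dQ}{dt}$ and $\frac{dS}{dt}$ from the previous lemma, with prefactor $\frac{|\lambda'|}{\lambda}+\frac1t\lesssim t^{-1/3}$. I integrate them over $0\le s\le 2d$ after rescaling, splitting into $s\le\lambda_k$ and $\lambda_k\le s\le 2d$ exactly as in the proof of Lemma \ref{Norm estimates for the rescaled profiles}. This gives the $H^1$ bounds $\frac{\log t}{\lambda t^{1/3}}$ and $\frac{\log t}{\lambda\log(t/\lambda)t^{1/3}}$.

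For the $H^2$ bounds I differentiate the defining equations in $t$:
\[
\ML\,\partial_tQ=-\partial_t\big[\Lambda W\chi_t(\lambda\cdot)\big]-\frac{\partial_t\langle\Lambda W\chi_t(\lambda\cdot),\Lambda W\rangle}{32\pi^2}f'(W),
\]
and similarly for $S$. The source is supported on the annulus $t/\lambda\lesssim|y|\lesssim 2t/\lambda$, apart from a multiple of $f'(W)$ (respectively of $\Lambda W\chi_t$) whose coefficient is $O(t^{-1/3})$ after the $\log(t/\lambda)$-normalisation. Interior elliptic regularity then bounds $\|\partial_tP_k\|_{\dot H^2(B(\boldsymbol y_k,2d))}$ by $\|W_k^2\partial_tP_k\|_{L^2}$, the rescaled source, and the $L^2$ norm already obtained.

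\textbf{Main obstacle.} I expect the hardest step to be the $\dot H^2$ control of $\partial_tS$ with only a $1/\lambda$ loss and no logarithm. This requires checking that the normalising coefficient $\frac{\langle(\underline\Lambda\Lambda W)\chi_t,\Lambda W\rangle}{\langle\Lambda W\chi_t,\Lambda W\rangle}$ has a $t$-derivative carrying the extra $\log(t/\lambda)^{-1}$, so that its contribution $\lambda^{-1}\|\Lambda W\|_{L^2(|y|\le3d/\lambda)}$ loses no $\sqrt{\log}$. I would verify this first by direct computation from the tail $\Lambda W=-8r^{-2}+O(r^{-4})$.
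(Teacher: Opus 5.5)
Your proposal is correct and follows the paper's route: the paper justifies this corollary only by pointing to Lemma~\ref{Norm estimates for the rescaled profiles}, which together with $|\lambda_k'|/\lambda_k\lesssim t^{-1/3}$ handles the scaling and translation parts of $\frac{d}{dt}P_k$ (the only contribution for $Z_{k,l}$), and to the pointwise bounds on $\partial_tQ,\partial_tS$ plus the same interior elliptic regularity for the intrinsic part, which is exactly your decomposition. The one detail to add is that your $\dot H^1$ bounds for $\Lambda Q$ and $\Lambda S$ need a second-derivative estimate that is not among the listed $m=0,1$ pointwise bounds; it follows from the radial ODE for $Q,S$, or from the same commutator and elliptic-regularity argument you already use at the $\dot H^2$ level.
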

We now turn to the construction of approximate solutions adapted to the modulation analysis. Using the profiles $Q_k$, $S_k$ and $Z_{k,l}$ together with the norm estimates established above, we define two approximate solutions. The first one, denoted by $\TPG$, is constructed as
\begin{equation*}
\begin{aligned}
    \TPG:=\sum_{k}\chi\left(\frac{\cdot-\boldsymbol{y}_k}{d}\right)\Big(\beta_k^2S_k+\frac{128\pi^2\lambda_kB_k(\boldsymbol{\lambda},\boldsymbol{y})}{\lelan \Lambda W\chi_t(\lambda \cdot),\Lambda W \rilan}Q_k+\sum_{l=1}^4(\lambda_k'+\beta_k)s_{k,l} Z_{k,l}\Big),   
\end{aligned}
\end{equation*}
using the refined parameters $(\lambda_k, \beta_k, \boldsymbol{y}_k, \boldsymbol{s}_k)$ and is adapted to the first order energy estimates. The second one, denoted by $\PG$, is defined as 
\begin{equation*}
\begin{aligned}
     {P}_\Gamma:=\sum_{k}\chi\left(\frac{\cdot-\boldsymbol{y}_k}{d}\right)\Big(b_k^2S_k+\frac{128\pi^2\lambda_kB_k(\boldsymbol{\lambda},\boldsymbol{y})}{\lelan \Lambda W\chi_t(\lambda \cdot),\Lambda W \rilan}Q_k+\sum_{l=1}^4(\lambda_k'+b_k)v_{k,l} Z_{k,l}\Big),   
\end{aligned}
\end{equation*}
using $(\lambda_k, b_k, y_k, v_k)$ and is adapted to the second order energy framework. Using Lemma \ref{Norm estimates for the rescaled profiles}, their size is quantified by the following norm estimates.
\begin{equation}\label{tpg h dot 1}
    \begin{aligned}
     \left\|\TPG\right\|_{\dot{H}^1}\lesssim& \sum_{k=1}^N\Bigg\{\beta_k^2\left\|S_k\right\|_{H^1}+\frac{\lambda_k^2}{t^{\frac{2}{3}}}\left\|Q_k\right\|_{H^1}+\sum_{l=1}^4\left|\lambda_k'+\beta_k\right||\boldsymbol{s}_k|\left\|Z_{k,l}\right\|_{H^1} \Bigg\}\\
     \lesssim&\sum_{k=1}^N\left({\frac{\beta_k^2\log t}{\lambda_k\log(t/\lambda_k)}}+\frac{\lambda_k\log t}{t^{\frac{2}{3}}}+\left|\lambda_k'+\beta_k\right||\boldsymbol{s}_k|(\log(2d/\lambda_k))^{1/2}\right)\lesssim (t^{-\frac{2}{3}}\log t)\lambda(t),
    \end{aligned}
\end{equation}
and
\begin{equation}\label{pg h dot 2}
 \begin{aligned}
     \left\|{P}_\Gamma\right\|_{{H}^2}\lesssim& \sum_{k=1}^N\Bigg\{b_k^2\left\|S_k\right\|_{H^2}+\frac{\lambda_k^2}{t^{\frac{2}{3}}}\left\|Q_k\right\|_{H^2}+\sum_{l=1}^4\left|\lambda_k'+b_k\right||\boldsymbol{v}_k|\left\|Z_{k,l}\right\|_{H^2} \Bigg\}\\
     \lesssim&\sum_{k=1}^N\left( \frac{b_k^2}{\lambda_k}+\frac{\lambda_k t^{\frac{2}{3}}}{t^{\frac{2}{3}}}+\left|\lambda_k'+b_k\right||\boldsymbol{v}_k|\frac{1}{\lambda_k}\right)\lesssim \lambda(t).
    \end{aligned}    
\end{equation}
Applying Corollary \ref{energy norm of d dt profile} and Lemma \ref{close the bootstrap for parameters system}, we have
\begin{equation}\label{partial t tilde p H 1}
\begin{aligned}
      \left\|\partial_t\TPG\right\|_{H^1}\lesssim& \sum_{k=1}^N\Bigg\{\beta_k|\beta_k'|\left\|S_k\right\|_{H^1}+\frac{|\lambda_k'|\lambda_k}{t^{\frac{2}{3}}}\left\|Q_k\right\|_{H^1}+\sum_{l=1}^4\Big[(|\lambda_k''|+|\beta_k'|)|\boldsymbol{s}_k|+\left|\lambda_k'+\beta_k\right||\boldsymbol{s}_k'|\Big]\left\|Z_{k,l}\right\|_{H^1}\\
      &+\beta_k^2\left\|\frac{dS_k}{dt}\right\|_{H^1}+\frac{\lambda_k^2}{t^{\frac{2}{3}}}\left\|\frac{dQ_k}{dt}\right\|_{H^1}+\sum_{l=1}^4\left|\lambda_k'+\beta_k\right||\boldsymbol{s}_k|\left\|\frac{dZ_{k,l}}{dt}\right\|_{H^1}\Bigg\}\\
      \lesssim& (t^{-\frac{5}{3}}\log t)\lambda+ (t^{-1}\log t)\lambda+\lambda^{\frac{3}{2}}+(t^{-1}\log t)\lambda\lesssim (t^{-1}\log t)\lambda(t),
\end{aligned}
\end{equation}
\begin{equation}\label{partial t P h dot 1}
    \begin{aligned}
        \left\|\partial_t{P}_\Gamma\right\|_{ H^1}\lesssim& \sum_{k=1}^N\Bigg\{b_k|b_k'|\left\|S_k\right\|_{{H}^1}+\frac{|\lambda_k'|\lambda_k}{t^{\frac{2}{3}}}\left\|Q_k\right\|_{{H}^1}+\sum_{l=1}^4\Big[(|\lambda_k''|+|b_k'|)|\boldsymbol{v}_k|+\left|\lambda_k'+b_k\right||\boldsymbol{v}_k'|\Big]\left\|Z_{k,l}\right\|_{{H}^1}\\
      &+b_k^2\left\|\frac{dS_k}{dt}\right\|_{{H}^1}+\frac{\lambda_k^2}{t^{\frac{2}{3}}}\left\|\frac{dQ_k}{dt}\right\|_{{H}^1}+\sum_{l=1}^4\left|\lambda_k'+b_k\right||\boldsymbol{v}_k|\left\|\frac{dZ_{k,l}}{dt}\right\|_{{H}^1}\Bigg\}\\
      \lesssim& \frac{t^{\frac{1}{2}}\lambda}{t}\log t+(t^{-1}\log t)\lambda+\lambda^{\frac{3}{2}}+(t^{-1}\log t)\lambda\lesssim (t^{\frac{1}{2}}\log t)\lambda(t),
    \end{aligned}
\end{equation}
and
\begin{equation}\label{partial t pg h dot 2}
     \begin{aligned}
        \left\|\partial_t{P}_\Gamma\right\|_{\dot H^2}\lesssim& \sum_{k=1}^N\Bigg\{b_k|b_k'|\left\|S_k\right\|_{\dot{H}^2}+\frac{|\lambda_k'|\lambda_k}{t^{\frac{2}{3}}}\left\|Q_k\right\|_{\dot{H}^2}+\sum_{l=1}^4\Big[(|\lambda_k''|+|b_k'|)|\boldsymbol{v}_k|+\left|\lambda_k'+b_k\right||\boldsymbol{v}_k'|\Big]\left\|Z_{k,l}\right\|_{\dot{H}^2}\\
      &+b_k^2\left\|\frac{dS_k}{dt}\right\|_{\dot{H}^2}+\frac{\lambda_k^2}{t^{\frac{2}{3}}}\left\|\frac{dQ_k}{dt}\right\|_{\dot{H}^2}+\sum_{l=1}^4\left|\lambda_k'+b_k\right||\boldsymbol{v}_k|\left\|\frac{dZ_{k,l}}{dt}\right\|_{\dot{H}^2}\Bigg\}\\
      \lesssim& \frac{t^{\frac{1}{2}}\lambda}{t^{\frac{1}{3}}}+t^{-\frac{1}{3}}\lambda+\lambda^{\frac{3}{2}}+\lambda\log t\lesssim t^{\frac{1}{6}}\lambda(t).
    \end{aligned} 
\end{equation}
Here we emphasize all the bounds of $\PG$, $\TPG$ are sufficiently small in the analysis later except the $\dot{H}^2$ norm of $\partial_t\PG$. The bad part comes from $b_k|b_k'|\left\|S_k\right\|_{\dot{H}^2}$ since we only have a rough estimate (\ref{rough est on b'}) for $b_k'$. More precisely, using Corollary \ref{on the ring region}, (\ref{partial t pg h dot 2}) can be written as 
\begin{equation}\label{refine dt pg h dot 2}
     \left\|\partial_t{P}_\Gamma\right\|_{\dot H^2}\lesssim \sum_{j=1}^N|b_j'b_j|\left\|S_k\right\|_{\dot{H}^2(|x-\boldsymbol{y}_k|\leq d)}+\lambda(t).
\end{equation}
This term will be treated more carefully in the second order energy estimate.
\subsection{Error estimates}
Recall the definition of $\WTG$ and $\WG$, the solution $\Vec{u}$ can be decomposed in two ways:
\begin{equation*}
\left(
    \begin{array}{c}
         u \\
         \partial_t u 
    \end{array}\right)=
\left(
    \begin{array}{c}
         \sum_{j=1}^N W_j\\
         \sum_{j=1}^N \frac{\beta_j}{\lambda_j}(\Lambda_j W_j)\chi_{t}+\frac{\boldsymbol{s}_j\cdot\nabla_jW_j}{\lambda_j}
    \end{array}\right)+
    \left(
    \begin{array}{c}
         g \\
         \dot{h} 
    \end{array}\right),
\end{equation*}
and
\begin{equation*}
  \left(
    \begin{array}{c}
         u \\
         \partial_t u 
    \end{array}\right)=
    \left(
    \begin{array}{c}
         \sum_{j=1}^N W_j \\
        \sum_{j=1}^N \frac{b_j}{\lambda_j}(\Lambda_jW_j)\chi_{t}+\frac{\boldsymbol{v}_j\cdot\nabla_jW_j}{\lambda_j} 
    \end{array}\right)+
    \left(
    \begin{array}{c}
         g\\
         \dot{g} 
    \end{array}\right). 
\end{equation*}
The corrected decomposition is used to exploit the sharper modulation system in
the first order estimate, whereas the original decomposition is kept for the
second order estimate. Accordingly, we set
\begin{equation*}
    \Tilde{\phi}:=\sum_{j=1}^NW_j+\TPG\text{ , }\Tilde{h}=g-\TPG
\end{equation*}
and
\begin{equation*}
    \phi:=\sum_{j=1}^NW_j+P_{\Gamma}\text{ , }h:=g-P_{\Gamma},
\end{equation*}
so that 
\begin{equation*}
    u=\sum_{j=1}^NW_j+g=\Tilde{\phi}+\Tilde{h}=\phi+h.
\end{equation*}
We first estimate the error associated with
\((\tilde \phi,\tilde h,\dot{\tilde h})\). For brevity, define
\begin{equation}\label{first order PSI}
   \begin{aligned}
         \Psi(\boldsymbol{\lambda},\boldsymbol{\beta},{\boldsymbol{y}},{\boldsymbol{s}}):=&\sum_{j=1}^N \frac{(\lambda_j'+\beta_j)\beta_j}{\lambda_j^2}(\underline{\Lambda}_j\Lambda_j W_j) \chi_t+\sum_{j=1}^N\frac{\beta_j((\boldsymbol{y}_j'+\boldsymbol{s}_j)\cdot \nabla_j)}{\lambda_j^2}(\Lambda_jW_j )\chi_t
           \\&-\sum_{j=1}^N\left(\beta_j'-\frac{\beta_j \lambda_j'}{2\log(t/\lambda_j)\lambda_j} -\frac{B_j(\boldsymbol{\lambda},\boldsymbol{y})}{\log(t/\lambda_j)}\right)\lambda_j^{-1}(\Lambda
           _j W_j)\chi_t,  
    \end{aligned}  
\end{equation}
and
\begin{equation*}
   \Tilde{\epsilon}(\tilde{h},\dot{h}):=\partial_t \dot{h}-\{ \Delta \Tilde{h}+f(\Tilde{\phi}+\Tilde{h})-f(\Tilde{\phi})+\Psi(\boldsymbol{\lambda},\boldsymbol{\beta},{\boldsymbol{y}},{\boldsymbol{s}})\}.
\end{equation*}
With these notations we have
\begin{lemma}\label{lemma of partial t dot g 2}
    Under the bootstrap assumption, the following estimate holds
      \begin{equation}\label{partial t g dot sim 2}
    \begin{aligned}
           \lVert \Tilde{\epsilon}(\tilde{h},\dot{h})\rVert_{L^2} 
           \lesssim {{(t^{-\frac{2}{3}}\log t)\lambda(t) }}.
    \end{aligned}
    \end{equation}
\end{lemma}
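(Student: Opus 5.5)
We begin by computing $\partial_t\dot h$ explicitly and matching it against $\Delta\tilde h+f(\tilde\phi+\tilde h)-f(\tilde\phi)+\Psi$ term by term. By definition, $\dot h=\partial_tu-\sum_j\bigl(\frac{\beta_j}{\lambda_j}(\Lambda_jW_j)\chi_t+\frac{\boldsymbol s_j\cdot\nabla_jW_j}{\lambda_j}\bigr)$. Hence
\[
\partial_t\dot h=\Delta u+f(u)-\partial_t\Bigl(\sum_j\tfrac{\beta_j}{\lambda_j}(\Lambda_jW_j)\chi_t+\tfrac{\boldsymbol s_j\cdot\nabla_jW_j}{\lambda_j}\Bigr).
\]
We write $\Delta u+f(u)=\Delta\tilde h+f(\tilde\phi+\tilde h)-f(\tilde\phi)+\bigl[\Delta\tilde\phi+f(\tilde\phi)\bigr]$. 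The error $\tilde\epsilon$ then equals
\[
\Delta\tilde\phi+f(\tilde\phi)-\partial_t\bigl(\text{velocity ansatz}\bigr)-\Psi .
\]
It involves only the modulation parameters and the profiles, and contains no $\tilde h$. The task is therefore a pure residual estimate for the refined ansatz.

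We expand $\Delta\tilde\phi+f(\tilde\phi)$ as $f(W_\Gamma)-\sum_jf(W_j)+\sum_k\chi(\frac{\cdot-\boldsymbol y_k}{d})\bigl(-\ML_k\TPG^{(k)}\bigr)+\bigl(f'(W_\Gamma)-f'(W_k)\bigr)\TPG+O(\TPG^2)$ plus cutoff commutators. The time derivative of the velocity ansatz produces four kinds of terms:
\begin{itemize}
\item[(a)] $-\frac{\beta_j'}{\lambda_j}(\Lambda_jW_j)\chi_t$ and $\frac{\beta_j\lambda_j'}{\lambda_j^2}(\underline\Lambda_j\Lambda_jW_j)\chi_t$;
\item[(b)] $\frac{\beta_j}{\lambda_j^2}(\boldsymbol y_j'\cdot\nabla)(\Lambda_jW_j)\chi_t$ together with the $\chi_t'$ terms;
\item[(c)] $\frac{\boldsymbol s_j'\cdot\nabla_jW_j}{\lambda_j}$;
\item[(d)] the $\lambda_j'\boldsymbol s_j$ and $\boldsymbol y_j'\boldsymbol s_j$ terms, as in \eqref{partial g dot}.
\end{itemize}
In (a) and (b) we replace $\lambda_j'$ by $-\beta_j+(\lambda_j'+\beta_j)$ and $\boldsymbol y_j'$ by $-\boldsymbol s_j+(\boldsymbol y_j'+\boldsymbol s_j)$. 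The parts carrying $\lambda_j'+\beta_j$ and $\boldsymbol y_j'+\boldsymbol s_j$ are exactly the first line of $\Psi$ and cancel.

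The key cancellation comes from the profile equations. The term $-\frac{\beta_j^2}{\lambda_j^2}(\underline\Lambda\Lambda W)_j\chi_t$ is cancelled by $-\beta_j^2\ML_jS_j$, up to the projected remainder
\[
\frac{\langle(\underline\Lambda\Lambda W)\chi_t,\Lambda W\rangle}{\langle\Lambda W\chi_t,\Lambda W\rangle}\,\Lambda W\chi_t\cdot\frac{\beta_j^2}{\lambda_j^2}.
\]
Using $\langle\underline\Lambda\Lambda W,\Lambda W\rangle=64\pi^2$ and $\langle\Lambda W\chi_t,\Lambda W\rangle\approx128\pi^2\log(t/\lambda)$, this remainder is $\frac{\beta_j\cdot(-\lambda_j')}{2\log(t/\lambda_j)\lambda_j}\lambda_j^{-1}\Lambda_jW_j\chi_t$ up to lower order. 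It matches the corresponding piece in the last line of $\Psi$.

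Similarly, the interaction $3W_j^2\sum_{i\ne j}W_i$ is approximated near $\boldsymbol y_j$ by $f'(W_j)\frac{8\lambda_i}{|\boldsymbol y_j-\boldsymbol y_i|^2}$. Since $\kappa=-2$ and $\langle f'(W),\Lambda W\rangle=-32\pi^2$, this is cancelled by the $Q_j$ term, leaving $\frac{B_j}{\log(t/\lambda_j)}\lambda_j^{-1}\Lambda_jW_j\chi_t$, which is the $B_j$ piece of $\Psi$. The Taylor error $W_i(x)-W_i(\boldsymbol y_j)\sim\lambda|x-\boldsymbol y_j|$ times $W_j^2$ has $L^2$ norm $O(\lambda^2)$. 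Finally, the $(\lambda_k'+\beta_k)s_{k,l}Z_{k,l}$ correction absorbs the $\frac{\beta\boldsymbol s}{\lambda^2}\partial_l\Lambda W$ cross term coming from (d) and (b). Term (c) is handled by \eqref{s'+=to} against the dipole part of the interaction, $\boldsymbol D_k$ times $\nabla W$: their difference is $O(t^{2/3}\lambda^2)\|\lambda^{-1}\nabla_kW_k\|_{L^2}$, which is negligible.

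The remaining terms are all bounded in $L^2$ using Lemma \ref{Norm estimates for the rescaled profiles}, Corollaries \ref{on the ring region} and \ref{energy norm of d dt profile}, the estimates \eqref{tpg h dot 1} and \eqref{partial t tilde p H 1}, and the bounds of Lemma \ref{close the bootstrap for parameters system}. They are:
\begin{itemize}
\item $\partial_t\TPG$, contributing $(t^{-1}\log t)\lambda$;
\item the $\chi(\cdot/d)$ commutators on the ring $d\le|x-\boldsymbol y_k|\le2d$, contributing $\lambda\log t\cdot t^{-2/3}$ from the $Q$ coefficient $\lambda_k^2t^{-2/3}$ times $\log t/\lambda$;
\item $(f'(W_\Gamma)-f'(W_k))\TPG$ and $\TPG^2$ terms, which are smaller;
\item the $\chi_t'$ terms, of size $\beta/t\lesssim t^{-4/3}\lambda$ in $L^2$;
\item the $O(\lambda^2)$ parts of $f(W_\Gamma)-\sum f(W_j)$ beyond the leading interaction.
\end{itemize}
The dominant contribution is of size $(t^{-2/3}\log t)\lambda$, which yields \eqref{partial t g dot sim 2}.

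\textbf{Main obstacle.} The hard step is the exact matching of the coefficients in the last line of $\Psi$. The normalization $\langle\Lambda W\chi_t(\lambda\cdot),\Lambda W\rangle$ differs from $128\pi^2\log(t/\lambda)$ by $O(\log t)$ and by the interaction terms in $\Xi$. One must check that these mismatches, multiplied by $\beta'$, $\beta^2/\lambda$ or $\lambda$ and by $\|\lambda^{-1}\Lambda_jW_j\chi_t\|_{L^2}\sim\sqrt{\log(t/\lambda)}$, stay within $(t^{-2/3}\log t)\lambda$. Concretely, the relative error $\log t/t^{2/3}$ times $\beta^2/\lambda\sim t^{-2/3}\lambda$, times $t^{1/3}$, gives about $t^{-1}\log t\,\lambda$, which is acceptable. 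The same care is needed for the non-$L^2$ tail of $\Lambda W\chi_t$, which must be cut at radius $t$ rather than $d$.
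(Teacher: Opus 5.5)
Your proposal is correct and follows essentially the same route as the paper. Like the paper, you:
\begin{itemize}
\item reduce $\tilde\epsilon$ to a pure residual of the refined ansatz, with no $\tilde h$;
\item cancel the leading terms near each bubble using the equations for $S_k$, $Q_k$, $Z_{k,l}$, leaving the $\beta_k(\lambda_k'+\beta_k)$ piece;
\item bound directly the ring $d\le|x-\boldsymbol y_k|\le 2d$, the uncancelled exterior tail of $\lambda_j^{-1}\Lambda_jW_j\chi_t$, and the remaining terms.
\end{itemize}
Your cut-off commutators are the same bookkeeping as the paper's interior/exterior splitting.

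Two harmless slips. First, $\partial_t\TPG$ does not actually occur in $\tilde\epsilon$: it is defined through $\partial_t\dot h$, and $\dot h$ contains no $\TPG$. Second, the $\boldsymbol s'$ term needs no comparison with $\boldsymbol D_k$, since $|\boldsymbol s_k'|\lesssim t^{2/3}\lambda^2$ is already negligible in $L^2$.
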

\begin{proof}
    By definition, $\partial_t \dot{h}$ satisfies  
  \begin{equation*}\label{partial h dot}
    \begin{aligned}
        \partial_t \dot{h}=&\Delta g+f(W_{\boldsymbol{\Gamma}}+g)-\sum_{j=1}^Nf(W_j)+\sum_{j=1}^N\frac{\lambda_j'\beta_j}{\lambda_j^2} (\underline{\Lambda}_j\Lambda_jW_j)\chi_{t}\\
        &-\sum_{j=1}^N\frac{\beta_j'}{\lambda_j}(\Lambda_jW_j\chi_{t})+\sum_
        {j=1}^N\frac{\beta_j|x-\boldsymbol{y}_j|}{\lambda_jt^2}(\Lambda_jW_j\chi'_{t})+\sum_{j=1}^N\frac{\beta_j}{\lambda_j^2}(\boldsymbol{y}_j'\cdot\nabla)(\Lambda_j W_j\chi_{t})\\
         & -\sum_{j=1}^N\sum_{l=1}^4\frac{s'_{j,l}(\partial_lW)(\frac{\cdot-\boldsymbol{y}_j}{\lambda_j})}{\lambda_j^2}-\sum_{j=1}^N\sum_{l=1}^4\frac{\lambda'_js_{j,l}(\underline{\Lambda} \partial_l W)\left(\frac{\cdot-\boldsymbol{y}_j}{\lambda_j}\right)}{\lambda_j^3}\\
   &+\sum_{j=1}^N\sum_{l,m=1}^4\frac{s_{j,l}y'_{j,m}(\partial_l\partial_mW)(\frac{\cdot-\boldsymbol{y}_j}{\lambda_j})}{\lambda_j^3}.
    \end{aligned}
\end{equation*}  
Hence, $\Tilde{\epsilon}(\tilde{h},\dot{h})$ can be written as
\begin{equation}\label{tilde epsilon equal}
\begin{aligned}
    \Tilde{\epsilon}(\tilde{h},\dot{h})=&\Delta \TPG+f(W_{\boldsymbol{\Gamma}}+\TPG)-\sum_{j=1}^Nf(W_j)-\sum_{j=1}^N\frac{\beta_j^2}{\lambda_j^2} (\underline{\Lambda}_j\Lambda_jW_j)\chi_{t}\\
        &-\sum_{j=1}^N\left(\frac{\beta_j \lambda_j'}{2\log(t/\lambda_j)\lambda_j} +\frac{B_j(\boldsymbol{\lambda},\boldsymbol{y})}{\log(t/\lambda_j)}\right)\lambda_j^{-1}\Lambda
           _j W_j\chi_t\\&+\sum_
        {j=1}^N\frac{\beta_j|x-\boldsymbol{y}_j|}{\lambda_jt^2}(\Lambda_jW_j\chi'_{t})-\sum_{j=1}^N\frac{\beta_j}{\lambda_j^2}(\boldsymbol{s}_j\cdot\nabla)(\Lambda_j W_j\chi_{t})\\
         & -\sum_{j=1}^N\sum_{l=1}^4\frac{s'_{j,l}(\partial_lW)(\frac{\cdot-\boldsymbol{y}_j}{\lambda_j})}{\lambda_j^2}-\sum_{j=1}^N\sum_{l=1}^4\frac{\lambda'_js_{j,l}(\underline{\Lambda} \partial_l W)\left(\frac{\cdot-\boldsymbol{y}_j}{\lambda_j}\right)}{\lambda_j^3}\\
   &+\sum_{j=1}^N\sum_{l,m=1}^4\frac{s_{j,l}y'_{j,m}(\partial_l\partial_mW)(\frac{\cdot-\boldsymbol{y}_j}{\lambda_j})}{\lambda_j^3}.
    \end{aligned}
\end{equation}
In order to prove (\ref{partial t g dot sim 2}), we again divide $\Tilde{B}:=\bigcup_{1\leq j\leq N}B(|x-\boldsymbol{y}_j|\leq d)$ and $\mathbb{R}^4 \setminus \tilde {B}$. We first consider the exterior region $\mathbb{R}^4 \setminus \tilde {B}$ and focus on the terms not involving $\TPG$.  In this region,  the multi-bubble term is bounded by
\begin{equation*}
    \left\|\sum_{j=1}^Nf(W_j)\right\|_{L^2(\mathbb{R}^4 \setminus \tilde {B})}\lesssim \sum_{j=1}^N\left(\int_d^{\infty}\left(\frac{\lambda_j^3}{r^6}\right)^2r^3dr\right)^{\frac{1}{2}}\lesssim \lambda^3.  
\end{equation*}
Then, the decay of $\underline{\Lambda}\Lambda W$ yields
\begin{equation*}
    \left\|\frac{\beta_j^2}{\lambda_j^2} (\underline{\Lambda}_j\Lambda_jW_j)\chi_{t}\right\|_{L^2(\mathbb{R}^4 \setminus \tilde {B})}\lesssim \beta_j^2\left(\int_d^{2t}\left(\frac{\lambda_j}{r^4}\right)^2r^3dr\right)^{\frac{1}{2}}\lesssim \lambda^3.
\end{equation*}
The following term provides the leading contribution in this region:
\begin{equation*}
    \begin{aligned}
   &\left\|\left(\frac{\beta_j \lambda_j'}{2\log(t/\lambda_j)\lambda_j} +\frac{B_j(\boldsymbol{\lambda},\boldsymbol{y})}{\log(t/\lambda_j)}\right)\lambda_j^{-1}\Lambda
           _j W_j\chi_t(\cdot-\boldsymbol{y}_j)\right\|_{L^2(\mathbb{R}^4 \setminus \tilde {B})}\\
           \lesssim& \left(\left|\frac{\beta_j \lambda_j'}{2\log(t/\lambda_j)\lambda_j}\right|+\left|\frac{B_j(\boldsymbol{\lambda},\boldsymbol{y})}{\log(t/\lambda_j)}\right|\right)\left(\int_d^{2t}\left(\frac{1}{r^2}\right)^2r^3dr\right)^{\frac{1}{2}}\lesssim (t^{-\frac{2}{3}}\log t)\lambda,  
    \end{aligned}
\end{equation*}
The remaining terms are treated in the same way. Here we emphasize two terms. First,  the cut-off derivative term is controlled by
\begin{equation*}
    \left\|\frac{\beta_j|x-\boldsymbol{y}_j|}{\lambda_jt^2}(\Lambda_jW_j\chi'_{t})\right\|_{L^2(\mathbb{R}^4 \setminus \tilde {B})}\lesssim \frac{\beta_j}{t}\left(\int_t^{2t}\left(\frac{1}{r^2}\right)^2r^3dr\right)^{\frac{1}{2}}\lesssim t^{-\frac{4}{3}}\lambda.
\end{equation*}
Moreover, by (\ref{s'+=to}), the term involving $\boldsymbol{s}'$ is bounded by
\begin{equation*}
   \left\|\frac{s'_{j,l}(\partial_lW)(\frac{\cdot-\boldsymbol{y}_j}{\lambda_j})}{\lambda_j^2}\right\|_{L^2(\mathbb{R}^4 \setminus \tilde {B})}\lesssim |\boldsymbol{s}'|\left(\int_d^{\infty}\left(\frac{\lambda_j}{r^3}\right)^2r^3dr\right)^{\frac{1}{2}}\ll\lambda^2. 
\end{equation*}
We next consider the terms involving $\TPG$. Since $\TPG=0$ in $ \bigcup_{1\leq j \leq N}B(|x-\boldsymbol{y}_j|\geq 2d)$, it suffices to consider on $d\leq|x-\boldsymbol{y}_j|\leq 2d$, for $1\leq j\leq N$. Using Corollary \ref{on the ring region}, these terms are controlled by
\begin{equation*}
    \begin{aligned}
        &\left\|\Delta\TPG+f(\WG+\TPG)\right\|_{L^2(d\leq |x-\boldsymbol{y}_k|\leq 2d)}\\
        \lesssim& \frac{\beta_k^2}{\lambda_k}+\frac{\lambda_k c\lambda}{\Xi(t)}\frac{\log t}{\lambda_k}+\frac{|\lambda_k'+\beta_k||s_{k,l}|}{\lambda_k}\lesssim (t^{-\frac{2}{3}}\log t)\lambda, 
    \end{aligned}
\end{equation*}
which completes the error estimates on $\mathbb{R}^4 \setminus \tilde {B}$. In the near k-bubble region $|x-\boldsymbol{y}_k|\leq d$, the definition of $\TPG$ yields 
\begin{equation*}
    \begin{aligned}
     \ML_k \TPG=&\beta_k^2\ML_k S_k+\frac{128\pi^2\lambda_kB_k(\boldsymbol{\lambda},\boldsymbol{y})}{\lelan \Lambda W\chi_t(\lambda \cdot),\Lambda W \rilan}\ML_kQ_k+\sum_{l=1}^4(\lambda_k'+\beta_k)s_{k,l}\ML_k Z_{k,l}\\
     =&-\frac{\beta_k^2}{\lambda_k^2}(\underline{\Lambda}_k\Lambda_kW_k)\chi_{t}+\frac{\beta_k^2}{2\log (t/\lambda_k)\lambda_k^2}\Lambda_k W_k\chi_t-\frac{B_k(\boldsymbol{\lambda},\boldsymbol{y})}{\log(t/\lambda_k)\lambda_k}\Lambda_k W_k\chi_t\\
     &-4{B_k(\boldsymbol{\lambda},\boldsymbol{y})}f'(W_k)-\frac{\beta_k}{\lambda_k^2}(\boldsymbol{s}_k\cdot\nabla)(\Lambda_k W_k)-\sum_{l=1}^4\frac{\lambda_k's_{k,l}(\underline{\Lambda} \partial_l W)\left(\frac{\cdot-\boldsymbol{y}_k}{\lambda_k}\right)}{\lambda_k^3}.
    \end{aligned}
\end{equation*}
Plugging this back into (\ref{tilde epsilon equal}) and using the definition of $Q_k,S_k,Z_{k.l}$, we obtain  the error $\Tilde{\epsilon}(\tilde{h},\dot{h})$ equals to 
\begin{equation*}
    \begin{aligned}
       &f(W_{\boldsymbol{\Gamma}}+\TPG)-3W_k^2\TPG-\sum_{j=1}^Nf(W_j)+{4B_k(\boldsymbol{\lambda},\boldsymbol{y})}f'(W_k)\\ -&\frac{\beta_k (\lambda_k'+\beta_k)}{2\log(t/\lambda_k)\lambda_k} \lambda_k^{-1}\Lambda
           _k W_k-\sum_{j=1}^N\sum_{l=1}^4\frac{s'_{j,l}(\partial_lW)(\frac{\cdot-\boldsymbol{y}_j}{\lambda_j})}{\lambda_j^2}+\sum_{j=1}^N\sum_{l,m=1}^4\frac{s_{j,l}y'_{j,m}(\partial_l\partial_mW)(\frac{\cdot-\boldsymbol{y}_j}{\lambda_j})}{\lambda_j^3},
    \end{aligned}
\end{equation*}
inside the ball $|x-\boldsymbol{y}_k|\leq d$. The $L^2$ norm of the second line can be absorbed into the right-hand side of (\ref{partial t g dot sim 2}) by (\ref{lambda'+b}), (\ref{b and beta}), (\ref{y'+v}), (\ref{s'+=to}). Here the leading error term comes from
\begin{equation*}
   \left\|\frac{\beta_k (\lambda_k'+\beta_k)}{2\log(t/\lambda_k)\lambda_k} \lambda_k^{-1}\Lambda
           _k W_k\right\|_{L^2(|x-\boldsymbol{y}_k|\leq d)}\lesssim \frac{|\lambda'+\beta|}{t}\left\|\frac{1}{\lambda_k}\Lambda
           _k W_k\right\|_{L^2(|x-\boldsymbol{y}_k|\leq d)}\lesssim (t^{-\frac{3}{2}}\sqrt{\log t})\lambda.
\end{equation*}
The first line can be written as
\begin{equation*}
\begin{aligned}
  &\Bigg(f(W_{\boldsymbol{\Gamma}}+\TPG)-3W_k^2\TPG-\sum_{j=1}^Nf(W_j)-\sum_{j\neq k}f'(W_k)W_j\Bigg)\\
  + &  \Bigg( \sum_{j\neq k}f'(W_k)W_j+{4B_k(\boldsymbol{\lambda},\boldsymbol{y})}f'(W_k)\Bigg):=\Rmnum{1}+\Rmnum{2}.
\end{aligned}
\end{equation*}
For $\Rmnum{1}$, the dominating terms are $\left\|W_k\TPG^2\right\|_{L^2}$, and $\left\|W_kW_j^2\right\|_{L^2}$. The quadratic term is bounded by Cauchy’s inequality and interpolation:
\begin{equation*}
  \left\|W_k\TPG^2\right\|_{L^2}\lesssim \left\|W_k\right\|_{L^4}\left\|\TPG\right\|_{L^8}^2\lesssim\left\|\TPG\right\|_{\dot{H}^1}^2+\left\|\TPG\right\|^2_{\dot{H}^2}\ll \lambda^{\frac{3}{2}},  
\end{equation*}
which can be absorbed. While the decay estimates yields 
\begin{equation*}
\left\|W_kW_j^2\right\|_{L^2}\lesssim \left(\int_{\lambda_k}^d\left(\frac{\lambda_k}{r^2}\frac{\lambda_j^2}{|z_k-z_j|^4}\right)^2r^3dr\right)^{\frac{1}{2}}\lesssim \lambda_k \lambda_j^2\log(d/\lambda_k)\ll  \lambda^2 . 
\end{equation*}
Consequently, $\left\|\Rmnum{1}\right\|_{L^2}$ is sufficiently small. Furthermore, for \Rmnum{2}, we notice that near k-bubble, the expansion
\begin{equation*}
    W_j(x)=\frac{8\lambda_j}{|\boldsymbol{y}_k-\boldsymbol{y}_j|^2}+O\left(\frac{\lambda_j|x-\boldsymbol{y}_k|}{|\boldsymbol{y}_k-\boldsymbol{y}_j|^3}\right)
\end{equation*}
holds. As a result we have
\begin{equation*}
    \begin{aligned}
        \left\|\Rmnum{2}\right\|_{L^2}\lesssim&\sum_{j\neq k}\left\|W_k^2\left(W_j-\frac{8\lambda_j}{|\boldsymbol{y}_k-\boldsymbol{y}_j|^2}\right)\right\|_{L^2(|x-\boldsymbol{y}_k|\leq d)}\\
        \lesssim&\lambda_j\left(\int_0^d\left(\frac{\lambda_k}{\lambda_k^2+r^2}\right)^4r^2r^3dr\right)^{\frac{1}{2}}\lesssim \lambda^{\frac{3}{2}},
    \end{aligned}
\end{equation*}
which is also sufficiently small. Collecting all these inequalities completes the proof of (\ref{partial t g dot sim 2}).
\end{proof}

We now pass to the error term associated with the decomposition $(\phi,h,\dot g)$. We introduce the operator $\ML_{\phi}$ which is defined as 
\begin{equation*}
    \ML_{\phi}h:=-\Delta h-3\phi^2h.
\end{equation*}
We denote
\begin{equation*}
   \begin{aligned}
         \Psi(\boldsymbol{\lambda},\boldsymbol{b},{\boldsymbol{y}},{\boldsymbol{v}}):=&\sum_{j=1}^N \frac{(\lambda_j'+b_j)b_j}{\lambda_j^2}\underline{\Lambda}_j\Lambda_j W_j \chi_t+\sum_{j=1}^N\frac{b_j((\boldsymbol{y}_j'+\boldsymbol{v}_j)\cdot \nabla_j)}{\lambda_j^2}\Lambda_jW_j \chi_t
           \\&-\sum_{j=1}^N\left(b_j'-\frac{b_j \lambda_j'}{2\log(t/\lambda_j)\lambda_j} -\frac{B_j(\boldsymbol{\lambda},\boldsymbol{y})}{\log(t/\lambda_j)}\right)\lambda_j^{-1}\Lambda
           _j W_j\chi_t,  
    \end{aligned}  
\end{equation*}
and
\begin{equation*}
  {\epsilon}(h,\dot{g}):=\partial_t \dot{g}-\{ \Delta h+f({\phi}+{h})-f({\phi})+ \Psi(\boldsymbol{\lambda},\boldsymbol{b},{\boldsymbol{y}},{\boldsymbol{v}})\}.
\end{equation*}
Analogously, we establish the following error estimate.
\begin{lemma}\label{lemma of partial t dot g}
    Under the bootstrap assumption we have
    \begin{equation}\label{partial t g dot sim}
    \begin{aligned}
          \left\|\ML_{\phi}( {\epsilon}(h,\dot{g}))\right\|_{\dot{H}^{-1}}\lesssim \lambda(t).
    \end{aligned}
    \end{equation}
\end{lemma}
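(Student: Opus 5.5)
We follow the proof of Lemma \ref{lemma of partial t dot g 2}, with $(\lambda,b,\boldsymbol y,\boldsymbol v)$ in place of $(\lambda,\beta,\boldsymbol y,\boldsymbol s)$. Two things change: the error is measured in the weaker quantity $\|\ML_\phi(\cdot)\|_{\dot H^{-1}}$, and only the rough bounds (\ref{rough est on b'}), (\ref{rough est on v'}) are available for $b'$ and $\boldsymbol v'$.

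\textbf{Step 1: explicit formula for $\epsilon(h,\dot g)$.} Insert (\ref{partial g dot}) into the definition of $\epsilon(h,\dot g)$. Exactly as in (\ref{tilde epsilon equal}), this gives an explicit expression for $\epsilon(h,\dot g)$. It consists of:
\begin{itemize}
\item the profile residual $\Delta P_\Gamma+f(W_{\boldsymbol\Gamma}+P_\Gamma)-\sum_j f(W_j)$ together with the $b_j^2$, $B_j$ and $\boldsymbol v$-terms;
\item the terms $-\sum_j\lambda_j^{-2}\boldsymbol v_j'\cdot(\nabla W)\big(\frac{\cdot-\boldsymbol y_j}{\lambda_j}\big)$, which are not cancelled by $\Psi$.
\end{itemize}
The $b_j'$-term, by contrast, is contained in $\Psi(\boldsymbol\lambda,\boldsymbol b,\boldsymbol y,\boldsymbol v)$, so it cancels exactly. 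The point of introducing $\Psi$ is that every term with a rough derivative disappears except the one proportional to a kernel direction, $\boldsymbol v'\cdot\nabla_jW_j$.

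Near the $k$-th bubble we use the profile equations for $S$, $Q$, $Z_l$ to cancel the leading terms, as in the proof of Lemma \ref{lemma of partial t dot g 2}. This gives $\epsilon=\epsilon_1+\epsilon_2$, where
\[
\epsilon_2=-\sum_j\lambda_j^{-1}\boldsymbol v_j'\cdot\nabla_jW_j,
\]
and $\epsilon_1$ collects the nonlinear interaction terms, the cutoff and ring-region terms, and the term $\frac{b_k(\lambda_k'+b_k)}{2\log(t/\lambda_k)\lambda_k^2}\Lambda_kW_k$.

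\textbf{Step 2: the rough term $\epsilon_2$.} Here $\|\ML_\phi\epsilon_2\|_{\dot H^{-1}}$ must be bounded using the cancellation $\ML_k\nabla_kW_k=0$. We write
\[
\ML_\phi\nabla_kW_k=-3(\phi^2-W_k^2)\nabla_kW_k .
\]
Near bubble $k$, $\phi^2-W_k^2\approx 2W_k\big(\sum_{j\ne k}W_j+P_\Gamma\big)$, and the $\dot H^{-1}$ norm is estimated through the dual $L^{4/3}$ norm. Since $W_j\sim\lambda$ near $\boldsymbol y_k$, this gives $\|\lambda_k^{-1}W_kW_j\nabla_kW_k\|_{L^{4/3}}\lesssim\lambda$. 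Similarly, $\|P_\Gamma\|_{H^2}\lesssim\lambda$ from (\ref{pg h dot 2}) gives a contribution $\lesssim\lambda$. Away from the bubbles the tails are of size $\lambda^2$. Multiplying by $|\boldsymbol v'|\lesssim t^{1/2}\lambda$ from (\ref{rough est on v'}), the total is $\lesssim t^{1/2}\lambda^2\ll\lambda$.

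\textbf{Step 3: the remaining terms $\epsilon_1$.} We use the crude bound
\[
\|\ML_\phi F\|_{\dot H^{-1}}\lesssim\|F\|_{\dot H^1}+\|\phi^2F\|_{L^{4/3}}\lesssim\|F\|_{\dot H^1}.
\]
The second term is controlled via Sobolev, $\|\phi\|_{L^4}\lesssim1$ and $\phi\in L^4$. It therefore suffices to bound the $\dot H^1$ norm of each piece of $\epsilon_1$, and for every piece except one, it will turn out to be $\lesssim\lambda$.
\begin{itemize}
\item Each explicit piece is $\lambda^{-1}$ times a rescaled profile with coefficient $O(\lambda^2)$ or smaller, so its $\dot H^1$ norm is $\lesssim\lambda$ up to logarithms. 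One must check that the logarithmic losses coming from the $\Lambda W$ tails are absorbed by the extra factors $t^{-1/3}$.
\item The pieces containing $P_\Gamma$ are handled by Corollary \ref{on the ring region}, Lemma \ref{Norm estimates for the rescaled profiles} and (\ref{pg h dot 2}).
\item For the interaction piece $\sum_{j\ne k}f'(W_k)W_j+4B_kf'(W_k)$, using the Taylor expansion of $W_j$ near $\boldsymbol y_k$ gives an $L^{4/3}$ size of about $\lambda^2$, which is even better.
\end{itemize}

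\textbf{Main obstacle: the $\Lambda_kW_k$-direction term.} The delicate piece is the one proportional to $\lambda_k^{-1}\Lambda_kW_k\chi_t$, namely the $b(\lambda'+b)/\log$ term. Its $\dot H^1$ norm carries a factor $\lambda^{-1}$, so the crude bound above loses too much. Here I would not bound $\|F\|_{\dot H^1}$ directly. Instead I would exploit $\ML_k\Lambda_kW_k=0$, as in Step 2, and the commutator identity $\ML(\Lambda W\chi_R)=[\ML,\chi_R]\Lambda W$, already estimated in Step 3 of Lemma \ref{close the bootstrap for parameters system} to be $O(t^{-1})$ in $L^{4/3}$. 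The coefficient is at most $|b||\lambda'+b|/(\lambda t^{2/3})\lesssim t^{-1/2}\lambda^2$, so the contribution is negligible.

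Summing the bounds from Steps 2 and 3 gives (\ref{partial t g dot sim}).
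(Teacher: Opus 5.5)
Your architecture matches the paper's. You substitute the equation for $\partial_t\dot g$ and note that the $b_j'$ terms cancel exactly against $\Psi$. You treat the $\boldsymbol v'$ term through $\mathcal L_k\nabla_kW_k=0$ and $3\phi^2-3W_k^2$. You bound the rest by $\|\mathcal L_\phi F\|_{\dot H^{-1}}\lesssim\|F\|_{\dot H^1}$ region by region, and use the equations for $S,Q,Z_l$ to cancel the leading terms in $|x-\boldsymbol y_k|\le d$.

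The flaw is in Step 3, at the interaction term $\mathrm{II}=\sum_{j\ne k}f'(W_k)W_j+4B_kf'(W_k)$. By your own reduction you need $\|\mathrm{II}\|_{\dot H^1}$, but you only estimate an $L^{4/3}$ norm. That gives no control of $\|\Delta\,\mathrm{II}\|_{\dot H^{-1}}=\|\mathrm{II}\|_{\dot H^1}$.

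The correct computation runs as follows.
\begin{itemize}
\item Since $-4B_k=\sum_{j\ne k}W_j(\boldsymbol y_k)+O(\lambda^3)$, on $|x-\boldsymbol y_k|\le d$ you have $\mathrm{II}=3W_k^2\sum_{j\ne k}\bigl(W_j-W_j(\boldsymbol y_k)\bigr)+O(\lambda^3)W_k^2$.
\item There $|W_j-W_j(\boldsymbol y_k)|\lesssim\lambda|x-\boldsymbol y_k|$ and $|\nabla W_j|\lesssim\lambda$, so $|\nabla\mathrm{II}|\lesssim\lambda^3(\lambda^2+|x-\boldsymbol y_k|^2)^{-2}$ and hence $\|\mathrm{II}\|_{\dot H^1}\lesssim\lambda$.
\end{itemize}
This is not ``even better'' than the target. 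Because $\sum_{j\ne k}\nabla W_j(\boldsymbol y_k)$ is genuinely of size $\lambda$, the bound is sharp. So $\mathrm{II}$ is exactly the term that fixes the right-hand side $\lambda(t)$ of the lemma; the paper calls it the leading contribution. The lemma still closes, but only with this corrected estimate.

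Conversely, the $\Lambda_kW_k$ term you single out as the main obstacle is harmless. You computed its coefficient in front of $\lambda_k^{-1}\Lambda_kW_k$ to be $\lesssim t^{-1/2}\lambda^2$. Since $\|\lambda_k^{-1}\Lambda_kW_k\|_{\dot H^1}=\lambda_k^{-1}\|\Lambda W\|_{\dot H^1}$, the crude bound already gives $t^{-1/2}\lambda\ll\lambda$, which is exactly how the paper handles it. Your kernel/commutator bound is valid there but unnecessary.

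That bound becomes indispensable only for multiples of $\Lambda_kW_k$ whose coefficient carries no factor $\lambda$. For example, take the coefficients $\frac{1}{2\log(t/\lambda_k)}$ and $\frac{1}{\log(t/\lambda_k)}$ in $\Psi$ literally. They then differ by $O(\log(t/\lambda_k)^{-2})=O(t^{-4/3})$ from the exact inner-product ratios produced by $\mathcal L_kP_\Gamma$. The interior cancellation then leaves residual multiples of $\Lambda_kW_k$ of $\dot H^1$-size $t^{-2}$ (from $S$) and $t^{-4/3}$ (from $Q$). The crude bound cannot absorb these, but your kernel argument does.
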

\begin{proof} 
In this proof, a term is called negligible if it is much smaller than the right-hand side of (\ref{partial t g dot sim}). Inserting the expression of $\partial_t \dot{g}$ (\ref{partial g dot})  back into (\ref{partial t g dot sim}), it suffices for us to prove 
\begin{equation*}
    \begin{aligned}
        \Bigg\lVert &\ML_{\phi}\Big(\Delta \PG+f(W_{\boldsymbol{\Gamma}}+\PG
        )-\sum_{j=1}^Nf(W_j)-\sum_{j=1}^N \frac{b_j^2}{\lambda_j^2}(\underline{\Lambda}_j\Lambda_jW_j)\chi_{t}\\
        &-\sum_{j=1}^N\left(\frac{b_j \lambda_j'}{2|\log(t/\lambda_j)|\lambda_j} +\frac{B_j(\boldsymbol{\lambda},\boldsymbol{y})}{\left\|\Lambda W\chi_t (\lambda_j \cdot)\right\|_{L^2}^2}\right)\lambda_j^{-1}(\Lambda
           _j W_j)\chi_t-\sum_{j=1}^N\frac{b_j}{\lambda_j^2}(\boldsymbol{v}_j\cdot\nabla)((\Lambda_j W_j)\chi_{t})
           \\&+\sum_
        {j=1}^N\frac{b_j|x-\boldsymbol{y}_j|}{\lambda_jt^2}((\Lambda_jW_j)\chi'_{t})
          -\sum_{j=1}^N\sum_{l=1}^4\frac{v'_{j,l}(\partial_lW)(\frac{\cdot-\boldsymbol{y}_j}{\lambda_j})}{\lambda_j^2}+\sum_{j=1}^N\sum_{l=1}^4\frac{\lambda'_jv_{j,l}(\Lambda \partial_l W)(\frac{\cdot-\boldsymbol{y}_j}{\lambda_j})}{\lambda_j^3}\\
   &+\sum_{j=1}^N\sum_{l,m=1}^4\frac{v_{j,l}(t)y'_{j,m}(t)(\partial_l\partial_mW)(\frac{\cdot-\boldsymbol{y}_j}{\lambda_j})}{\lambda_j^3} \Big)\Bigg\rVert_{\dot{H}^{-1}} \lesssim \lambda.
    \end{aligned}
\end{equation*}
The main difference with the proof of Lemma \ref{lemma of partial t dot g 2} lies in the use of the second order structure. Instead of estimating $\tilde\varepsilon$ in $L^2$, we control $\ML_\phi \varepsilon$ in $\dot H^{-1}$. This is essential since many error terms involve $\Lambda W$ and $\nabla W$, which lie in the kernel of $\ML_k$. To see this, we start the analysis with the term $ \left\|\ML_{\phi}\Big(\frac{v'_{k,l}\partial_lW(\frac{\cdot-\boldsymbol{y}_k}{\lambda_k})}{\lambda_k^2}\Big)\right\|_{\dot{H}^{-1}}$. Since $\partial_l W\in \ker \ML$,
\begin{equation*}
\begin{aligned}
  \left\|\ML_{\phi}\Bigg(\frac{v'_{k,l}\partial_lW(\frac{\cdot-\boldsymbol{y}_k}{\lambda_k})}{\lambda_k^2}\Bigg)\right\|_{\dot{H}^{-1}}=&\left\|(\ML_{\phi}-\ML_{k})\Bigg(\frac{v'_{k,l}\partial_lW(\frac{\cdot-\boldsymbol{y}_k}{\lambda_k})}{\lambda_k^2}\Bigg)\right\|_{\dot{H}^{-1}}\\
  =&\left\|(3{\phi}^2-3W_{k}^2)\Bigg(\frac{v'_{k,l}\partial_lW(\frac{\cdot-\boldsymbol{y}_k}{\lambda_k})}{\lambda_k^2}\Bigg)\right\|_{\dot{H}^{-1}}
\end{aligned}
\end{equation*}
From the definition of $\phi$, 
\begin{equation*}
    3\phi^2-3W_k^2=3\left(\sum_{j\neq k}W_j+\PG\right)^2+6W_k\left(\sum_{j\neq k}W_j+\PG\right).
\end{equation*}
Then using Cauchy and Sobolev, it holds 
\begin{equation*}
    \left\|W_j W_k \frac{v'_{k,l}\partial_lW(\frac{\cdot-\boldsymbol{y}_k}{\lambda_k})}{\lambda_k^2}\right\|_{\dot{H}^{-1}}\lesssim |v'_{k,l}|\Bigg( \int_{\lambda_k}^{\infty}\left(\frac{\lambda_j}{|\boldsymbol{y}_k-\boldsymbol{y}_j|^2}\cdot\frac{\lambda_k}{r^2}\cdot \frac{\lambda_k}{r^3}\right)^{\frac{4}{3}}r^3dr \Bigg)^{3/4}\lesssim \lambda|\boldsymbol{v}'|;
\end{equation*}
\begin{equation*}
    \left\|\PG W_k \frac{v'_{k,l}\partial_lW(\frac{\cdot-\boldsymbol{y}_k}{\lambda_k})}{\lambda_k^2}\right\|_{\dot{H}^{-1}}\lesssim \left\| \PG\right\|_{\dot{H}^{2}} |v'_{k,l}|\left( \int_{\lambda_k}^{\infty}\left(\frac{\lambda_k}{r^2}\cdot \frac{\lambda_k}{r^3}\right)^{\frac{4}{3}}r^3dr \right)^{3/4}\lesssim |\boldsymbol{v}'|\left\| \PG\right\|_{\dot{H}^{2}};
\end{equation*}
\begin{equation*}
\begin{aligned}
    \left\|W_j^2 \frac{v'_{k,l}\partial_lW(\frac{\cdot-\boldsymbol{y}_k}{\lambda_k})}{\lambda_k^2}\right\|_{\dot{H}^{-1}}\lesssim&  |v'_{k,l}|\left( \int_{\lambda_k}^{d}\left(\frac{\lambda_j^2}{|\boldsymbol{y}_k-\boldsymbol{y}_j|^4}\cdot \frac{\lambda_k}{r^3}\right)^{\frac{4}{3}}r^3dr \right)^{3/4}
    +|v_{k,l}'|\lambda_j^2\lambda_k\left\||x|^{-7}\right\|_{L^{\frac{4}{3}}(|x|\geq d/2)}\\
    &+|v'_{k,l}|\left( \int_{\lambda_j}^{d}\left(\frac{\lambda_j^2}{|r|^4}\cdot \frac{\lambda_k}{|\boldsymbol{y}_k-\boldsymbol{y}_j|^3}\right)^{\frac{4}{3}}r^3dr \right)^{3/4}
    \lesssim |\boldsymbol{v}'|\lambda^2;
\end{aligned}
\end{equation*}
and 
\begin{equation*}
      \left\|\PG^2 \frac{v'_{k,l}\partial_lW(\frac{\cdot-\boldsymbol{y}_k}{\lambda_k})}{\lambda_k^2}\right\|_{\dot{H}^{-1}}\lesssim \left\| \PG\right\|_{\dot{H}^{2}}^2 |v'_{k,l}|\left( \int_{\lambda_k}^{d}\left( \frac{\lambda_k}{r^3}\right)^{\frac{4}{3}}r^3dr \right)^{3/4}\lesssim \lambda|\boldsymbol{v}'|\left\| \PG\right\|_{\dot{H}^{2}}^2\left(\log\left(\frac{d}{\lambda}\right)\right)^{\frac{3}{4}}.
\end{equation*}
Collecting the above bounds and using (\ref{rough est on v'}), (\ref{pg h dot 2}), we obtain 
\begin{equation*}
  \left\|\ML_{\phi}\Big(-\frac{v'_{k,l}\partial_lW(\frac{\cdot-\boldsymbol{y}_k}{\lambda_k})}{\lambda_k^2}\Big)\right\|_{\dot{H}^{-1}}\ll \lambda^{\frac{3}{2}},
\end{equation*}
which is sufficiently small. Furthermore, from the definition of $\ML_{\phi}$, for any $f\in \dot{H}^1$ it holds
\begin{equation*}
    \left\|\ML_{\phi}f\right\|_{\dot{H}^{-1}}\lesssim \left\|\Delta f\right\|_{\dot{H}^{-1}}+\left\|W_k^2f\right\|_{\dot{H}^{-1}}\lesssim \left\|f\right\|_{\dot{H}^1}.
\end{equation*}
Therefore, we derive estimates
\begin{equation*}
    \begin{aligned}
        \left\|\ML_{\phi}\left(\frac{b_k|x-\boldsymbol{y}_k|}{\lambda_kt^2}(\Lambda_kW_k\chi'_{t})\right)\right\|_{\dot{H}^{-1}}\lesssim \frac{b_k}{\lambda_k t}\left(\int_t^{2t}\left(\frac{\lambda_k}{r^3}\right)^2r^3dr\right)^{\frac{1}{2}}\lesssim \frac{b}{t^2},
    \end{aligned}
\end{equation*}
and
\begin{equation*}
    \left\|\ML_{\phi}\left( \frac{v_{k,l}y'_{k,m}(\partial_l\partial_mW)(\frac{\cdot-\boldsymbol{y}_j}{\lambda_k})}{\lambda_k^3}   \right)\right\|_{\dot{H}^{-1}}\lesssim  \left\|\frac{v_{k,l}y'_{k,m}(\partial_l\partial_mW)(\frac{\cdot-\boldsymbol{y}_k}{\lambda_k})}{\lambda_j^3}   \right\|_{\dot{H}^{1}}\lesssim \frac{|\boldsymbol{v}\boldsymbol{y}'|}{\lambda^2}.
\end{equation*}
Thus, both terms are negligible due to the bootstrap assumption. For the remaining term, we again work in the exterior region $\mathbb R^4\setminus \widetilde B$ and compute
\begin{equation*}
\begin{aligned}
      \left\|f(W_k)\right\|_{\dot{H}^1({|x-\boldsymbol{y}_k|}\geq d)}\lesssim& \lambda_k^{3}\left(\int_{d}^{\infty}r^{-14}r^3dr\right)^{1/2}\lesssim \lambda^3,\\
    \left\|\lambda_k^{-1}(\underline{\Lambda}_k\Lambda_kW_k)\chi_{t}\right\|_{\dot{H}^1(d\leq|x-\boldsymbol{y}_k|\leq 2t)}\lesssim& \lambda_k^2\left(\int_{d}^{2t}r^{-10}r^3dr\right)^{1/2}\lesssim \lambda^2, \\
    \left\|\frac{|x-\boldsymbol{y}_k|}{\lambda_kt^2}(\Lambda_kW_k)\chi_t'\right\|_{\dot{H}^1({|x-\boldsymbol{y}_k|}\geq d)}\lesssim& \frac{1}{t}\left(\int_t^{2t}\left(\frac{1}{r^3}\right)^2r^3dr\right)^{\frac{1}{2}}\lesssim \frac{1}{t},\\
      \left\|\lambda_k^{-1}\Lambda_k W_k\right\|_{\dot{H}^1(d\leq|x-\boldsymbol{y}_k|\leq 2t)}\lesssim& \left(\int_{d}^{2t}r^{-6}r^3dr\right)^{1/2}\lesssim 1/d,\\      \left\|\lambda_k^{-2}\partial_{l,k}\Lambda_k W_k\right\|_{\dot{H}^1(d\leq|x-\boldsymbol{y}_k|)}\lesssim&\left(\int_{d}^{2t}r^{-8}r^3dr\right)^{1/2}\lesssim 1/d,\\  
       \left\|\lambda_k^{-2}\Lambda_k \partial_{l,k} W_k\right\|_{\dot{H}^1(d\leq|x-\boldsymbol{y}_k|)}\lesssim&\left(\int_{d}^{2t}r^{-8}r^3dr\right)^{1/2}\lesssim 1/d.
\end{aligned}
\end{equation*}
Combining these with the bootstrap assumption yields all the terms not involving $\PG$ are negligible in the exterior part. Then we turn to the terms containing $\PG$. Similar to the proof of Lemma \ref{lemma of partial t dot g 2}, it suffices to consider on $d\leq|x-\boldsymbol{y}_j|\leq 2d$ and using corollary \ref{on the ring region} we obtain
\begin{equation*}
    \begin{aligned}
        &\left\|\Delta\PG+f(W_{\boldsymbol{\Gamma}}+\PG)\right\|_{\dot{H}^1(d\leq |x-\boldsymbol{y}_k|\leq 2d)}\\
        \lesssim& \frac{b_k^2}{\lambda_k}+\frac{\lambda_k B_k(\boldsymbol{\lambda},\boldsymbol{y})}{\lelan \Lambda W \chi_t(\lambda_k \cdot),\Lambda W \rilan}\frac{\log t}{\lambda_k}+\frac{|\lambda_k'+b_k||\boldsymbol{v}_k|}{\lambda_k}\lesssim (t^{-\frac{2}{3}}\log t)\lambda.
    \end{aligned}
\end{equation*}
Thus, all terms are negligible in $\mathbb R^4\setminus \widetilde B$, and it remains to consider the interior region. Since $P_\Gamma$ and $\TPG$ are constructed in the same way, the same cancellation occurs in the region $|x-\boldsymbol{y}_k|\le d$. Consequently, it suffices for us to estimate
\begin{equation*}
    \left\|\ML_{\phi}\left(f(\WG+{P}_{\Gamma})-3W_k^2\PG-\sum_{j=1}^Nf(W_j)+{4B_k(\boldsymbol{\lambda},\boldsymbol{y})}f'(W_k)-\frac{b_k (\lambda_k'+b_k)}{2\log(t/\lambda_k)\lambda_k^2}\Lambda
           _k W_k\right)\right\|_{\dot{H}^{-1}}.
\end{equation*}
Applying (\ref{bootstrap a for lambda}), (\ref{lambda'+b}), the $\Lambda_k W_k$ term is dominated by 
\begin{equation*}
 \frac{|b_k| |\lambda_k'+b_k|}{2\log(t/\lambda_k)\lambda_k^2} \left\|\Lambda_kW_k\right\|_{\dot{H}^1}\lesssim \frac{t^{\frac{1}{6}}\lambda}{t^{\frac{2}{3}}}\lesssim t^{-\frac{1}{2}}\lambda.
\end{equation*}
It remains to handle the multi-bubble interaction. As in Lemma \ref{lemma of partial t dot g 2}, we split this contribution into two terms, denoted by \Rmnum{1} and \Rmnum{2}. The only difference lies in the required norm. For \Rmnum{1}, the dominating terms are $\left\|W_k\PG^2\right\|_{\dot{H}^1}$ and $\left\|W_kW_j^2\right\|_{\dot{H}^1}$.Repeating the argument from Lemma \ref{lemma of partial t dot g 2}, we obtain
\begin{equation*}
    \left\|W_k\PG^2\right\|_{\dot{H}^1}\lesssim \left\|W_k\right\|_{\dot{H}^1}\left\|\PG\right\|_{\dot{H}^2}^2\ll \lambda^{\frac{3}{2}},
\end{equation*}
and 
\begin{equation*}
 \left\|W_kW_j^2\right\|_{\dot{H}^1}\lesssim  \left(\int_{\lambda_k}^d\left(\frac{\lambda_k}{r^3}\frac{\lambda_j^2}{|\boldsymbol{y}_k-\boldsymbol{y}_j|^4}\right)^2r^3dr\right)^{\frac{1}{2}}\lesssim  \lambda^2.
\end{equation*}
Thus $\left\|\Rmnum{1}\right\|_{\dot{H}^1}$ can be neglected. While for II, it holds 
\begin{equation*}
    \begin{aligned}
        \left\|\Rmnum{2}\right\|_{\dot{H}^1}\lesssim&\sum_{j\neq k}\left\|W_k^2\left(W_j-\frac{8\lambda_j}{|\boldsymbol{y}_k-\boldsymbol{y}_j|^2}\right)\right\|_{\dot{H}^1(|x-\boldsymbol{y}_k|\leq d|)}\\
        \lesssim&\lambda_j\left(\int_0^d\left(\frac{\lambda_k}{\lambda_k^2+r^2}\right)^4r^3dr\right)^{\frac{1}{2}}\lesssim \lambda,
    \end{aligned}
\end{equation*}
which gives the leading contribution. Collecting the above bounds completes the proof of the second order error estimate.

\end{proof}

\section{Energy estimates}
This section is devoted to the first and second order energy estimates for the remainders introduced in Section \ref{Construction of the refined approximate solution} . The second order estimate is the main ingredient of the argument, as it captures the cancellations associated with the linearized operator and allows us to handle the rough modulation terms.

We start with the first order energy estimate. Unlike in the five-dimensional setting, the first order estimate does not require an additional localized virial correction in the present argument. The second order control, together with the error bounds from Section~5, is sufficient to recover the required energy lower bound.
\subsection{First order energy estimate}
In this subsection, we establish the first order energy estimate for the decomposition associated with $(\tilde \phi,\tilde h,\dot h)$. By the definitions of $\tilde h$ and $\dot h$, together with the estimates on $\TPG$ obtained in Section \ref{Construction of the refined approximate solution}, we have
\begin{equation*}
    \left\|g-\tilde{h}\right\|_{\dot{H}^1}+\left\|\dot{g}-\dot{h}\right\|_{L^2}\lesssim \left\|\TPG\right\|_{\dot{H}^1}+\left\| \sum_{j=1}^N \frac{(\beta_j-b_j)}{\lambda_j}(\Lambda_j W_j)\chi_{t}+\frac{(\boldsymbol{s}_j-\boldsymbol{v}_j)\cdot\nabla_jW_j}{\lambda_j}\right\|_{L^2}.
\end{equation*}
From (\ref{tpg h dot 1}), the first term is bounded by $(t^{-\frac{2}{3}}\log t)\lambda(t)$. Moreover, by (\ref{b and beta}) and (\ref{v and s}), the second term is dominated by
\begin{equation*}
    \sum_{j=1}^N|\beta_j-b_j|\sqrt{\log(t/\lambda_j)}+\sum_{j=1}^N|\boldsymbol{s}_j-\boldsymbol{v}_j|\lesssim (t^{-\frac{1}{2}}\log t)\lambda(t) .
\end{equation*}
Consequently, the pair $(\tilde{h},\dot{h})$ satisfies
\begin{equation}\label{1 energy norm of th dot h}
    \left\|\tilde{h}\right\|_{\dot{H}^1}+\left\|\dot{h}\right\|_{L^2}\lesssim \left\|g\right\|_{\dot{H}^1}+\left\|\dot{g}\right\|_{L^2}+\left\|g-\tilde{h}\right\|_{\dot{H}^1}+\left\|\dot{g}-\dot{h}\right\|_{L^2}\lesssim t^{-\frac 16}\lambda(t).
\end{equation}
We define the first order energy functional by
\begin{equation*}
   \mathcal{I}_1(t):=\int_{\mathbb R^4}\left\{\frac{1}{2}\dot{h}^2+\frac{1}{2}|\nabla \tilde{h}|^2-(F(\tilde{\phi}+\tilde{h})-F(\tilde{\phi})-f(\tilde{\phi})\tilde{h})\right\}dx
\end{equation*}

\begin{lemma}\label{first order energy est lemma}
On $[T_{*},T]$ where the bootstrap assumption holds, one obtains
\begin{equation}\label{d dt t H}
   \mathcal{I}_1'(t)\gtrsim - t^{-\frac{3}{4}}\lambda(t)^2.
\end{equation}
\end{lemma}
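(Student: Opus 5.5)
We plan to differentiate $\mathcal I_1$ directly and organize the computation around the equation for $\dot h$ encoded in $\tilde\epsilon(\tilde h,\dot h)$. Using $\partial_t u=\partial_t\tilde\phi+\partial_t\tilde h$, we write $\partial_t\tilde h=\dot h+\mathcal R$. Here $\mathcal R$ collects the modulation defects $\lambda_j^{-1}(\lambda_j'+\beta_j)\Lambda_jW_j$ (up to the cutoff mismatch $\chi_t$ versus $1$), $\lambda_j^{-1}(\boldsymbol y_j'+\boldsymbol s_j)\cdot\nabla_jW_j$ and $-\partial_t\TPG$. We substitute $\partial_t\dot h=\Delta\tilde h+f(\tilde\phi+\tilde h)-f(\tilde\phi)+\Psi+\tilde\epsilon$. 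The leading terms $\langle\dot h,\Delta\tilde h+f(\tilde\phi+\tilde h)-f(\tilde\phi)\rangle$ cancel against $\langle\partial_t\tilde h,-\Delta\tilde h-(f(\tilde\phi+\tilde h)-f(\tilde\phi))\rangle$ restricted to the $\dot h$ part. This leaves
\[
\mathcal I_1'=\langle\dot h,\Psi+\tilde\epsilon\rangle-\langle\mathcal R,\Delta\tilde h+f(\tilde\phi+\tilde h)-f(\tilde\phi)\rangle-\int\partial_t\tilde\phi\,\bigl(f(\tilde\phi+\tilde h)-f(\tilde\phi)-f'(\tilde\phi)\tilde h\bigr).
\]
We then bound each group from below by $-t^{-3/4}\lambda^2$.

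\textbf{Easy terms.} By Lemma \ref{lemma of partial t dot g 2} and \eqref{1 energy norm of th dot h}, $|\langle\dot h,\tilde\epsilon\rangle|\lesssim t^{-1/6}\lambda\cdot t^{-2/3}\log t\,\lambda\ll t^{-3/4}\lambda^2$. For $\langle\dot h,\Psi\rangle$ we use \eqref{lambda'+b}, \eqref{y'+v} and \eqref{beta'+=to}, noting that the bracket $\beta'-\ldots$ in $\Psi$ is by design of size $t^{-7/6}\sqrt{\log t}\,\lambda/\Xi$-corrected. Combined with $\|\lambda^{-1}\Lambda W\chi_t\|_{L^2}\sim t^{1/3}$ and $\|\underline\Lambda\Lambda W\|_{L^{4/3}}$ paired against $\dot h$ via Sobolev and \eqref{bootstap a for 2nd energy}, this gives $o(t^{-3/4}\lambda^2)$. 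The cubic term with $\partial_t\tilde\phi$ is dominated by $\int|\partial_t W_j|\,W_{\boldsymbol\Gamma}\tilde h^2$. Since $|\partial_tW_j|\lesssim\lambda^{-1}|\lambda_j'|\,|\Lambda_jW_j|+\ldots\sim t^{-1/3}W_j$, we use Hardy with the $\epsilon$-loss \eqref{hardy 2-episllon} and both bootstrap bounds to get $t^{-1/3}\|\tilde h\|_{\dot H^1}^{1+\theta}\|\tilde h\|_{\dot H^2}^{1-\theta}$-type bounds. This is where the second order control enters, replacing the virial correction of the 5D argument; we check that the exponents give $\lesssim t^{-3/4}\lambda^2$. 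The $\partial_t\TPG$ contribution to $\mathcal R$ is handled by \eqref{partial t tilde p H 1}: $(t^{-1}\log t)\lambda\cdot t^{-1/6}\lambda$.

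\textbf{Main obstacle.} The hard term is the modulation part of $\mathcal R$, paired with the linearized operator, $\langle\lambda_j^{-1}(\lambda_j'+\beta_j)\Lambda_jW_j,\ \mathcal L_{\tilde\phi}\tilde h\rangle$ plus its nonlinear remainder. Here $|\lambda'+\beta|$ is only $\lesssim t^{1/2}\lambda^2+t^{-5/6}\sqrt{\log t}\,\lambda$ by \eqref{lambda'+b}, \eqref{b and beta}, and $\Lambda W\notin L^2$. We move $\mathcal L_k$ onto the profile, where $\mathcal L_k\Lambda_kW_k=0$. The only survivors are then $(\mathcal L_{\tilde\phi}-\mathcal L_k)\Lambda_kW_k$, i.e.\ interaction and $\TPG$ terms of size $\lambda$ in $L^{4/3}$, and the cutoff commutator $[\mathcal L,\chi_t]\Lambda W$ of size $t^{-1}$ in $L^{4/3}$, as computed in Step~3 of Lemma \ref{close the bootstrap for parameters system}. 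Both are paired with $\|\tilde h\|_{\dot H^1}$. The term $(\lambda'+\beta)\cdot\lambda\cdot t^{-1/6}\lambda$ is then safely small, and the translation analog uses $\mathcal L_k\nabla_kW_k=0$ similarly. If the crude $(\lambda'+\beta)$ bound is insufficient on the commutator piece, we fall back on splitting $\lambda'+\beta=(\lambda'+b)+(b-\beta)$ and using the $\dot H^2$-level version of \eqref{lambda'+b} from Step~1.
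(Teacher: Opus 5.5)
Your proposal is correct and follows the paper's proof essentially step by step. It uses the same identity for $\mathcal{I}_1'$ after writing $\partial_t\tilde h=\dot h+\mathcal{R}$, the same $L^2$ bounds on $\tilde\epsilon$ and $\Psi$ against $\|\dot h\|_{L^2}\lesssim t^{-1/6}\lambda$, and the same kernel cancellation $\mathcal{L}_k\Lambda_kW_k=\mathcal{L}_k\nabla_kW_k=0$ plus a cutoff term for the modulation part of $\mathcal{R}$. It also uses second order control for the cubic term: the paper does this via $\|\tilde\phi\|_{L^{8/3}}\lesssim\lambda^{1/2}$ and $\|\tilde h\|_{\dot H^{3/2}}\lesssim t^{1/6}\lambda$, while your $\epsilon$-loss Hardy bound plays the same role.

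A few bookkeeping points to fix:
\begin{itemize}
\item In the cubic term, the gain comes from the scaling factor $\lambda^{2-2\epsilon}$. The powers of $t$ alone only give the insufficient $t^{-2/3}\lambda^2$, so your ``$t^{-1/3}\|\tilde h\|_{\dot H^1}^{1+\theta}\|\tilde h\|_{\dot H^2}^{1-\theta}$'' bound needs that prefactor to close.
\item In the modulation term, keep the prefactor $\lambda_j^{-1}$ in front of $\lambda_j'+\beta_j$. This gives $t^{-1}\sqrt{\log t}\,\lambda^2$ rather than your $(\lambda'+\beta)\lambda\, t^{-1/6}\lambda$, which is still $\ll t^{-3/4}\lambda^2$.
\item Pair the $\underline{\Lambda}\Lambda W$ part of $\Psi$ in $L^2$, as the paper does. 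The second order bootstrap controls $\dot g$ in $\dot H^1$ but not $\dot h$, whose $\dot H^1$ norm is of size $|b-\beta|/\lambda$.
\end{itemize}
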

\begin{proof}
    In this proof, the symbol "$\simeq$" means equality up to error terms of order $O(t^{-\frac{4}{5}}\lambda(t)^2)$, which will be called negligible. Taking time derivative of ${\mathcal{I}}_1$ we obtain
    \begin{equation*}
       \frac{d}{dt}\tilde{\mathcal{I}}=\lelan \partial_t \dot{h},\dot{h}  \rilan-\lelan \partial_t\tilde{h},\Delta \tilde{h}+f(\tilde{\phi}+\tilde{h})-f(\tilde{\phi}) \rilan-\lelan \partial_t \tilde{\phi}, f(\tilde{\phi}+\tilde{h})-f(\tilde{\phi})-f'(\tilde{\phi})\tilde{h}\rilan. 
    \end{equation*}
From (\ref{partial t g dot sim 2}) and (\ref{1 energy norm of th dot h}), it holds
\begin{equation*}
  \left|\lelan  \Tilde{\epsilon}(\tilde{h},\dot{h}),    \dot{h}\rilan\right|\lesssim \left\|\Tilde{\epsilon}(\tilde{h},\dot{h})\right\|_{L^2}\left\|\dot{h}\right\|_{L^2}\lesssim {(t^{-\frac{2}{3}}}\log t)\lambda \cdot t^{-\frac 16}\lambda\lesssim t^{-\frac{5}{6}}\lambda^2.
\end{equation*}
Thus,
\begin{equation*}
    \lelan \partial_t \dot{h},\dot{h}  \rilan\simeq \lelan \dot{h},  \Delta \Tilde{h}+f(\Tilde{\phi}+\Tilde{h})-f(\Tilde{\phi})\rilan+\lelan  \dot{h}, \Psi(\boldsymbol{\lambda},\boldsymbol{\beta},{\boldsymbol{y}},{\boldsymbol{s}}) \rilan.
\end{equation*}
We first estimate the term involving $\Psi(\boldsymbol{\lambda},\boldsymbol{\beta},{\boldsymbol{y}},{\boldsymbol{s}})$. Recall the definition of $\Psi(\boldsymbol{\lambda},\boldsymbol{\beta},{\boldsymbol{y}},{\boldsymbol{s}})$ in (\ref{first order PSI}). By the scaling property, $\left\|\Psi(\boldsymbol{\lambda},\boldsymbol{\beta},{\boldsymbol{y}},{\boldsymbol{s}})\right\|_{L^2}$  is controlled by
\begin{equation*}
    \begin{aligned}
     &\sum_{j=1}^N \frac{|\lambda_j'+\beta_j|\beta_j}{\lambda_j}\left\|\underline{\Lambda}\Lambda W\right\|_{L^2}+\sum_{j=1}^N\frac{|\boldsymbol{y}_j'+\boldsymbol{s}_j|\beta_j}{\lambda_j}\left\|\nabla \Lambda W\right\|_{L^2}\\
     +&\sum_{j=1}^N\left|\beta_j'-\frac{\beta_j \lambda_j'}{2\log(t/\lambda_j)\lambda_j} -\frac{B_j(\boldsymbol{\lambda})}{\log(t/\lambda_j)}\right|\left\|\Lambda W \chi\left(\frac{\lambda_k\cdot}{t}\right)\right\|_{L^2}.
    \end{aligned}
\end{equation*}
For the first term, from (\ref{lambda'+b}) and (\ref{b and beta}), it holds
\begin{equation*}
  \sum_{j=1}^N \frac{|\lambda_j'+\beta_j|\beta_j}{\lambda_j}\left\|\underline{\Lambda}\Lambda W\right\|_{L^2}\lesssim \sum_{j=1}^N t^{-\frac{1}{3}}(|\lambda_j'+b_j|+|\beta_j-b_j|)\lesssim (t^{-\frac{7}{6}}\log t)\lambda.  
\end{equation*}
Similarly, using (\ref{y'+v}) and (\ref{v and s}) the second term is bounded by
\begin{equation*}
    \sum_{j=1}^N\frac{|\boldsymbol{y}_j'+\boldsymbol{s}_j|\beta_j}{\lambda_j}\left\|\nabla \Lambda W\right\|_{L^2}\lesssim \sum_{j=1}^Nt^{-\frac{1}{3}}(|\boldsymbol{y}_j'+\boldsymbol{v}_j|+|\boldsymbol{v}_j-\boldsymbol{s}_j|)\lesssim t^{\frac{2}{3}}\lambda^2.
\end{equation*}
For the last term, a direct computation gives $\left\|\Lambda W \chi\left(\frac{\lambda_k\cdot}{t}\right)\right\|_{L^2}\lesssim \sqrt{\log(t/\lambda_k)}\lesssim t^{\frac{1}{3}} $. While (\ref{beta'+=to}) and (\ref{rough est of beta'}) imply
\begin{equation*}
\begin{aligned}
  \sum_{j=1}^N\left|\beta_j'-\frac{\beta_j \lambda_j'}{2\log(t/\lambda_j)\lambda_j} -\frac{B_j(\boldsymbol{\lambda})}{\log(t/\lambda_j)}\right|\lesssim& \sum_{j\neq k}\frac{8|\beta'_j(t)|}{\log(t/\lambda_j)}\log t+( t^{-\frac{7}{6}}\sqrt{\log t})\lambda  \\
  \lesssim& \frac{(t^{-\frac{2}{3}}\log t)\lambda}{t^{\frac{2}{3}}}+( t^{-\frac{7}{6}}\sqrt{\log t})\lambda\lesssim ( t^{-\frac{7}{6}}\sqrt{\log t})\lambda.
\end{aligned}  
\end{equation*}
Thus, we obtain 
\begin{equation*}
  \sum_{j=1}^N\left|\beta_j'-\frac{\beta_j \lambda_j'}{2\log(t/\lambda_j)\lambda_j} -\frac{B_j(\boldsymbol{\lambda})}{\log(t/\lambda_j)}\right|\left\|\Lambda W \chi\left(\frac{\lambda_k\cdot}{t}\right)\right\|_{L^2}\lesssim  ( t^{-\frac{5}{6}}\sqrt{\log t})\lambda.
\end{equation*}
Collecting the above estimates for $\|\Psi(\boldsymbol{\lambda},\boldsymbol{\beta},{\boldsymbol{y}},{\boldsymbol{s}})\|_{L^2}$ and applying Cauchy's inequality, we obtain
\begin{equation*}
\begin{aligned}
 \left|\lelan  \dot{h}, \Psi(\boldsymbol{\lambda},\boldsymbol{\beta},{\boldsymbol{y}},{\boldsymbol{s}}) \rilan\right| \lesssim \left\|\dot{h}\right\|_{L^2}\left\|\Psi(\boldsymbol{\lambda},\boldsymbol{\beta},{\boldsymbol{y}},{\boldsymbol{s}}) \right\|_{L^2}\lesssim  (t^{-1}\sqrt{\log t})\lambda^2.    
\end{aligned}
\end{equation*}
It follows that
\begin{equation*}
    \lelan \partial_t \dot{h},\dot{h}  \rilan\simeq \lelan \dot{h},  \Delta \Tilde{h}+f(\Tilde{\phi}+\Tilde{h})-f(\Tilde{\phi})\rilan.
\end{equation*}

 Next, we claim 
\begin{equation}\label{canclelation of partial t h and dot h}
    \lelan \partial_t\tilde{h},\Delta \tilde{h}+f(\tilde{\phi}+\tilde{h})-f(\tilde{\phi}) \rilan\simeq \lelan \dot{h},  \Delta \Tilde{h}+f(\Tilde{\phi}+\Tilde{h})-f(\Tilde{\phi})\rilan.
\end{equation}
From the definition of $\dot{h}$, we have
\begin{equation*}
     \partial_t \tilde{h}-\dot{h}=-\partial_t\TPG+\sum_{j=1}^N \lambda_j^{-1}(\lambda_j'\Lambda_jW_j+\beta_j(\Lambda_j W_j)\chi_{t})+\sum_{j=1}^N \lambda_j^{-1}(\boldsymbol{y}_j'+\boldsymbol{s}_j)\cdot\nabla_j W_j.
\end{equation*}
On the one hand, using the estimate (\ref{partial t tilde p H 1}) together with (\ref{lambda'+b}), (\ref{b and beta}) and (\ref{y'+v}), the difference $\left\|\partial_t \tilde{h}-\dot h\right\|_{\dot{H}^1}$ satisfies the rough bound
\begin{equation}\label{rouogh est of partial t h-dot h}
    \begin{aligned}
&\left\|\partial_t\TPG\right\|_{\dot{H}^1}+\sum_{k=1}^N\frac{|\lambda_k'+\beta_k|}{\lambda_k}\left\|\Lambda W\right\|_{\dot{H}^1}+\frac{|\beta_k|}{\lambda_k}\left\|\Lambda_k W_k(1-\chi(t))\right\|_{\dot{H}^1}+\frac{|\boldsymbol{y}_k'+\boldsymbol{s}_k|}{\lambda_k}\left\|\nabla W\right\|_{\dot{H}^1}\\
\lesssim&(t^{-1}\log t)\lambda+\frac{(t^{-\frac{5}{6}}\sqrt{\log t})\lambda}{\lambda}+\frac{t^{-\frac{1}{3}}\lambda}{t}+\frac{(t\sqrt{\log t})\lambda^2}{\lambda}\lesssim t^{-\frac{5}{6}}\sqrt{\log t}.
    \end{aligned}
\end{equation}
On the other hand,
\begin{equation*}
    \Delta \tilde{h}+f(\tilde{\phi}+\tilde{h})-f(\tilde{\phi})=\Delta  \tilde{h}+3\tilde{\phi}^2\tilde{h}+3\tilde{\phi}\tilde{h}^2+\tilde{h}^3.
\end{equation*}
By Cauchy's inequality and (\ref{bootstap a for energy}), the nonlinear part is dominated by 
\begin{equation*}
    \left\|3\tilde{\phi}\tilde{h}^2+\tilde{h}^3\right\|_{L^{\frac{4}{3}}}\lesssim \left\|\tilde{h}\right\|_{L^4}^2+\left\|\tilde{h}\right\|_{L^4}^3\lesssim\left\|\tilde{h}\right\|_{\dot{H}^1}^2+\left\|\tilde{h}\right\|_{\dot{H}^1}^3 \lesssim t^{-\frac{1}{3}}\lambda^2.
\end{equation*}
Combining this with the rough estimate (\ref{rouogh est of partial t h-dot h}),   it suffices to prove
\begin{equation*}
\left|\lelan  \partial_t \tilde{h}-\dot{h},\Delta\tilde{h}+3\tilde{\phi}^2\tilde{h} \rilan\right|\lesssim t^{-\frac{4}{5}}\lambda^2.
\end{equation*}
The smallness of the $\partial_t\TPG$ term has already been obtained in the proof of (\ref{rouogh est of partial t h-dot h}). Indeed, due to (\ref{tpg h dot 1}), all the terms involving $\TPG$ are controlled by 
\begin{equation*}
    \left|\lelan\partial_t \tilde{h}-\dot{h}, W_k\TPG h\rilan\right|\lesssim \left\|\partial_t \tilde{h}-\dot{h}\right\|_{\dot{H}^1}\left\|W_k\right\|_{\dot{H}^1}\left\|\TPG\right\|_{\dot{H}^1}\left\|\tilde{h}\right\|_{\dot{H}^1}\lesssim t^{-\frac{5}{3}}\lambda^2,
\end{equation*}
and are therefore negligible. For the remaining part, since $\ML_k\Lambda_kW_k=\ML_k \nabla_k W_k=0$, we deduce
\begin{equation*}
    \begin{aligned}
        &\left|\lelan \Lambda_k W_k, \Delta \tilde{h}+3\left(\sum_{j=1}^NW_j\right)^2\tilde{h} \rilan\right|\lesssim \sum_{j\neq k}\left|\lelan \Lambda_k W_k, W_jW_k \tilde{h} \rilan\right|\\\lesssim
        &\sum_{j\neq k}\left(\int_{\lambda_k}^{\infty}\left(\frac{\lambda_j\lambda_k^2}{r^4|\boldsymbol{y}_k-\boldsymbol{y}_j|^2}\right)^{\frac{4}{3}}r^3dr\right)^{\frac{3}{4}}\left\|\tilde{h}\right\|_{\dot{H}^1}\lesssim \sum_{j\neq k}\left\|\tilde{h}\right\|_{\dot{H}^1}\frac{\lambda_j\lambda_k}{|\boldsymbol{y}_k-\boldsymbol{y}_j|^2},
    \end{aligned}
\end{equation*}
and 
\begin{equation*}
\begin{aligned}
    &\left|\lelan \nabla_k W_k, \Delta \tilde{h}+3\left(\sum_{j=1}^NW_j\right)^2\tilde{h} \rilan\right|\lesssim \sum_{j\neq k}\left|\lelan \nabla_k W_k, W_jW_k \tilde{h} \rilan\right|\\
   \lesssim&\sum_{j\neq k}\left(\int_{\lambda_k}^{\infty}\left(\frac{\lambda_j\lambda_k^3}{r^5|\boldsymbol{y}_k-\boldsymbol{y}_j|^2}\right)^{\frac{4}{3}}r^3dr\right)^{\frac{3}{4}}\left\|\tilde{h}\right\|_{\dot{H}^1} \lesssim \sum_{j\neq k}\left\|\tilde{h}\right\|_{\dot{H}^1}\frac{\lambda_j\lambda_k}{|\boldsymbol{y}_k-\boldsymbol{y}_j|^2}.  
\end{aligned}
\end{equation*}
Therefore, the sums involving $\Lambda_j W_j$ and $\nabla_j W_j$ are also negligible. Finally, we consider the truncation term. The definition  of $\chi_t$ yields
\begin{equation*}
  {\left|\lelan \frac{\beta_k}{\lambda_k}\Lambda_kW_k\chi_t,\ML_k\tilde{h} \rilan \right|\lesssim \beta_k\int_{t}^{2t}\left(\left(\frac{1}{r^3}\frac{1}{t}+\frac{1}{r^2}\frac{1}{t^2}\right)^{\frac{4}{3}}r^3dr\right)^{\frac{3}{4}}\left\|\tilde{h}\right\|_{\dot{H}^1}\lesssim \frac{\beta_k}{t}\left\|\tilde{h}\right\|_{\dot{H}^1}.} 
\end{equation*}
By the estimate for $\beta$ in (\ref{b and beta}), this term is bounded by 
$t^{-\frac{3}{2}}\lambda^2$. Collecting the above estimates proves claim (\ref{canclelation of partial t h and dot h}).

It remains to estimate the nonlinear term $\lelan \partial_t \tilde{\phi}, f(\tilde{\phi}+\tilde{h})-f(\tilde{\phi})-f'(\tilde{\phi})\tilde{h}\rilan$. Using the identity $\partial_t\phi=\partial_tW_{\Gamma}+\TPG$ and (\ref{partial t tilde p H 1}), we obtain
\begin{equation*}
   \left\|\partial_t \tilde{\phi}\right\|_{\dot{H}^1}\lesssim \left\|\partial_t \TPG\right\|_{\dot{H}^1}+ \sum_{j=1}^N\frac{|\lambda_j'|}{\lambda_j}\left\|\Lambda_k W_k\right\|_{\dot{H}^1}+\sum_{j=1}^N\frac{|\boldsymbol{y}_j'|}{\lambda_j}\left\|\nabla_k W_k\right\|_{\dot{H}^1}\lesssim t^{-\frac{1}{3}}.
\end{equation*}
Furthermore, the nonlinear term is given by
\begin{equation*}
  f(\tilde{\phi}+\tilde{h})-f(\tilde{\phi})-f'(\tilde{\phi})\tilde{h}=3\tilde{\phi}\tilde{h}^2+\tilde{h}^3.  
\end{equation*}
By interpolation inequality and Sobolev embedding, we obtain 
\begin{equation*}
    \begin{aligned}
        &\left|\lelan \partial_t \tilde{\phi}, f(\tilde{\phi}+\tilde{h})-f(\tilde{\phi})-f'(\tilde{\phi})\tilde{h}\rilan\right|\lesssim\left\|\partial_t \tilde{\phi}\right\|_{\dot{H}^1}\left(\left\|\tilde{\phi}\right\|_{L^{\frac{8}{3}}}\left\|\tilde{h}\right\|_{\dot{H}^1}\left\|\tilde{h}\right\|_{\dot{H}^{\frac{3}{2}}}+\left\|\tilde{h}\right\|_{\dot{H}^1}^3\right)\\
        \lesssim& t^{-\frac{1}{3}}\left(\int_{\lambda_k}^{\infty}\left(\frac{\lambda_k}{r^2}\right)^{\frac{8}{3}}r^3dr\right)^{\frac{3}{8}}\left\|\tilde{h}\right\|_{\dot{H}^1}^{\frac{3}{2}}\left\|\tilde{h}\right\|_{\dot{H}^2}^{\frac{1}{2}}\lesssim t^{-\frac{1}{3}}\lambda^{\frac{5}{2}}.
    \end{aligned}
\end{equation*}
This completes the proof of (\ref{d dt t H}).
\end{proof}

\subsection{Construction of the truncated virial operators}
We now introduce the localized virial operators needed for the second order energy estimate. These operators are designed to extract coercive control from the commutator structure of the linearized operator.

In particular, the borderline behavior of the scaling direction and the cutoff terms in the refined approximate solution make the choice of the weight more
delicate.

\begin{lemma}\label{function q}For any $\epsilon>0$ and $R>0$, there exists a radially symmetric function $q=q_{\epsilon,R}\in \mathcal{C}^{5,1}$ with the following properties:

(1) $q(r)=\frac{1}{2}r^2$ for $r\leq R$.

(2) There exists $\Tilde{R}=\Tilde{R}(R,\epsilon)>R$ and $\Tilde{C}_1$, $\Tilde{C}_2$, $\Tilde{C}_3$, $\Tilde{C}_4\in \mathbb R$ (dependent on $\epsilon$ and $R$) such that for any $r\geq \Tilde{R}$, 
\begin{equation*}
    q(r)=\Tilde{C}_1+\Tilde{C}_2\log r+\Tilde{C}_3r^{-2}+\Tilde{C_4}r^{-1}.
\end{equation*}

(3) $\frac{q'(r)}{r}\geq -\epsilon$, $q''(r)-\frac{q'(r)}{r}\geq -\epsilon$.

(4) $\left|\left(\partial_{rr}+\frac{3\partial_r}{r}\right)^3 q(r)\right|\leq\frac{\epsilon}{r^4(1+|\log r|)^2}$.

(5) $\left|\left(\partial_{rr}+\frac{3\partial_r}{r}\right)^2 q(r)\right|\leq \frac{\epsilon}{r^2(1+|\log r|)}$.

(6) $\left|\left(\left(\partial_{rr}+\frac{3\partial_r}{r}\right) q(r)\right)'\right|\leq \frac{\epsilon}{(1+|\log r|)r}$.

(7) $| q'' (r)|\leq 4$, $|q'(r)|\leq 4r$ .

\end{lemma}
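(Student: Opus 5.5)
The plan is to construct $q$ by prescribing its radial Laplacian. Write $\Delta_r:=\partial_{rr}+\frac{3}{r}\partial_r$ and $s:=\log r$. In the variable $s$,
\[
\Delta_r=r^{-2}\left(\partial_s^2+2\partial_s\right),
\]
and $r^{-2}$ is $\Delta_r$-harmonic. Fix a smooth nonincreasing cutoff $\chi_0:\mathbb R\to[0,1]$ with $\chi_0=1$ on $(-\infty,1]$ and $\chi_0=0$ on $[2,\infty)$. For a large parameter $K$, set
\[
\psi(r):=\chi_0\!\left(\frac{\log\log r}{K}\right)\quad (r>e),\qquad \psi:=1 \text{ otherwise},
\]
so that $\psi$ makes its transition on $s\in[e^{K},e^{2K}]$. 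Choose $K$ so large that $e^{e^{K}}\geq \max(R,e^2)$. We then define $q$ as the radial solution of $\Delta_r q=4\psi$ that is regular at the origin, namely
\[
q'(r)=r^{-3}\int_0^r 4\psi(\rho)\rho^3\,d\rho ,\qquad q(0)=0 .
\]
For $r\le R$ we have $\psi\equiv1$, hence $q=\frac12r^2$, which is property (1). Beyond $\tilde R:=\exp(e^{2K})$ we have $\Delta_r q=0$, so $q=\tilde C_1+\tilde C_3r^{-2}$; this is property (2) with $\tilde C_2=\tilde C_4=0$. Since $\psi$ is smooth in $r$ (it is a composition of smooth maps away from $r=e$, and constant near $r=e$), $q$ is smooth, in particular $\mathcal C^{5,1}$.

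Next we check the derivative bounds (4)--(6) using the $s$-calculus. In the transition region the derivatives of $\psi$ with respect to $s$ satisfy $|\partial_s^j\psi|\lesssim_j K^{-1}s^{-j}$; outside it they vanish. First,
\[
(\Delta_r q)'=4\psi'(r)=4r^{-1}\partial_s\psi=O\!\left(\frac{1}{K r s}\right),
\]
which gives (6). Second,
\[
\Delta_r^2q=4r^{-2}(\partial_s^2\psi+2\partial_s\psi)=O\!\left(\frac{1}{K r^2 s}\right),
\]
which gives (5). For the third derivative, write $\Delta_r^2 q=4r^{-2}f(s)$ with $f=\partial_s^2\psi+2\partial_s\psi$. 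Since $r^{-2}$ is harmonic, a direct computation gives $\Delta_r(r^{-2}f)=r^{-4}(f''-2f')$. Substituting $f$, the $\partial_s^3\psi$ terms cancel and
\[
\Delta_r^3 q=4r^{-4}\left(\partial_s^4\psi-4\partial_s^2\psi\right)=O\!\left(\frac{1}{K r^4 s^2}\right).
\]
This is exactly the $\log^{-2}$ weight required in (4). All three bounds become $\le\epsilon$ once $K\gtrsim\epsilon^{-1}$. Moreover, because $s\ge e^{K}\ge 1$ in the transition region, $1+|\log r|\sim s$ there, so the weights match.

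It remains to check (3) and (7). Since $0\le\psi\le1$, the quantity $q'/r$ is a $\rho^3d\rho$-weighted average of $\psi$ over $[0,r]$, up to the normalization making it equal $1$ when $\psi\equiv1$. Hence $0\le q'/r\le 1$, which gives $|q'|\le r$ and the first part of (3). From $q''=4\psi-3q'/r$ we then get $|q''|\le4$, which gives (7), and
\[
q''-\frac{q'}{r}=4\left(\psi(r)-\frac{q'(r)}{r}\right)=4\left(\psi(r)-\frac{4}{r^4}\int_0^r\psi\rho^3\,d\rho\right).
\]
I expect this last quantity to be the only delicate point. Because $\psi$ is decreasing, it is negative, so we need it to be small. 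To bound it, split the average into $\rho\in[re^{-A},r]$ and $\rho<re^{-A}$. The first piece differs from $\psi(r)$ by at most $\sup_{[s-A,s]}|\partial_s\psi|\cdot A\lesssim A/K$. The second piece carries total weight $e^{-4A}$. Choosing $A$ with $e^{-4A}\le\epsilon/8$ and then $K\gg A/\epsilon$ gives $q''-q'/r\ge-\epsilon$.
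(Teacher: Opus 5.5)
Your construction is correct, and it proves the lemma by a different and more elementary route than the paper.

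\emph{The paper's route.} It works in $x=\log r$ and in effect prescribes the top-order quantity. It takes a piecewise source $F$, equal to $-\alpha_1/X^2$ on $(X,2X)$ and to $\alpha_2/(1+x)^2$ on $(2X,\tilde X)$ with $X=\log R$. It then builds $p(x)=q(e^x)$ by variation of parameters with the six functions $1,x,e^{2x},xe^{2x},e^{-2x},e^{-x}$, so that $r^4\Delta^3 q=48F(\log r)$ up to a $c_6$-term. Finally it imposes the moment conditions $\int F=0$ and $\int(1+x)F=\tfrac12$ to remove the $r^2$ and $r^2\log r$ modes beyond $\tilde R$. Properties (3)--(7) then need separate bounds on each coefficient $c_j$. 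The resulting $q$ is only $\mathcal C^{5,1}$ because $F$ jumps, its exterior form in general contains $\log r$ and $r^{-1}$ terms, and the construction implicitly needs $X=\log R$ large.

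\emph{Your route.} You prescribe the lowest-order quantity instead, $\Delta_r q=4\psi$ with $\psi$ a smooth monotone cutoff in $\log\log r$, and integrate once. Properties (1), (2) (with $\tilde C_2=\tilde C_4=0$), smoothness and validity for every $R>0$ are then immediate. Properties (4)--(6) reduce to the single estimate $|\partial_s^j\psi|\lesssim_j K^{-1}s^{-j}$ together with the identity $\Delta_r(r^{-2}f)=r^{-4}(f''-2f')$. Properties (3) and (7) follow from writing $q'/r$ as a $\rho^3\,d\rho$-average of $\psi$ and using monotonicity.

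\emph{What each buys.} The mechanism is the same: $\Delta q$ drops from $4$ to $0$ while $\log\log r$ varies by an amount of order $\epsilon^{-1}$; in the paper this is the statement $\log\frac{1+\tilde X}{1+2X}\sim\alpha_1^{-1}$. The paper's route mainly buys an explicit formula for the sixth-order quantity that carries the borderline $\log^{-2}$ weight in (4). Yours gives a $\mathcal C^\infty$ function, exact harmonicity outside $\tilde R$, and transparent sign conditions, without any coefficient bookkeeping.

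\emph{A small simplification.} The splitting parameter $A$ in your last step is not needed. By Fubini, $\psi(r)-\frac{4}{r^4}\int_0^r\psi\rho^3\,d\rho=\frac{1}{r^4}\int_0^r\psi'(\sigma)\sigma^4\,d\sigma$. Since $|\psi'(\sigma)|\lesssim(K\sigma)^{-1}$, this quantity is $O(K^{-1})$ directly.
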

\begin{proof}
Let $\alpha_1>0$ be small constant to be chosen later. Set $X:=\log R$ and consider the auxiliary function
\begin{equation*}
    f(x):=\frac{1}{X}\left(\frac{1}{2X+1}-\frac{1}{x+1}\right)^{-1}\left(\log(1+x)-\log(1+2X)\right)-\frac{3X+2}{2X}.
\end{equation*}
For $X$ sufficiently large, one has $f(4X+1)<10$. Since $\lim_{x\to\infty} f(x)=+\infty$, there exists $\tilde{X} >4X+1$ such that $f(\Tilde{X})=\frac{1}{2\alpha_1}$. We note that $\log\left(\frac{1+\Tilde{X}}{1+2X}\right)$ is comparable to $\frac{1}{\alpha_1}$. In particular, $\Tilde{X}\gg X$. We let $\alpha_2:=\frac{\alpha_1}{X}\left(\frac{1}{2X+1}-\frac{1}{\Tilde{X}+1}\right)^{-1}$, then $\alpha_1$, $\alpha_2$ and $\Tilde{X}$ satisfy
\begin{equation*}
    \frac{\alpha_1}{X}=\alpha_2\left(\frac{1}{2X+1}-\frac{1}{\Tilde{X}+1}\right)
\end{equation*}
and 
\begin{equation*}
    \alpha_2(\log(1+\Tilde{X})-\log(1+2X))=\frac{1}{2X}\alpha_1(3X+2)+\frac{1}{2}.
\end{equation*}
Note that $\alpha_2=2(1+o_{X\to \infty}(1))\alpha_1$. We then define the function
\begin{equation*}
    F(x):=\begin{cases}
        -\alpha_1/X^2,& x\in(X,2X)\\
        \alpha_2/(1+x)^2, & x\in (2X,\Tilde{X})\\
        0,&\text{elsewhere}.
    \end{cases}
\end{equation*}
Then we have
\begin{equation}\label{int F(x)=0}
    \int_{-\infty}^{\infty}F(x)dx=-\frac{\alpha_1}{X}+\alpha_2\left(\frac{1}{2X+1}-\frac{1}{\Tilde{X}+1}\right)=0,
\end{equation}
and
\begin{equation*}\label{int xF(x)=1/2}
\begin{aligned}
     \int_{-\infty}^{\infty}(1+x)F(x)dx= &-\frac{\alpha_1}{2X^2}\left((1+2X)^2-(1+X)^2\right)\\
     &+\alpha_2\left(\log(1+\Tilde{X})-\log(1+2X)\right)=\frac{1}{2}.
\end{aligned}
\end{equation*}
Now we define 
\begin{equation*}\begin{aligned}
    c_1(x)&=-\int_{-\infty}^x(6z+3)e^{2z}F(z)dz,&c_2(x)=\int_{-\infty}^x6e^{2z}F(z)dz,\\
c_3(x)&=\frac{1}{2}-\int_{-\infty}^x\left(\frac{19}{12}+z\right)F(z)dz,&c_4(x)=\int_{-\infty}^xF(z)dz,\\
 c_5(x)&=\int_{-\infty}^x-\frac{3}{4}e^{4z}F(z)dz,&c_6(x)=\int_{-\infty}^x\frac{16}{3}e^{3z}F(z)dz.
    \end{aligned}
\end{equation*}
These functions are smooth away from the points $X$, $2X$ and $\tilde{X}$, and are globally Lipschitz continuous. For $x\leq X$, we have
\begin{equation*}
  (c_1(x),c_2(x),c_3(x),c_4(x),c_5(x),c_6(x))=(0,0,1/2,0,0,0).  
\end{equation*}
When $x\geq X$, integration by parts shows that
\begin{equation*}
\begin{aligned}
   &|c_1(x)|\lesssim \alpha_1 e^{2x}(1+x)^{-1}\text{ , }|c_2(x)|\lesssim \alpha_1 e^{2x}(1+x)^{-2}, \\
   &|c_5(x)|\lesssim \alpha_1 e^{4x}(1+x)^{-2}\text{ , }|c_6(x)|\lesssim \alpha_1 e^{3x}(1+x)^{-2}.
\end{aligned} 
\end{equation*}
Furthermore, from the definition of $F(x)$, the function $c_3(x)$ is monotone increasing on $x\in [X,2X]$ and reaching its maximum at $2X$. A direct computation gives $c_3(2X)=\frac{1}{2}+\frac{\alpha_1(18X+19)}{12X}$; and decreasing till $c_3(\title{X})=0$. Finally for $c_4(x)$, when $X\leq x\leq \Tilde{X}$, the same argument gives $|c_4(x)|\lesssim \frac{\alpha_1}{x}$. When $x\geq \Tilde{X}$, (\ref{int F(x)=0})  indicates $c_4(x)=0$.
Define
\begin{equation*}
    p(x):=c_1(x)+c_2(x)x+c_3(x)e^{2x}+c_4(x)xe^{2x}+c_5(x)e^{-2x}+c_6(x)e^{-x}.
\end{equation*}
Then $p$ is Lipschitz continuous and 
\begin{equation*}
    \begin{aligned}
        p'(x)=&c_1'(x)+c_2'(x)x+c_3(x)e^{2x}+c_4(x)'xe^{2x}+c_5'(x)e^{-2x}+c_6'(x)e^{-x}\\
       &+c_2(x)+2c_3(x)e^{2x}+c_4(x)(1+2x)e^{2x}-2c_5(x)e^{-2x}-c_6(x)e^{-x}.
    \end{aligned}
\end{equation*}
The terms containing the derivatives of $c_j$ cancel by construction: 
\begin{equation*}
\begin{aligned}
  &c_1'(x)+c_2'(x)x+c_3(x)e^{2x}+c_4(x)'xe^{2x}+c_5'(x)e^{-2x}+c_6'(x)e^{-x}\\
 =&e^{2x}\left(-3-\frac{19}{12}-\frac{3}{4}+\frac{16}{3}\right)+xe^{2x}(-6+6-1+1)=0.
\end{aligned}
\end{equation*}
Therefore
\begin{equation*}
    p'(x)=c_2(x)+2c_3(x)e^{2x}+c_4(x)(1+2x)e^{2x}-2c_5(x)e^{-2x}-c_6(x)e^{-x},
\end{equation*}
which is Lipschitz. Repeating this procedure, we obtain
\begin{equation*}
    \begin{aligned}
        p''(x)=&c_2'(x)+2c_3'(x)e^{2x}+c_4'(x)(1+2x)e^{2x}-2c_5'(x)e^{-2x}-c_6'(x)e^{-x}\\
        &+4c_3(x)e^{2x}+4c_4(x)e^{2x}+4c_4(x)xe^{2x}+4c_5(x)e^{-2x}+c_6(x)e^{-x}.
    \end{aligned}
\end{equation*}
Since 
\begin{equation*}
    \begin{aligned}
     &c_2'(x)+2c_3'(x)e^{2x}+c_4'(x)(1+2x)e^{2x}-2c_5'(x)e^{-2x}-c_6'(x)e^{-x}\\
     =&e^{2x}\left(6-\frac{19}{6}+1+\frac{3}{2}-\frac{16}{3}\right)+xe^{2x}\left(-2+2\right)=0,
    \end{aligned}
\end{equation*}
 we have
\begin{equation*}
    p''(x)=4c_3(x)e^{2x}+4c_4(x)e^{2x}+4c_4(x)xe^{2x}+4c_5(x)e^{-2x}+c_6(x)e^{-x},
\end{equation*}
again Lipschitz. Next, for the third order derivative, 
\begin{equation*}
\begin{aligned}
     p^{(3)}(x)=&4c_3'(x)e^{2x}+4c_4'(x)e^{2x}+4c_4'(x)xe^{2x}+4c_5'(x)e^{-2x}+c_6'(x)e^{-x}\\
     &+8c_3(x)e^{2x}+12c_4(x)e^{2x}+8c_4(x)xe^{2x}-8c_5(x)e^{-2x}-c_6(x)e^{-x}.
\end{aligned}
\end{equation*}
The same cancellation yields
\begin{equation*}
   \begin{aligned}
    &4c_3'(x)e^{2x}+4c_4'(x)e^{2x}+4c_4'(x)xe^{2x}+4c_5'(x)e^{-2x}+c_6'(x)e^{-x}\\
    =&e^{2x}\left(-\frac{19}{3}+4-3+\frac{16}{3}\right)+xe^{2x}\left(-4+4\right)=0.
   \end{aligned} 
\end{equation*}
It follows that $p^{(3)}$ is Lipschitz continuous and is given by
\begin{equation*}
    p^{(3)}(x)=8c_3(x)e^{2x}+12c_4(x)e^{2x}+8c_4(x)xe^{2x}-8c_5(x)e^{-2x}-c_6(x)e^{-x}.
\end{equation*}
Furthermore, the fourth order derivative is given by
\begin{equation*}
    \begin{aligned}
        p^{(4)}(x)=&8c_3'(x)e^{2x}+12c_4'(x)e^{2x}+8c_4'(x)xe^{2x}-8c_5'(x)e^{-2x}-c_6'(x)e^{-x}\\
        &+16c_3(x)e^{2x}+32c_4(x)e^{2x}+16c_4(x)e^{2x}+16c_5(x)e^{-2x}+c_6(x)e^{-x}.
    \end{aligned}
\end{equation*}
Similarly as before, we have
\begin{equation*}
    \begin{aligned}
     &8c_3'(x)e^{2x}+12c_4'(x)e^{2x}+8c_4'(x)xe^{2x}-8c_5'(x)e^{-2x}-c_6'(x)e^{-x} \\
     =&e^{2x}\left(-\frac{38}{3}+12+6-\frac{16}{3}\right)+xe^{2x}\left(-8+8\right)=0.
    \end{aligned}
\end{equation*}
As a result, $p^{(4)}$ is also Lipschitz and 
\begin{equation*}
    p^{(4)}(x)=16c_3(x)e^{2x}+32c_4(x)e^{2x}+16c_4(x)e^{2x}+16c_5(x)e^{-2x}+c_6(x)e^{-x}.
\end{equation*}
Finally, 
\begin{equation*}
    \begin{aligned}
       p^{(5)}(x)=&16c_3'(x)e^{2x}+32c_4'(x)e^{2x}+16c_4'(x)e^{2x}+16c_5'(x)e^{-2x}+c_6'(x)e^{-x} \\
       &+32c_3(x)e^{2x}+80c_4(x)e^{2x}+32c_4(x)xe^{2x}-32c_5(x)e^{-2x}-c_6(x)e^{-x}.
    \end{aligned}
\end{equation*}
The first line vanishes, since 
\begin{equation*}
    \begin{aligned}
    &16c_3'(x)e^{2x}+32c_4'(x)e^{2x}+16c_4'(x)e^{2x}+16c_5'(x)e^{-2x}+c_6'(x)e^{-x} \\
    =&e^{2x}\left(-\frac{76}{3}+32-12+\frac{16}{3}\right)+xe^{2x}\left(-16+16\right)=0.
    \end{aligned}
\end{equation*}
Consequently, 
\begin{equation*}
    p^{(5)}(x)=32c_3(x)e^{2x}+80c_4(x)e^{2x}+32c_4(x)xe^{2x}-32c_5(x)e^{-2x}-c_6(x)e^{-x}
\end{equation*}
and $p(x)\in C^{5,1}$. In order to prove property (4), we compute $p^{(6)}$:
\begin{equation*}
    \begin{aligned}
        p^{(6)}(x):=&32c_3'(x)e^{2x}+80c_4'(x)e^{2x}+32c_4'(x)xe^{2x}-32c_5'(x)e^{-2x}-c_6'(x)e^{-x}\\
        &+64c_3(x)e^{2x}+192c_4(x)e^{2x}+64c_4(x)xe^{2x}+64c_5(x)e^{-2x}+c_6(x)e^{-x}.
    \end{aligned}
\end{equation*}
We claim that the function $q(y):=p(\log y)$ satisfies all the desired properties. Denote $x:=\log y$, then for $y\leq R$, we have $x\leq X$, so $q(y)=p(x)=\frac{1}{2}e^{2x}=\frac{1}{2}y^2$, which proves (1). Also by defining $\Tilde{R}=e^{\Tilde{X}}$, property (2) directly holds. For property (3),   we have
\begin{equation*}
    \frac{q'(y)}{y}=e^{-2x}p'(x)\text{ , and }q''(y)=e^{-2x}\left(p''(x)-p'(x)\right).
\end{equation*}
From the expression of $p'(x)$ and $p''(x)$ we know
\begin{equation*}
  e^{-2x}p'(x)=2c_3(x)+e^{-2x}(c_2(x)+c_4(x)(1+2x)e^{2x}-2c_5(x)e^{-2x}-c_6(x)e^{-x}).
\end{equation*}
Then $\frac{q'}{r}\geq -\epsilon$ holds due to the fact that $c_3(x)\geq 0$ and the estimate of $|c_j(x)|$. Furthermore, $ q''(r)-\frac{q'(r)}{r}$ can be expressed by
\begin{equation*}
    q''(r)-\frac{q'(r)}{r}=e^{-2x}\left(p''(x)-2p'(x)\right)=-2c_2(x)e^{-2x}+2c_4(x)+8c_5(x)e^{-4x}+3c_6(x)e^{-3x}.
\end{equation*}
The smallness of the coefficients $c_j(x)$ gives the desired bound and proves (3). We next prove the crucial property (4). A direct change of variables gives 
\begin{equation*}
    \left(\partial_{yy}+\frac{3\partial_y}{y}\right)^3 q(y)=\frac{p^{(6)}(x)-6p^{(5)}(x)+4p^{(4)}(x)+24p^{(3)}(x)-32p''(x)}{y^6}.
\end{equation*}
Plugging the expression of $p^{(j)}$ into it yields
\begin{equation*}
    \begin{aligned}
         \left(\partial_{yy}+\frac{3\partial_y}{y}\right)^3 q(y)=&\frac{\left(32c_3'(x)e^{2x}+80c_4'(x)e^{2x}+32c_4'(x)xe^{2x}-32c_5'(x)e^{-2x}-c_6'(x)e^{-x}-45c_6(x)e^{-x}\right)}{y^6}\\
        =&\frac{48F(\log y)}{y^4}-\frac{45c_6(x)e^{-x}}{y^6}.
    \end{aligned}
\end{equation*}
The definition of $F$ indicates that $|F(\log y)|\leq\frac{3\alpha_1}{(1+\log y)^2}$. Together with the pointwise estimate of $c_6(x)$ we have $ \left(\partial_{yy}+\frac{3\partial_y}{y}\right)^3 q(y)\gtrsim -\frac{\alpha_1}{y^4(1+\log y)^2}$, which proves (4). For property (5), observe that
\begin{equation*}
\begin{aligned}
  \Delta^2 q(y)=&\frac{1}{y^4}\left(p^{(4)}(x)-4p^{(2)} (x)\right)
  =\frac{1}{y^4}\left(16c_4(x)e^{2x}-3c_6(x)e^{-x}\right).  
\end{aligned}  
\end{equation*}
Since $|c_4(x)|\lesssim \frac{\alpha_1}{x}$, $|c_6(x)|\lesssim \alpha_1\frac{e^{3x}}{(1+x)^2}$, the $| \Delta^2 q(y)|$ is bounded by $\frac{\alpha_1}{|y|^2(1+|\log y|)}$, and thus (5) holds. To prove (6), we compute $(\Delta q(y))'$ , which is equal to  
\begin{equation*}
   (\Delta q(y))'=\frac{1}{y^3}\left(p^{(3)}(x)-4p'(x)\right)=\frac{1}{e^{3x}}\left(-4c_2(x)+8c_4(x)e^{2x}+3c_6(x)e^{-x}\right).
\end{equation*}
The bound on $|c_j(x)|$ then implies (6). Finally, property (7) follows from 
\begin{equation*}
\begin{aligned}
   |q''(y)|=& \frac{|-c_2(x)+2c_3(x)e^{2x}+3c_4(x)e^{2x}+2c_4(x)xe^{2x}+6c_5(x)e^{-2x|}+2c_6(x)e^{-x}}{e^{2x}}\\\leq& |2c_3|+\alpha_1O(1)
   \leq 2\left(\frac{1}{2}+\frac{\alpha_1(18X+19)}{12X}\right)+\alpha_1O(1)\leq 4, 
\end{aligned}
\end{equation*}
and
\begin{equation*}
    \begin{aligned}
        \left|\frac{q'(y)}{y}\right|=&\frac{|c_2(x)+2c_3(x)e^{2x}+c_4(x)(1+2x)e^{2x}-2c_5(x)e^{-2x}-c_6(x)e^{-x}|}{e^{2x}}\\
        \leq& \left( |2c_3|+\alpha_1O(1)\right)\leq 4,
    \end{aligned}
\end{equation*}
which completes the proof of the Lemma.
\end{proof}

With this choice of $q$, we define the localized operators
\begin{equation*}
    \begin{aligned}
        &\left[A_k h\right](x):=\frac{1}{4}\frac{1}{\lambda_k}\Delta q \left(\frac{x-\boldsymbol{y}_k}{\lambda_k}\right)h(x)+\nabla q\left(\frac{x-\boldsymbol{y}_k}{\lambda_k}\right)\cdot \nabla h(x),\\
        &\left[\underline{A}_k h\right](x):=\frac{1}{2}\frac{1}{\lambda_k}\Delta q \left(\frac{x-\boldsymbol{y}_k}{\lambda_k}\right)h(x)+\nabla q\left(\frac{x-\boldsymbol{y}_k}{\lambda_k}\right)\cdot \nabla h(x).
    \end{aligned}
\end{equation*}
Note the similarity between $A_k$ and $\frac{1}{\lambda_k}\Lambda$ and between $\underline{A}_k$ and $\frac{1}{\lambda_k}\underline{\Lambda}$. For later use, we also introduce the unscaled operators
\begin{equation*}
    \begin{aligned}
        \left[Ah\right](x):=&\frac{1}{4}\Delta q(x)h(x)+\nabla q(x)\cdot \nabla h(x)\\ \left[\underline{A}h\right](x):=&\frac{1}{2}\Delta q(x)h(x)+\nabla q(x)\cdot \nabla h(x).
    \end{aligned}
\end{equation*}
\begin{lemma}\label{property of Ak}
   For any $k=1,...,N$, the operators $A_k$ and $\underline{A}_k$ satisfy the following properties.

 (\rmnum{1}) The families
\begin{equation*}
    \begin{aligned}
        \left\{A_k:\lambda_k>0,y_k\in \mathbb R^4\right\}\text{ , }&\left\{\underline{A}_k:\lambda_k>0,y_k\in \mathbb R^4\right\}\\
        \left\{\lambda_k\partial_{\lambda_k}A_k:\lambda_k>0,y_k\in \mathbb R^4\right\}\text{ , }&\left\{\lambda_k\partial_{\lambda_k}\underline{A}_k:\lambda_k>0,y_k\in \mathbb R^4\right\}\\
        \left\{\lambda_k\partial_{y_k}A_k:\lambda_k>0,y_k\in \mathbb R^4\right\}\text{ and }&\left\{\lambda_k\partial_{y_k}\underline{A}_k:\lambda_k>0,y_k\in \mathbb R^4\right\}
    \end{aligned}
\end{equation*}
are bounded in $\ML(\dot{H}^1,L^2)$ and $\ML(L^2,L^{\frac{4}{3}})$,  with norms depending on $q$. Moreover, for any $h\in \dot{H}^1\cap \dot{H}^2$,
\begin{equation}\label{Lk 2 bounded for Ak}
    \left\|\underline{A}_kh\right\|_{\dot{H}^1}+\left\|\lambda_k\partial_{\lambda_k}\underline{A}_kh\right\|_{\dot{H}^1}+\left\|\lambda_k\partial_{y_k}\underline{A}_kh\right\|_{\dot{H}^1}\lesssim \Tilde{R}\left\|\ML_k h\right\|_{L^2}+\frac{1}{\lambda_k}\left\|h\right\|_{\dot{H}^1}.
\end{equation}

(\rmnum{2}) For any $g,h\in \dot{H}^1\cap \dot{H}^2$,
\begin{equation}\label{Ak key property 1}
  \lelan A_kh,f(h+g)-f(h)-f'(h)g \rilan=-\lelan A_k g,f(h+g)-f(h) \rilan. 
\end{equation}

(\rmnum{3}) For any $h\in \dot{H}^1\cap \dot{H}^2$, $g\in\dot{H}^1\cap L^2$, it holds
\begin{equation}\label{Ah delta g}
    \left|\lelan \underline{A}_k h,\Delta g \rilan\right|\lesssim \Tilde{R}\left\|\ML_kh\right\|_{L^2}\left\|g\right\|_{\dot{H}^1}.
\end{equation}

(\rmnum{4}) For any $\eta>0$, choosing $\epsilon>0$ small enough in Lemma \ref{function q}, it holds for all $g\in \dot{H}^1\cap \dot{H}^2$,
\begin{equation}\label{Ak key property 3}
    \lelan \underline{A}_kg,\Delta g \rilan\leq \frac{\eta}{\lambda_k}\left\|g\right\|_{\dot{H}^1}^2-\frac{1}{\lambda_k}\int_{|x-\boldsymbol{y}_k|\leq R}|\nabla g(x)|^2dx,
\end{equation}
\begin{equation}\label{Ak key property 2}
      \lelan [\Delta,\underline{A}_k]g,\Delta g \rilan\geq  \frac{1}{\lambda_k}\int_{|x-\boldsymbol{y}_k|\leq \lambda_k R}(\Delta g(x))^2dx-\frac{\eta}{\lambda_k}\left\|\ML_k g\right\|_{L^2}^2.
\end{equation}

(\rmnum{5}) For any $\eta>0$, choosing $\epsilon>0$ and $R$ is from Lemma \ref{function q}, it holds
\begin{equation}\label{A and Lambda W nabla W}
    \left\|\underline{A}\Lambda W\right\|_{L^2(|x|\geq R)}\leq \left(\eta+\frac{1}{R}\right)\text{ , }\left\|\underline{A}\nabla W\right\|_{L^2(|x|\geq R)}\leq \frac{1}{R}.
\end{equation}
\end{lemma}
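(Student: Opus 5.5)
By scaling and translation invariance it suffices to treat $\lambda_k=1$, $\boldsymbol y_k=0$, i.e. the unscaled operators $A$, $\underline A$ and $\ML$. Each statement then reduces to a weighted inequality for $q$, using the properties listed in Lemma \ref{function q}.

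For (\rmnum{1}), properties (7), (5), (6) give $|\nabla q|\lesssim r$ and $|\Delta q|\lesssim 1$. Since $q$ is logarithmic for $r\ge\tilde R$, we actually have $|\nabla q|\lesssim \min(r,\tilde R^2/r)$, hence $|\nabla q|\lesssim \tilde R$ globally. The bound $\dot H^1\to L^2$ then follows from Hardy's inequality for the term $\Delta q\, h$ (because $|\Delta q|\lesssim \min(1,r^{-2})$ at infinity and is bounded near the origin) together with $|\nabla q|\lesssim \tilde R$. The bound $L^2\to L^{4/3}$ is proved by duality: the adjoint of $A$ equals $-A$ up to a zero-order term, and $A$ maps $\dot H^1\to L^2$, so $A$ maps $L^2\to\dot H^{-1}\supset L^{4/3}$. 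More precisely, I would pair with $L^4\subset \dot H^1$ and use $|\nabla q|\lesssim\tilde R$. The operators $\lambda\partial_\lambda A$ and $\partial_y A$ have the same structure with $q$ replaced by $x\cdot\nabla q$ or $\nabla q$, and these satisfy the same bounds. For \eqref{Lk 2 bounded for Ak}, differentiate: $\nabla(\underline A h)$ involves $\nabla^2q\,\nabla h$, $\nabla q\,\nabla^2 h$ and $\nabla\Delta q\, h$. These are bounded by $\|\nabla h\|_{L^2}$, by $\tilde R\|h\|_{\dot H^2}$, and (via property (6) and Hardy) by $\|h\|_{\dot H^1}$ respectively. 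Lemma \ref{coercivity proposition for second order operator} then converts $\|h\|_{\dot H^2}$ into $\|\ML h\|_{L^2}+\|h\|_{\dot H^1}$.

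Part (\rmnum{2}) is the standard identity. Using $\mathrm{div}(\nabla q)=\Delta q$ and $\nabla F(u)=f(u)\nabla u$, one integrates by parts in $\langle \nabla q\cdot\nabla(\cdot),\,\cdot\rangle$. The $\frac14$ weight is exactly what makes the energy-critical scaling $F(u)=\frac14 u f(u)$ close the identity, as in \cite{JM}. For (\rmnum{3}), integrate by parts once: $\langle \underline A h,\Delta g\rangle=-\langle\nabla \underline A h,\nabla g\rangle$, and then apply the $\dot H^1$ bound from (\rmnum{1}).

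For (\rmnum{4}), I would first compute $\langle \underline A g,\Delta g\rangle=-\int \nabla^2 q(\nabla g,\nabla g)+\frac14\int \Delta^2 q\, g^2$. By (1), $\nabla^2 q=\mathrm{Id}$ on $|x|\le R$. Elsewhere property (3) gives $\nabla^2 q\ge-\epsilon$, and property (5) combined with the log-Hardy inequality \eqref{Hardy 2}-type estimate controls the $g^2$ term by $\epsilon\|g\|_{\dot H^1}^2$. This gives \eqref{Ak key property 3}. The commutator bound \eqref{Ak key property 2} is the main obstacle. I would expand $[\Delta,\underline A]g=2\nabla^2q:\nabla^2 g+\nabla\Delta q\cdot\nabla g+\frac12\Delta^2q\,g$ and pair it with $\Delta g$. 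Integrating by parts, the leading term becomes $2\int \nabla^2q(\nabla\partial_i g,\nabla\partial_ig)$ plus lower-order terms carrying $\nabla\Delta q$, $\Delta^2 q$ and $\Delta^3 q$. The leading term is bounded below by $\int_{|x|\le R}(\Delta g)^2$ minus $\epsilon\|g\|_{\dot H^2}^2$, using (1), (3) and the Riesz transform identity on the ball. The lower-order terms are controlled by properties (4)--(6) in terms of $\epsilon\int\big(|\nabla g|^2|x|^{-2}+g^2|x|^{-4}(1+|\log|x||)^{-2}\big)$. Here the borderline log-weighted Hardy inequality in dimension four is exactly what forces property (4). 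Finally, Proposition \ref{Coercivity estimate with LG LG} bounds these weighted norms by $\|\ML g\|_{L^2}^2$. This step is delicate, since that proposition requires the orthogonality conditions; I would first try to absorb the orthogonal components separately, or, failing that, apply it in its stated form with the orthogonality terms added.

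For (\rmnum{5}), for $R\le|x|\le\tilde R$ we have $|\nabla q|\le 4|x|$, and $\Lambda W$, $\nabla W$ decay like $|x|^{-2}$ and $|x|^{-3}$ respectively. For $|x|\ge\tilde R$ we use the explicit form of $q$ from property (2). Note that $\underline A\Lambda W$ is $O(|x|^{-2})$ times a weight whose $L^2$ mass over the annulus is small only thanks to the $\epsilon$-smallness in property (6); this gives the bound $\eta+1/R$. For $\nabla W$ the extra decay gives the bound $1/R$ directly.
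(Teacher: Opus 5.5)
Parts (ii), the first inequality of (iv), and the $\dot H^1$ bounds in (i) are fine. The genuine gap is in the commutator estimate \eqref{Ak key property 2}. After your integration by parts the leading term is $2\int \operatorname{tr}\bigl(\nabla^2 q\,(\nabla^2 g)^2\bigr)$, which on $\{|x|\le R\}$ equals $2\int_{|x|\le R}|\nabla^2 g|^2$. There is no Riesz identity on a ball: the two sides differ by boundary terms on $\{|x|=R\}$. Pointwise you only have $|\nabla^2 g|^2\ge \frac14(\Delta g)^2$, which is sharp for $g=|x|^2$ near $\{|x|\le R\}$, so this route gives $\frac12\int_{|x|\le R}(\Delta g)^2$, not the stated constant.

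The missing ingredient is that $\nabla^2 q=\frac{q'}{r}\,\mathrm{Id}+w\,\frac{x\otimes x}{r^2}$ with $w:=q''-\frac{q'}{r}$ small in absolute value, not merely bounded below as in property (3). Indeed $(r^4 w)'=r^4(\Delta q)'$ and $w(R)=0$, so property (6) gives $|w(r)|\lesssim \epsilon/(1+|\log r|)$. The paper then avoids integrating by parts in the leading term: it uses $\nabla^2 q:\nabla^2 g=\frac{q'}{r}\Delta g+w\,\partial_{rr}g$. Since $[\Delta,\underline A]g=2\nabla^2 q:\nabla^2 g+2\nabla\Delta q\cdot\nabla g+\frac12\Delta^2 q\,g$, this yields $2\int_{|x|\le R}(\Delta g)^2-C\epsilon\|g\|_{\dot H^2}^2$ plus the lower-order terms you describe. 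This pointwise form is also the one that survives when $\Delta h$ is replaced by $\mathcal L_\phi h$ in Lemma \ref{energy est 2nd}. Your route can be repaired by using the smooth weight $q'/r$ instead of the ball, via $\int\frac{q'}{r}|\nabla^2 g|^2=\int\frac{q'}{r}(\Delta g)^2+O\bigl(\epsilon\|g\|_{\dot H^2}\||x|^{-1}\nabla g\|_{L^2}\bigr)$, which holds because $|\nabla(q'/r)|=|w|/r$.

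Second, as you suspect, Proposition \ref{Coercivity estimate with LG LG} cannot be applied to arbitrary $g$, and adding the orthogonality terms changes the statement; the paper invokes it here without comment. To get the estimate for all $g$, let the local term control the kernel. Run the contradiction argument of Proposition \ref{Coercivity estimate with LG LG} with the orthogonality conditions replaced by $\int_{|x|\le R}(\Delta g)^2$; this works because an element $g$ of $\operatorname{span}\{\Lambda W,\nabla W\}$ satisfies $\Delta g=-3W^2 g$, which vanishes on $\{|x|\le R\}$ only if $g=0$. It gives $\|g\|_{\dot H^2}^2+\int\frac{|\nabla g|^2}{|x|^2}+\int\frac{g^2}{|x|^4(1+|\log|x||)^2}\le C_R\bigl(\|\mathcal L g\|_{L^2}^2+\int_{|x|\le R}(\Delta g)^2\bigr)$ with $C_R$ independent of $\epsilon$. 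Taking $\epsilon\ll C_R^{-1}$ then absorbs all error terms into one half of $2\int_{|x|\le R}(\Delta g)^2$, which is why the full leading constant $2$ matters.

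In (v), the bounds you list ($|\nabla q|\le 4|x|$, $|\Lambda W|\lesssim |x|^{-2}$) only give $\|\underline A\Lambda W\|_{L^2(R\le|x|\le\tilde R)}\lesssim\sqrt{\log(\tilde R/R)}$. This is not small, since the transition region has large logarithmic length. The factorization you assert comes from the identity $\underline A f=\frac{q'}{r}\,\underline\Lambda f+\frac12 w f$ for radial $f$. Here $\underline\Lambda$ annihilates the $-8r^{-2}$ tail, so $\underline\Lambda\Lambda W=O(r^{-4})$ contributes $O(R^{-2})$ by property (7). Meanwhile $\|w\Lambda W\|_{L^2(|x|\ge R)}^2\lesssim\int_R^\infty w^2\,\frac{dr}{r}\lesssim \epsilon^2/\log R$ by the bound on $w$ above. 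This is what the paper's expansion $2(c_3+c_4\log r)\underline\Lambda\Lambda W+O(\alpha_1 r^{-2}/\log r)$ encodes.

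Two smaller points. In (iii), your one-line argument gives $\bigl(\tilde R\|\mathcal L_k h\|_{L^2}+\lambda_k^{-1}\|h\|_{\dot H^1}\bigr)\|g\|_{\dot H^1}$, and the second term cannot be dropped. For $h=\Lambda_k W_k$ one has $\mathcal L_k h=0$, while near $\boldsymbol y_k$ one has $\underline A_k h=\lambda_k^{-1}\underline\Lambda_k\Lambda_k W_k$, which is not harmonic. So the stated bound needs that term or an orthogonality assumption. In (i), mapping into $\dot H^{-1}$ does not give mapping into its subspace $L^{4/3}$, and $L^4\not\subset \dot H^1$. Your duality argument proves boundedness in $\mathcal L(L^2,\dot H^{-1})$, which is the correct formulation (as in \cite{JM}). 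By contrast, the first-order operator $A_k$ is not bounded from $L^2$ to $L^{4/3}$; test it on $\phi(x)\cos(Nx_1)$.
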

\begin{Rmk}
To keep the exposition focused, we defer the proof of Lemma \ref{property of Ak} to Appendix \ref{Proof of Lemma property of Ak}. We emphasize that (\ref{Ak key property 2}) is the key estimate. The rather involved construction of the weight in Lemma \ref{function q} is designed precisely to obtain this property.
\end{Rmk}

\subsection{Second order energy estimates}
In this subsection, we establish the second order energy estimate for the decomposition associated with $( \phi, h,\dot g)$. By the definition of $h$ and estimate (\ref{pg h dot 2}) we obtain
\begin{equation*}
    \left\|h\right\|_{\dot{H}^2}\lesssim \left\|g\right\|_{\dot{H}^2}+\left\|\PG\right\|_{\dot{H}^2}\lesssim t^{\frac{1}{2}}\lambda.
\end{equation*}
We define the second order energy by
\begin{equation*}
    \MI_2(t):=\frac{1}{2}\int_{\mathbb R^4}\Big(|\nabla \dot{g}|^2-3\phi^2\dot{g}^2\Big)dx+\frac{1}{2}\int_{\mathbb R^4}\Big(\Delta h+3\phi^2h\Big)^2dx.
\end{equation*}
We also set
\begin{equation*}
    \mathcal{J}_k(t)=\mathcal{J}_{k,1}(t)+\mathcal{J}_{k,2}(t),
\end{equation*}
where 
\begin{equation*}
    \mathcal{J}_{k,1}(t):=-b_k\lelan \underline{A}_kh, \ML_{\phi}\dot{g} \rilan,\qquad\mathcal{J}_{k,2}(t):=2b_k\lelan \underline{A}_k\dot{g}, \mathcal{L}_{\phi}h \rilan.
\end{equation*}
Denote $\mathcal{H}_2(t):=\MI_2(t)+\sum_{k=1}^NJ_k(t)$. We prove the following Lemma.
\begin{lemma}\label{energy est 2nd}
    For any $\delta>0$, choosing $\epsilon>0$ small enough, it holds
    \begin{equation}\label{H 2}
  \mathcal{H}'_2(t)\gtrsim -\delta t^{\frac{2}{3}}\lambda(t)^2.      
    \end{equation}
\end{lemma}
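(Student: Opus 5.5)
We differentiate $\mathcal H_2$ in time and organize the terms by the linearized structure around $\phi$. Write the equations for $h$ and $\dot g$ as
\[
\partial_t h=\dot g+\mathcal{R}_1,\qquad \partial_t\dot g=-\ML_\phi h+N(h)+\Psi(\boldsymbol\lambda,\boldsymbol b,\boldsymbol y,\boldsymbol v)+\epsilon(h,\dot g).
\]
Here $\mathcal R_1=-\partial_t\PG+\sum_j\lambda_j^{-1}(\lambda_j'+b_j)\Lambda_jW_j+\sum_j\lambda_j^{-1}(\boldsymbol y_j'+\boldsymbol v_j)\cdot\nabla_jW_j-\sum_j\lambda_j^{-1}b_j\Lambda_jW_j(1-\chi_t)$, and $N(h)=3\phi h^2+h^3$. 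Differentiating $\MI_2$ gives the main Hamiltonian cross terms $\langle\ML_\phi\dot g,-\ML_\phi h\rangle+\langle\ML_\phi h,\ML_\phi\dot g\rangle$, which cancel exactly. What remains falls into three groups.

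\emph{First, the time derivative of the potential:} $-3\int\partial_t(\phi^2)(\dot g^2+\ldots)$, together with the $\partial_t(\phi^2)h$ terms inside $(\ML_\phi h)^2$.

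\emph{Second, the forcing terms:} $\langle\ML_\phi\dot g,\Psi+\epsilon\rangle$ and $\langle\ML_\phi h,\ML_\phi\mathcal R_1\rangle$.

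\emph{Third, the nonlinear terms.}

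For the forcing, we use Lemma~\ref{lemma of partial t dot g} together with $\|\dot g\|_{\dot H^1}\le t^{1/2}\lambda$, which gives a contribution of order $t^{1/2}\lambda^2$. This is not yet $\delta t^{2/3}\lambda^2$ in a useful sense, so we also retain the precise form of $\epsilon$. For the $\Psi$ and $\mathcal R_1$ pieces, the key point is that $\ML_k\Lambda_kW_k=\ML_k\nabla_kW_k=0$. Each kernel term therefore produces only $(\ML_\phi-\ML_k)$, the cutoff commutator $[\ML,\chi_t]$, or the $j\ne k$ interactions, all of size $\lambda\cdot t^{1/2}\lambda$ times a small factor. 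These are estimated exactly as in Steps 1--6 of Lemma~\ref{close the bootstrap for parameters system}.

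The genuinely bad terms are the following:
\begin{enumerate}
\item the contribution of $\partial_t\PG$ through $b_k b_k'\,\Delta S_k$, which by \eqref{refine dt pg h dot 2} is only controlled via the rough bound \eqref{slight refine of b'};
\item the term $\frac{\lambda_k'}{\lambda_k}\int W_k^2\dot g^2$-type contributions from $\partial_t\phi^2$, of size $t^{-1/3}\lambda^{-1}\|\dot g\|^2_{\dot H^1}\cdot\lambda$.
\end{enumerate}
By \eqref{slight refine of b'}, the first is bounded by $\frac{b}{\log M}\|\ML_kg\|_{L^2(|x-\boldsymbol y_k|\le2\lambda_kM)}\,\|\ML_\phi h\|$. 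We rewrite this as $-o_M(1)\,\frac{b}{\lambda}\mathcal N(t)$, where $\mathcal N(t):=\lambda\int_{|x-\boldsymbol y_k|\le 2\lambda_kM}(\ML_k h)^2+|\nabla\dot g|^2$ is the local quantity.

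Next we differentiate $\mathcal J_k$. Using the equations above and the antisymmetry \eqref{Ak key property 1}, the leading part of $\mathcal J_{k,1}'+\mathcal J_{k,2}'$ is
\[
b_k\big(\langle[\Delta,\underline A_k]h,\Delta h\rangle\text{-type terms}\big)-b_k\langle\underline A_k\dot g,\Delta\dot g\rangle\ \text{(suitably symmetrized)}.
\]
By \eqref{Ak key property 2} and \eqref{Ak key property 3}, this gives
\[
\mathcal J_k'\ge \frac{b_k}{\lambda_k}\Big(\int_{|x-\boldsymbol y_k|\le\lambda_kR}(\Delta h)^2+\int_{|x-\boldsymbol y_k|\le R}|\nabla\dot g|^2\Big)-\eta\frac{b_k}{\lambda_k}\big(\|\ML_kh\|^2+\|\dot g\|_{\dot H^1}^2\big)-\text{errors}.
\]
Replacing $\Delta h$ by $\ML_kh$ on the ball costs a $W_k^2h$ term, which is handled by Lemma~\ref{coercivity proposition for second order operator} and Corollary~\ref{coercivity est with LKG LKG} together with the orthogonality conditions \eqref{orthogonal condition 1}. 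This produces $+c\frac{b}{\lambda}\mathcal N(t)$. The potential-derivative term (group 1) is, to leading order, $\frac{b}{\lambda}\int 3(\underline\Lambda W^2)_k\dot g^2$-type. It is exactly the term that the virial correction is designed to cancel, up to the exterior part, which is controlled by \eqref{A and Lambda W nabla W}.

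The remaining errors in $\mathcal J_k'$ come from $\partial_t b_k$, $\partial_t\underline A_k$, and $\mathcal R_1$, $\epsilon$ inserted into $\mathcal J$. These are bounded using \eqref{Lk 2 bounded for Ak}, \eqref{Ah delta g}, the rough bounds \eqref{rough est on b'} and \eqref{rough est on v'}, and $b\sim t^{-1/3}\lambda$. This yields contributions $\lesssim \tilde R\,t^{-1/3}\lambda\cdot t^{1/2}\lambda\cdot t^{1/2}\lambda/\lambda\cdot(\ldots)$, each to be checked to be $\le\delta t^{2/3}\lambda^2$ once $\eta,\epsilon$ are small.

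The order of parameter choices is as follows. First fix $\eta$, hence $\epsilon$ and $R$, so that the virial gain $c$ dominates the $\eta$ losses. Then choose $M$ large so that $o_M(1)<c/2$, which absorbs the bad $b'$ term. Finally take $T_0$ large so that all remaining terms carry a positive power of $t^{-1}$ relative to the bootstrap scale, leaving $\mathcal H_2'\ge-\delta t^{2/3}\lambda^2$.

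The main obstacle I expect is the matching step: showing that the commutator term from $\mathcal J_k'$ produces positivity on exactly the same local region and quantity $\mathcal N$ that controls the $b_k'$-loss. The four-dimensional Hardy inequality has a logarithmic loss, so we must rely on property (4) of Lemma~\ref{function q} and the weighted coercivity \eqref{second order coercivity for L scaling verison} with the $(1+|\log|x/\lambda||)^{-2}$ weight, rather than on naive Hardy bounds.
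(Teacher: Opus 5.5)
\textbf{There is a genuine gap: several terms of critical size are treated as small instead of cancelled.} Your skeleton is the paper's: $\MI_2$ plus $\mathcal J_k$, commutator positivity from \eqref{Ak key property 2}, and absorption of the $b_k'$-loss by taking $M$ large. The bookkeeping, however, fails at the critical scale $t^{2/3}\lambda^2$. You treat the $\Psi$-forcing through $\ML_k\Lambda_kW_k=\ML_k\nabla_kW_k=0$, but $\Psi$ also contains
\[
\Psi_0=\sum_j\frac{(\lambda_j'+b_j)b_j}{\lambda_j^2}(\underline{\Lambda}_j\Lambda_jW_j)\chi_t+\sum_j\frac{b_j}{\lambda_j^2}\bigl((\boldsymbol y_j'+\boldsymbol v_j)\cdot\nabla_j\bigr)(\Lambda_jW_j)\chi_t ,
\]
whose profiles $\underline{\Lambda}\Lambda W$ and $\nabla\Lambda W$ are not in $\ker\ML$. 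Using \eqref{lambda'+b}, \eqref{y'+v}, $b\sim t^{-1/3}\lambda$ and $\|\dot g\|_{\dot H^1}\le t^{1/2}\lambda$, you only get
\[
|\langle\ML_\phi\dot g,\Psi_0\rangle|\lesssim t^{\frac12}\lambda^2\cdot t^{-\frac13}\lambda\cdot\lambda^{-2}\cdot t^{\frac12}\lambda=t^{\frac23}\lambda^2,
\]
with an $O(1)$ constant. The same holds for the piece
\[
-b_k\bigl\langle\underline{A}_k\bigl(\lambda_k^{-1}(\lambda_k'+b_k)\Lambda_kW_k+\lambda_k^{-1}(\boldsymbol y_k'+\boldsymbol v_k)\cdot\nabla_kW_k\bigr),\ML_\phi\dot g\bigr\rangle
\]
of $\mathcal J_{k,1}'$ generated by $\mathcal R_1$, which you list among the errors ``to be checked $\le\delta t^{2/3}\lambda^2$''. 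That check fails, since neither term improves as $\eta,\epsilon\to0$ or $t\to\infty$. The missing idea is that these two terms cancel each other. On $|x-\boldsymbol y_k|\le\lambda_kR$ one has exactly $\underline{A}_k\Lambda_kW_k=\lambda_k^{-1}\underline{\Lambda}_k\Lambda_kW_k$ and $\underline{A}_k\nabla_kW_k=\lambda_k^{-1}\nabla_k\Lambda_kW_k$, because $\underline{\Lambda}\partial_lW=\partial_l\Lambda W$. Outside that ball the discrepancy is $\lesssim(\eta+R^{-1})\lambda_k^{-1}$ in $\dot H^1$, by the decay of $W$ and the construction of $q$. The sum is therefore $O\bigl((\eta+R^{-1}+t^{-1})t^{2/3}\lambda^2\bigr)$, as in \eqref{Jk2 4 2}--\eqref{Jk 2 4 3}. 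This matching of $\underline{A}_k$ with $\lambda_k^{-1}\underline{\Lambda}$ is a role of the virial correction separate from commutator positivity, and your plan never uses it.

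\textbf{Further steps that rest on the wrong mechanism.} Write $B_k:=\{|x-\boldsymbol y_k|\le\lambda_kR\}$.

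\emph{The $h$-terms.} Passing from $\frac{b_k}{\lambda_k}\int_{B_k}(\Delta h)^2$ to $(\ML_\phi h)^2$ produces the cross term $\frac{6b_k}{\lambda_k}\int_{B_k}\phi^2h\,\ML_\phi h$. The potential part of $b_k\langle[\ML_\phi,\underline{A}_k]h,\ML_\phi h\rangle$ produces $\frac{6b_k}{\lambda_k}\int_{B_k}\bigl((x-\boldsymbol y_k)\cdot\nabla W_k\bigr)\phi h\,\ML_\phi h$. Both are of order $t^{2/3}\lambda^2$, and the global coercivity you invoke gives no small constant for them. Together they equal $\frac{6b_k}{\lambda_k}\int_{B_k}\Lambda_kW_k\,\phi h\,\ML_\phi h$. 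This cancels the leading part $\frac{6\lambda_k'}{\lambda_k}\int\Lambda_kW_k\,\phi h\,\ML_\phi h$ of $\frac12\langle[\partial_t,\ML_\phi^2]h,h\rangle$ up to the factor $\lambda_k'+b_k$. That is how the paper obtains exactly $\frac{2b_k}{\lambda_k}\int_{B_k}(\ML_\phi h)^2$.

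\emph{The $\dot g$-terms.} The $\frac1{4\lambda_k}\Delta q$ part of $\underline{A}_k$ contributes $-\frac{b_k}{\lambda_k}\int_{B_k}3W_k^2\dot g^2$. So what actually appears is the form $\int_{B_k}(|\nabla\dot g|^2-3W_k^2\dot g^2)$, which is not positive. It is bounded below only via Lemma~\ref{Lemma 9 of 5D multi}, with the negative direction made negligible by the bounds \eqref{bootstrap a for unstable}--\eqref{bootstrap a for stable} on $a^\pm$, which your plan never uses. Consequently there is no $\nabla\dot g$ gain, and $\mathcal N$ should contain only $\ML_\phi h$.

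\emph{Order of parameters.} The $b_k'$-loss lives on $|x-\boldsymbol y_k|\le2\lambda_kM$. Absorbing it into the positivity on $B_k$ needs $R\ge2M$, so $M$ must be fixed before $R$ and $\epsilon$, which is the reverse of your order.

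\emph{Minor.} The $\epsilon(h,\dot g)$ contribution $t^{1/2}\lambda^2$ is already $o(t^{2/3}\lambda^2)$, so it needs no further refinement.
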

\begin{proof} In this proof, the symbol $\simeq$ denotes equality up to terms of order 
$O(t^{\frac{1}{2}}\lambda^2)$, which will be called negligible. We call such error terms "negligible". All constants $C>0$ below are independent of the choices of $\epsilon$ and $R$ in Lemma \ref{function q}.

\textbf{Step1 Analysis of $\MI_2'.$} We first compute $\MI_2'$.
\begin{equation*}
    \begin{aligned}
        \frac{d}{dt}\MI_2=&\lelan \ML_{\phi}\dot{g}, \partial_t\dot{g}\rilan+\frac{1}{2}\lelan [\partial_t,\ML_{\phi}]\dot{g}, \dot{g} \rilan
        +\lelan \ML_{\phi}^2h, \partial_t h \rilan +\lelan [\partial_t,\ML_{\phi}]h, \ML_{\phi}h \rilan.
    \end{aligned}
\end{equation*}
Since $[\partial_t,\ML_{\phi}]=-6\phi \partial_t\phi$ is a self-adjoint operator, term $\lelan [\partial_t,\ML_{\phi}]h, \ML_{\phi}h \rilan$ can be rewritten as
\begin{equation*}
\begin{aligned}
  &\lelan [\partial_t,\ML_{\phi}]h, \ML_{\phi}h \rilan \\
  =&\frac{1}{2}\lelan [\partial_t, \ML_{\phi}]\ML_{\phi}h,h \rilan+\frac{1}{2}\lelan \ML_{\phi}[\partial_t,\ML_{\phi}]h,h \rilan\\
  =&\frac{1}{2}\lelan \partial_t(\ML_{\phi}(\ML_{\phi}h))-\ML_{\phi}(\partial_t(\ML_{\phi}h))  ,h\rilan+\frac{1}{2}\lelan  \ML_{\phi}(\partial_t(\ML_{\phi}h))-\ML_{\phi}(\ML_{\phi}(\partial_t h)),h\rilan\\
  =&\frac{1}{2}\lelan [\partial_t,\ML_{\phi}^2]h,h \rilan.
\end{aligned}  
\end{equation*}
This identity allows us to rewrite the derivative in a symmetric form. Therefore, 
\begin{equation}\label{d dt I 2 equal to}
    \begin{aligned}
        \frac{d}{dt}\MI_2=&\lelan \ML_{\phi}\dot{g}, \partial_t\dot{g}\rilan
        +\lelan \ML_{\phi}^2h, \partial_t h \rilan +\frac{1}{2}\lelan [\partial_t,\ML_{\phi}]\dot{g}, \dot{g} \rilan+\frac{1}{2}\lelan [\partial_t,\ML_{\phi}^2]h, h \rilan.
    \end{aligned}
\end{equation}
The last two terms in (\ref{d dt I 2 equal to}), which contain the commutators, will be canceled after the corrections $\mathcal J_k$ are taken into account. This cancellation will be discussed in the later steps. For the remaining part, using (\ref{partial t g dot sim}),  we obtain
\begin{equation}\label{L phi dot g and L phi 2 h}
  \begin{aligned}
     &\lelan \ML_{\phi}\dot{g}, \partial_t\dot{g}\rilan+ \lelan \ML_{\phi}^2h, \partial_t h \rilan\\
     =&\lelan \ML_{\phi}\dot{g}, -\ML_{\phi}h \rilan+\lelan \ML_{\phi}\dot{g},   \Psi(\boldsymbol{\lambda},\boldsymbol{b},{\boldsymbol{y}},{\boldsymbol{v}})\rilan+\lelan \ML_{\phi}\dot{g}, {\epsilon}(h,\dot{g}) \rilan+\lelan \ML_{\phi}^2h, \partial_t h \rilan\\
     =&\lelan \ML_{\phi}^2h, \partial_t (h-g) \rilan+\lelan \ML_{\phi}^2h, \partial_t g-\dot{g} \rilan+\lelan \ML_{\phi}\dot{g},   \Psi(\boldsymbol{\lambda},\boldsymbol{b},{\boldsymbol{y}},{\boldsymbol{v}})\rilan+\lelan \ML_{\phi}\dot{g}, {\epsilon}(h,\dot{g}) \rilan.
  \end{aligned}  
\end{equation}
We estimate the four terms on the right hand side of (\ref{L phi dot g and L phi 2 h}) separately. Since $h-g=-\PG$, and using the analysis from the estimates (\ref{partial t pg h dot 2}) and (\ref{refine dt pg h dot 2}), the first term is bounded by
\begin{equation*}
  \begin{aligned}
    \left| \lelan \ML_{\phi}^2h, \partial_t (h-g) \rilan\right|=&\left|\lelan \ML_{\phi}h,\ML_{\phi}(\partial_t \PG) \rilan\right|\\\lesssim& \sum_{k=1}^N{|b_k'b_k|}\left|\int_{|x-\boldsymbol{y}_k|\leq d}(\ML_{\phi}h)(\ML_k S_k)dx\right|+t^{\frac{1}{2}}\lambda^2.
  \end{aligned}  
\end{equation*}
Since on the region $|x-\boldsymbol{y}_k|\leq d$, it holds
\begin{equation*}
    \ML_k S_k= -\frac{1}{\lambda_k^3}(\underline{\Lambda}\Lambda W)\left(\frac{x-\boldsymbol{y}_k}{\lambda_k}\right)+\frac{\lelan (\underline{\Lambda}\Lambda W)\chi_t(\lambda r),\Lambda W \rilan}{\lelan \Lambda W \chi_t(\lambda \cdot),\Lambda W \rilan\lambda_k^3}(\Lambda W)\left(\frac{x-\boldsymbol{y}_k}{\lambda_k}\right).
\end{equation*}
The $\Lambda W$ term, by Cauchy and (\ref{rough est on b'}), is controlled by
\begin{equation*}
    \frac{|b_k'b_k|}{\lambda_k}\left\|\ML_{\phi}h\right\|_{L^2}\left(\int_{\lambda_k}^d\left(\frac{1}{r^2t^{\frac{2}{3}}}\right)^2r^3dr\right)^{\frac{1}{2}}\lesssim \frac{t^{\frac{1}{2}}\lambda}{t^{\frac{1}{3}}}t^{\frac{1}{2}}\lambda(t^{-\frac{1}{3}})=t^{\frac{1}{3}}\lambda^2,
\end{equation*}
which can be neglected. The leading contribution comes from $\frac{1}{\lambda_k^3}\underline{\Lambda}\Lambda W$ term. Using (\ref{slight refine of b'}), we obtain 
\begin{equation*}
\begin{aligned}
   &\sum_{k=1}^N\frac{|b_k'b_k|}{\lambda_k}\left|\int_{|x-\boldsymbol{y}_k|\leq d}(\ML_{\phi} h)\frac{1}{\lambda_k^2}\left(\underline{\Lambda}\Lambda W\right)\left(\frac{x-\boldsymbol{y}_k}{\lambda_k}\right)dx\right|\\
    \lesssim& \frac{\left\|\ML_k g\right\|_{L^2(|x-\boldsymbol{y}_k|\leq 2\lambda_kM)}}{t^{\frac{1}{3}}\log M} \left|\int_{|x-\boldsymbol{y}_k|\leq \lambda_k R}(\ML_{\phi} h)\frac{1}{\lambda_k^2}\left(\underline{\Lambda}\Lambda W\right)\left(\frac{x-\boldsymbol{y}_k}{\lambda_k}\right)dx\right|\\
    &+\frac{\left\|\ML_k g\right\|_{L^2(|x-\boldsymbol{y}_k|\leq 2\lambda_kM)}}{t^{\frac{1}{3}}\log M}\left|\int_{\lambda_k R\leq|x-\boldsymbol{y}_k|\leq d}(\ML_{\phi} h)\frac{1}{\lambda_k^2}\left(\underline{\Lambda}\Lambda W\right)\left(\frac{x-\boldsymbol{y}_k}{\lambda_k}\right)dx\right|,  
\end{aligned}
\end{equation*}
where $R$ is given by Lemma \ref{function q}. Without loss of generality, we assume $R\geq 2M$ so that $\left\|\ML_k g\right\|_{L^2(|x-\boldsymbol{y}_k|\leq 2\lambda_kM)}\leq \left\|\ML_k g\right\|_{L^2(|x-\boldsymbol{y}_k|\leq \lambda_kR)}$.  Then by Cauchy inequality and the decay of $\underline{\Lambda}\Lambda W$, we derive
\begin{equation*}
\begin{aligned}
   &\frac{|b_k'b_k|}{\lambda_k}\left|\int_{|x-\boldsymbol{y}_k|\leq d}(\ML_{\phi} h)\frac{1}{\lambda_k^2}\left(\underline{\Lambda}\Lambda W\right)\left(\frac{x-\boldsymbol{y}_k}{\lambda_k}\right)\right|
    \lesssim \frac{\left\|\ML_\phi h\right\|^2_{L^2(|x-\boldsymbol{y}_k|\leq \lambda_kR)}}{t^{\frac{1}{3}}\log M} +\frac{\left\|\ML_\phi h\right\|^2_{L^2(|x-\boldsymbol{y}_k|\leq d)}}{t^{\frac{1}{3}}R^2\log M}. 
\end{aligned}
\end{equation*}
Consequently,
\begin{equation}\label{L phi 2 h partial t(h-g)}
 \lelan \ML_{\phi}^2h, \partial_t (h-g) \rilan\gtrsim -  \frac{\left\|\ML_\phi h\right\|^2_{L^2(|x-\boldsymbol{y}_k|\leq \lambda_kR)}}{t^{\frac{1}{3}}\log M} -\frac{\left\|\ML_\phi h\right\|^2_{L^2(|x-\boldsymbol{y}_k|\leq d)}}{t^{\frac{1}{3}}R^2\log M}-O(t^{\frac{1}{2}}\lambda^2). 
\end{equation}

 Next, using (\ref{g-dot g}) and $\left\{\Lambda W,\partial_jW_{j=1,2,3,4}\right\}\in \ker \ML$, we estimate the term $\lelan \ML_{\phi}^2h, \partial_t g-\dot{g} \rilan$ as 
\begin{equation}\label{ML phi on trunction}
\begin{aligned}
 |\lelan \ML_{\phi}^2h, \partial_t g-\dot{g} \rilan|\lesssim&   \sum_k\left\|\ML_{\phi}g\right\|_{L^2}\left\|\ML_k(\lambda_k^{-1}b_k(\Lambda_kW_k)\chi_{t})\right\|_{L^2}\\
 &+\sum_k\left\|\ML_{\phi}g\right\|_{L^2}\left\|(\ML_{\phi}-\ML_k)(\lambda_k^{-1}(\lambda_k'\Lambda_kW_k+b_k(\Lambda_k W_k)\chi_{t})\right\|_{L^2}\\
  &+\sum_k\left\|\ML_{\phi}g\right\|_{L^2}\left\|(\ML_{\phi}-\ML_k)(\lambda_k^{-1}(\boldsymbol{y}_k'+\boldsymbol{v}_k)\cdot\nabla_k W_k)\right\|_{L^2}.
\end{aligned}
\end{equation}
For the first term on the right-hand side of (\ref{ML phi on trunction}), the definition of $\chi_t$ and Cauchy's inequality give
\begin{equation*}
   \begin{aligned}
&\sum_k\left\|\ML_{\phi}g\right\|_{L^2}\left\|\ML_k(\lambda_k^{-1}b_k(\Lambda_kW_k)\chi_{t})\right\|_{L^2}\\ 
 \lesssim&\sum_k\frac{b_k}{\lambda_k}t^{\frac{1}{2}}\lambda\left(\int_{t\leq |x-\boldsymbol{y}_k|\leq 2t}(\nabla(\Lambda_kW_k)\cdot\nabla(\chi_{t})+\Lambda_kW_k\Delta(\chi_{t}))^2dx\right)^{\frac{1}{2}}\\
 \lesssim&t^{\frac16}\lambda\left(\int_{t\leq r \leq 2t}\Big(\frac{\lambda_k}{r^3}\frac{1}{t}\Big)^2+\Big(\frac{\lambda_k}{r^2}\frac{1}{t^2}\Big)^2r^3dr\right)^{\frac{1}{2}}     
 \lesssim t^{-\frac{11}{6}}\lambda^2.
   \end{aligned} 
\end{equation*}
For the second term of (\ref{ML phi on trunction}), we split the domain into the region near multi-bubbles, $\Tilde{B}:=\bigcup_{1\leq j\leq N}B(|x-\boldsymbol{y}_j|\leq d)$, and its complement $\mathbb{R}^4 \setminus \tilde {B}$. First, in $B(|x-\boldsymbol{y}_k|\leq d
)$ from (\ref{lambda'+b}) and (\ref{pg h dot 2}) we have 
\begin{equation*}
    \begin{aligned}
    &\left\|(\ML_{\phi}-\ML_k)(\lambda_k^{-1}(\lambda_k'\Lambda_kW_k+b_k(\Lambda_k W_k)\chi_{t})\right\|_{L^2(\mathcal{B}(|x-\boldsymbol{y}_k|\leq d)}\\
    \lesssim &\frac{|\lambda_k'+b_k|}{\lambda_k}\left(\sum_{j\neq k}\left\|(W_jW_k)\Lambda_kW_k\right\|_{L^2(\mathcal{B}(|x-\boldsymbol{y}_k|\leq d)}+\left\|(\PG W_k)\Lambda_kW_k\right\|_{L^2(\mathcal{B}(|x-\boldsymbol{y}_k|\leq d)}\right)\\
    \lesssim &t^{\frac{1}{2}}\lambda\left(\frac{\lambda_j\lambda_k^2}{|z_k-z_j|^2}\left(\int_{\lambda_k}^d(\frac{1}{r^4})^2r^3dr\right)^{1/2}+\sqrt{\log(d/\lambda_k)}\left\|\PG\right\|_{{H}^2}\right)
    \lesssim \lambda^{\frac{3}{2}}.
    \end{aligned}
\end{equation*}
The same estimate holds on $\bigcup_{j\neq k}{B}(|x-\boldsymbol{y}_j|\leq d)$,
\begin{equation*}
  \begin{aligned}
    &\left\|(\ML_{\phi}-\ML_k)(\lambda_k^{-1}(\lambda_k'\Lambda_kW_k+b_k(\Lambda_k W_k)\chi_{t})\right\|_{L^2(\mathcal{B}(|x-\boldsymbol{y}_j|\leq d)}\\
    \lesssim &\frac{|\lambda_k'+b_k|}{\lambda_k}\left(\left\|(W_j)^2\Lambda_kW_k\right\|_{L^2(\mathcal{B}(|x-\boldsymbol{y}_j|\leq d)}+\left\|(\PG W_j)\Lambda_kW_k\right\|_{L^2(\mathcal{B}(|x-\boldsymbol{y}_j|\leq d)}\right)
    \lesssim  \lambda^{\frac{3}{2}}.
    \end{aligned}   
\end{equation*}
Hence the contribution from $\tilde{B}$ is bounded by
\begin{equation*}
 \sum_k\left\|\ML_{\phi}g\right\|_{L^2}\left\|(\ML_{\phi}-\ML_k)(\lambda_k^{-1}(\lambda_k'\Lambda_kW_k+b_k(\Lambda_k W_k)\chi_{t})\right\|_{L^2({\tilde{B})}}\lesssim t^{\frac{1}{2}}\lambda^{\frac{5}{2}}.   
\end{equation*}
 In the exterior region $\mathbb{R}^4 \setminus \tilde {B}$ , by (\ref{pg h dot 2}) we obtain
\begin{equation*}
  \begin{aligned}
    &\left\|(\ML_{\phi}-\ML_k)(\lambda_k^{-1}(\lambda_k'\Lambda_kW_k+b_k(\Lambda_k W_k)\chi_{t})\right\|_{L^2(\mathbb{R}^4 \setminus \tilde {B})}\\
    \lesssim &\frac{|\lambda_k'|+|b_k|}{\lambda_k}\left(\left(\int_d^{\infty}\left(\frac{\lambda_k^3}{r^6}\right)^2r^3dr\right)^{1/2}+\left\|\PG\right\|_{\dot{H}^1}\left(\int_d^{\infty}\left(\frac{\lambda_k^2}{r^4}\right)^4\right)^{1/4}\right)
    \lesssim \lambda^2.
    \end{aligned}   
\end{equation*}
Another application of Cauchy's inequality shows that the exterior contribution is also of lower order.  The same argument can be applied to the last line of (\ref{ML phi on trunction}). Plugging these estimates back into (\ref{ML phi on trunction}) we conclude that $ \left|\lelan \ML_{\phi}^2h, \partial_t g-\dot{g} \rilan\right|$ is negligible. 

Finally, from Lemma \ref{lemma of partial t dot g}, the term involving $\epsilon(h,\dot{g})$ is dominated by
\begin{equation*}
\begin{aligned}
    |\lelan \ML_{\phi}\dot{g}, {\epsilon}(h,\dot{g}) \rilan  |\lesssim \left\|\dot{g}\right\|_{\dot{H}^1}\left\|\ML_{\phi} ({\epsilon}(h,\dot{g}))\right\|_{\dot{H}^{-1}}
    \lesssim& t^{\frac{1}{2}}\lambda^2,
\end{aligned}
\end{equation*}
which is lower order. It remains for us to estimate $\lelan \ML_{\phi}\dot{g},   \Psi(\boldsymbol{\lambda},\boldsymbol{b},{\boldsymbol{y}},{\boldsymbol{v}})\rilan$. We separate the main part of $\Psi$ by setting
\begin{equation*}
      \Psi_0(\boldsymbol{\lambda},\boldsymbol{b},{\boldsymbol{y}},{\boldsymbol{v}}):=\sum_{j=1}^N \frac{(\lambda_j'+b_j)b_j}{\lambda_j^2}(\underline{\Lambda}_j\Lambda_j W_j) \chi_t(\cdot-\boldsymbol{y}_k)+\sum_{j=1}^N\frac{b_j((\boldsymbol{y}_j'+\boldsymbol{v}_j)\cdot \nabla_j)}{\lambda_j^2}(\Lambda_jW_j) \chi_t(\cdot-\boldsymbol{y}_j),  
\end{equation*}
and one directly has
\begin{equation*}
 \Psi(\boldsymbol{\lambda},\boldsymbol{b},{\boldsymbol{y}},{\boldsymbol{v}})=     \Psi_0(\boldsymbol{\lambda},\boldsymbol{b},{\boldsymbol{y}},{\boldsymbol{v}})-\sum_{j=1}^N\left(b_j'-\frac{b_j \lambda_j'}{2\log(1/\lambda_j)\lambda_j} -\frac{B_j(\Vec{\lambda})}{\left\|\Lambda W\chi_t (\lambda_j \cdot)\right\|_{L^2}^2}\right)\lambda_j^{-1}(\Lambda
           _j W_j)\chi_t(\cdot-\boldsymbol{y}_j).
\end{equation*}
We claim that the remaining part of $\Psi(\boldsymbol{\lambda},\boldsymbol{b},{\boldsymbol{y}},{\boldsymbol{v}})$ is negligible, namely
\begin{equation}\label{PSi and Psi 0}
  \lelan \ML_{\phi}\dot{g},   \Psi(\boldsymbol{\lambda},\boldsymbol{b},{\boldsymbol{y}},{\boldsymbol{v}})\rilan\simeq \lelan \ML_{\phi}\dot{g},   \Psi_0(\boldsymbol{\lambda},\boldsymbol{b},{\boldsymbol{y}},{\boldsymbol{v}})\rilan.
\end{equation}
In order to prove the claim, using an argument similar to the proof of (\ref{ML phi on trunction}) we obtain
\begin{equation*}
\begin{aligned}
    \left\|\ML_{\phi}((\Lambda_kW_k)\chi_{t})\right\|_{\dot{H}^{-1}}\lesssim& \left\|\ML_k((\Lambda_kW_k)\chi_{t})\right\|_{L^{\frac{4}{3}}}+\left\|(\ML_{\phi}-\ML_k)((\Lambda_kW_k)\chi_{t})\right\|_{L^{\frac{4}{3}}}\\
    \lesssim& \left(\int_{t\leq |x-\boldsymbol{y}_k|\leq 2t}\left((\Lambda_k W_k)\Delta(\chi_{t})+\nabla(\Lambda_k W_k)\cdot \nabla(\chi_{t})\right)^{\frac{4}{3}}\right)^{\frac{3}{4}}\\
    &+\sum_{j\neq k}\frac{\lambda_j}{|\boldsymbol{y}_k-\boldsymbol{y}_j|^2}\left(\int_{\lambda_k}^d(\frac{\lambda_k^2}{r^4})^{\frac{4}{3}}r^3dr\right)^{\frac{3}{4}}+\left\|\PG\right\|_{\dot{H}^1}\\
    \lesssim& \frac{\lambda_k}{t}+\sum_{j\neq k}\lambda_j \lambda_k +(t^{-\frac{2}{3}}\log t)\lambda\lesssim (t^{-\frac{2}{3}}\log t)\lambda.
\end{aligned}
\end{equation*}
Combining this with (\ref{lambda'+b}), (\ref{rough est on b'}) and bootstrap assumptions for $\lambda_k$, $b_k$,  we have
\begin{equation*}
\begin{aligned}
   &\left| \lelan \ML_\phi \dot{g},\sum_{j=1}^N(b_j'-\frac{b_j \lambda_j'}{2\log(1/\lambda_j)\lambda_j} -\frac{B_j(\Vec{\lambda})}{\left\|\Lambda W\chi_t (\lambda_j \cdot)\right\|_{L^2}^2})\lambda_j^{-1}\Lambda
           _j W_j\chi_t \rilan\right|\\
           \lesssim& \sum_{j=1}^N\left(\frac{|b_j'|}{\lambda_j}+\frac{|b_j||\lambda_j'|}{\log(1/\lambda_j)\lambda_j^2}+\frac{\lambda_j}{\log(1/\lambda_j)\lambda_j}\right)\left\|\dot{g}\right\|_{\dot{H}^1} \left\|\ML_{\phi}((\Lambda_kW_k)\chi_{t})\right\|_{\dot{H}^{-1}}\\\lesssim& \left(t^{\frac{1}{2}}+t^{-\frac{4}{3}}+t^{-\frac{2}{3}}\right)t^{\frac{1}{2}}\lambda (t^{-\frac{2}{3}}\log t)\lambda \lesssim (t^{\frac{1}{3}}\log t)\lambda^2,
\end{aligned}
\end{equation*}
which proves (\ref{PSi and Psi 0}). Collecting all the estimates in Step 1, $\mathcal{I}_2'$ satisfies
\begin{equation}\label{final est for I2'}
\begin{aligned}
   \frac{d}{dt}\MI_2\geq& \frac{1}{2}\lelan [\partial_t,\ML_{\phi}]\dot{g}, \dot{g} \rilan
       +\frac{1}{2}\lelan [\partial_t,\ML_{\phi}^2]h, h \rilan+\lelan \ML_{\phi}\dot{g},   \Psi_0(\boldsymbol{\lambda},\boldsymbol{b},{\boldsymbol{y}},{\boldsymbol{v}})\rilan\\
       &-C\frac{\left\|\ML_\phi h\right\|^2_{L^2(|x-\boldsymbol{y}_k|\leq \lambda_kR)}}{t^{\frac{1}{3}}\log M} -C\frac{t^{\frac{2}{3}}\lambda^2}{R^2\log M}.   
\end{aligned} 
\end{equation}

\textbf{Step 2. Estimate of $\mathcal{J}_{k,1}'.$} We now analyze the correction terms $\mathcal J_k$. Their time derivatives are designed to cancel the commutator terms on the right-hand side of (\ref{final est for I2'}). We first consider $\mathcal J_{k,1}$. A direct computation gives
\begin{equation*}\label{J_k'1}
\begin{aligned}
   \mathcal{J}_{k,1}'=&-b_k'\lelan \underline{A}_kh, \ML_{\phi}\dot{g} \rilan -b_k\lambda_k' \lelan (\partial_{\lambda_k}\underline{A}_k)h, \ML_{\phi}\dot{g} \rilan-b_k\lelan (\boldsymbol{y}_k'\cdot \partial_{y_k}\underline{A}_k)h, \ML_{\phi}\dot{g}\rilan \\
   &-b_k\lelan \underline{A}_k\partial_th, \ML_{\phi}\dot{g} \rilan-b_k \lelan \underline{A}_kh, [\partial_t,\ML_{\phi}]\dot{g}\rilan-b_k\lelan \underline{A}_kh, \ML_{\phi}\partial_t\dot{g} \rilan\\
   :=&\Rmnum{1}+\Rmnum{2}+\Rmnum{3}+\Rmnum{4}+\Rmnum{5}+\Rmnum{6}.
\end{aligned}
\end{equation*}
For $\Rmnum{1}$, the definition of $\ML_{\phi}$ gives
\begin{equation*}
\begin{aligned}
    |\Rmnum{1}|\lesssim& |b_k'|\left(\left|\lelan \underline{A}_kh,\Delta \dot{g}\rilan\right|+\left|\lelan \underline{A}_kh,W_k^2 \dot{g}\rilan\right|+\left|\lelan \underline{A}_kh,\PG^2 \dot g \rilan\right| \right).
\end{aligned}
\end{equation*}
From (\ref{Ah delta g}), we deduce
\begin{equation*}
   |b_k'|\left|\lelan \underline{A}_kh,\Delta \dot{g}\rilan\right|
    \lesssim|b_k'|\Tilde{R}\left\|\ML_kh\right\|_{L^2}\left\|g\right\|_{\dot{H}^1}\lesssim \lambda^{\frac{5}{2}}.
\end{equation*}
Furthermore, the fact that $\lelan \underline{A}_kf,g  \rilan=-\lelan f,\underline{A}_kg  \rilan$ implies
\begin{equation*}
    |b_k'|\left|\lelan \underline{A}_kh,W_k^2 \dot{g}\rilan\right|\lesssim |b_k'|\left(\left|\lelan h,\underline{A}_k(W_k^2)\dot{g} \rilan \right|+\left|\lelan h,W_k^2\underline{A}_k\dot{g} \rilan \right|  \right).
\end{equation*}
Due to the failure of Sobolev embedding for $\dot{H}^2$ in $\mathbb R^4$, here we use $\left\|h\right\|_{\dot{H}^{\frac{3}{2}}}$ instead. From (\ref{h dot 3 2 for g}) and (\ref{pg h dot 2}), we have 
\begin{equation*}
   \left\|h\right\|_{\dot{H}^{\frac{3}{2}}}\lesssim t^{\frac16}\lambda. 
\end{equation*}
 Meanwhile, (i) of Lemma \ref{property of Ak} and interpolation inequality together gives
\begin{equation*}
    \left\|\underline{A}_k W_k^2\right\|_{L^{\frac{8}{5}}}\lesssim \left\|\underline{A}_k W_k^2\right\|_{L^{\frac{4}{3}}}^{\frac{1}{2}}\left\|\underline{A}_k W_k^2\right\|_{L^2}^{\frac{1}{2}}\lesssim \left\|W_k^2\right\|_{L^{2}}^{\frac{1}{2}}\left\|W_k^2\right\|_{\dot{H}^1}^{\frac{1}{2}}\lesssim\sqrt{\lambda_k}.
\end{equation*}
Using Cauchy inequality and (\ref{rough est on b'}), we then obtain
\begin{equation*}
\begin{aligned}
     &|b_k'|\left(\left|\lelan h,\underline{A}_k(W_k^2)\dot{g} \rilan \right|+\left|\lelan h,W_k^2\underline{A}_k\dot{g} \rilan \right|  \right)\\ \lesssim& |b_k'|\left(\left\|h\right\|_{\dot{H}^{\frac{3}{2}}}\left\|\underline{A}_k W_k^2\right\|_{L^{\frac{8}{5}}}\left\|\dot{g}\right\|_{\dot{H}^1}+\left\|h\right\|_{\dot{H}^{\frac{3}{2}}}\left\| W_k^2\right\|_{L^{\frac{8}{3}}}\left\|\underline{A}_k\dot{g}\right\|_{L^2}\right)\\
     \lesssim& t^{\frac{1}{2}} \lambda \left\|h\right\|_{\dot{H}^{\frac{3}{2}}}\left\|\dot{g}\right\|_{\dot{H}^1}\frac{1}{\sqrt{\lambda_k}}\lesssim t^{\frac{2}{3}}\lambda^{\frac{5}{2}}. 
\end{aligned}
\end{equation*}
The term involving $\PG$, by (\rmnum{1}) of Lemma \ref{property of Ak} and (\ref{pg h dot 2}), is dominated by
\begin{equation*}
    |b_k'|\left|\lelan \underline{A}_kh,\PG^2 \dot g \rilan\right|\lesssim |b_k'|\left\|\PG\right\|_{\dot{H}^1}^2\left\|\dot{g}\right\|_{\dot{H}^1}\left(\left\|h\right\|_{\dot{H}^2}+\frac{1}{\lambda_k}\left\|h\right\|_{\dot{H}^1}\right)\lesssim \lambda^{\frac{5}{2}}.
\end{equation*}
Combining these estimates together, we conclude that $\Rmnum{1}$ is negligible. Next for $\Rmnum{2}$ and $\Rmnum{3}$, (\rmnum{1}) of Lemma \ref{property of Ak} yields
\begin{equation*}
\begin{aligned}
       &|\Rmnum{2}|+|\Rmnum{3}|
       \lesssim \left(\frac{|b_k \lambda_k'|}{\lambda_k}+\frac{|b_k||\boldsymbol{y}_k'|}{\lambda_k}\right)(\Tilde{R}\left\|\MLWG h\right\|_{L^2}+\frac{1}{\lambda_k}\left\|h\right\|_{\dot{H}^1})\left\|\dot{g}\right\|_{\dot{H}^1}
       \lesssim t^{-\frac{1}{3}}\lambda^2.
\end{aligned}
\end{equation*}
Therefore, both terms are negligible. We next consider term $\Rmnum{4}$. Recall that 
\begin{equation*}
    \partial_t h=\partial_t g-\partial_t \PG=\dot{g}+\sum_{j=1}^N \lambda_j^{-1}(\lambda_j'\Lambda_jW_j+b_j\Lambda_j W_j\chi_{t})+\sum_{j=1}^N \lambda_j^{-1}(\boldsymbol{y}_j'+\boldsymbol{v}_j)\cdot\nabla_j W_j-\partial_t \PG.
\end{equation*}
Hence, $\Rmnum{4}$ can be decomposed as 
\begin{equation}\label{Jk' 2.1}
    \begin{aligned}
    - b_k\lelan \underline{A}_k\partial_th, \ML_{\phi}\dot{g} \rilan=&  -b_k\lelan \underline{A}_k\dot{g}, \ML_{\phi}\dot{g} \rilan -b_k\lelan \underline{A}_k\left(\sum_{j=1}^N \lambda_j^{-1}(\lambda_j'\Lambda_jW_j+b_j\Lambda_j W_j\chi_{t})\right), \ML_{\phi}\dot{g} \rilan\\
     &-b_k\lelan \underline{A}_k\left(\sum_{j=1}^N \lambda_j^{-1}(\boldsymbol{y}_j'+\boldsymbol{v}_j)\cdot\nabla_j W_j\right), \ML_{\phi}\dot{g} \rilan+b_k\lelan \underline{A}_k\partial_t \PG, \ML_{\phi}\dot{g} \rilan.
    \end{aligned}
\end{equation}
The first term on the right-hand side of (\ref{Jk' 2.1}) will be treated in Step 3, where it is canceled by terms coming from $\mathcal J'_{k,2}$. Furthermore, the term involving $\underline{A}_k \partial_t \PG$ is bounded by
\begin{equation}\label{Jk 2 4 4}
\begin{aligned}
    |b_k\lelan \underline{A}_k\partial_t \PG, \ML_{\phi}\dot{g} \rilan|\lesssim& b_k\left\|\underline{A}_k\partial_t \PG\right\|_{\dot{H}^1}\left\|\ML_{\phi}\dot{g}\right\|_{\dot{H}^{-1}}\\\lesssim& b_k\left(\tilde{R}\left\|\partial_t\PG\right\|_{\dot{H}^2} +\frac{\left\|\partial_t \PG\right\|_{\dot{H}^1}}{\lambda_k}\right)\left\|\dot{g}\right\|_{\dot{H}^1}\lesssim t^{-\frac{1}{3}}\lambda^2, 
\end{aligned}
\end{equation}
where we used (\ref{partial t P h dot 1}), (\ref{partial t pg h dot 2}) and (\ref{Lk 2 bounded for Ak}).
For the remaining part of $\Rmnum{4}$, in view of the similarity between $\underline{A}_k$ and $\frac{1}{\lambda_k}\underline{\Lambda}$, these terms can be compared with
\begin{equation*}
  - \lelan\frac{(\lambda_k'+b_k)b_k}{\lambda_k^2}\underline{\Lambda}_k\Lambda_k W_k \chi_t, \ML_{\phi}\dot{g}\rilan  -\lelan \frac{b_k((\boldsymbol{y}_k'+\boldsymbol{v}_k)\cdot \nabla_k)}{\lambda_k^2}\Lambda_kW_k, \ML_{\phi}\dot{g}\rilan,
\end{equation*}
where $\underline{\Lambda}\partial_l W=\partial_l \Lambda W$ is used. In order to prove this, we first show that 
\begin{equation*}
    \left\|\underline{A}_k\Lambda_k W_k-\frac{1}{\lambda_k}\underline{\Lambda}_k \Lambda_k W_k\right\|_{\dot{H}^1}+\left\|\underline{A}_k\nabla_k W_k-\frac{1}{\lambda_k}\underline{\Lambda}_k \nabla_kW_k \right\|_{\dot{H}^1}\leq \frac{\eta}{\lambda_k}.
\end{equation*}
Indeed, by the definition of $\underline{A}_k$, for $|x-\boldsymbol{y}_k|\leq \lambda_k R$, we have $\underline{A}_k \Lambda_k W_k=\frac{1}{\lambda_k}\underline{\Lambda}_k \Lambda_k W_k$. For $|x-\boldsymbol{y}_k|\geq \lambda_k R$, the decay of $W$ and function $q$ leads $ \left\|\underline{A}_k\Lambda_k W_k-\frac{1}{\lambda_k}\underline{\Lambda}_k \Lambda_k W_k\right\|_{\dot{H}^1}\leq \frac{\eta}{\lambda_k}.$ The estimate on $\nabla_k W_k$ is proved analogously. For $j\neq k$, one verifies that 
$\left\|\underline{A}_k\Lambda_j W_j\right\|_{\dot{H^1}}+\left\|\underline{A}_k\nabla_j W_j\right\|_{\dot{H^1}}\ll t^{-1}$. Combining these estimates with (\ref{lambda'+b}) and (\ref{y'+v}), we arrive at the estimate
\begin{equation}\label{Jk2 4 2}
\begin{aligned}
    &-b_k\lelan \underline{A}_k\left(\sum_{j=1}^N \lambda_j^{-1}(\lambda_j'\Lambda_jW_j+b_j\Lambda_j W_j\chi_{t})\right), \ML_{\phi}\dot{g} \rilan+  \lelan\frac{(\lambda_k'+b_k)b_k}{\lambda_k^2}\underline{\Lambda}_k\Lambda_k W_k \chi_t, \ML_{\phi}\dot{g}\rilan\\
    &\gtrsim - \frac{|(b_k+\lambda_k')b_k|}{\lambda_k^2}(t^{-1}+R^{-1}+\eta)\left\|\dot{g}\right\|_{\dot{H}^1}\gtrsim -{t^{\frac{2}{3}}}\lambda^2(t^{-1}+R^{-1}+\eta),
\end{aligned}
\end{equation}
and 
\begin{equation}\label{Jk 2 4 3}
    \begin{aligned}
      &-b_k\lelan \underline{A}_k\left(\sum_{j=1}^N \lambda_j^{-1}(\boldsymbol{y}_j'+\boldsymbol{v}_j)\cdot\nabla_j W_j\right), \ML_{\phi}\dot{g} \rilan+\lelan \frac{b_k((\boldsymbol{y}_k'+\boldsymbol{v}_k)\cdot \nabla_k)}{\lambda_k^2}\Lambda_kW_k \chi_t, \ML_{\phi}\dot{g}\rilan\\
      &\gtrsim -\frac{|(\boldsymbol{y}_k'+\boldsymbol{v}_k)b_k|}{\lambda_k}(t^{-1}+R^{-1})\left\|\dot{g}\right\|_{\dot{H}^1}\gtrsim -{t^{\frac{2}{3}}}\lambda^2(t^{-1}+R^{-1}+\eta).
    \end{aligned}
\end{equation}
Substituting (\ref{Jk 2 4 4})-(\ref{Jk 2 4 3}) into (\ref{Jk' 2.1}) and recalling the definition of $ \Psi_0(\boldsymbol{\lambda},\boldsymbol{b},{\boldsymbol{y}},{\boldsymbol{v}})$, we obtain 
\begin{equation*}
  \left|\Rmnum{4}+\lelan \ML_{\phi}\dot{g},   \Psi_0(\boldsymbol{\lambda},\boldsymbol{b},{\boldsymbol{y}},{\boldsymbol{v}})\rilan+b_k\lelan \underline{A}_k\dot{g}, \ML_{\phi}\dot{g} \rilan \right|\leq C\eta{t^{\frac{2}{3}}}\lambda^2,
\end{equation*}
This gives the desired estimate for $\Rmnum{4}$. Next, since $ [\partial_t, \ML_{\phi}]=-6\phi_t\phi$,  Lemma \ref{property of Ak} (\rmnum{1}) gives
\begin{equation*}
\begin{aligned}
     |b_k \lelan \underline{A}_kh, [\partial_t,\ML_{\phi}]\dot{g}\rilan| \leq&\left|b_k\lelan h, \underline{A}_k(6\phi_t\phi)\dot{g} \rilan\right|+\left|b_k\lelan h, 6\phi_t\phi\underline{A}_k(\dot{g}) \rilan\right|\\
     \lesssim& |b_k|\left\|h\right\|_{\dot{H}^2}\left\|\phi_t \phi\right\|_{L^2}\left\|\dot{g}\right\|_{\dot{H}^1}\lesssim \lambda^{\frac{5}{2}}.
\end{aligned}  
\end{equation*}
Finally , we consider the term $\Rmnum{6}$ and claim that
\begin{equation}\label{last of Jk'1}
    -\sum_{k=1}^Nb_k\lelan \underline{A}_kh, \ML_{\phi}\partial_t\dot{g} \rilan+\frac{1}{2}\lelan [\partial_t,\ML_{\phi}^2]h, h \rilan\geq \sum_{k=1}^N\frac{2b_k}{\lambda_k}\int_{|x-\boldsymbol{y}_k|\leq R\lambda_k}  (\ML_{\phi}h)^2 dx -C\eta{t^{\frac{2}{3}}}\lambda^2.
\end{equation}
To prove this claim, we first use the argument leading to  (\ref{PSi and Psi 0}) to obtain
\begin{equation*}
\begin{aligned}
    &|-b_k\lelan \underline{A}_kh, \ML_{\phi}\partial_t\dot{g} \rilan- b_k\lelan \underline{A}_kh, \ML_{\phi}^2h \rilan|\\
    \lesssim&|b_k|\left\|\underline{A}_k h\right\|_{\dot{H}^1}\left\|\ML_{\phi}({\epsilon}(h,\dot{g}))\right\|_{\dot{H}^{-1}}+|b_k|\left\|\underline{A}_k h\right\|_{\dot{H}^1}\left\|\ML_{\phi}\left(\Psi_0(\boldsymbol{\lambda},\boldsymbol{b},{\boldsymbol{y}},{\boldsymbol{v}})\right)\right\|_{\dot{H}^{-1}}.
\end{aligned}
\end{equation*}
Using (\ref{lambda'+b}), (\ref{y'+v}), (\ref{partial t g dot sim}) and (\ref{Lk 2 bounded for Ak}), these terms are bounded by
\begin{equation*}
    \begin{aligned}
     &|b_k|\left(\Tilde{R}\left\|\ML_k h\right\|_{L^2}+\frac{1}{\lambda_k}\left\|h\right\|_{\dot{H}^1}\right)\left(\left\|\ML_{\phi}({\epsilon}(h,\dot{g}))\right\|_{\dot{H}^{-1}}+\sum_{j=1}^N\frac{|\lambda_j'+b_j|b_j}{\lambda_j^2}+\sum_{j=1}^N\frac{|\boldsymbol{y}_j'+\boldsymbol{v}_j|b_j}{\lambda_j^2}\right)\\
     \lesssim&t^{-\frac{1}{3}}\lambda t^{-\frac{1}{6}}\left(\lambda+t^{\frac16}\lambda\right)\lesssim t^{-\frac{1}{3}}\lambda^2,
    \end{aligned}
\end{equation*}
and are thus negligible. Consequently, it remains to prove
\begin{equation}\label{to cancel [,Lphi2]}
    \sum_{k=1}^Nb_k\lelan \underline{A}_kh, \ML_{\phi}^2h \rilan+\frac{1}{2}\lelan [\partial_t,\ML_{\phi}^2]h, h \rilan\geq \sum_{k=1}^N\frac{2b_k}{\lambda_k}\int_{|x-\boldsymbol{y}_k|\leq R\lambda_k}  (\ML_{\phi}h)^2 dx -C\eta{t^{\frac{2}{3}}}\lambda^2.
\end{equation}
A direct computation gives
\begin{equation*}
    \begin{aligned}
       b_k\lelan \underline{A}_kh, \ML_{\phi}^2h \rilan=&b_k\lelan 
       [\ML_{\phi},\underline{A}_k]h,\ML_{\phi}h\rilan\\
       =&6b_k\lelan \left(\nabla q
       \left(\frac{x-\boldsymbol{y}_k}{\lambda_k}\right)\cdot\nabla\phi\right)\phi h , \ML_{\phi}h\rilan-b_k\lelan [\Delta,\underline{A}_k]h, \ML_{\phi}h \rilan.
    \end{aligned}
\end{equation*}
For the first term, since $\nabla q(x)=x $ for $|x|\leq R$, we have
\begin{equation}\label{6 nabla q nalba phi}
  \begin{aligned}
   6b_k\lelan \left(\nabla q
       \left(\frac{x-\boldsymbol{y}_k}{\lambda_k}\right)\cdot\nabla\phi\right)\phi h , \ML_{\phi}h\rilan\simeq&\frac{6b_k}{\lambda_k}\int_{|x-\boldsymbol{y}_k|\leq R\lambda_k}\left(\left(\frac{x-\boldsymbol{y}_k}{\lambda_k}\right)\cdot \nabla_kW_k\right)\phi h \ML_{\phi}hdx\\
       +&\frac{6b_k}{\lambda_k}\int_{|x-\boldsymbol{y}_k|\geq R\lambda_k}\left(\nabla q\left(\frac{x-\boldsymbol{y}_k}{\lambda_k}\right)\cdot \nabla_kW_k\right)\phi h \ML_{\phi}hdx,
  \end{aligned}  
\end{equation}
where the terms involving $\nabla \PG$ and $\nabla_j W_j$ for $j\neq k$  are negligible and have been omitted. Moreover, using the coercivity estimate (\ref{second order coercivity for L scaling verison}) and  $ |q'(x)|\leq 4|x|$, the contribution from
the exterior region is bounded by
\begin{equation}\label{6 exterior}
\begin{aligned}
   &\left|\frac{6b_k}{\lambda_k}\int_{|x-\boldsymbol{y}_k|\geq R\lambda_k}\left(\nabla q\left(\frac{x-\boldsymbol{y}_k}{\lambda_k}\right)\cdot \nabla_kW_k\right)\phi h \ML_{\phi}hdx\right|\\
   \leq& Ct^{-\frac{1}{3}}\left\|\ML_{\phi}h\right\|_{L^2}\left(\int_{|x-\boldsymbol{y}_k|\geq R\lambda_k}\left(\frac{\lambda_k^2h}{|x-\boldsymbol{y}_k|^4}\right)^2\right)^{\frac{1}{2}}\leq \frac{C}{R}t^{-\frac{1}{3}}\left\|\ML_{\phi}h\right\|_{L^2}^2\leq \frac{C}{R}t^{\frac{2}{3}}\lambda^2.
\end{aligned}
\end{equation}
Next, the argument used in the proof of (\ref{Ak key property 2}) gives 
\begin{equation*}\label{delta Ak Lphi}
\begin{aligned}
     &-b_k\lelan [\Delta,\underline{A}_k]h, \ML_{\phi}h \rilan \\\geq&\frac{2b_k}{\lambda_k}\int_{|x-\boldsymbol{y}_k|\leq R\lambda_k}  \Delta h(x)(\Delta h(x)+3\phi^2h)dx-  \frac{C\eta b_k}{\lambda_k}\left\|\ML_k h\right\|_{L^2}^2\\
     =&\frac{2b_k}{\lambda_k}\int_{|x-\boldsymbol{y}_k|\leq R\lambda_k}  (\ML_{\phi}h)^2 dx+\frac{b_k}{\lambda_k}\int_{|x-\boldsymbol{y}_k|\leq R\lambda_k}(6\phi\ML_{\phi}h)dx-  \frac{C\eta b_k}{\lambda_k}\left\|\ML_k h\right\|_{L^2}^2.
\end{aligned} 
\end{equation*}
On the other hand, the term containing $[\partial_t,\ML_{\phi}^2]$ in (\ref{to cancel [,Lphi2]}) can be expanded as 
\begin{equation*}
    \begin{aligned}
     &\frac{1}{2}\lelan [\partial_t,\ML_{\phi}^2]h, h \rilan= \lelan [\partial_t,\ML_{\phi}]\ML_{\phi}h, h \rilan\\
     =&\sum_{j=1}^N6\frac{\lambda_j'}{\lambda_j}\lelan \Lambda_j W_j\phi \ML_{\phi}h, h  \rilan+\sum_{j=1}^N6\lelan \frac{\boldsymbol{y}_j'}{\lambda_j}\cdot \nabla_j W_j  \phi \ML_{\phi}h, h \rilan+6\lelan \partial_t \PG \phi \ML_{\phi}h,h \rilan,
    \end{aligned}
\end{equation*}
By (\ref{y'+v}) and (\ref{partial t pg h dot 2}), the last two terms are negligible. Therefore, applying the same argument as in (\ref{6 exterior}), we obtain
\begin{equation}\label{6 [t, L2]}
   \frac{1}{2}\lelan [\partial_t,\ML_{\phi}^2]h, h \rilan\geq  \sum_{j=1}^N\int_{|x-\boldsymbol{y}_j|\leq R\lambda_j}\frac{6\lambda_j'}{\lambda_j}\Lambda_j W_j\phi (\ML_{\phi}h)hdx-\frac{C}{R}t^{\frac{2}{3}}\lambda^2.
\end{equation}
Collecting inequalities (\ref{6 nabla q nalba phi})-(\ref{6 [t, L2]}) leads to 
\begin{equation*}
    \begin{aligned}
     \sum_{k=1}^Nb_k\lelan \underline{A}_kh, \ML_{\phi}^2h \rilan+\frac{1}{2}\lelan [\partial_t,\ML_{\phi}^2]h, h \rilan
     \geq&\sum_{j=1}^N\int_{|x-\boldsymbol{y}_j|\leq R\lambda_j}\frac{6(\lambda_j'+b_j)}{\lambda_j}\Lambda_j W_j\phi (\ML_{\phi}h)hdx\\&+ \sum_{k=1}^N\frac{2b_k}{\lambda_k}\int_{|x-\boldsymbol{y}_k|\leq R\lambda_k}  (\ML_{\phi}h)^2 dx -C\eta{t^{\frac{2}{3}}}\lambda^2,    
    \end{aligned}
\end{equation*}
where we used $\Lambda f=f+x\cdot \nabla f$. Using Cauchy's inequality and (\ref{lambda'+b}), the first term on the right hand side is controlled by
\begin{equation*}
    \frac{|\lambda_j'+b_j|}{\lambda_j}\left\|\ML_{\phi}h\right\|_{L^2}\left\|h\right\|_{\dot{H}^2}\lesssim \lambda^{\frac{5}{2}},
\end{equation*}
and thus negligible. This proves (\ref{last of Jk'1}). 

Combining (\ref{Jk' 2.1})-(\ref{last of Jk'1}), we obtain
\begin{equation}\label{J k1' est and cancel}
    \begin{aligned}
        &\sum_{k=1}^N\mathcal{J}_{k,1}'+\frac{1}{2}\lelan [\partial_t,\ML_{\phi}^2]h, h \rilan+\lelan \ML_{\phi}\dot{g},   \Psi_0(\boldsymbol{\lambda},\boldsymbol{b},{\boldsymbol{y}},{\boldsymbol{v}})\rilan\\
        \geq& \sum_{k=1}^N\frac{2b_k}{\lambda_k}\left\|\ML_\phi h\right\|^2_{L^2(|x-\boldsymbol{y}_k|\leq \lambda_kR)}- \sum_{k=1}^Nb_k\lelan \underline{A}_k\dot{g}, \ML_{\phi}\dot{g} \rilan-C\eta{t^{\frac{2}{3}}}\lambda^2,
    \end{aligned}
\end{equation}
which completes Step 2.

\textbf{Step 3. Estimate of $\mathcal{J}_{k,2}'.$} In this step, we prove that $\mathcal{J}_{k,2}'+\lelan [\partial_t,\ML_{\phi}]\dot{g}, \dot{g} \rilan$ admits the required bound. Taking time derivative of $\mathcal{J}_{k,2}$ gives
\begin{equation}\label{J_k'2}
\begin{aligned}
   \mathcal{J}_{k,2}'=&2b_k'\lelan \underline{A}_k\dot{g}, \ML_{\phi}h \rilan +2b_k\lambda_k' \lelan (\partial_{\lambda_k}\underline{A}_k)\dot{g}, \ML_{\phi}h \rilan+2b_k\lelan (\boldsymbol{y}_k'\cdot \partial_{y_k}\underline{A}_k)\dot{g}, \ML_{\phi}h\rilan \\
   &+2b_k\lelan \underline{A}_k\partial_t\dot{g}, \ML_{\phi}h \rilan+2b_k \lelan \underline{A}_k\dot{g}, [\partial_t,\ML_{\phi}]h\rilan+2b_k\lelan \underline{A}_k\dot{g}, \ML_{\phi}\partial_th \rilan
\end{aligned}  
\end{equation}
For the first line,  Lemma \ref{property of Ak} (\rmnum 1) gives
\begin{equation*}
\begin{aligned}
   &\left(|b_k'|+\frac{b_k|\lambda_k'|}{\lambda_k}+\frac{b_k|\boldsymbol{y}_k'|}{\lambda_k}\right)\left(\left\|\underline{A}_k\dot{g}\right\|_{L^2}+\left\|\lambda_k\partial_{\lambda_k}\underline{A}_k\dot{g}\right\|_{L^2}+\left\|\lambda_k \partial_{y_k}\underline{A}_k\dot{g}\right\|_{L^2}\right)\left\|\ML_{\phi}h\right\|_{L^2}\\
   \lesssim&t^{\frac{1}{2}}\lambda\left\|\dot{g}\right\|_{\dot{H}^1}\left\|\ML_{\phi}h\right\|_{L^2}\lesssim \lambda^{\frac{5}{2}},   
\end{aligned}
\end{equation*}
and is thus negligible. 

Next, recall the definition of $\Psi(\boldsymbol{\lambda},\boldsymbol{b},{\boldsymbol{y}},{\boldsymbol{v}})$ and $\epsilon(h,\dot{g})$, the fourth term decomposes as
\begin{equation*}
    \begin{aligned}
       &2b_k\lelan \underline{A}_k\partial_t\dot{g}, \ML_{\phi}h \rilan
       \simeq -2b_k\lelan \underline{A}_k\ML_{\phi}h, \ML_{\phi}h \rilan+2b_k\lelan \underline{A}_k \Psi(\boldsymbol{\lambda},\boldsymbol{b},{\boldsymbol{y}},{\boldsymbol{v}}), \ML_{\phi}h  \rilan+2b_k\lelan \underline{A}_k{\epsilon}(h,\dot{g}), \ML_{\phi}h \rilan. 
    \end{aligned}
\end{equation*}
The first term vanishes by the skew-adjointness identity $\lelan \underline{A}_kf, f \rilan=0$. For  the term involving $\Psi(\boldsymbol{\lambda},\boldsymbol{b},{\boldsymbol{y}},{\boldsymbol{v}})$, denote
\begin{equation*}
    \Psi_1(\boldsymbol{\lambda},\boldsymbol{b},{\boldsymbol{y}},{\boldsymbol{v}}):= \Psi(\boldsymbol{\lambda},\boldsymbol{b},{\boldsymbol{y}},{\boldsymbol{v}})+\sum_{j=1}^N\frac{b_j'}{\lambda_j}\Lambda_jW_j\chi_t.
\end{equation*}
By (\ref{lambda'+b}), (\ref{y'+v}) and (\rmnum{1}) of Lemma \ref{property of Ak}, we have
\begin{equation*}
\begin{aligned}
  &\left\|\underline{A}_k \Psi_1(\boldsymbol{\lambda},\boldsymbol{b},{\boldsymbol{y}},{\boldsymbol{v}})\right\|_{L^2}\lesssim \left\| \Psi_1(\boldsymbol{\lambda},\boldsymbol{b},{\boldsymbol{y}},{\boldsymbol{v}})\right\|_{\dot{H}^1}\\\lesssim& \left(\frac{|\lambda_j'+b_j|b_j}{\lambda_j^2}+\frac{|\boldsymbol{y}_j'+\boldsymbol{v}_j|b_j}{\lambda_j^2}+\frac{|b_j\lambda_j'|}{2\log(t/\lambda_j)\lambda_j^2}+\frac{|B_j(\boldsymbol{\lambda})|}{\log(t/\lambda_j)\lambda_j}\right)\lesssim \frac{1}{t^{\frac{2}{3}}}.
\end{aligned} 
\end{equation*}
Therefore, Cauchy inequality yields
\begin{equation*}
    \left|2b_k\lelan\underline{A}_k \Psi_1(\boldsymbol{\lambda},\boldsymbol{b},{\boldsymbol{y}},{\boldsymbol{v}}),\ML_{\phi}h  \rilan\right|\lesssim |b_k|\left\|\underline{A}_k \Psi_1(\boldsymbol{\lambda},\boldsymbol{b},{\boldsymbol{y}},{\boldsymbol{v}})\right\|_{L^2}\left\|\ML_{\phi}h\right\|_{L^2}\lesssim t^{\frac{1}{2}}\lambda^2.
\end{equation*}
Furthermore, using $\underline{A}_k=\frac{1}{\lambda_k}\underline{\Lambda}$ when $|x-\boldsymbol{y}_k|\leq \lambda_k R$, we deduce
\begin{equation*}
    \begin{aligned}
        -2\sum_{j=1}^N\frac{b_kb_j'}{\lambda_j}\lelan \underline{A}_k\Lambda_jW_j,\ML_{\phi}h \rilan\simeq&-\frac{2b_kb_k'}{\lambda_k^2}\int_{|x-\boldsymbol{y}_k|\leq \lambda_kR}(\underline{\Lambda}\Lambda W)\ML_{\phi}hdx\\
        &-\frac{2b_kb_k'}{\lambda_k^2}\int_{|x-\boldsymbol{y}_k|\geq \lambda_kR}(\underline{A}_k\Lambda_kW_k)\ML_{\phi}hdx
    \end{aligned}
\end{equation*}
For the interior region, the argument used in (\ref{L phi 2 h partial t(h-g)}) gives the lower bound
\begin{equation*}
 -\frac{2b_kb_k'}{\lambda_k^2}\int_{|x-\boldsymbol{y}_k|\leq \lambda_kR}(\underline{\Lambda}\Lambda W)\ML_{\phi}hdx\geq  - C \frac{\left\|\ML_\phi h\right\|^2_{L^2(|x-\boldsymbol{y}_k|\leq \lambda_kR)}}{t^{\frac{1}{3}}\log M}. 
\end{equation*}
For the exterior region, the scaling invariance and (\ref{A and Lambda W nabla W}) provide 
\begin{equation*}
  -\frac{2b_kb_k'}{\lambda_k^2}\int_{|x-\boldsymbol{y}_k|\geq \lambda_kR}(\underline{A}_k\Lambda_kW_k)\ML_{\phi}hdx\geq -C\left(\eta+\frac{1}{R}\right)  \frac{\left\|\ML_\phi h\right\|^2_{L^2}}{t^{\frac{1}{3}}}.
\end{equation*}
Consequently, we conclude
\begin{equation*}
   2b_k\lelan \underline{A}_k \Psi(\boldsymbol{\lambda},\boldsymbol{b},{\boldsymbol{y}},{\boldsymbol{v}}), \ML_{\phi}h  \rilan \geq - C \frac{\left\|\ML_\phi h\right\|^2_{L^2(|x-\boldsymbol{y}_k|\leq \lambda_kR)}}{t^{\frac{1}{3}}\log M} -C\left(\eta+\frac{1}{R}\right)  \frac{\left\|\ML_\phi h\right\|^2_{L^2}}{t^{\frac{1}{3}}}.
\end{equation*}
Now we turn to the term involving $\epsilon(h,\dot{g})$. Recall the definition of $\epsilon(h,\dot{g})$ and the equation of $\PG$, it suffices to consider the term 
\begin{equation*}
    \sum_{j=1}^N\sum_{l=1}^42b_k\lelan \underline{A}_k \left(-\frac{v'_{j,l}(\partial_lW)(\frac{\cdot-\boldsymbol{y}_j}{\lambda_j})}{\lambda_j^2}
   +\frac{v_{j,l}(t)y'_{j,m}(t)(\partial_l\partial_mW)(\frac{\cdot-\boldsymbol{y}_j}{\lambda_j})}{\lambda_j^3} \right),\ML_{\phi}h\rilan.
\end{equation*}
From (\ref{y'+v}) and (\rmnum{1}) of Lemma \ref{property of Ak}, it remains to estimate
\begin{equation*}
    -\frac{2b_k}{\lambda_k^2}\lelan \underline{A}_k\left(v'_{j,l}(\partial_lW)\left(\frac{\cdot-\boldsymbol{y}_j}{\lambda_j}\right)\right),\ML_{\phi}h \rilan ,
\end{equation*}
Using the same argument as above and (\ref{slight refine of v'}), this term is bounded below by 
\begin{equation*}
   - C \frac{\left\|\ML_\phi h\right\|^2_{L^2(|x-\boldsymbol{y}_k|\leq \lambda_kR)}}{t^{\frac{1}{3}}M} -C\left(\eta+\frac{1}{R}\right)  \frac{\left\|\ML_\phi h\right\|^2_{L^2}}{t^{\frac{1}{3}}}.  
\end{equation*}
Collecting all these estimates, we arrive at the estimate for the fourth term:
\begin{equation*}\label{jk2' 4th term}
    2b_k\lelan \underline{A}_k\partial_t\dot{g}, \ML_{\phi}h \rilan\geq  - C \frac{\left\|\ML_\phi h\right\|^2_{L^2(|x-\boldsymbol{y}_k|\leq \lambda_kR)}}{t^{\frac{1}{3}}\log M} -C\left(\eta+\frac{1}{R}\right)  \frac{\left\|\ML_\phi h\right\|^2_{L^2}}{t^{\frac{1}{3}}}.
\end{equation*}

The fifth term on the right hand side of (\ref{J_k'2}) is dominated by
\begin{equation*}
    \left|2b_k \lelan \underline{A}_k\dot{g}, [\partial_t,\ML_{\phi}]h\rilan\right|= |12b_k\lelan \underline{A}_k\dot{g},(\partial_t\phi\phi) h\rilan|\lesssim |b_k|\left\|\dot{g}\right\|_{\dot{H}^1}\left\|h\right\|_{\dot{H}^2}\lesssim \lambda^{\frac{5}{2}},
\end{equation*}
and is thus negligible. Finally, inserting (\ref{g-dot g}) into the last term of (\ref{J_k'2}) we get
\begin{equation}\label{last of jk2'}
    \begin{aligned}
        2b_k\lelan \underline{A}_k\dot{g}, \ML_{\phi}\partial_th \rilan=&2b_k\lelan \underline{A}_k\dot{g}, \ML_{\phi}\dot{g} \rilan-2b_k\lelan \underline{A}_k\dot{g}, \ML_{\phi}\partial_t\PG \rilan\\
        +&2b_k\lelan \underline{A}_k\dot{g}, \ML_{\phi} \left(\sum_{j=1}^N \lambda_j^{-1}(\lambda_j'\Lambda_jW_j+b_j\Lambda_j W_j\chi_{t})\right)\rilan\\
        +&2b_k\lelan \underline{A}_k\dot{g},\ML_{\phi}\left(\sum_{j=1}^N \lambda_j^{-1}(\boldsymbol{y}_j'+\boldsymbol{v}_j)\cdot\nabla_j W_j\right) \rilan.
    \end{aligned}
\end{equation}
The last two terms are negligible due to (\ref{lambda'+b}), (\ref{y'+v}), and the $\Lambda W,\nabla W\in \ker\ML$. The argument is similar to (\ref{ML phi on trunction}) and we omit it. The term involving $\partial_t\PG$ is bounded by
\begin{equation*}
    \left|2b_k\lelan \underline{A}_k\dot{g}, \ML_{\phi}\partial_tP \rilan\right|\lesssim b_k\left\|\underline{A}_k\dot{g}\right\|_{L^2}\left\|\partial_t \PG\right\|_{{H}^2}\lesssim \lambda^{\frac{5}{2}}.
\end{equation*}
It remains to treat the first term on the right-hand side of (\ref{last of jk2'}). 
This term should be combined with the contribution obtained in (\ref{J k1' est and cancel}). 
Indeed, (\ref{J k1' est and cancel}) contains the opposite term $- b_k\langle \underline{A}_k\dot g,\ML_\phi\dot g\rangle$, while (\ref{last of jk2'}) contains two copies of it. Hence, after summing the estimates for $\mathcal J'_{k,1}$ and $\mathcal J'_{k,2}$ in the derivative of $\mathcal H_2$, only one copy remains. It is therefore enough to estimate
\begin{equation}\label{bk Ak dot g L phi dot g}
      b_k\lelan \underline{A}_k\dot{g}, \ML_{\phi}\dot{g} \rilan=-b_k\lelan \underline{A}_k\dot{g},\Delta \dot{g} \rilan-\frac{b_k}{4\lambda_k}\lelan \Delta q\left(\frac{x-\boldsymbol{y}_k}{\lambda_k}\right)\dot{g}, 3\phi^2 \dot{g}  \rilan-b_k\lelan A_k \dot{g},3\phi^2 \dot{g} \rilan.
\end{equation}
From (\ref{Ak key property 3}), the first two terms on the right hand side are bounded  below  by
\begin{equation*}
    \begin{aligned}
      & -\eta \frac{b_k}{\lambda_k}\left\|\dot{g}\right\|_{\dot{H}^1}^2+\frac{b_k}{\lambda_k}\left\{\int_{|x-\boldsymbol{y}_k|<R\lambda_k}|\nabla \dot{g}(x)|^2dx-\frac{1}{4}\lelan \Delta q(\frac{x-\boldsymbol{y}_k}{\lambda_k})\dot{g}, 3\phi^2 \dot{g}  \rilan \right\}.
    \end{aligned}
\end{equation*}
Furthermore, we claim 
\begin{equation}\label{Delta q 3 phi 2 claim}
    \left|\frac{1}{4}\lelan \Delta q(\frac{x-\boldsymbol{y}_k}{\lambda_k})\dot{g}, 3\phi^2 \dot{g}  \rilan-\int_{|x-\boldsymbol{y}_k|\leq \lambda_k R}3W_k^2\dot{g}^2(x)dx\right|\lesssim \frac{1}{R^2}t\lambda^2.
\end{equation}
To prove the claim, first from the smallness of $\left\|\PG\right\|_{{H}^2}$, we obtain
\begin{equation*}
    \left|\lelan \Delta q(\frac{x-\boldsymbol{y}_k}{\lambda_k})\dot{g}, \PG^2 \dot{g}  \rilan\right|\lesssim \left\|\dot{g}\right\|_{L^2}^2\left\|\PG\right\|_{H^2}\lesssim \lambda^{\frac{5}{2}}.
\end{equation*}
Hence, we can replace the $3\phi^2$ in (\ref{Delta q 3 phi 2 claim}) with $3(\sum_{j=1}^NW_j)^2$. Then recall that when $|x-\boldsymbol{y}_k|\geq \tilde{R}\lambda_k$, $\Delta q(\frac{x-\boldsymbol{y}_k}{\lambda_k})=O({\lambda_k^2}/{|x-\boldsymbol{y}_k|^2})$. It holds
\begin{equation*}
    \begin{aligned}
    \left| \int_{|x-\boldsymbol{y}_k|\geq \tilde{R}\lambda_k}\Delta q(\frac{x-\boldsymbol{y}_k}{\lambda_k})\dot{g}^2W_k^2dx\right|\lesssim \int_{|x-\boldsymbol{y}_k|\geq \tilde{R}\lambda_k}\frac{\lambda_k^4}{|x-\boldsymbol{y}_k|^4} \frac{\dot{g}^2}{|x-\boldsymbol{y}_k|^2}dx\lesssim \frac{1}{\tilde{R}^4}\left\|\dot{g}\right\|_{\dot{H}^1},  
    \end{aligned}
\end{equation*}
where Hardy's inequality is used. Moreover, for $j\neq k$, the main contribution lies in the region $|x-\boldsymbol{y}_j|\leq d$ and is bounded by
\begin{equation*}
    \left| \int_{|x-\boldsymbol{y}_j|\leq d\lambda_k}\Delta q(\frac{x-\boldsymbol{y}_k}{\lambda_k})\dot{g}^2W_j^2dx\right|\lesssim \int \frac{\lambda_k^2}{|\boldsymbol{y}_k-\boldsymbol{y}_j|^2\lambda_j^2}\dot{g}^2dx\lesssim \left\|\dot{g}\right\|_{L^2}.
\end{equation*}
In the interior region $|x-\boldsymbol{y}_k|\leq \tilde{R}$, since $|\Delta q|\leq C$, the multi-bubble interaction is dominated by
\begin{equation*}
    \left|\int_{|x-\boldsymbol{y}_k|\leq \tilde{R}\lambda_k}\Delta q(\frac{x-\boldsymbol{y}_k}{\lambda_k})\dot{g}^2W_jW_kdx \right|\lesssim \int\frac{\lambda_j}{|\boldsymbol{y}_k-\boldsymbol{y}_j|^2\lambda_k}\dot{g}^2dx\lesssim \left\|\dot{g}\right\|_{L^2}^2.
\end{equation*}
Also notice $\Delta q(x)=4$ when $|x|\leq R$, Cauchy inequality then yields
\begin{equation*}
    \begin{aligned}
        &\left|\frac{1}{4}\lelan \Delta q(\frac{x-\boldsymbol{y}_k}{\lambda_k})\dot{g}, 3W_k^2 \dot{g}  \rilan-\int_{|x-\boldsymbol{y}_k|\leq \lambda_k R}3W_k^2\dot{g}^2(x)dx\right|\\\lesssim& \left|\int_{R\lambda_k\leq|x-\boldsymbol{y}_k|\leq \tilde{R}\lambda_k}\Delta q(\frac{x-\boldsymbol{y}_k}{\lambda_k})\dot{g}^2W_k^2dx\right|
        \lesssim \left\|\dot{g}\right\|_{\dot{H}^1}^2\left\|W_k\right\|_{L^4(|x-\boldsymbol{y}_k|\geq R\lambda_k)}^2\leq \frac{1}{R^2}\left\|\dot{g}\right\|_{\dot{H}^1}^2.
    \end{aligned}
\end{equation*}
Collecting these estimates, the claim (\ref{Delta q 3 phi 2 claim}) is proved. Plugging the claim back into (\ref{bk Ak dot g L phi dot g}) and using Lemma \ref{Lemma 9 of 5D multi}, we derive
\begin{equation*}
    \begin{aligned}
     &-b_k\lelan \underline{A}_k\dot{g},\Delta \dot{g} \rilan-\frac{b_k}{4\lambda_k}\lelan \Delta q(\frac{x-\boldsymbol{y}_k}{\lambda_k})\dot{g}, 3\phi^2 \dot{g}  \rilan\\\geq&     \frac{b_k}{\lambda_k}\int_{|x-\boldsymbol{y}_k|
     \leq \lambda_kR}\left(|\nabla \dot{g}|^2-3W_k^2\dot{g}\right)dx-C\left(\eta+\frac{1}{R}\right)t^{\frac{2}{3}}\lambda^2.
    \end{aligned}
\end{equation*}
Applying Lemma \ref{Lemma 9 of 5D multi} and the bootstrap assumption of stable/unstable direction (\ref{bootstrap a for unstable})-(\ref{bootstrap a for stable}), the first term on the right hand side can be refined as 
\begin{equation}\label{Jk1'.1 first line}
   \begin{aligned}
  &   \frac{b_k}{\lambda_k}\int_{|x-\boldsymbol{y}_k|
     \leq \lambda_kR}\left(|\nabla \dot{g}|^2-3W_k^2\dot{g}\right)dx
    \geq\frac{b_k}{\lambda_k}\left(-\eta \left\|\dot{g}\right\|_{\dot{H}^1}^2+\lelan \frac{1}{\lambda_k^2}Y_k,\dot{g} \rilan^2\right)\\
    \geq& -\eta t^{\frac{2}{3}}\lambda^2-\frac{b_k}{\lambda_k}\left(\frac{a_k^{\pm}}{\lambda_k}\right)^2\geq -\eta t^{\frac{2}{3}}\lambda^2-\frac{t^{\frac{1}{2}}\lambda^4}{t^{\frac{1}{3}}\lambda^2}\geq  -\eta t^{\frac{2}{3}}\lambda^2.
   \end{aligned}
\end{equation}
Returning to (\ref{bk Ak dot g L phi dot g}), it remains to handle the term $-b_k\lelan A_k \dot{g},3\phi^2 \dot{g} \rilan$. From (\ref{Ak key property 1}), we obtain
\begin{equation*}
    -b_k\lelan A_k \dot{g},3\phi^2 \dot{g} \rilan=b_k\lelan {A}_k \dot{g},3\phi\dot{g}^2 +\dot{g}^3\rilan+b_k\lelan A_k\phi,3\phi\dot{g}^2+\dot{g}^3 \rilan.
\end{equation*}
The definition of $A_k$ and Lemma \ref{function q} (7) imply
\begin{equation*}
    \begin{aligned}
      \left|b_k\lelan {A}_k \dot{g},3\phi\dot{g}^2 +\dot{g}^3\rilan+b_k\lelan A_k\phi,\dot{g}^3 \rilan\right|\lesssim |b_k|\frac{1}{\lambda_k}\left(\left\|\dot{g}\right\|_{\dot{H}^1}^3+\left\|\dot{g}\right\|_{\dot{H}^1}^4\right)\lesssim \lambda^{\frac{5}{2}}.  
    \end{aligned}
\end{equation*}
For the remaining part, we claim
\begin{equation}\label{[partial t, L phi] and the Jk1'}
    \frac{1}{2}\lelan [\partial_t,\ML_{\phi}]\dot{g}, \dot{g} \rilan+\sum_{k=1}^Nb_k\lelan A_k \phi, 3\phi\dot{g}^2 \rilan \geq -\frac{C}{R}{t^{\frac{2}{3}}\lambda^2}.
\end{equation}
To prove this, first a direct computation provides
\begin{equation*}
    \frac{1}{2}\lelan [\partial_t,\ML_{\phi}]\dot{g}, \dot{g} \rilan=\sum_{k=1}^N \frac{\lambda_k'}{\lambda_k}\lelan \Lambda_k W_k, 3\phi\dot{g}^2 \rilan +\sum_{k=1}^N\frac{1}{\lambda_k}\lelan \boldsymbol{y}_k'\cdot \nabla_kW_k, 3\phi\dot{g}^2 \rilan-\lelan \partial_t \PG, 3\phi\dot{g}^2 \rilan.
\end{equation*}
The bootstrap assumption for $\boldsymbol{v}$ together with (\ref{y'+v}) and (\ref{partial t P h dot 1}) yields that
\begin{equation*}
   \left|\sum_{k=1}^N\frac{1}{\lambda_k}\lelan \boldsymbol{y}_k'\cdot \nabla_kW_k, 3\phi\dot{g}^2 \rilan-\lelan \partial_t \PG, 3\phi\dot{g}^2 \rilan\right| \lesssim \frac{|\boldsymbol{y}_k'|}{\lambda_k} \left\|\dot{g}\right\|_{\dot{H}^1}^2+\left\|\partial_t\PG\right\|_{L^4}\left\|\dot{g}\right\|_{\dot{H}^1}^2\lesssim \lambda^{\frac{5}{2}}.
\end{equation*}
Therefore, the problem is reduced to proving
\begin{equation*}
    \sum_{k=1}^N\lelan b_k A_k\phi+\frac{\lambda_k'}{\lambda_k}\Lambda_kW_k , 3\phi\dot{g}^2\rilan\gtrsim -\frac{1}{R}t^{\frac{2}{3}}\lambda^2.
\end{equation*}
Since $A_k=\frac{1}{\lambda_k}\Lambda$ when $|x-\boldsymbol{y}_k|\leq R\lambda_k$, the left-hand side is thus bounded by
\begin{equation*}
    \left| \int_{|x-\boldsymbol{y}_k|\leq R\lambda_k}\frac{\lambda_k'+b_k}{\lambda_k}\Lambda_kW_k, 3\phi \dot{g}^2dx\right|\lesssim \frac{|\lambda_k'+b_k|}{\lambda_k}\left\|\dot{g}\right\|_{\dot{H}^1}\lesssim \lambda^{\frac{5}{2}}
\end{equation*}
in the interior region. On the exterior region  $|x-\boldsymbol{y}_k|\geq R\lambda_k$, from (7) of Lemma \ref{function q} and Cauchy, it holds
\begin{equation*}
    \begin{aligned}
        &\left|\int_{|x-\boldsymbol{y}_k|\geq R\lambda_k}\left(b_kA\phi+\frac{\lambda_k'}{\lambda_k}\Lambda_kW_k\right)3\phi\dot{g}^2dx\right|\\
        \lesssim& \frac{|\lambda_k'|+|b_k|}{\lambda_k}\left(\int_{|x-\boldsymbol{y}_k|\geq R\lambda_k}\left(\frac{\lambda_k}{|x|^2}\right)^4dx\right)^{\frac{1}{4}}\left\|\phi\right\|_{\dot{H}^1}\left\|\dot{g}\right\|_{\dot{H}^1}^2\lesssim \frac{1}{R}t^{\frac{2}{3}}\lambda^2.
    \end{aligned}
\end{equation*}
Consequently,  (\ref{[partial t, L phi] and the Jk1'}) is proved. Now inserting (\ref{Jk1'.1 first line}) and (\ref{[partial t, L phi] and the Jk1'}) back into (\ref{bk Ak dot g L phi dot g})
\begin{equation*}\label{-bk Ak Lphi with 1/2 [,Lphi]}
    \sum_{k=1}^Nb_k\lelan \underline{A}_k\dot{g}, \ML_{\phi}\dot{g} \rilan+\frac{1}{2}\lelan [\partial_t,\ML_{\phi}]\dot{g}, \dot{g} \rilan\geq-C\left(\eta+ \frac{1}{R}\right)t^{\frac{2}{3}}\lambda^2.
\end{equation*}

Combining the estimates above, we obtain
\begin{equation}\label{Jk 2' final estmate}
\begin{aligned}
    &\sum_{k=1}^N\mathcal{J}_{k,2}'-\sum_{k=1}^Nb_k\lelan \underline{A}_k\dot{g}, \ML_{\phi}\dot{g} \rilan+\frac{1}{2}\lelan [\partial_t,\ML_{\phi}]\dot{g}, \dot{g}\ \rilan \\
    \geq& - C \frac{\left\|\ML_\phi h\right\|^2_{L^2(|x-\boldsymbol{y}_k|\leq \lambda_kR)}}{t^{\frac{1}{3}}\log M} -C\left(\eta+ \frac{1}{R}\right)t^{\frac{2}{3}}\lambda^2.
\end{aligned}
\end{equation}

\textbf{Step 4. Analysis of $\mathcal{H}_2'$.} Combining (\ref{final est for I2'}), (\ref{J k1' est and cancel}) and (\ref{Jk 2' final estmate}) together yields
\begin{equation*}
    \begin{aligned}
        \mathcal{I}_2'+\sum_{k=1}^N\mathcal{J}_{k,1}'+\sum_{k=1}^N\mathcal{J}_{k,2}'\geq& \sum_{k=1}^N\frac{2b_k}{\lambda_k}\left\|\ML_\phi h\right\|^2_{L^2(|x-\boldsymbol{y}_k|\leq \lambda_kR)} -C\frac{\left\|\ML_\phi h\right\|^2_{L^2(|x-\boldsymbol{y}_k|\leq \lambda_kR)}}{t^{\frac{1}{3}}\log M}\\
        &-C \frac{\left\|\ML_\phi h\right\|^2_{L^2(|x-\boldsymbol{y}_k|\leq \lambda_kR)}}{t^{\frac{1}{3}}M}-C\left(\eta+ \frac{1}{R}\right){t^{\frac{2}{3}}\lambda^2}.
    \end{aligned}
\end{equation*}
From (\ref{bootstrap a for b}) and (\ref{lambda'+b}), $\frac{2b_k}{\lambda_k}\geq \frac{1}{3}t^{-\frac{1}{3}}$. Then by choosing $M$, which comes from orthogonal condition (\ref{orthogonal condition 1}) and (\ref{orthogonal condition 2}), be a large constant such that
\begin{equation*}
    \frac{C}{M}+\frac{C}{\log M}\leq \frac{1}{10},
\end{equation*}
it holds that
\begin{equation*}
    \mathcal{H}'_2(t)\geq -C\eta{t^{\frac{2}{3}}\lambda(t)^2}.
\end{equation*}
which completes the proof of (\ref{H 2}) provided $R$ is chosen sufficiently large and $\eta$ sufficiently small.
\end{proof}
We are now ready to close the energy part of the bootstrap. The argument is the
standard backward-in-time integration of the modified energies.

\begin{lemma}\label{final boots for energy}
For all \(t\in[T_*,T]\), it holds
\[
\|\vec g(t)\|_{\mathcal E}\le \frac12 t^{-\frac16}\lambda(t),
\qquad
\|\vec g(t)\|_{\mathcal E_2}\le \frac12 t^{\frac12}\lambda(t).
\]
\end{lemma}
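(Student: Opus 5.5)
The plan is to integrate the two differential inequalities of Lemma \ref{first order energy est lemma} and Lemma \ref{energy est 2nd} backward from $t=T$. We then convert the resulting bounds on the functionals into bounds on $\|\vec g\|_{\mathcal E}$ and $\|\vec g\|_{\mathcal E_2}$ using the coercivity results of Section 3.

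\textbf{First order energy.} Write $\mathcal I_1(T)-\mathcal I_1(t)=\int_t^T\mathcal I_1'(s)\,ds\ge -C\int_t^T s^{-3/4}\lambda(s)^2ds$. By (\ref{bootstrap a for lambda}), $\lambda(s)^2=e^{-2\alpha(s)}$ with $\alpha'(s)\sim s^{-1/3}$, so $\lambda^2$ decreases stretched-exponentially. Hence $\int_t^T s^{-3/4}\lambda^2\,ds\lesssim s^{-3/4}s^{1/3}\lambda(t)^2=t^{-5/12}\lambda(t)^2$, since integrating against $e^{-2\alpha}$ gains a factor $(\alpha')^{-1}$. At $t=T$, (\ref{initial data of energy}) gives $\mathcal I_1(T)\lesssim T^{1/2}\lambda(T)^4$, which is negligible. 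Therefore $\mathcal I_1(t)\lesssim t^{-5/12}\lambda(t)^2$. To pass to $\|(\tilde h,\dot h)\|^2$, expand $\mathcal I_1=\frac12\|\dot h\|^2+\frac12\langle \mathcal L_{\tilde\phi}\tilde h,\tilde h\rangle+O(\|\tilde h\|_{\dot H^1}^3)$ and apply Lemma \ref{near multi bubble coercivity}. This leaves the orthogonality defects $\langle\Delta\Lambda W\chi_M,\cdot\rangle$ and $\langle\Delta\nabla W,\cdot\rangle$ of $\tilde h=g-\TPG$, which are bounded by $\|\TPG\|_{\dot H^1}\lesssim t^{-2/3}\log t\,\lambda$ from (\ref{tpg h dot 1}), together with the unstable/stable components $\tilde a^\pm$. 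For the latter we use (\ref{bootstrap a for unstable})--(\ref{bootstrap a for stable}) and (\ref{a and tilde a}) to get $|\tilde a^\pm|\lesssim t^{1/4}\lambda^2$, which is negligible. Combining these with (\ref{1 energy norm of th dot h})-type comparisons between $(g,\dot g)$ and $(\tilde h,\dot h)$ yields $\|\vec g\|_{\mathcal E}^2\lesssim t^{-5/12}\lambda^2+t^{-1}(\log t)\lambda^2$. Since $t^{-5/12}\ll t^{-1/3}$, this gives $\|\vec g\|_{\mathcal E}\le\frac12 t^{-1/6}\lambda$ for $T_0$ large.

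\textbf{Second order energy.} Similarly, $\mathcal H_2(t)\le\mathcal H_2(T)+C\delta\int_t^T s^{2/3}\lambda^2\,ds\lesssim \mathcal H_2(T)+C\delta\, t\lambda(t)^2$, using the same gain of $s^{1/3}$. At $T$, $\mathcal H_2(T)\lesssim T^{1/2}\lambda(T)^2+\|P_\Gamma\|_{H^2}^2$, which is $\lesssim T^{1/2}\lambda^2$ after accounting for $h=g-P_\Gamma$ and (\ref{pg h dot 2}).

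\textbf{Main obstacle: lower-bounding $\mathcal H_2$.} This step is delicate. The correction terms satisfy $|\mathcal J_k|\lesssim |b_k|\,\|\underline A_k h\|_{\dot H^1}\|\dot g\|_{\dot H^1}\lesssim t^{-1/3}\lambda\,(\tilde R\|\mathcal L_k h\|+\lambda^{-1}\|h\|_{\dot H^1})\|\dot g\|_{\dot H^1}$, using (\ref{Lk 2 bounded for Ak}). The factor $t^{-1/3}$ lets these be absorbed into $\mathcal I_2$ once $T_0$ is large, given $\tilde R$. For $\mathcal I_2$ itself:
\begin{itemize}
\item the term $\frac12\langle\mathcal L_\phi^2 h,h\rangle$ is bounded below via Corollary \ref{coercivity est with LKG LKG}, localized near each bubble, plus Lemma \ref{coercivity proposition for second order operator}. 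The orthogonality defects of $h$ are controlled by $\|P_\Gamma\|_{H^2}$ and by (\ref{orthogonal condition 1}).
\item the term $\frac12\langle\mathcal L_\phi\dot g,\dot g\rangle$ is bounded below via the second inequality of Lemma \ref{near multi bubble coercivity} applied to $\dot g$, using (\ref{orthogonal condition 2}).
\item the negative direction $\langle \lambda^{-2}Y_k,\dot g\rangle$ is not directly controlled by $a^\pm$, since $a^\pm$ pairs $Y$ with $g$ and $\dot g$ jointly. We would try to bound it by $\lambda^{-1}|a^\pm|+\lambda^{-2}\|g\|_{\dot H^1}$-type terms. This gives $\lesssim t^{1/4}\lambda+t^{-1/6}\lambda^{-1}\lambda$, which may be too large. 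If so, we will instead lower-bound using $\langle \mathcal L_\phi \dot g,\dot g\rangle\ge \eta\|\dot g\|_{\dot H^1}^2-C\lambda^{-2}\|\dot g\|_{L^2}^2$-style subcoercivity, with $\|\dot g\|_{L^2}\le t^{-1/6}\lambda$ from the improved first-order bound. That yields a $t^{-1/3}$-size error, which is acceptable.
\end{itemize}

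\textbf{Conclusion.} Putting these together gives $\|\vec g\|_{\mathcal E_2}^2\lesssim T^{1/2}\lambda(T)^2+\delta t\lambda^2+O(1)\cdot\text{lower order}$. Choosing $\delta$ small, and hence $\epsilon$, $R$, $M$ as in Lemma \ref{energy est 2nd}, gives $\|\vec g\|_{\mathcal E_2}\le\frac12 t^{1/2}\lambda$.
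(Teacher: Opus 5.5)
The overall architecture is the paper's: backward integration of $\mathcal I_1'$ and $\mathcal H_2'$, using the gain $\int_t^T s^a\lambda(s)^2\,ds\lesssim t^{a+1/3}\lambda(t)^2$, then coercivity, with $\mathcal J_k$ absorbed because $b/\lambda\sim t^{-1/3}$. The second-order conversion, however, fails at exactly the step you flag, and your fallback does not repair it.

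The target is $\|\vec g\|_{\mathcal E_2}^2\le\frac14 t\lambda(t)^2$, which is exponentially small because $\lambda=e^{-\xi t^{2/3}+O(t^{1/3})}$. The subcoercivity bound $\langle\mathcal L_\phi\dot g,\dot g\rangle\ge\eta\|\dot g\|_{\dot H^1}^2-C\lambda^{-2}\|\dot g\|_{L^2}^2$, with $\|\dot g\|_{L^2}\le t^{-1/6}\lambda$, leaves an error of absolute size $Ct^{-1/3}$. That is only polynomially small, hence vastly larger than $t\lambda^2$, so it is not acceptable.

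Your first attempt fails for the same reason. Writing $\lambda^{-1}\langle Y_k,\dot g\rangle=2a^+-\nu\lambda^{-2}\langle Y_k,g\rangle$ and bounding the last pairing by $\|g\|_{\dot H^1}$ gives $\langle\lambda^{-2}Y_k,\dot g\rangle^2\lesssim t^{1/2}\lambda^2+t^{-1/3}$. Localizing the subcoercive term near the bubble does not help either, since $\lambda^{-2}\|\dot g\|_{L^2(|x-\boldsymbol y_k|\lesssim\lambda)}^2\lesssim\|\dot g\|_{\dot H^1}^2$ carries no smallness.

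The missing observation is that your premise is false: the $a^\pm$ do isolate the $\dot g$-component. By (\ref{Z_k and Y_k}), $\vec Z_k^+$ and $\vec Z_k^-$ have the same first component $\frac{\nu}{2}\lambda_k^{-2}Y_k$. Hence $a^+-a^-=\lambda^{-1}\langle Y_k,\dot g\rangle$ exactly, and (\ref{bootstrap a for unstable})--(\ref{bootstrap a for stable}) give
\[
\langle\lambda^{-2}Y_k,\dot g\rangle^2=\lambda^{-2}(a^+-a^-)^2\le 4t^{1/2}\lambda^2=o(t\lambda^2).
\]
This is precisely the $(a^\pm)^2/\lambda^2$ term in the paper's comparison (\ref{6.16})--(\ref{6.17}). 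With it, and with the $\Lambda W\chi_M$ and $\nabla W\chi_M$ directions removed by (\ref{orthogonal condition 2}), the second inequality of Lemma \ref{near multi bubble coercivity} applied to $\dot g$ closes the argument.

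Two smaller points.
\begin{itemize}
\item The gain in the time integrals needs the pointwise bound $\alpha'\gtrsim t^{-1/3}$. This comes from (\ref{bootstrap a for b}) together with (\ref{lambda'+b}) and (\ref{b and beta}). Using (\ref{bootstrap a for lambda}) alone only gives $\lambda(s)^2/\lambda(t)^2\le e^{-2\xi(s^{2/3}-t^{2/3})+O(t^{1/3})}$, which loses a factor $e^{O(t^{1/3})}$.
\item $\mathcal I_1(T)$ is not $O(T^{1/2}\lambda(T)^4)$, since $\tilde h(T)=g(T)-\TPG(T)$ and (\ref{tpg h dot 1}) gives $\|\TPG\|_{\dot H^1}\lesssim T^{-2/3}\log T\,\lambda(T)$. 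It is still $o(t^{-1/3}\lambda(t)^2)$, so this is harmless.
\end{itemize}
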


\begin{proof}
We first record an elementary consequence of the scale and velocity bootstrap.
By the definition of \(\alpha\),
\[
\alpha'=-\frac{\lambda'}{\lambda}.
\]
Using the modulation estimates for \(\lambda'+b\) and \(\beta-b\), together
with \(m=\beta/\lambda\), we have
\[
\alpha'=m+O(t^{-\frac56}\sqrt{\log t}).
\]
The bootstrap estimate on \(m-\mu\), together with \(\mu(t)\sim t^{-1/3}\),
therefore gives
\[
\alpha'(t)\ge c_0 t^{-\frac13},
\]
where $c_0>0$, for \(T_0\) sufficiently large. Hence, for \(t\le s\le T\),
\[
\lambda(s)^2
=
\lambda(t)^2\exp\left(-2\int_t^s\alpha'(\tau)\,d\tau\right)
\le
\lambda(t)^2 e^{-c(s^{2/3}-t^{2/3})}.
\]
Consequently, for any fixed \(a\),
\begin{equation}\label{6.14}
  \int_t^T s^a\lambda(s)^2\,ds
\lesssim
t^{a+\frac13}\lambda(t)^2.
\end{equation}

We next recall the comparison between the modified energies and the remainder
norms. By the coercivity estimates, the orthogonality conditions, and the
estimates on the correction profiles, the lower order terms in the modified
energies are perturbative. Thus, for \(T_0\) sufficiently large,
\begin{equation}\label{6.15}
  \|\vec g(t)\|_{\mathcal E}^2
\lesssim
\mathcal I_1(t)+(\widetilde {a}^+(t))^2+(\widetilde {a}^-(t))^2+o\bigl(t^{-\frac13}\lambda(t)^2\bigr),  
\end{equation}
and
\begin{equation}\label{6.16}
\|\vec g(t)\|_{\mathcal E_2}^2
\lesssim
\mathcal H_2(t)+\frac{( a^+(t))^2}{\lambda(t)^2}+\frac{( a^-(t))^2}{\lambda(t)^2}
+o\bigl(t\lambda(t)^2\bigr).   
\end{equation}
Moreover, by the bootstrap assumption on \(a^\pm\) and the estimate comparing
\(a^\pm\) with \(\widetilde a^\pm\), the unstable contributions satisfy
\begin{equation}\label{6.17}
  (\widetilde a^\pm(t))^2=o\bigl(t^{-\frac13}\lambda(t)^2\bigr),
\qquad
\frac{(a^+(t))^2}{\lambda(t)^2}
=o\bigl(t\lambda(t)^2\bigr).  
\end{equation}

We now integrate the first order energy estimate. Lemma \ref{first order energy est lemma}  gives
\[
\mathcal I_1'(t)\gtrsim -t^{-\frac34}\lambda(t)^2.
\]
Therefore, using \((\ref{6.14})\),
\[
\mathcal I_1(t)
\le
\mathcal I_1(T)+C\int_t^T s^{-\frac34}\lambda(s)^2\,ds
\le
\mathcal I_1(T)+C t^{-\frac5{12}}\lambda(t)^2.
\]
By the terminal data estimate in Lemma \ref{choice of u(T)}, \(\mathcal I_1(T)\) is lower order
with respect to \(t^{-1/3}\lambda(t)^2\). Since
\[
t^{-\frac5{12}}\lambda(t)^2=o\bigl(t^{-\frac13}\lambda(t)^2\bigr),
\]
we get, after increasing \(T_0\),
\[
\mathcal I_1(t)\le o\bigl(t^{-\frac13}\lambda(t)^2\bigr).
\]
Combining this with \((\ref{6.15})\) and \((\ref{6.17})\), we obtain
\[
\|\vec g(t)\|_{\mathcal E}^2
\le
\frac14 t^{-\frac13}\lambda(t)^2.
\]

For the second order energy, Lemma 6.4 gives, for every \(\eta>0\), after
choosing the auxiliary parameter \(\epsilon>0\) sufficiently small,
\[
\mathcal H_2'(t)\geq -C\eta\, t^{\frac23}\lambda(t)^2.
\]
Thus, by (\ref{6.14}),
\[
\mathcal H_2(t)
\le
\mathcal H_2(T)
+C\eta\int_t^T s^{\frac23}\lambda(s)^2\,ds
\le
\mathcal H_2(T)+C\eta\, t\lambda(t)^2.
\]
Again the terminal contribution \(\mathcal H_2(T)\) is lower order compared
with \(t\lambda(t)^2\). Choosing \(\eta>0\) sufficiently small and then
increasing \(T_0\), we obtain
\[
\mathcal H_2(t)\le o\bigl(t\lambda(t)^2\bigr).
\]
Using \((\ref{6.16})\)--\((\ref{6.17})\), this yields
\[
\|\vec g(t)\|_{\mathcal E_2}^2
\le
\frac14 t\lambda(t)^2.
\]
Taking square roots proves the lemma.
\end{proof}

\section{Proof of Theorem \ref{multi-bubble solution}}
\subsection{Proof of Proposition \ref{main step}.}
To prove our main result, we need to first close all the bootstrap estimates in Proposition \ref{main step}. The estimates (\ref{bootstap a for energy}) and (\ref{bootstap a for 2nd energy}) were proved in the last section. We next consider the scaling velocity parameter, the translation parameters and the stable direction. 
\begin{lemma}\label{close of the scaling and trans para}For all  $t\in [T_*,T]$ it holds
\begin{equation}\label{bootstrap a for b close time}
    |m(t)-\mu(t)|\leq \frac{C}{2}t^{-\frac{2}{3}},
\end{equation}
\begin{equation}\label{bootstrap a for y close time}
   |\boldsymbol{y}_k(t)-\boldsymbol{z}_k|\leq \frac{1}{2}t^{\frac{5}{2}}\lambda(t)^2, 
\end{equation}
\begin{equation}\label{bootstrap a for v close time}
  |\boldsymbol{v}_k(t)|\leq \frac{1}{2}t^{\frac{7}{6}}\lambda(t)^2,
  \end{equation}
and
\begin{equation}\label{close of a+}
   (a^+(t))^2\leq \frac{1}{2} t^{\frac{1}{2}}\lambda(t)^4,
\end{equation} 
\end{lemma}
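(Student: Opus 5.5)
Each of the four bounds will be closed by integrating the modulation system of Lemma \ref{close the bootstrap for parameters system} backward from the terminal time $T$, where the data of Lemma \ref{choice of u(T)} give $\boldsymbol{y}_k(T)=\boldsymbol{z}_k$, $\boldsymbol{v}_k(T)=0$, $a^+(T)=0$ and, by (\ref{m T- mu T}), $|m(T)-\mu(T)|=o(T^{-2/3})$. Throughout, the decay estimate (\ref{6.14}), $\int_t^T s^a\lambda(s)^2ds\lesssim t^{a+1/3}\lambda(t)^2$, turns pointwise derivative bounds into bounds with a gain of $t^{1/3}$.

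\emph{Translation parameters.} By (\ref{s'+=to}), $|\boldsymbol{s}_k'|\lesssim |\boldsymbol{D}_k|+t^{2/3}\lambda^2\lesssim t^{2/3}\lambda^2$. By (\ref{v and s}), $|\boldsymbol{s}_k(T)|\lesssim T\lambda(T)^2$, which is much smaller than $t\lambda(t)^2$ for $t<T$ because $\lambda$ decreases stretched-exponentially. Integrating with (\ref{6.14}) gives $|\boldsymbol{s}_k(t)|\lesssim t\lambda(t)^2$, hence $|\boldsymbol{v}_k(t)|\lesssim t\lambda^2\le\frac12 t^{7/6}\lambda^2$ for $T_0$ large. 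Next, (\ref{y'+v}) gives $|\boldsymbol{y}_k'|\lesssim t^{7/6}\lambda^2+t^{1/2}\lambda^2$. Integrating once more yields $|\boldsymbol{y}_k-\boldsymbol{z}_k|\lesssim t^{3/2}\lambda^2\le\frac12t^{5/2}\lambda^2$, which proves (\ref{bootstrap a for y close time}) and (\ref{bootstrap a for v close time}).

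\emph{Unstable direction.} By (\ref{a' also satisfies}), $(a^+)'=\nu\lambda^{-1}a^++O(\lambda)$, and $a^+(T)=0$. I would differentiate $(a^+)^2 e^{-\nu\int^t\lambda^{-1}}$, or more simply argue as follows. Where $(a^+)^2$ would equal $\frac12 t^{1/2}\lambda^4$, the equation forces $\frac{d}{dt}(a^+)^2=2\nu\lambda^{-1}(a^+)^2+O(\lambda|a^+|)\ge \nu\lambda^{-1}(a^+)^2>0$, since $\lambda|a^+|\sim t^{1/4}\lambda^3\ll\lambda^{-1}t^{1/2}\lambda^4$. The threshold $\frac12t^{1/2}\lambda^4$ itself decreases in $t$. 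So moving backward in time, $(a^+)^2$ decreases while the threshold increases, and the bound cannot be reached starting from $a^+(T)=0$. This gives (\ref{close of a+}).

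\emph{Scale velocity: the main step.} Set $\sigma=m-\mu$. From (\ref{beta'+=to}) and $\lambda'=-\beta+O(t^{1/2}\lambda^2)$, I compute
\[
m'=\frac{\beta'}{\lambda}+\frac{\beta^2}{\lambda^2}+O(t^{1/6}\lambda)
=\frac{1}{\Xi}\Bigl(\bigl(\Xi-\tfrac12\bigr)m^2-c\Bigr)+O(t^{-11/6}\sqrt{\log t}).
\]
Using $c=\Xi\mu^2$, this becomes $m'=(m^2-\mu^2)-\frac{m^2}{2\Xi}+O(\cdot)=2\mu\sigma+\sigma^2-\frac{\mu^2}{2\Xi}+O(\cdot)$. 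Also $\mu'=-\frac{\mu\,\Xi'}{2\Xi}$ with $\Xi'=\alpha'+O(1/t)=m+O(t^{-5/6})$, so $\mu'=O(t^{-4/3})$, and $\mu^2/\Xi\sim t^{-4/3}$. Hence
\[
\sigma'=2\mu\sigma+O(\sigma^2)+O(t^{-4/3}).
\]
This is a repulsive linear equation forward in time, with rate $2\mu\sim t^{-1/3}$, so it is stable when integrated backward. If at some time $|\sigma|=\frac C2t^{-2/3}$, then $2\mu|\sigma|\sim Ct^{-1}$, which dominates $O(t^{-4/3})+C^2t^{-4/3}$. Thus $\frac{d}{dt}|\sigma|>0$ there, exceeding the derivative $\frac{d}{dt}\bigl(\frac C2t^{-2/3}\bigr)<0$ of the threshold. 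Since $|\sigma(T)|=o(T^{-2/3})$, a backward continuity argument gives (\ref{bootstrap a for b close time}).

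The delicate point is verifying that every error is $o(t^{-1})$ relative to the linear term. This includes the $O(\log t)$ discrepancy between $\Xi$ and $\alpha$, which enters only through $\mu'$ and costs $t^{-1}\cdot t^{-1/3}$, and the $\beta$-versus-$b$ errors, which are $t^{-7/6}\sqrt{\log t}$ after dividing by $\lambda\Xi$.
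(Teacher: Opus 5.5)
Your proof is correct. For $\boldsymbol{v}_k,\boldsymbol{y}_k$ and $a^+$ it follows the paper's argument: you integrate the $\boldsymbol{s}_k$ system backward from the terminal data, then $\boldsymbol{y}_k'$, and you run a first-exit barrier for $a^+$ using (\ref{a' also satisfies}) and $a^+(T)=0$. For $m-\mu$ you use the same backward-stable equation $\sigma'=(m+\mu)\sigma+\text{error}$ as the paper.

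There is one slip in the $m'$ computation. You wrote $\lambda'=-\beta+O(t^{1/2}\lambda^2)$, but (\ref{lambda'+b}) controls $\lambda'+b$, not $\lambda'+\beta$. Combined with (\ref{b and beta}), you only get $\lambda'=-\beta+O(t^{-5/6}\sqrt{\log t}\,\lambda)$. Hence $m'=\beta'/\lambda+m^2+O(t^{-7/6}\sqrt{\log t})$, and the forcing in the $\sigma$-equation is $O(t^{-7/6}\sqrt{\log t})$, not $O(t^{-4/3})$. This is exactly the size the paper obtains. It is still $o(t^{-1})$, so your barrier at $\frac C2 t^{-2/3}$ survives, and your closing remark suggests you had this error in mind. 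The displayed expansion should still be corrected.

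The one genuine difference is how the $\sigma$-equation is closed. You place a barrier at $\frac C2 t^{-2/3}$. The paper instead integrates the linear equation backward with the kernel $e^{-c_0(s^{2/3}-t^{2/3})}$ and obtains the sharper bound $|m-\mu|\lesssim t^{-5/6}\sqrt{\log t}$, which it records as $t^{-3/4}$.

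That sharper bound is not part of the statement, but the paper uses it right afterwards. It appears in (\ref{alpha' and m}) and in the transversality computation for $A_0$. There, on the face $|A_0|=1$, the term $T_1^{-1/3}(\alpha'-\mu)A_0$ must be smaller than $\tfrac23 T_1^{-1}$. The bound $\frac C2 t^{-2/3}$ alone does not guarantee this for a general constant $C$.

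Your argument proves exactly the stated estimate. To recover the downstream bound, rerun the same barrier with threshold $K t^{-5/6}\sqrt{\log t}$ for $K$ large. This works because $2\mu K t^{-5/6}\sqrt{\log t}\gtrsim K t^{-7/6}\sqrt{\log t}$ dominates the forcing, the threshold is decreasing, and $|\sigma(T)|=O(T^{-1}\log T)$ lies below it.
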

\begin{proof}
We first improve the bootstrap estimate (\ref{bootstrap a for b}). Recall that
\[
m(t)=\frac{\beta(t)}{\lambda(t)},\qquad \mu(t)=\left(\frac{c}{\Xi(t)}\right)^{1/2}.
\]
We claim that, on \([T_*,T]\),
\begin{equation}\label{m and mu refine}
   \left|m'-m^2+\mu^2\right|
\lesssim t^{-\frac76}\sqrt{\log t}. 
\end{equation}
Indeed, since
\[
\alpha'=-\frac{\lambda'}{\lambda},
\]
the estimates (\ref{lambda'+b}) and (\ref{b and beta}) give
\begin{equation*}
   \alpha'
=
\frac{b}{\lambda}+O(t^{\frac12}\lambda)
=
\frac{\beta}{\lambda}+O(t^{-\frac56}\sqrt{\log t})
=
m+O(t^{-\frac56}\sqrt{\log t}). 
\end{equation*}
Hence
\[
\frac{\beta'}{\lambda}
=
m'-m\alpha'
=
m'-m^2+O(t^{-\frac76}\sqrt{\log t}),
\]
where we used \(m=O(t^{-1/3})\), which follows from the bootstrap bound
\(|m-\mu|\le Ct^{-2/3}\) and \(\mu\sim t^{-1/3}\).

Dividing (\ref{beta'+=to}) by \(\lambda\), and using the last identity, we obtain
\[
\Xi(t)(m'-m^2)+\frac12 m^2+c
=
O(t^{-\frac12}\sqrt{\log t}).
\]
Since \(\Xi(t)\sim t^{2/3}\), this gives
\[
m'-m^2+\frac{c}{\Xi(t)}
=
O(t^{-\frac76}\sqrt{\log t})
+
O\left(\frac{m^2}{\Xi(t)}\right).
\]
The last term is \(O(t^{-4/3})\), hence is absorbed by the right-hand side.
Since \(c/\Xi=\mu^2\), the claim (\ref{m and mu refine}) follows. Set
\[
w(t):=m(t)-\mu(t).
\]
By the definition of \(\mu\), and using
\[
\Xi(t)=\alpha(t)+O(\log t),\qquad
\alpha(t)=\xi t^{2/3}+O(t^{1/3}),
\]
we have
\[
\mu(t)\sim t^{-1/3},\qquad |\mu'(t)|\lesssim t^{-4/3}.
\]
Combining this with (\ref{m and mu refine}), we obtain
\begin{equation*}\label{w' equal}
  w'
=
m'-\mu'
=
m^2-\mu^2+O(t^{-\frac76}\sqrt{\log t})-\mu'
=
(m+\mu)w+O(t^{-\frac76}\sqrt{\log t}).  
\end{equation*}
Moreover, by the bootstrap assumption (\ref{bootstrap a for b}),
\[
m+\mu\ge c_0 t^{-1/3}
\]
where $c_0>0$ is a constant for \(T_0\) sufficiently large. By (\ref{m T- mu T}), we have
\[
w(T)=m(T)-\mu(T)=O(T^{-1}\log T).
\]
Solving the differential inequality backward on \([t,T]\), we get
\[
|w(t)|
\lesssim
e^{-c_0(T^{2/3}-t^{2/3})}|w(T)|
+
\int_t^T
e^{-c_0(s^{2/3}-t^{2/3})}
s^{-\frac76}\sqrt{\log s}\,ds .
\]
The integral is bounded by
\[
\int_t^T
e^{-c_0(s^{2/3}-t^{2/3})}
s^{-\frac76}\sqrt{\log s}\,ds
\lesssim
t^{-\frac56}\sqrt{\log t}.
\]
Therefore
\begin{equation*}
    |m(t)-\mu(t)|
\lesssim
t^{-\frac56}\sqrt{\log t}
+
e^{-c(T^{2/3}-t^{2/3})}O(T^{-1}\log T)\lesssim t^{-\frac{3}{4}}.
\end{equation*}
Thus, for \(T_0\) sufficiently large,
\[
|m(t)-\mu(t)|\le \frac C2 t^{-2/3},
\]
which proves (\ref{bootstrap a for b close time}) and  closes the bootstrap estimate (\ref{bootstrap a for b}). 

It remains to close the translation estimates. From (\ref{m and mu refine}), (\ref{lambda'+b}), and
(\ref{b and beta}), we have
\begin{equation}\label{alpha' and m}
   \alpha'(t)
=
m(t)+O(t^{-\frac56}\sqrt{\log t})
=
\mu(t)+O(t^{-\frac34})
\gtrsim t^{-\frac13}. 
\end{equation}
Hence, for \(t\le \tau\le T\),
\[
\lambda(\tau)^2
\lesssim
\lambda(t)^2 e^{-c(\tau^{2/3}-t^{2/3})}.
\]
Consequently, for any fixed \(a\),
\begin{equation}\label{integration kernel}
 \int_t^T \tau^a\lambda(\tau)^2\,d\tau
\lesssim
t^{a+\frac13}\lambda(t)^2,
\qquad
T^a\lambda(T)^2\lesssim t^a\lambda(t)^2.   
\end{equation}
By Lemma \ref{close the bootstrap for parameters system} , we have
\[
|\boldsymbol v_k-\boldsymbol s_k|\lesssim t\lambda(t)^2,
\qquad
|-\boldsymbol s_k'+\boldsymbol D_k(\lambda,\boldsymbol y)|
\lesssim t^{\frac23}\lambda(t)^2.
\]
Since the centers stay close to the fixed separated configuration, the
definition of \(\boldsymbol D_k\) gives
\[
|\boldsymbol D_k(\lambda,\boldsymbol y)|\lesssim \lambda(t)^2.
\]
Thus
\[
|\boldsymbol s_k'(t)|\lesssim t^{\frac23}\lambda(t)^2.
\]
Moreover, by \( \boldsymbol v_k(T)=0\) and the estimate on
\(\boldsymbol v_k-\boldsymbol s_k\),
\[
|\boldsymbol s_k(T)|\lesssim T\lambda(T)^2.
\]
Integrating backward and using (\ref{integration kernel}), we obtain
\[
|\boldsymbol s_k(t)|
\le
|\boldsymbol s_k(T)|
+
\int_t^T |\boldsymbol s_k'(\tau)|\,d\tau
\lesssim
t\lambda(t)^2.
\]
Therefore
\[
|\boldsymbol v_k(t)|
\le
|\boldsymbol v_k(t)-\boldsymbol s_k(t)|
+
|\boldsymbol s_k(t)|
\lesssim
t\lambda(t)^2.
\]
Since \(t\lambda(t)^2=o(t^{7/6}\lambda(t)^2)\), after increasing \(T_0\) we get
\[
|\boldsymbol v_k(t)|
\le
\frac12 t^{\frac76}\lambda(t)^2,
\]
which proves (\ref{bootstrap a for v close time}) and closes the bootstrap estimate for \(\boldsymbol v_k\). Finally, using \(\boldsymbol y_k(T)=\boldsymbol{z}_k\) and (\ref{y'+v}) we have
\[
\begin{aligned}
|\boldsymbol y_k(t)-\boldsymbol{z}_k|
&\le
\int_t^T |\boldsymbol y_k'(\tau)+\boldsymbol v_k(\tau)|\,d\tau
+
\int_t^T |\boldsymbol v_k(\tau)|\,d\tau  \\
&\lesssim
\int_t^T \tau^{\frac12}\lambda(\tau)^2\,d\tau
+
\int_t^T \tau\lambda(\tau)^2\,d\tau  \\
&\lesssim
t^{\frac56}\lambda(t)^2+t^{\frac43}\lambda(t)^2
\lesssim
t^{\frac43}\lambda(t)^2.
\end{aligned}
\]
Since \(t^{4/3}\lambda(t)^2=o(t^{5/2}\lambda(t)^2)\), increasing \(T_0\) if
necessary gives
\[
|\boldsymbol y_k(t)-\boldsymbol z_k|
\le
\frac12 t^{\frac52}\lambda(t)^2.
\]
This proves (\ref{bootstrap a for y close time}) closes the translation bootstrap estimates.

  In order to prove (\ref{close of a+}),  notice that (\ref{bootstrap a for unstable}) and (\ref{a' also satisfies}) together give
    \begin{equation*}
        \left|\frac{d}{dt}({a}^+)^2-2\nu\sum_k\frac{({a}^+)^2}{\lambda}\right|  \lesssim 2\left|{a}^{+}\sum_k\left(\frac{d}{dt}\tilde{a}^{+}-\nu\frac{\tilde{a}^+}{\lambda}\right)\right| \lesssim t^{\frac{1}{4}}\lambda^3.
    \end{equation*}
Hence, there exist constants $C_1, C_2>0$ (independent of $t$) such that
\begin{equation}\label{d dt a+}
    \frac{d}{dt}({a}^+(t))^2\geq\frac{C_1}{\lambda}({a}^+(t))^2-C_2t^{\frac{1}{4}}\lambda(t)^3.
\end{equation}
    Since $  a_k^+(T)=0$ by (\ref{initial data of stable unstable}) , it is clear that (\ref{close of a+}) holds for $t$ close to $T$. Suppose that (\ref{close of a+}) breaks down for the first time at some $T_1\in (T_*,T)$, which gives 
\begin{equation*}
    \left(a^+(T_1)\right)^2>\frac{1}{2}T_1^{\frac{1}{2}}\lambda(T_1)^4.
\end{equation*}
Then directly we would have on the one hand
\begin{equation*}
    \frac{d}{dt}({a}^+(T_1))^2\leq 0;
\end{equation*}
 on the other hand (\ref{d dt a+}) indicates
 \begin{equation*}
  \frac{d}{dt}({a}^+(T_1))^2\geq \frac{C_1}{2\lambda(T_1)}T_1^{\frac{1}{2}}\lambda(T_1)^4-C_2 T_1^{\frac{1}{4}}\lambda(T_1)^3>0 
 \end{equation*}
 for sufficiently large $T_0$. This contradiction completes the proof for $a^+$. 

\end{proof}

Finally, to prove Proposition \ref{main step}, it remains to handle estimates (\ref{bootstrap a for lambda}) and (\ref{bootstrap a for stable}). For the sake of contradiction, suppose that for any $(\omega_0,\omega_1)\in [-1,1]^2$, it holds $T_*=T_*(\omega_0,\omega_1)\in (T_0,T]$. By Lemma \ref{final boots for energy}, and Lemma \ref{close of the scaling and trans para}, on $[T_*,T]$, equality is reached in none of the estimates (\ref{bootstap a for energy})-(\ref{bootstrap a for unstable}). Therefore, from (i) of Lemma \ref{close the bootstrap for parameters system}, equality has to be reached at $t=T_*$ in estimates (\ref{bootstrap a for lambda}) or (\ref{bootstrap a for stable}). Let 
\begin{equation*}
A_0(t):=t^{-\frac{1}{3}}\left(\alpha(t)-\xi t^{\frac{2}{3}} \right)     ,\qquad
A_1(t):=t^{-\frac{1}{4}}\lambda(t)^{-2} a^-(t).  
\end{equation*}
 Then from (\ref{initial data of stable unstable}), $A_k(T)=\omega_k$ for $k=0,1$. The contradiction assumption gives that for any $(\omega_0,\omega_1)\in [-1,1]^2$, for all $t\in [T_*,T]$ it holds 
\begin{equation*}
    (\omega_0,\omega_1)\in [-1,1]^2,\quad(A_0(T_*),A_1(T_*))\in \mathbb \partial([-1,1]^2).
\end{equation*}
Consider the map $\mathcal{G}:[-1,1]^2\to \partial([-1,1]^2)$ defined by 
\begin{equation*}
    \mathcal{G}(\omega_0,\omega_1):=(A_0(T_*),A_1(T_*)).
\end{equation*}
To prove $\mathcal{G}$ is a continuous map, it suffice to verify that $(\omega_0,\omega_1)\to T_*$ is continuous. Applying the implicit function theorem, this can be reduced to the following transversality condition: for any $T_1\in [T_*,T]$ if $|A_0(T_1)|=1$, then 
\begin{equation}\label{A0A0' cond}
    A_0'(T_1)A_0(T_1)<0;
\end{equation}
if $|A_1|=1$, then
\begin{equation}\label{A1A1' cond}
     A_1'(T_1)A_1(T_1)<0.
\end{equation}
To prove (\ref{A0A0' cond}), notice that
\begin{equation*}
\begin{aligned}
    A_0'(T_1)A_0(T_1)=&-\frac{(\alpha(T_1)-\xi T_1^{\frac{2}{3}})}{3T_1^{\frac{2}{3}}}A_0(T_1)+T_1^{-\frac{1}{3}}\left(\alpha'(T_1)-\frac{2\xi}{3}T_1^{-\frac{1}{3}}\right)A_0(T_1) \\
    =&-\frac{A_0(T_1)^2}{3T_1}+T_1^{-\frac{1}{3}}\left(\mu(T_1)-\frac{2\xi}{3}T_1^{-\frac{1}{3}}\right)A_0(T_1)+T_1^{-\frac{1}{3}}\left(\alpha'(T_1)-\mu(T_1)\right)A_0(T_1).
\end{aligned}  
\end{equation*}
By (\ref{rough of mu}) and (\ref{alpha' and m}), we obtain
\begin{equation*}
    A_0'(T_1)A_0(T_1)=-\frac{2A_0(T_1)^2}{3T_1}+\frac{CA_0(T_1)}{T_1^{\frac{13}{12}}}.
\end{equation*}
Therefore, when $|A_0(T_1)|=1$, (\ref{A0A0' cond}) holds.

Next, for (\ref{A1A1' cond}), using (\ref{a' also satisfies}) we have
\begin{equation*}
\begin{aligned}
     (a^-)'(T_1)a^-(T_1)=-\frac{\nu}{\lambda(T_1)}(a^-(T_1))^2+O(\lambda(T_1)|a^-(T_1)|),
\end{aligned}
\end{equation*}
and thus
\begin{equation*}
\begin{aligned}
     A'_1(T_1)A_1(T_1)=&T_1^{-\frac{1}{2}}\lambda^{-4}\left(\frac{(T_1^{-\frac{1}{4}}\lambda^{-2})'}{T_1^{-\frac{1}{4}}\lambda^{-2}}a^-(T_1)+(a^-)'(T_1)\right)a^-(T_1)\\
     =&O(T_1^{-\frac{1}{3}})A_1(T_1)^2-\frac{\nu}{\lambda(T_1)}A_1(T_1)^2+O(T_1^{-\frac{1}{4}}\lambda^{-1}|A_1(T_1)|).
\end{aligned}
\end{equation*}
On the face \(|A_1(T_1)|=1\), the last term is
\(O(T_1^{-1/4}\lambda(T_1)^{-1})\), and is absorbed by
\(-\nu\lambda(T_1)^{-1}\) for \(T_0\) large. This completes the proof of (\ref{A1A1' cond}).

As a result, the transversality condition is proved and $\mathcal{G}$ is continuous on $[-1,1]^2$ and its restriction to $\partial([-1,1]^2)$ is the identity, which contradicts with the no-retraction theorem.

Combining this with Lemma \ref{final boots for energy} and Lemma \ref{close of the scaling and trans para}, Proposition \ref{main step} holds. With all the preparations before, we can now turn to the proof of Theorem \ref{multi-bubble solution}.
\subsection{Proof of Theorem \ref{multi-bubble solution} from Proposition \ref{main step}.}
This process is very similar to \cite{JM}. We need to introduce two propositions first.
\begin{prop}\label{APPENDIX 1}There exists a constant $\eta>0$ such that the following holds. Let $\Vec{u}:[t_0,T_{\max})\to \dot{H}^1\times L^2$ be a maximal solution of (\ref{NLW 4}) with $T_{\max}<\infty$. Then for any compact set $\mathcal{K}\subset \dot{H}^1\times L^2$ there exists $\tau<T_{\max}$ such that $dist(\Vec{u},\mathcal{K})>\eta$ for all $t\in [\tau,T_{\max})$.    
\end{prop}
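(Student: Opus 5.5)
This is the standard "no blow-up near a compact set" statement, and I would prove it by contradiction. The tools are the local Cauchy theory for (\ref{NLW 4}) in $\dot H^1\times L^2$ and continuous dependence on data.

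The first step is to pick $\eta$ from the small-data / perturbation theory. For any $\vec v_0\in\mathcal K$, the solution with data $\vec v_0$ exists on some interval $[0,\tau(\vec v_0)]$. Its Strichartz norm $\|v\|_{L^3_tL^6_x}$ (or whatever critical Strichartz space is used in dimension four) on that interval is finite. By the long-time perturbation lemma, data within distance $\eta_0$ of $\vec v_0$ also give solutions defined on a comparable interval.

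Compactness makes this uniform. The map $\vec v_0\mapsto$ (time on which $\|S(t)\vec v_0\|_{L^3L^6}$ is small) is lower semicontinuous, and $\mathcal K$ is compact. Hence there is a uniform $\tau_{\mathcal K}>0$ with
\[
\|S(t)\vec v_0\|_{L^3_t([0,\tau_{\mathcal K}];L^6_x)}\le \tfrac12\delta_0 \quad\text{for all } \vec v_0\in\mathcal K,
\]
where $\delta_0$ is the small-data threshold. Strichartz estimates then give the same bound for data in the $\eta$-neighborhood of $\mathcal K$, with $\eta$ small. Note that this $\eta$ must be independent of $\mathcal K$, as the statement asserts. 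To get this, I would take $\eta$ to be the small-data constant: for $\|\vec w-\vec v_0\|_{\mathcal E}\le\eta$ one has $\|S(t)(\vec w-\vec v_0)\|_{L^3L^6}\le C\eta\le \tfrac12\delta_0$ on all of $\mathbb R$.

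Next, for data in the $\eta$-neighborhood, the local theory gives existence on $[0,\tau_{\mathcal K}]$, since the linear evolution is small there and the Duhamel fixed point closes. Now argue by contradiction. If $\operatorname{dist}(\vec u(t_n),\mathcal K)\le\eta$ along a sequence $t_n\to T_{\max}$, then for $n$ with $T_{\max}-t_n<\tau_{\mathcal K}$ the solution starting at time $t_n$ extends beyond $T_{\max}$. This contradicts maximality. Therefore some $\tau<T_{\max}$ exists after which the distance exceeds $\eta$.

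The main obstacle is the uniformity of the existence time, since criticality means the time depends on the profile and not just on the norm. Compactness of $\mathcal K$, together with the smallness of the linear flow of the $\eta$-perturbation, is exactly what handles this; the energy-critical local well-posedness of Kenig--Merle supplies the needed Strichartz framework.
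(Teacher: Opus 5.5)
Your argument is correct. It is the standard proof of this fact, which the paper does not reprove but cites from \cite{Jtb}: compactness of $\mathcal K$ gives a uniform $\tau_{\mathcal K}$ on which every free evolution $S(t)\vec v_0$, $\vec v_0\in\mathcal K$, is small in $L^3_tL^6_x$; the global Strichartz bound makes the free evolution of any $\eta$-perturbation uniformly small; and the local theory then continues $\vec u$ from $t_n$ past $T_{\max}$.

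One point to keep straight is why $\eta$ is independent of $\mathcal K$: the cubic fixed point in dimension four closes in $L^3_tL^6_x$ alone, via $\|u^3\|_{L^1_tL^2_x}\le\|u\|_{L^3_tL^6_x}^3$, with an absolute threshold $\delta_0$. By contrast, in the Kenig--Merle formulation you invoke at the end, the threshold $\delta(A)$ depends on a bound $A$ for the data, which would make $\eta$ depend on $\mathcal K$.
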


\begin{prop}\label{APPENDIX 2}There exists a constant $\eta>0$ such that the following holds. Let $\mathcal{K}\subset \dot{H}^1\times L^2$ be a compact set and let $\Vec{u}_n:[T_1,T_2]\to\dot{H}^1\times L^2$ be a sequence of solutions to (\ref{NLW 4}) such that 
\begin{equation*}
    dist(\Vec{u}_n,\mathcal{K})\leq \eta\text{ , for all }n\in \mathbb N \text{ and }t\in[T_1,T_2].
\end{equation*}
Suppose that $\vec u_n(T_1)\rightharpoonup \Vec{u}_0$ weakly in $\dot{H}^1\times L^2$. Then the solution $\Vec{u}(t)$ of (\ref{NLW 4}) with the initial condition $\Vec{u}(T_1)=\Vec{u}_0$ is defined for $t\in[T_1,T_2]$ and 
\begin{equation*}
    \Vec{u}_n(t)\rightharpoonup \Vec{u}(t)\text{ , weakly in }\dot{H}^1\times L^2\text{ for all }t\in [T_1,T_2].
\end{equation*}
\end{prop}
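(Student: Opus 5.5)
The approach is a weak-compactness argument on short time intervals of uniform length, followed by uniqueness of the Strichartz-class solution to identify the limit. The hypothesis $\operatorname{dist}(\vec u_n(t),\mathcal K)\le\eta$ will supply both uniform bounds and a uniform time step.

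\textbf{Step 1: uniform Strichartz control on intervals of fixed length.} Fix $\delta>0$ below the small-data threshold of the critical Cauchy theory for (\ref{NLW 4}) in the norm $S(I)=L^3_tL^6_x(I\times\mathbb R^4)$. For each $\vec k\in\mathcal K$ the free evolution $S_L(t)\vec k$ has finite $S$-norm. Hence there is $\tau_{\vec k}>0$ with $\|S_L(\cdot)\vec k\|_{S([0,\tau_{\vec k}])}<\delta/4$.
\begin{itemize}
\item Compactness of $\mathcal K$ and continuity of $S_L$ from $\dot H^1\times L^2$ into $S$ give a uniform $\tau>0$ for all $\vec k\in\mathcal K$.
\item Choose $\eta$ so small that $C_{\rm Str}\eta<\delta/4$. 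Then every datum within $\eta$ of $\mathcal K$ has free evolution with $S([0,\tau])$-norm $<\delta/2$.
\item By the small-data theory, the corresponding solution exists on an interval of length $\tau$ after the initial time. It satisfies $\|u\|_{S}\le\delta$ and $\sup_t\|\vec u\|_{\mathcal E}\le C(\mathcal K)$.
\end{itemize}
This constant $\eta$ depends only on the Cauchy theory, as the statement requires. In particular, on every subinterval $[s,s+\tau]\subset[T_1,T_2]$ the $\vec u_n$ are bounded uniformly in $n$ in $S$ and in $L^\infty_t(\dot H^1\times L^2)$.

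\textbf{Step 2: passage to the limit on one interval.} Assume $\vec u_n(s)\rightharpoonup\vec w_0$. The weak limit also lies within $\eta$ of $\mathcal K$. To see this, write $\vec u_n(s)=\vec k_n+\vec r_n$ with $\vec k_n\in\mathcal K$ and $\|\vec r_n\|\le\eta$. Along a subsequence $\vec k_n\to\vec k$ strongly, and $\|\vec w_0-\vec k\|\le\liminf\|\vec r_n\|\le\eta$ by weak lower semicontinuity. So by Step 1 the solution $\vec w$ with $\vec w(s)=\vec w_0$ exists on $[s,s+\tau]$ with the same bounds.

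The key point is to show that $\vec u_n\rightharpoonup\vec w$ there. The uniform bounds show that $u_n$ is bounded in $H^1_{loc}$ of space-time, since $\partial_tu_n\in L^\infty L^2$ and $\nabla u_n\in L^\infty L^2$. By Rellich, after extraction $u_n\to\tilde u$ strongly in $L^2_{loc}$. Interpolating with the $S$ bound gives $u_n\to\tilde u$ in $L^p_{loc}$ for every $p<3$ in time (and the corresponding space exponent). Hence $|u_n|^2u_n\to|\tilde u|^2\tilde u$ in $\mathcal D'$.

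Passing to the limit in the equation, $\tilde u$ is a distributional solution with data $\vec w_0$ at $t=s$. This uses that $\vec u_n(s)\rightharpoonup\vec w_0$ and that the time traces converge weakly, by the uniform Lipschitz-in-time bound in $H^{-1}$. Moreover $\tilde u\in S$ by lower semicontinuity. Write the Duhamel formula for $\tilde u$ in this class. Uniqueness of Strichartz solutions in $S\cap C(\dot H^1\times L^2)$ then gives $\tilde u=w$. Since the limit is unique, the whole sequence converges, and in particular $\vec u_n(t)\rightharpoonup\vec w(t)$ for every $t\in[s,s+\tau]$.

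\textbf{Step 3: iteration.} Starting at $s=T_1$ with $\vec w_0=\vec u_0$, Step 2 gives existence of $\vec u$ and weak convergence on $[T_1,T_1+\tau]$. The endpoint weak limit $\vec u(T_1+\tau)$ again satisfies the hypothesis of Step 2. Since $\tau$ is independent of the step, finitely many iterations cover $[T_1,T_2]$.

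\textbf{Main obstacle.} The delicate point is the identification $\tilde u=w$ in Step 2. One must verify that the distributional limit really is the Strichartz solution, and not merely a weak solution. I would handle this by showing that $\tilde u$ satisfies the Duhamel formula, passing to the limit in the Duhamel formula for $u_n$ tested against smooth functions, using the $L^p_{loc}$ convergence of the nonlinearity together with the uniform $L^1_tL^2_x$ bound on $|u_n|^2u_n$.
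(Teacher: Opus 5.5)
Your proof is correct. The paper gives no argument for this proposition; it imports it, together with Proposition~\ref{APPENDIX 1}, from \cite{Jtb}. So what you have written is a self-contained proof, not a reconstruction of anything in the text.

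It differs from a common perturbative route to such weak-continuity statements. That route writes $\vec u_n(T_1)=\vec u_0+\vec\epsilon_n$ with $\vec\epsilon_n\rightharpoonup 0$. It then describes $\vec u_n(t)$ as $\vec u(t)$ plus asymptotically orthogonal small waves, using long-time perturbation theory or a profile decomposition of $\vec\epsilon_n$. Finally it invokes Proposition~\ref{APPENDIX 1} to keep $\vec u$ from blowing up before $T_2$. This yields a strong decoupling statement, at the cost of heavier machinery.

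Your route needs only three ingredients:
\begin{itemize}
\item the small-data Strichartz theory;
\item local compactness (Rellich);
\item uniqueness of small Strichartz solutions.
\end{itemize}
The uniform step $\tau=\tau(\mathcal K)$, with $\eta=\delta/(4C_{\mathrm{Str}})$ independent of $\mathcal K$, combines with your observation that weak limits stay $\eta$-close to $\mathcal K$. Together these already rule out blow-up of the limit. In fact your Step~1 immediately implies Proposition~\ref{APPENDIX 1} with the same $\eta$. The limit is then identified by compactness and uniqueness, giving weak convergence of the whole sequence, which is all that the proof of Theorem~\ref{multi-bubble solution} uses.

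One justification should be tightened. Interpolating the strong $L^2_{\mathrm{loc}}$ convergence with the $L^3_tL^6_x$ bound only gives strong convergence in $L^p_tL^q_x$ with $p<3$. That does not place $|u_n|^2u_n$ in $L^1_{\mathrm{loc}}$. Use instead the uniform $L^\infty_tL^4_x$ bound that comes from the energy via Sobolev. It gives $u_n\to\tilde u$ in $L^3_{\mathrm{loc}}$, hence $|u_n|^2u_n\to|\tilde u|^2\tilde u$ in $L^1_{\mathrm{loc}}$. You need exactly this twice: for the distributional equation, and for passing to the limit in the Duhamel formula tested against $\frac{\sin((t-t')|\nabla|)}{|\nabla|}\varphi$, which is compactly supported by finite speed of propagation.
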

These propositions have been proved in \cite{Jtb}. With these tools, we will prove our main result using compactness strategy, which was developed in \cite{CMM,Jtype2d5,Jtb,Mnsgkdv,Mkps,RS}. 
\begin{proof}[Proof of Theorem \ref{multi-bubble solution}.] Let \(T_n\to+\infty\). For each \(n\), let \(\vec u_n\) be the solution
given by Proposition \ref{main step} with terminal time \(T=T_n\).
Then \(\vec u_n\) is defined on \([T_0,T_n]\), and we write
\[
\vec u_n(t)=\vec W_{\boldsymbol\Gamma_n(t)}+\vec g_n(t),
\]
where
\[
\boldsymbol\Gamma_n(t)
=
(\lambda_{1,n},b_{1,n},\boldsymbol y_{1,n},\boldsymbol v_{1,n},\ldots,
 \lambda_{N,n},b_{N,n},\boldsymbol y_{N,n},\boldsymbol v_{N,n}).
\]
Since the solution is \(G\)-invariant and the modulation parameters are chosen
in the equivariant modulation class, we have, for each fixed \(n\),
\[
\lambda_{1,n}(t)=\cdots=\lambda_{N,n}(t)=:\lambda_n(t),
\qquad
b_{1,n}(t)=\cdots=b_{N,n}(t)=:b_n(t).
\]
We emphasize that no relation between \(\lambda_n\) and \(\lambda_m\) is assumed
when \(n\neq m\).
The estimates of Proposition \ref{main step} are uniform in \(n\).
In particular, for all \(t\in [T_0,T_n]\),
\[
\|\vec g_n(t)\|_{\ME} \lesssim t^{-1/6}\lambda_n(t),
\]
and
\[
\left|
\log\frac1{\lambda_n(t)}-\xi t^{2/3}
\right|
\lesssim t^{1/3},
\qquad
|y_{k,n}(t)-z_k|
\lesssim t^{5/2}\lambda_n(t)^2 .
\]
Moreover, by the bounds on \(b_n\) and \(v_n\), the velocity part of
\(\vec W_{\Gamma_n(t)}\) satisfies
\[
\left\|
\sum_{k=1}^N
\left(
\frac{b_{k,n}(t)}{\lambda_{k,n}(t)}
(\Lambda_k W_{k,n})\chi_t
+
\frac{v_{k,n}(t)}{\lambda_{k,n}(t)}
\cdot \nabla_k W_{k,n}
\right)
\right\|_{L^2}
\to 0
\]
as \(t\to+\infty\), uniformly in \(n\) whenever \(t\in[T_0,T_n]\).
Therefore,
\[
\left\|
u_n(t)-
\sum_{k=1}^N
\frac1{\lambda_n(t)}
W\left(
\frac{\cdot-y_{k,n}(t)}{\lambda_n(t)}
\right)
\right\|_{\dot H^1}
+
\|\partial_t u_n(t)\|_{L^2}
\to 0
\]
as \(t\to+\infty\), uniformly in the same sense.

We now apply the compactness argument.Fix \(T>T_0\). For \(n\) sufficiently large, \(T_n\geq T\). On \([T_0,T]\),
the bootstrap estimates imply that the parameters of \(\Gamma_n(t)\) stay in a
compact subset of the admissible modulation region. More precisely, let
\(\mathcal K_T\) be the set of all profiles \(\vec W_\Gamma(t)\), with
\(t\in[T_0,T]\), whose parameters satisfy the equivariance conditions and the
bootstrap bounds
\[
    \left|\log\frac1{\lambda(t)}-\xi t^{2/3}\right|
    \leq Ct^{1/3},
\]
together with the corresponding bounds on \(b,y,v\). Since \(t\in[T_0,T]\),
the scale \(\lambda(t)\) is bounded away from \(0\) and \(+\infty\), and the
parameter set is compact. Hence \(\mathcal K_T\) is compact in
\(\dot H^1\times L^2\). Taking \(T_0\) sufficiently large, the
estimate
\[
\|\vec g_n(t)\|_{\ME}\lesssim t^{-1/6}\lambda_n(t)
\]
implies
\[
\operatorname{dist}(\vec u_n(t),\mathcal K_T)\le \eta
\]
for all \(t\in[T_0,T]\) and all sufficiently large \(n\), where \(\eta>0\) is the
constant in Proposition \ref{APPENDIX 2}.

Since \((\vec u_n(T_0))_n\) is bounded in \(\dot H^1\times L^2\), after extracting
a subsequence we may assume that
\[
\vec u_n(T_0)\rightharpoonup \vec u_0
\quad\text{weakly in } \dot H^1\times L^2 .
\]
By Proposition \ref{APPENDIX 2}, the solution \(\vec u\) of \((1.1)\) with
initial data
\[
\vec u(T_0)=\vec u_0
\]
is defined on \([T_0,T]\), and for every \(t\in[T_0,T]\),
\[
\vec u_n(t)\rightharpoonup \vec u(t)
\quad\text{weakly in } \dot H^1\times L^2 .
\]
Since \(T>T_0\) is arbitrary, \(\vec u\) is defined on \([T_0,+\infty)\).

It remains to identify the asymptotic behavior of \(\vec u\). Applying the
modulation decomposition to \(\vec u(t)\) for \(t\) sufficiently large, and passing
to the limit in the uniform estimates above, we obtain parameters
\(\lambda(t)>0\) and \(\boldsymbol y_k(t)\) such that
\[
\left|
\log\frac1{\lambda(t)}-\xi t^{2/3}
\right|
\lesssim t^{1/3},
\qquad
|\boldsymbol y_k(t)-\boldsymbol z_k|\lesssim t^{5/2}\lambda(t)^2,
\]
and
\[
\left\|
u(t)-
\sum_{k=1}^N
\frac1{\lambda(t)}
W\left(
\frac{\cdot-\boldsymbol y_k(t)}{\lambda(t)}
\right)
\right\|_{\dot H^1}
+
\|\partial_t u(t)\|_{L^2}
\to 0 .
\]
Finally,
\[
\left\|
\frac1{\lambda(t)}
W\left(
\frac{\cdot-\boldsymbol y_k(t)}{\lambda(t)}
\right)
-
\frac1{\lambda(t)}
W\left(
\frac{\cdot-\boldsymbol z_k}{\lambda(t)}
\right)
\right\|_{\dot H^1}
\lesssim
\frac{|\boldsymbol y_k(t)-\boldsymbol z_k|}{\lambda(t)}
\lesssim t^{5/2}\lambda(t)
\to0.
\]
Therefore
\[
\left\|
u(t)-
\sum_{k=1}^N
\frac1{\lambda(t)}
W\left(
\frac{\cdot-\boldsymbol z_k}{\lambda(t)}
\right)
\right\|_{\dot H^1}
+
\|\partial_t u(t)\|_{L^2}
\to0,
\]
and
\[
\left|
\log\frac1{\lambda(t)}-\xi t^{2/3}
\right|
\lesssim t^{1/3}.
\]
After translating time, the solution is defined on \([0,+\infty)\). This proves
Theorem \ref{multi-bubble solution}.

\end{proof}
\appendix
\section{Hardy inequalities in dimension four}
In this appendix, we give a proof of the four-dimensional Hardy inequality. We include it because it leads to a key difference in the four-dimensional coercivity estimate compared with higher dimensions. 
\begin{lemma}[Hardy inequalities]\label{Hardy 4D} Let $D=4$. Then for all $R>2$ and $v\in H^2(\mathbb R^4)$, we have
\begin{equation}\label{Hardy 1}
    \int_{\mathbb R^4}\frac{|\nabla v|^2}{|x|^2}dx\lesssim \int_{\mathbb R^4}(\Delta v)^2dx,
\end{equation}
\begin{equation}\label{Hardy 2}
  \int_{|x|\leq R}\frac{|v|^2}{|x|^4(1+|\log|x| |)^2} dx \lesssim \int_{|x|\leq R}\frac{|\nabla v|^2}{|x|^2} dx+\int_{|x|\leq 2}|v|^2dx.
\end{equation}
\end{lemma}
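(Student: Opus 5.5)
For \eqref{Hardy 1}, I would expand $\|\Delta v\|_{L^2}^2=\sum_{i,j}\|\partial_i\partial_j v\|_{L^2}^2$ by integration by parts (equivalently by Plancherel, since $|\xi|^4=\sum_{i,j}\xi_i^2\xi_j^2$). I would then apply the classical first-order Hardy inequality in $\mathbb R^4$,
\[
\int_{\mathbb R^4}\frac{|w|^2}{|x|^2}\,dx\le \Big(\frac{2}{D-2}\Big)^2\int_{\mathbb R^4}|\nabla w|^2\,dx=\int_{\mathbb R^4}|\nabla w|^2\,dx,
\]
to each $w=\partial_j v$ and sum over $j$. This gives $\int|\nabla v|^2|x|^{-2}\le\sum_{i,j}\|\partial_i\partial_jv\|^2=\|\Delta v\|^2$. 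The case $D=4$ is not borderline at this order, since the constant is $1$ and finite. To justify the argument I would work with $v\in C_c^\infty$ and then use density in $H^2$.

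The logarithmic inequality \eqref{Hardy 2} carries the genuine four-dimensional issue. The weight $|x|^{-4}$ is exactly critical, so the loss $(1+|\log|x||)^{-2}$ is needed. I would pass to radial/logarithmic variables $s=\log r$ and prove a one-dimensional weighted Hardy inequality on each ray. For fixed $\omega\in S^3$, write $f(s)=v(e^s\omega)$, so that $dx=e^{4s}ds\,d\omega$ and $|\partial_r v|^2 r^{-2}\,dx=|f'(s)|^2ds\,d\omega$. The left side becomes $\int |f|^2(1+|s|)^{-2}\,ds$, and the target is
\[
\int_{-\infty}^{\log R}\frac{|f|^2}{(1+|s|)^2}\,ds\lesssim \int_{-\infty}^{\log R}|f'|^2\,ds+\text{boundary term}.
\]
This is the standard Hardy inequality $\int |f|^2/(1+|s|)^2\le 4\int|f'|^2$ for functions vanishing at a base point. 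I would apply it separately on the half-lines $s\le s_0$ and $s\ge s_0$, with $s_0$ chosen in $[0,\log 2]$, that is $r\in[1,2]$. Concretely, I would write $f=(f-f(s_0))+f(s_0)$. The first piece is handled by the 1D Hardy inequality via $\int (f-f(s_0))^2(1+|s-s_0|)^{-2}\le 4\int|f'|^2$. The constant piece gives $|f(s_0)|^2\int(1+|s|)^{-2}ds\lesssim|f(s_0)|^2$, which is finite on both half-lines; this integrability is exactly why the log-squared weight suffices.

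To control the pointwise value $f(s_0)$, I would average over $s_0\in[0,\log 2]$ and over $\omega$. This bounds $\int_{S^3}|f(s_0)|^2$ by $\int_{1\le|x|\le2}|v|^2\,dx$ plus $\int|f'|^2$, using $|f(s_0)|^2\lesssim |f(s)|^2+\int_0^{\log2}|f'|^2$ for $s\in[0,\log2]$. The $|\nabla v|^2/|x|^2$ term on the right dominates $|\partial_rv|^2/r^2$, and $\int_{|x|\le2}|v|^2$ absorbs the averaged constant. Together these yield \eqref{Hardy 2}, with constants independent of $R$ because $\int_{\mathbb R}(1+|s|)^{-2}\,ds<\infty$.

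The main obstacle is making the treatment of the $s\to-\infty$ end (the origin) rigorous. Since $v\in H^2$, $v$ is continuous and not necessarily vanishing at $0$, so $f(s)\to v(0)$. This is why the constant must be subtracted at the base point rather than at $-\infty$, and why the 1D Hardy inequality is applied to $f-f(s_0)$ on the half-line $(-\infty,s_0]$, which is valid by its standard proof through integrating $\frac{d}{ds}\big((1+|s-s_0|)^{-1}\big)$ against $(f-f(s_0))^2$. I would check this step against the boundary terms carefully, again first for smooth $v$ and then by density.
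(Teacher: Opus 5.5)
Your proposal is correct and follows the paper's route. For \eqref{Hardy 1}, you use Hardy's inequality applied to $\nabla v$ together with $\sum_{i,j}\|\partial_i\partial_j v\|_{L^2}^2=\|\Delta v\|_{L^2}^2$, exactly as the paper does. For \eqref{Hardy 2}, you use the one-dimensional weighted Hardy inequality in $s=\log|x|$, anchored at a radius in $[1,2]$. The paper writes this same argument in Cartesian form as the divergence theorem with the fields $\mp x/(|x|^4(1\pm\log|x|))$ on $a\le|x|\le R$ and $\epsilon\le|x|\le a$. There, a pigeonholed sphere $|x|=a$ plays the role of your subtraction of $f(s_0)$ and averaging over $s_0$.

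One correction to your remarks: $H^2(\mathbb R^4)$ does not embed into $C^0$. For example, a smooth cutoff of $\log\log(1/|x|)$ near the origin lies in $H^2$, so $v(0)$ need not exist. Your argument never actually uses $v(0)$. The boundary term at $s=-\infty$ in your integration by parts has a favorable sign, exactly like the dropped $|x|=\epsilon$ term in the paper. Alternatively, that term vanishes for smooth $v$ before you pass to general $v$ by density and Fatou.
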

\begin{proof}
Using Hardy's inequality and the Riesz transform, (\ref{Hardy 1}) holds from 
\begin{equation*}
   \int_{\mathbb R^4}\frac{|\nabla v|^2}{|x|^2}dx\lesssim \int_{\mathbb R^4}|\nabla (\nabla v)|^2dx\lesssim   \int_{\mathbb R^4}(\Delta v)^2dx.
\end{equation*}
Next to prove (\ref{Hardy 2}), let $a\in [1,2]$ such that 
\begin{equation*}
    \int_{|x|=a}|v(x)|^2dS\leq 8\int_{1\leq |x|\leq 2}|v|^2.
\end{equation*}
Let $f(x)=-(x/|
x|^4(1+\log |x|))$, so that $\nabla \cdot f=\frac{1}{|x|^4(1+\log |x|)^2}$, and integrate by parts to get
\begin{equation}\label{Hardy 2.1}
    \begin{aligned}
        &\int_{a\leq |x|\leq R}\frac{|v|^2}{|x|^4(1+\log |x|)^2}dx
        =\int_{a\leq |x|\leq R}|v|^2\nabla \cdot fdx\\
        =&\int_{|x|=R}-\frac{|v|^2}{|x|^3(1+\log |x|)} dS+\int_{|x|=a}\frac{|v|^2}{|x|^3(1+\log |x|)} dS
       +2\int_{a\leq |x|\leq R}\frac{2v \nabla v\cdot x}{|x|^4(1+\log |x|)^2}\\
       \lesssim &\int_{|x|=a}|v(x)|^2dS+\left(\int_{a\leq |x|\leq R}\frac{|v|^2}{|x|^4(1+|\log |x||)^2}\right)^{\frac{1}{2}}\left(\int_{a\leq |x|\leq R}\frac{|\nabla v|^2}{|x|^2}\right)^{\frac{1}{2}}.
    \end{aligned}
\end{equation}
Similarly, using $\Tilde{f}(x)=(x/|
x|^4(1-\log |x|))$, we have
\begin{equation}\label{Hardy 2.2}
    \begin{aligned}
        &\int_{\epsilon\leq |x|\leq a}\frac{|v|^2}{|x|^4(1-\log |x|)^2}dx
        =\int_{\epsilon\leq |x|\leq a}|v|^2\nabla \cdot \Tilde{f}dx\\
        =&\int_{|x|=a}\frac{|v|^2}{|x|^3(1-\log |x|)} dS-\int_{|x|=\epsilon}\frac{|v|^2}{|x|^3(1-\log |x|)} dS
       -2\int_{\epsilon\leq |x|\leq a}\frac{2v \nabla v\cdot x}{|x|^4(1-\log |x|)^2}\\
       \lesssim &\int_{|x|=a}|v(x)|^2dS+\left(\int_{\epsilon\leq |x|\leq a}\frac{|v|^2}{|x|^4(1+|\log |x||)^2}\right)^{\frac{1}{2}}\left(\int_{\epsilon\leq |x|\leq a}\frac{|\nabla v|^2}{|x|^2}\right)^{\frac{1}{2}}.
    \end{aligned}
\end{equation}
Combining (\ref{Hardy 2.1}) and (\ref{Hardy 2.2}) together we have (\ref{Hardy 2}) holds.
\end{proof}

\section{Proof of the pointwise estimates for the refined profile}\label{pointwise est of refined profile}
This appendix is devoted to the proof of the pointwise estimates for the profiles \(Q\) and \(S\) stated in Section~5. These estimates follow from standard ODE arguments and are included here for completeness.
\begin{proof}
We recall that $Q$ satisfies
\begin{equation*}
    \ML Q=-\Lambda W \chi_t(\lambda \cdot)-\frac{\lelan \Lambda W \chi_t(\lambda \cdot),\Lambda W \rilan}{32\pi^2}f'(W),
\end{equation*} 
As in \cite{HR}, let 
    \begin{equation*}
        \Gamma (y)=-\Lambda W(y)\int_1^y\frac{ds}{s^{3}(\Lambda W)^2(s)},
    \end{equation*}
then $\Gamma$ belongs to the kernel of $\ML$ and satisfies
\begin{equation*}
    \Gamma '\Lambda W-\Gamma(\Lambda W)'=\frac{-1}{y^3}.
\end{equation*}
A direct computation yields the pointwise estimates for $\Gamma$:
\begin{equation*}
    \frac{d^k \Gamma}{dy^k}(y)=\begin{cases}
        &O(y^{-2-k})\text{ as }y\to 0,\\
        &O(y^{-k})\text{ as }y\to \infty.
    \end{cases}
\end{equation*}
Then a smooth solution to $\ML w=F$ can be given by 
\begin{equation}\label{solution formula}
w(y)=\Gamma (y)\int_0^yF(s)\Lambda W(s)s^3ds-\Lambda W(y)\int_0^yF(s)\Gamma(s)s^3ds.  
\end{equation}
Differentiating \(w\), we obtain
\begin{equation*}
    \frac{d}{dy}w(y)=\Gamma' (y)\int_0^yF(s)\Lambda W(s)s^3ds-\Lambda W'(y)\int_0^yF(s)\Gamma(s)s^3ds, 
\end{equation*}
where we denote 
\begin{equation*}
    F=-\Lambda W \chi_t(\lambda \cdot)-\frac{\lelan \Lambda W \chi_t(\lambda \cdot),\Lambda W \rilan}{24}f'(W).
\end{equation*}
Here $\Lambda W'(y)=\frac{y(y^2-24)}{32(1+\frac{y^2}{8})^3}$. Since $\Lambda W$ is orthogonal to the right hand side of the equation, (\ref{solution formula}) can be rewritten as
\begin{equation}\label{solution formula for QTS}
  w(y)=-\left[\Gamma (y)\int_y^{\infty}F(s)\Lambda W(s)s^3ds+\Lambda W(y)\int_0^yF(s)\Gamma(s)s^3ds\right].  
\end{equation}
Notice that $\lelan \Lambda W \chi_t(\lambda \cdot),\Lambda W \rilan\sim \log(2t/\lambda)$.
Hence for $y\leq 1$, using (\ref{solution formula}), 
\begin{equation*}
    \begin{aligned}
     |Q(y)|\leq &\left|\Gamma (y)\int_0^yF(s)\Lambda W(s)s^3ds\right|+\left|\Lambda W(y)\int_0^yF(s)\Gamma(s)s^3ds\right|\\
     \lesssim&\left|y^{-2}\int_0^y(1+{\log(2t/\lambda)})s^3ds\right|+\left|\int_0^y(1+{\log(2t/\lambda)})sds\right|
     \lesssim \log(2t/\lambda)y^2.
    \end{aligned}
\end{equation*}
Analogously, on this region the derivative is controlled by
\begin{equation*}
\begin{aligned}
    \left|\frac{d Q}{dy}\right|\leq &\left|\Gamma' (y)\int_0^yF(s)\Lambda W(s)s^3ds\right|+\left|\Lambda W'(y)\int_0^yF(s)\Gamma(s)s^3ds\right|\\
     \lesssim&\left|y^{-3}\int_0^y(1+{\log(2t/\lambda)})s^3ds\right|+\left|\int_0^y(1+{\log(2t/\lambda)})sds\right|
     \lesssim \log(2t/\lambda)y.
\end{aligned}    
\end{equation*}
On the region $1\leq y\leq 2t/\lambda$, applying (\ref{solution formula for QTS}) yields
\begin{equation}\label{pointwise est of Q}
    \begin{aligned}
    |Q(y)|=&\left|\Gamma (y)\int_y^{\infty}F(s)\Lambda W(s)s^3ds\right|+\left|\Lambda W(y)\int_0^yF(s)\Gamma(s)s^3ds\right|\\
    \lesssim& \int_y^{\infty}\left(\frac{s^3\chi_{s\leq \frac{2t}{\lambda}}}{(1+s^2)^2}+\frac{s^3\log(2t/\lambda)}{(1+s^2)^3}\right)ds+\frac{1}{1+y^2}\int_1^y\left(\frac{s^3}{1+s^2}+\frac{s^3\log(2t/\lambda)}{(1+s^2)^2}\right)ds\\
    &+\frac{1}{1+y^2}\int_0^1\left(\frac{s}{1+s^2}+\frac{s\log(2t/\lambda)}{(1+s^2)^2}\right)ds\\
    \lesssim&\log(\frac{2t}{\lambda y})+\frac{y^2+(1+\log y) \log(2t/\lambda )}{1+y^2}.
    \end{aligned}
\end{equation}
Similarly, on the exterior region $\frac{dQ}{dy}$ is dominated by
\begin{equation*}
  \begin{aligned}
    \left|\frac{dQ}{dy}(y)\right|=&\left|\Gamma' (y)\int_y^{\infty}F(s)\Lambda W(s)s^3ds\right|+\left|\Lambda W'(y)\int_0^yF(s)\Gamma(s)s^3ds\right|\\
    \lesssim& \frac{1}{y}\int_y^{\infty}\left(\frac{s^3\chi_{s\leq \frac{2t}{\lambda}}}{(1+s^2)^2}+\frac{s^3\log(2t/\lambda)}{(1+s^2)^3}\right)ds+\frac{1}{1+y^3}\int_1^y\left(\frac{s^3}{1+s^2}+\frac{s^3\log(2t/\lambda)}{(1+s^2)^2}\right)ds\\
    &+\frac{1}{1+y^3}\int_0^1\left(\frac{s}{1+s^2}+\frac{s\log(2t/\lambda)}{(1+s^2)^2}\right)ds\\
    \lesssim&\frac{\log(\frac{2t}{\lambda y})}{y}+\frac{y^2+(1+\log y) \log(2t/\lambda )}{1+y^3}.
    \end{aligned}  
\end{equation*}
Furthermore, from the equation of $Q$, one can see that $Q$ is also a function of $t$. When $y\leq 1$, the $\frac{dF}{dt}(y)$ satisfies
\begin{equation*}
    \frac{dF}{dt}(y)=-\frac{d\lelan \Lambda W \chi_t(\lambda \cdot),\Lambda W \rilan }{dt}\frac{f'(W)(y)}{24}\sim \frac{|\lambda'|}{\lambda}+\frac{1}{t}.
\end{equation*}
As a result we obtain the pointwise estimate that, when $y\leq 1$,
\begin{equation*}
    \begin{aligned}
     \left|\frac{dQ}{dt}(y)\right|\leq &\left|\Gamma (y)\int_0^y\frac{dF}{dt}(s)\Lambda W(s)s^3ds\right|+\left|\Lambda W(y)\int_0^y\frac{dF}{dt}(s)\Gamma(s)s^3ds\right|\\
     \lesssim&\left|y^{-2}\int_0^y\left(\frac{|\lambda'|}{\lambda}+\frac{1}{t}\right)s^3ds\right|+\left|\int_0^y\left(\frac{|\lambda'|}{\lambda}+\frac{1}{t}\right)sds\right|
     \lesssim \left(\frac{|\lambda'|}{\lambda}+\frac{1}{t}\right)y^2,
    \end{aligned}
\end{equation*}
and
\begin{equation*}
\begin{aligned}
    \left|\frac{d^2 Q}{dydt}\right|\leq &\left|\Gamma' (y)\int_0^y\frac{dF}{dt}(s)\Lambda W(s)s^3ds\right|+\left|\Lambda W'(y)\int_0^y\frac{dF}{dt}(s)\Gamma(s)s^3ds\right|\\
     \lesssim&\left|y^{-3}\int_0^y\left(\frac{|\lambda'|}{\lambda}+\frac{1}{t}\right)s^3ds\right|+\left|\int_0^y\left(\frac{|\lambda'|}{\lambda}+\frac{1}{t}\right)sds\right|
     \lesssim \left(\frac{|\lambda'|}{\lambda}+\frac{1}{t}\right)y.
\end{aligned}        
\end{equation*}
On the outer region $1\leq y\leq 2t/\lambda$ , it holds 
\begin{equation*}
\begin{aligned}
    \frac{dF}{dt}(y)=&-\Lambda W\frac{d(\chi_t(\lambda \cdot))}{dt} -\frac{d\lelan \Lambda W \chi_t(\lambda \cdot),\Lambda W \rilan }{dt}\frac{f'(W)(y)}{24}\\
    \sim&  \left(\Lambda W \chi'\left(\frac{\lambda \cdot}{t}\right) +f'(W) \right)\left(\frac{|\lambda'|}{\lambda}+\frac{1}{t}\right).
\end{aligned}
\end{equation*}
Therefore, when $1\leq y \leq 2t/\lambda$,   $\frac{dQ}{dt}$ is bounded pointwise by
\begin{equation*}
    \begin{aligned}
     \left|\frac{dQ}{dt}(y)\right|\leq &\left|\Gamma (y)\int_y^{\infty}\frac{dF}{dt}(s)\Lambda W(s)s^3ds\right|+\left|\Lambda W(y)\int_0^y\frac{dF}{dt}(s)\Gamma(s)s^3ds\right|\\
    \lesssim& \Bigg[\int_y^{\infty}\left(\frac{s^3\chi'(\frac{\lambda s}{t})}{(1+s^2)^2}+\frac{s^3}{(1+s^2)^3}\right)ds+\frac{1}{1+y^2}\int_1^y\left(\frac{s^3\chi'(\frac{\lambda s}{t})}{1+s^2}+\frac{s^3}{(1+s^2)^2}\right)ds\\
    &+\frac{1}{1+y^2}\int_0^1\frac{s}{(1+s^2)^2}ds\Bigg]\left(\frac{|\lambda'|}{\lambda}+\frac{1}{t}\right) 
    \lesssim\left(\frac{|\lambda'|}{\lambda}+\frac{1}{t}\right),
    \end{aligned}   
\end{equation*}
and $\frac{d^2Q}{dy dt}$ satisfies
\begin{equation*}
    \begin{aligned}
    \left|\frac{d^2Q}{dydt}(y)\right|=&\left|\Gamma' (y)\int_y^{\infty}\frac{dF}{dt}(s)\Lambda W(s)s^3ds\right|+\left|\Lambda W'(y)\int_0^y\frac{dF}{dt}(s)\Gamma(s)s^3ds\right|\\
    \lesssim&\Bigg[ \frac{1}{y}\int_y^{\infty}\left(\frac{s^3\chi'(\frac{\lambda s}{t})}{(1+s^2)^2}+\frac{s^3}{(1+s^2)^3}\right)ds+\frac{1}{1+y^3}\int_1^y\left(\frac{s^3\chi'(\frac{\lambda s}{t})}{1+s^2}+\frac{s^3}{(1+s^2)^2}\right)ds\\
    &+\frac{1}{1+y^3}\int_0^1\frac{s}{(1+s^2)^2}ds\Bigg]\left(\frac{|\lambda'|}{\lambda}+\frac{1}{t}\right) 
    \lesssim\frac{1}{y}\left(\frac{|\lambda'|}{\lambda}+\frac{1}{t}\right) .
    \end{aligned}   
\end{equation*}
Next, we turn to $S$. Since $S$ satisfies
\begin{equation*}
    \ML S=-(\underline{\Lambda}\Lambda W)\chi_t(\lambda \cdot)+\frac{\lelan (\underline{\Lambda}\Lambda W)\chi_t(\lambda \cdot),\Lambda W \rilan}{\lelan \Lambda W \chi_t(\lambda \cdot),\Lambda W \rilan}\Lambda W \chi_t(\lambda \cdot),
\end{equation*}
and observe that 
\begin{equation*}
 \frac{\lelan (\underline{\Lambda}\Lambda W)\chi_t(\lambda \cdot),\Lambda W \rilan}{\lelan \Lambda W \chi_t(\lambda \cdot),\Lambda W \rilan}\sim \frac{1}{\log(2t/\lambda)}.
\end{equation*}
With the same strategy as $Q$, when $y\leq 1$, using (\ref{solution formula}) we obtain
\begin{equation*}
    |S(y)|\lesssim y^{-2}\int_0^y \left(1+\frac{1}{2\log(2t/\lambda)}\right)s^3ds+\int_0^y \left(1+\frac{1}{2\log(2t/\lambda)}\right)sds\lesssim y^2,
\end{equation*}
and
\begin{equation*}
   \left|\frac{dS}{dy}(y)\right|\lesssim  y^{-3}\int_0^y \left(1+\frac{1}{2\log(2t/\lambda)}\right)s^3ds+\int_0^y \left(1+\frac{1}{2\log(2t/\lambda)}\right)sds\lesssim y.
\end{equation*}
Also, since on this region,  
\begin{equation*}
    \frac{d}{dt }\left(-(\underline{\Lambda}\Lambda W)\chi_t(\lambda \cdot)+\frac{\lelan (\underline{\Lambda}\Lambda W)\chi_t(\lambda r),\Lambda W \rilan}{\lelan \Lambda W \chi_t(\lambda \cdot),\Lambda W \rilan}\Lambda W \chi_t(\lambda \cdot)\right)\sim \frac{1}{\log(2t/\lambda)^2}\left(\frac{|\lambda'|}{\lambda}+\frac{1}{t}\right),
\end{equation*}
the $\frac{dS}{st}$ and $\frac{d^2S}{dydt}$ are thus dominated by
\begin{equation*}
\begin{aligned}
      \left|\frac{dS}{dt}(y)\right|\lesssim& y^{-2}\int_0^y \left(\frac{1}{\log(2t/\lambda)^2}\left(\frac{|\lambda'|}{\lambda}+\frac{1}{t}\right)\right)s^3ds+\int_0^y \left(\frac{1}{\log(2t/\lambda)^2}\left(\frac{|\lambda'|}{\lambda}+\frac{1}{t}\right)\right)sds\\
      \lesssim& \frac{y^2}{\log(2t/\lambda)^2}\left(\frac{|\lambda'|}{\lambda}+\frac{1}{t}\right),
\end{aligned}
\end{equation*}
and
\begin{equation*}
\begin{aligned}
   \left|\frac{d^2S}{dydt}(y)\right|\lesssim& y^{-3}\int_0^y \left(\frac{1}{\log(2t/\lambda)^2}\left(\frac{|\lambda'|}{\lambda}+\frac{1}{t}\right)\right)s^3ds+\int_0^y \left(\frac{1}{\log(2t/\lambda)^2}\left(\frac{|\lambda'|}{\lambda}+\frac{1}{t}\right)\right)sds\\
      \lesssim& \frac{y}{\log(2t/\lambda)^2}\left(\frac{|\lambda'|}{\lambda}+\frac{1}{t}\right).
\end{aligned}
\end{equation*}
In the case $1\leq y \leq 2t/\lambda$,  (\ref{solution formula for QTS}) yields
\begin{equation*}
    \begin{aligned}
        |S(y)|=&\left|\Gamma (y)\int_y^{\infty}F(s)\Lambda W(s)s^3ds-\Lambda W(y)\int_0^yF(s)\Gamma(s)s^3ds\right|\\
        \lesssim&\int_y^{\infty}\left(\frac{s^3\chi_{s\leq \frac{2t}{\lambda}}}{(1+s^2)(1+s^4)}+\frac{s^3\chi_{s\leq \frac{2t}{\lambda}}}{(1+s^2)^2\log(2t/\lambda)}\right)ds\\
        &+\frac{1}{1+y^2}\int_1^y\left(\frac{s^3}{1+s^4}+\frac{s^3}{(1+s^2)\log(2t/\lambda)}\right)ds\\
    &+\frac{1}{1+y^2}\int_0^1\left(\frac{s}{1+s^4}+\frac{s}{(1+s^2)\log(2t/\lambda)}\right)ds
    \lesssim\frac{1+\log y}{1+y^2}+\frac{\log(t/\lambda y)+1}{\log(t/
    \lambda)}.
    \end{aligned}
\end{equation*}
and
\begin{equation*}
    \begin{aligned}
        \left|\frac{dS}{dy}(y)\right|=&\left|\Gamma' (y)\int_y^{\infty}F(s)\Lambda W(s)s^3ds-\Lambda W'(y)\int_0^yF(s)\Gamma(s)s^3ds\right|\\
        \lesssim&\frac{1}{y}\int_y^{\infty}\left(\frac{s^3\chi_{s\leq \frac{2t}{\lambda}}}{(1+s^2)(1+s^4)}+\frac{s^3\chi_{s\leq \frac{2t}{\lambda}}}{(1+s^2)^2\log(2t/\lambda)}\right)ds\\
        &+\frac{1}{1+y^3}\int_1^y\left(\frac{s^3}{1+s^4}+\frac{s^3}{(1+s^2)\log(2t/\lambda)}\right)ds\\
    &+\frac{1}{1+y^3}\int_0^1\left(\frac{s}{1+s^4}+\frac{s}{(1+s^2)\log(2t/\lambda)}\right)ds
    \lesssim\frac{1+\log y}{1+y^3}+\frac{\log(t/\lambda y)+1}{\log(t/
    \lambda)y}
    \end{aligned}
\end{equation*}
Observe that when $1\leq y \leq 2t/\lambda$, the time derivative of right-hand side of the equation for $S$ yields 
\begin{equation*}
\left(\frac{|\lambda'|}{\lambda}+\frac{1}{t}\right)\left(  (\underline{\Lambda}\Lambda W)\chi'\left(\frac{\lambda \cdot}{t}\right)+\frac{\Lambda W}{\log (2t/\lambda)}\chi'\left(\frac{\lambda \cdot}{t}\right)\right)+ \frac{\Lambda W \chi_t(\lambda \cdot)}{\log(2t/\lambda)^2}\left(\frac{|\lambda'|}{\lambda}+\frac{1}{t}\right).
\end{equation*}
With this, $\frac{dS}{dt}$ can be estimated by
\begin{equation*}
    \begin{aligned}
        \left|\frac{dS}{dt}(y)\right|\lesssim&\left|\Gamma (y)\int_y^{\infty}\frac{dF}{dt}(s)\Lambda W(s)s^3ds-\Lambda W(y)\int_0^y\frac{dF}{dt}(s)\Gamma(s)s^3ds\right|\\
        \lesssim&\left(\frac{|\lambda'|}{\lambda}+\frac{1}{t}\right)\Bigg[\int_y^{\infty}\left(\frac{s^3\chi'(\frac{\lambda s}{t})}{(1+s^2)(1+s^4)}+\frac{s^3\chi'(\frac{\lambda s}{t})}{(1+s^2)^2\log(2t/\lambda)}+\frac{s^3\chi(\frac{\lambda s}{t})}{(1+s^2)^2\log(2t/\lambda)^2}\right)ds\\
        &+\frac{1}{1+y^2}\int_1^y\left(\frac{s^3\chi'(\frac{\lambda s}{t})}{1+s^4}+\frac{s^3\chi'(\frac{\lambda s}{t})}{(1+s^2)\log(2t/\lambda)}+\frac{s^3\chi(\frac{\lambda s}{t})}{(1+s^2)\log(2t/\lambda)^2}\right)ds\\
    &+\frac{1}{1+y^2}\int_0^1\left(\frac{s\chi(\frac{\lambda s}{t})}{(1+s^2)\log(2t/\lambda)^2}\right)ds\Bigg]
    \lesssim\left(\frac{|\lambda'|}{\lambda}+\frac{1}{t}\right)\left(\frac{1}{\log(t/\lambda)}+\frac{\log(t/\lambda y)}{\log(t/\lambda)^2}+\frac{1}{1+y^2}\right).
    \end{aligned}
\end{equation*}
Again, $\frac{d^2S}{dydt}(y)$ satisfies
\begin{equation*}
   \begin{aligned}
        \left|\frac{d^2S}{dydt}(y)\right|\lesssim&\left|\Gamma' (y)\int_y^{\infty}\frac{dF}{dt}(s)\Lambda W(s)s^3ds-\Lambda W'(y)\int_0^y\frac{dF}{dt}(s)\Gamma(s)s^3ds\right|\\
        \lesssim&\left(\frac{|\lambda'|}{\lambda}+\frac{1}{t}\right)\Bigg[\frac{1}{y}\int_y^{\infty}\left(\frac{s^3\chi'(\frac{\lambda s}{t})}{(1+s^2)(1+s^4)}+\frac{s^3\chi'(\frac{\lambda s}{t})}{(1+s^2)^2\log(2t/\lambda)}+\frac{s^3\chi(\frac{\lambda s}{t})}{(1+s^2)^2\log(2t/\lambda)^2}\right)ds\\
        &+\frac{1}{1+y^3}\int_1^y\left(\frac{s^3\chi'(\frac{\lambda s}{t})}{1+s^4}+\frac{s^3\chi'(\frac{\lambda s}{t})}{(1+s^2)\log(2t/\lambda)}+\frac{s^3\chi(\frac{\lambda s}{t})}{(1+s^2)\log(2t/\lambda)^2}\right)ds\\
    &+\frac{1}{1+y^3}\int_0^1\left(\frac{s\chi(\frac{\lambda s}{t})}{(1+s^2)\log(2t/\lambda)^2}\right)ds\Bigg]
    \lesssim\left(\frac{|\lambda'|}{\lambda}+\frac{1}{t}\right)\left(\frac{1}{y\log(t/\lambda)}+\frac{\log(t/\lambda y)}{y\log(t/\lambda)^2}+\frac{1}{1+y^3}\right).
    \end{aligned}   
\end{equation*}

\end{proof}

\section{Proof of Lemma \ref{property of Ak}.}\label{Proof of Lemma property of Ak}
This section is devoted to proving the property of operator $\underline{A}_k$. For reader's convenience, we restated the lemma before giving its proof.

\begin{lemma}
   For any $k=1,...,N$, the operators $A_k$ and $\underline{A}_k$ satisfy the following properties.

 (\rmnum{1}) The families
\begin{equation*}
    \begin{aligned}
        \left\{A_k:\lambda_k>0,y_k\in \mathbb R^4\right\}\text{ , }&\left\{\underline{A}_k:\lambda_k>0,y_k\in \mathbb R^4\right\}\\
        \left\{\lambda_k\partial_{\lambda_k}A_k:\lambda_k>0,y_k\in \mathbb R^4\right\}\text{ , }&\left\{\lambda_k\partial_{\lambda_k}\underline{A}_k:\lambda_k>0,y_k\in \mathbb R^4\right\}\\
        \left\{\lambda_k\partial_{y_k}A_k:\lambda_k>0,y_k\in \mathbb R^4\right\}\text{ and }&\left\{\lambda_k\partial_{y_k}\underline{A}_k:\lambda_k>0,y_k\in \mathbb R^4\right\}
    \end{aligned}
\end{equation*}
are bounded in $\ML(\dot{H}^1,L^2)$ and  $\ML(L^2,L^{\frac{4}{3}})$ with norms depending on $q$. Moreover, for any $h\in \dot{H}^1\cap \dot{H}^2$,
\begin{equation}\label{Lk 2 bounded for Ak of appendix}
    \left\|\underline{A}_kh\right\|_{\dot{H}^1}+\left\|\lambda_k\partial_{\lambda_k}\underline{A}_kh\right\|_{\dot{H}^1}+\left\|\lambda_k\partial_{y_k}\underline{A}_kh\right\|_{\dot{H}^1}\lesssim \Tilde{R}\left\|\ML_k h\right\|_{L^2}+\frac{1}{\lambda_k}\left\|h\right\|_{\dot{H}^1}.
\end{equation}

(\rmnum{2}) For any $g,h\in \dot{H}^1\cap \dot{H}^2$,
\begin{equation}\label{Ak key property 1 in app}
  \lelan A_kh,f(h+g)-f(h)-f'(h)g \rilan=-\lelan A_k g,f(h+g)-f(h) \rilan. 
\end{equation}

(\rmnum{3}) For any $h\in \dot{H}^1\cap \dot{H}^2$, $g\in\dot{H}^1\cap L^2$, it holds
\begin{equation}\label{Ah delta g in app}
    \left|\lelan \underline{A}_k h,\Delta g \rilan\right|\lesssim \Tilde{R}\left\|\ML_{k}h\right\|_{L^2}\left\|g\right\|_{\dot{H}^1}.
\end{equation}

(\rmnum{4}) For any $\eta>0$, choosing $\epsilon>0$ small enough in Lemma \ref{function q}, it holds for all $h\in \dot{H}^1\cap \dot{H}^2$,
\begin{equation}\label{Ak key property 3 in app}
    \lelan \underline{A}_kh,\Delta h \rilan\leq \frac{\eta}{\lambda_k}\left\|h\right\|_{\dot{H}^1}^2-\frac{1}{\lambda_k}\int_{|x-\boldsymbol{y}_k|\leq R}|\nabla h(x)|^2dx,
\end{equation}
\begin{equation}\label{Ak key property 2 in app}
    \lelan [\Delta,\underline{A}_k]h,\Delta h \rilan\geq  \frac{1}{\lambda_k}\int_{|x-\boldsymbol{y}_k|\leq \lambda_k R}(\Delta h(x))^2dx-\frac{2\epsilon}{\lambda_k}\left\|\ML_k h\right\|_{L^2}^2.
\end{equation}

(\rmnum{5}) For any \(\eta>0\), after choosing \(\epsilon>0\) sufficiently small and then \(R>0\) suitably in Lemma \ref{function q}, it holds
\begin{equation}\label{A and Lambda W nabla W in app}
    \left\|\underline{A}\Lambda W\right\|_{L^2(|x|\geq R)}\leq \left(\eta+\frac{1}{R}\right)\text{ , }\left\|\underline{A}\nabla W\right\|_{L^2(|x|\geq R)}\leq \frac{1}{R}.
\end{equation}
\end{lemma}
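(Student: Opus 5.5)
By scaling and translation invariance it suffices to treat $\lambda_k=1$, $\boldsymbol y_k=0$, i.e.\ the unscaled operators $A,\underline A$. All properties then follow from direct computation combined with the properties (1)--(7) of $q$ in Lemma \ref{function q}.

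\textbf{Step (i): boundedness.} By (7), $|\nabla q(x)|\le 4|x|$ and $|\nabla^2 q|\lesssim 1$. Moreover $\Delta q$ is bounded, since it equals $4$ for $|x|\le R$, is smooth up to $\tilde R$, and equals $2\tilde C_2|x|^{-2}+O(|x|^{-3})$ beyond. So $\nabla q\cdot\nabla h$ and $\Delta q\, h$ are controlled, but only on $|x|\le\tilde R$ with constant $\tilde R$. For $|x|\ge\tilde R$ we have $|\nabla q|\lesssim |x|^{-1}$, so Hardy gives the $\mathcal L(\dot H^1,L^2)$ bound. The $\mathcal L(L^2,L^{4/3})$ bound follows by duality, since $\underline A^*=-A$ up to a bounded multiplication. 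The operators $\lambda\partial_\lambda A$ and $\partial_y A$ involve $x\cdot\nabla(\nabla q)$ and $\nabla^2 q$, which obey the same bounds. For (\ref{Lk 2 bounded for Ak of appendix}), differentiate: $\nabla(\underline A h)$ contains $\nabla q\cdot\nabla^2 h$, which is bounded by $\tilde R\|h\|_{\dot H^2}$ inside $B_{\tilde R}$ and by $\|h\|_{\dot H^2}$ outside, plus lower-order terms bounded via Hardy by $\|h\|_{\dot H^1}$. We then finish with Lemma \ref{coercivity proposition for second order operator}, in the form $\|h\|_{\dot H^2}\lesssim\|\mathcal L_k h\|_{L^2}+\lambda_k^{-1}\|h\|_{\dot H^1}$.

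\textbf{Steps (ii), (iii).} Part (ii) is the standard identity. Write $A=\frac14\Delta q+\nabla q\cdot\nabla$ and note $\langle \nabla q\cdot\nabla(F(\cdot)),1\rangle=-\langle\Delta q\,F,1\rangle$ with $F(u)=\frac14|u|^4$. Expanding $\nabla q\cdot\nabla[F(h+g)-F(h)-f(h)g]$, the $\frac14\Delta q$ terms match exactly because $f(u)u=4F(u)$; this is the same computation as in \cite{JM}. Part (iii) follows by integrating by parts once, $\langle\underline A h,\Delta g\rangle=-\langle\nabla\underline A h,\nabla g\rangle$, and applying (i).

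\textbf{Step (iv): the commutator estimates (main obstacle).} For (\ref{Ak key property 3 in app}) we use the Pohozaev-type identity
\[
\langle\underline A h,\Delta h\rangle=-\int \partial_{ij}q\,\partial_ih\partial_jh+\tfrac14\int\Delta^2 q\,h^2 .
\]
Decompose $\partial_{ij}q=\frac{q'}{r}\delta_{ij}+(q''-\frac{q'}{r})\hat x_i\hat x_j$. Inside $B_R$ this matrix is the identity, which produces $-\int_{B_R}|\nabla h|^2$. Outside, (3) gives the lower bound $-\epsilon$, so the remainder is at most $\epsilon\|h\|_{\dot H^1}^2$. The $\Delta^2 q$ term is bounded via (5) and the logarithmic Hardy inequality (\ref{Hardy 2}) by $\epsilon\|h\|_{\dot H^1}^2$ up to harmless pieces.

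For (\ref{Ak key property 2 in app}) we compute $[\Delta,\underline A]h=2\partial_{ij}q\,\partial_{ij}h+\frac32\nabla\Delta q\cdot\nabla h+\frac12\Delta^2 q\, h$. Pairing with $\Delta h$ and integrating by parts twice reduces the leading term to $2\int\partial_{ij}q\,\partial_{ik}h\partial_{jk}h$ plus terms carrying $\nabla\Delta q$, $\Delta^2 q$ and $\Delta^3 q$. Inside $B_R$ the leading term equals $2\int|\nabla^2 h|^2$, which after integrating by parts on the ball dominates $\int_{B_R}(\Delta h)^2$ up to boundary terms. Those boundary terms must be absorbed using the cutoff structure rather than pointwise bounds. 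Outside $B_R$ we again use (3).

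The genuinely delicate terms are the lower-order ones: $\int \nabla\Delta q$-weighted $|\nabla h|^2$, $\int\Delta^2 q\,|\nabla h|^2$, and $\int\Delta^3 q\, h^2$. Properties (6), (5) and (4) give exactly the weights $\epsilon/(r(1+|\log r|))$, $\epsilon/(r^2(1+|\log r|))$ and $\epsilon/(r^4(1+|\log r|)^2)$. These are precisely the weights controlled by the coercivity of Proposition \ref{Coercivity estimate with LG LG} / Corollary \ref{coercivity est with LKG LKG}, namely $\int|\nabla h|^2/|x|^2+h^2/(|x|^4(1+|\log|x||)^2)\lesssim\|\mathcal L h\|_{L^2}^2$ modulo orthogonality directions, together with Hardy (\ref{Hardy 1})--(\ref{Hardy 2}). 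We expect the main difficulty here: the orthogonality terms in that coercivity are not available for an arbitrary $h$. We would first try to bypass them by using the Hardy inequalities alone, bounding everything by $\|h\|_{\dot H^2}^2$ and then by $\|\mathcal L_k h\|^2$ via Lemma \ref{coercivity proposition for second order operator}, accepting an error proportional to $\epsilon$.

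\textbf{Step (v).} This is direct. For $R\le|x|\le\tilde R$ we have $|\nabla q|\lesssim |x|$ and $\Lambda W\sim |x|^{-2}$, so $\underline A\Lambda W=\underline\Lambda\Lambda W+(\text{error})$. Since $\underline\Lambda$ annihilates the $r^{-2}$ tail, $\underline\Lambda\Lambda W=O(r^{-4})$, whose $L^2(|x|\ge R)$ norm is $\lesssim R^{-1}$. The deviation of $q$ from $\frac12 r^2$ contributes $\epsilon$-small coefficients (via (3),(6)) multiplied by $\|\Lambda W\|$ on logarithmic annuli, giving the $\eta$ term. The bound for $\nabla W$ is the same and easier because of its faster decay.
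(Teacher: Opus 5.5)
Your treatment of (i), (ii) and the first inequality in (iv) matches the paper. One small correction there: for the $\Delta^2 q$ term you only need the ordinary Hardy inequality $\int h^2|x|^{-2}\lesssim\|h\|_{\dot H^1}^2$, not \eqref{Hardy 2}. You also correctly see that the weights in (4)--(6) are the crux.

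\textbf{The gap in (iv).} It is the step you flag yourself: turning the error terms of \eqref{Ak key property 2 in app} into $\epsilon\lambda_k^{-1}\|\mathcal{L}_kh\|_{L^2}^2$. Your bypass (Hardy, then Lemma~\ref{coercivity proposition for second order operator}) cannot do this, for two reasons.
\begin{itemize}
\item The $\Delta^3 q$ term carries the scale-critical weight $|x|^{-4}(1+|\log|x||)^{-2}$, and $\|h\|_{\dot H^2}$ does not control it at all. For $h=1$ on $B_1$ and $h=\varphi(\log|x|/K)$ outside, $\|\Delta h\|_{L^2}^2\sim K^{-1}$ while $\int h^2|x|^{-4}(1+|\log|x||)^{-2}\,dx\gtrsim1$. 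This is why \eqref{Hardy 2} carries a local $L^2$ term, which after rescaling is of size $\lambda_k^{-2}\|h\|_{\dot H^1}^2$.
\item Lemma~\ref{coercivity proposition for second order operator} itself brings in $\lambda_k^{-2}\|h\|_{\dot H^1}^2$. You also need it for the $-\epsilon\|\nabla^2 h\|^2$ coming from outside $B_{\lambda_kR}$.
\end{itemize}
So you only obtain \eqref{Ak key property 2 in app} with an extra error $C\epsilon\lambda_k^{-3}\|h\|_{\dot H^1}^2$. This is not cosmetic. Under the bootstrap, $\lambda^{-1}\|h\|_{\dot H^1}$ is only known to be $\lesssim t^{-1/6}$, while $\|\mathcal{L}_\phi h\|_{L^2}\lesssim t^{1/2}\lambda$. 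After multiplying by $b_k$, the extra term is therefore bounded only by $\epsilon t^{-2/3}$, far above the $t^{2/3}\lambda^2$ budget of Lemma~\ref{energy est 2nd}.

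The missing ingredient is the weighted coercivity of Proposition~\ref{Coercivity estimate with LG LG} and Corollary~\ref{coercivity est with LKG LKG}. It bounds $\|h\|_{\dot H^2}^2+\int|\nabla h|^2|x|^{-2}+h^2|x|^{-4}(1+|\log|x||)^{-2}$ by $\|\mathcal{L}_kh\|_{L^2}^2$ with no $\dot H^1$ term. Properties (4)--(6) are tailored to it, and the paper invokes it for exactly these terms. It holds only modulo the directions $(\Delta\Lambda W)\chi_M$ and $\Delta\nabla W$. That is harmless where the lemma is used: there $h=g-P_\Gamma$, with $g$ satisfying \eqref{orthogonal condition 1}. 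For a truly arbitrary $h$ you would have to split off the components along $\Lambda_kW_k$ and $\nabla_kW_k$ and absorb them with the positive term on $B_{\lambda_kR}$; Hardy alone cannot do it.

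\textbf{The same issue in (iii).} Integrating by parts once and applying (i) only yields $\bigl(\tilde R\|\mathcal{L}_kh\|_{L^2}+\lambda_k^{-1}\|h\|_{\dot H^1}\bigr)\|g\|_{\dot H^1}$. The stated bound cannot hold without orthogonality. For $h=\Lambda_kW_k$ one has $\mathcal{L}_kh=0$, while $\underline A_kh=\lambda_k^{-1}\underline\Lambda_k\Lambda_kW_k$ near $\boldsymbol y_k$ is not harmonic. In term I of Step 2 of Lemma~\ref{energy est 2nd}, your extra piece would be of size $t^{5/6}\lambda^2$. The paper instead moves the derivatives onto $\Delta q$ and $\nabla q$, uses (5)--(7), and bounds the resulting weighted norms of $h$ by the same coercivity.

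\textbf{Two smaller points.}

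First, the main term of (iv). Your global integration by parts has the merit that the one-sided bounds in (3) suffice outside $B_{\lambda_kR}$. Inside, however, you get $2\lambda_k^{-1}\int|\nabla^2h|^2$, and integrating by parts on the ball leaves uncontrolled boundary terms on $\partial B_{\lambda_kR}$. You can use the pointwise bound $4|\nabla^2h|^2\ge(\Delta h)^2$, which gives the constant $\frac12$ instead of $1$. Alternatively, pair directly as the paper does via $\partial_{ij}q\,\partial_{ij}h=\frac{q'}{r}\Delta h+(q''-\frac{q'}{r})\partial_{rr}h$. That route needs two-sided smallness of $q''-q'/r$, which the construction of $q$ provides.

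Second, (v). Here $\underline A\Lambda W=\frac{q'}{r}\underline\Lambda\Lambda W+\frac12(q''-\frac{q'}{r})\Lambda W$. Properties (1), (2) and (6) force $\log\tilde R\gtrsim e^{c/\epsilon}\log R$, so $\|\Lambda W\|_{L^2(R\le|x|\le\tilde R)}$ is enormous. An $O(\epsilon)$ bound on $q''-q'/r$ is therefore useless; you need $|q''-q'/r|\lesssim\epsilon(1+\log r)^{-1}$, which does follow from (6).
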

\begin{proof}
\textbf{Step 1. Proof of (\rmnum{1}) and (\rmnum{2}).} The boundedness in (\rmnum{1}) is derived from the definition of $q$ and the scaling invariance of the norm. For (\ref{Lk 2 bounded for Ak of appendix}), we first consider $\left\|\underline{A}_kh\right\|_{\dot{H}^1}$. From the definition of the operator $\underline{A}_k$ we have
\begin{equation*}
  \left\|\underline{A}_kh\right\|_{\dot{H}^1}\lesssim{\frac{1}{\lambda_k}\left\|\Delta q\left(\frac{x-\boldsymbol{y}_k}{\lambda_k}\right)h(x)\right\|_{\dot{H}^1}}+\left\|\nabla q\left(\frac{x-\boldsymbol{y}_k}{\lambda_k}\right)\cdot \nabla h(x)\right\|_{\dot{H}^1}
\end{equation*}
For the first term, recall that $|\Delta q|\leq 4$ when $|x|\leq \Tilde{R}$ and property (6) of Lemma \ref{function q}, it holds 
\begin{equation*}
  \frac{1}{\lambda_k}\left\|\Delta q\left(\frac{x-\boldsymbol{y}_k}{\lambda_k}\right)h(x)\right\|_{\dot{H}^1}\lesssim \frac{1}{\lambda_k}\left\|\nabla h(x)\right\|_{L^2}+\frac{1}{\lambda_k^2}\left\|\frac{\epsilon \lambda_kh}{(1+|\log (x-\boldsymbol{y}_k/\lambda_k)|)|x-\boldsymbol{y}_k|}\right\|_{L^2} \lesssim \frac{\left\|h\right\|_{\dot{H}^1}}{\lambda_k}, 
\end{equation*}
where Hardy is applied in the last inequality. For the second term, a direct computation gives
\begin{equation*}
    \begin{aligned}
   \left\|\nabla q\left(\frac{x-\boldsymbol{y}_k}{\lambda_k}\right)\cdot \nabla h(x)\right\|_{\dot{H}^1}\leq&\sum_{1\leq i,j\leq 4}\left(\int_{\mathbb R^4}\left(\frac{1}{\lambda_k}\partial_i\partial_jq\left(\frac{x-\boldsymbol{y}_k}{\lambda_k}\right)\partial_jh\right)^2\right)^{1/2}
   \\&+\sum_{1\leq i,j\leq 4}\left(\int_{\mathbb R^4}\left(\partial_jq\left(\frac{x-\boldsymbol{y}_k}{\lambda_k}\right)\partial_i\partial_jh\right)^2\right)^{1/2}.
    \end{aligned}
\end{equation*}
From (7) of Lemma \ref{function q} and the expression of $q$ when $x\geq \Tilde{R}$, we have
\begin{equation*}
   \sum_{1\leq i,j\leq 4}\left(\int_{\mathbb R^4}\left(\frac{1}{\lambda_k}\partial_i\partial_jq\left(\frac{x-\boldsymbol{y}_k}{\lambda_k}\right)\partial_jh\right)^2\right)^{1/2}\leq \frac{\left\|h\right\|_{\dot{H}^1}}{\lambda_k}, 
\end{equation*}
and
\begin{equation*}
 \sum_{1\leq i,j\leq 4}\left(\int_{\mathbb R^4}\left(\partial_jq\left(\frac{x-\boldsymbol{y}_k}{\lambda_k}\right)\partial_i\partial_jh\right)^2\right)^{1/2}\leq \Tilde{R} \left\|h\right\|_{\dot{H}^2}.   
\end{equation*}
Therefore, using the coercivity estimate we arrive at the estimate
\begin{equation*}
\left\|\underline{A}_kh\right\|_{\dot{H}^1}\lesssim \Tilde{R}\left\|\ML_k h\right\|_{L^2}+\frac{1}{\lambda_k}\left\|h\right\|_{\dot{H}^1}.
\end{equation*}
 For $\lambda_k\partial_{\lambda_k}\underline{A}_kh$ and $\lambda_k\partial_{y_k}\underline{A}_k$, we compute
\begin{equation*}
\begin{aligned}
    \lambda_k\partial_{\lambda_k}\underline{A}_k= &-\frac{1}{2\lambda_k}\Delta q\left(\frac{x-\boldsymbol{y}_k}{\lambda_k}\right)-\frac{1}{2\lambda_k}\frac{x-\boldsymbol{y}_k}{\lambda_k}\cdot\nabla \Delta q\left(\frac{x-\boldsymbol{y}_k}{\lambda_k}\right)\\
    &-\frac{x-\boldsymbol{y}_k}{\lambda_k}\cdot\nabla^2q\left(\frac{x-\boldsymbol{y}_k}{\lambda_k}\right)\cdot\nabla,
\end{aligned}
\end{equation*}
and 
\begin{equation*}
    \lambda_k\partial_{y_k}\underline{A}_k=-\frac{1}{2\lambda_k}\nabla \Delta q\left(\frac{x-\boldsymbol{y}_k}{\lambda_k}\right)-\nabla ^2q\left(\frac{x-\boldsymbol{y}_k}{\lambda_k}\right)\cdot \nabla.
\end{equation*}
Then similarly, using Lemma \ref{function q}, (\ref{Lk 2 bounded for Ak of appendix}) holds and the proof of (\rmnum{1}) is completed. Next, the property (\rmnum{2}) can be proved with the same approach in \cite{Jtb}. 

\textbf{Step 2. Proof of (\rmnum{3}).} From the scaling and translation invariance, it suffices for us to consider the case $\lambda_k=1$ and $\boldsymbol{y}_k=0$. Plugging the definition of $\underline{A}$ into the left hand side of (\ref{Ah delta g in app}) we obtain 
\begin{equation*}
    \begin{aligned}
         \left|\lelan \underline{A} h,\Delta g \rilan\right|\leq \left|\int_{\mathbb R^4}\frac{1}{2}\Delta q \left(x\right)h(x)\Delta g(x)dx\right|+\left|\int_{\mathbb R^4}\nabla q\left({x}\right)\cdot \nabla h(x)\Delta g(x) \right|.
    \end{aligned}
\end{equation*}
Integrating by parts twice, the first term is controlled by
\begin{equation*}
    \begin{aligned}
     &\left|\int_{\mathbb R^4}\frac{1}{2}\Delta q \left(x\right)h(x)\Delta g(x)dx\right|\\
     \lesssim& \left|\int_{\mathbb R^4}\Delta^2 q \left(x\right)h(x) g(x)dx\right|+\left|\int_{\mathbb R^4}\nabla\Delta q \left(x\right)\nabla h(x) g(x)dx\right|+ \left|\int_{\mathbb R^4}\Delta q \left(x\right)\Delta h(x) \nabla g(x)dx\right|.  
    \end{aligned}
\end{equation*}
Using (5), (6) and (7) of Lemma \ref{function q} and the  $q$ in the exterior region $|x|\geq \Tilde{R}$, these terms are separately bounded by 
\begin{equation*}
    \begin{aligned}
    \left|\int_{\mathbb R^4}\Delta^2 q \left(x\right)h(x) g(x)dx\right|\leq&
     \left|\int_{|x|\leq \Tilde{R}}\frac{\epsilon h(x)}{|x|^2(1+|\log |x|)} g(x)dx\right|+ \left|\int_{|x|\geq \Tilde{R}}\frac{h(x)}{|x|^5} g(x)dx\right|\\
     \lesssim & (\Tilde{R}+{\Tilde{R}}^{-1}) \left\|\ML_{k}h\right\|_{L^2}\left\|g\right\|_{\dot{H}^1};
    \end{aligned}
\end{equation*}
\begin{equation*}
   \begin{aligned}
      \left|\int_{\mathbb R^4}\nabla\Delta q \left(x\right)\nabla h(x) g(x)dx\right|\lesssim& \ \left|\int_{|x|\leq \Tilde{R}}\frac{\epsilon \nabla h(x)}{x(1+|\log |x||)} g(x)dx\right|+\left|\int_{|x|\geq \Tilde{R}}\frac{\nabla h(x)}{|x|^3}g(x)dx\right|\\
      \lesssim&(\Tilde{R}+{\Tilde{R}}^{-1}) \left\|\ML_{k}h\right\|_{L^2}\left\|g\right\|_{\dot{H}^1};
   \end{aligned} 
\end{equation*}
and
\begin{equation*}
   \begin{aligned}
     \left|\int_{\mathbb R^4}\Delta q \left(x\right)\Delta h(x) \nabla g(x)dx\right|\lesssim& \left|\int_
     {|x|\leq \Tilde{R}}\Delta h(x)\nabla g(x)dx\right|+\left|\int_
     {|x|\geq \Tilde{R}}\frac{\Delta h(x)}{|x|^2}\nabla g(x)dx\right|
     \lesssim \left\|\ML_{k}h\right\|_{L^2}\left\|g\right\|_{\dot{H}^1}.
   \end{aligned} 
\end{equation*}
Combining these together we have
\begin{equation*}
   \left|\int_{\mathbb R^4}\frac{1}{2}\Delta q \left(x\right)h(x)\Delta g(x)dx\right|\lesssim  \Tilde{R}\left\|\ML_{k}h\right\|_{L^2}\left\|g\right\|_{\dot{H}^1}.
\end{equation*}
With the same strategy, we can also prove 
\begin{equation*}
   \left|\int_{\mathbb R^4}\nabla q\left(x\right)\cdot \nabla h(x)\Delta g(x) \right|\lesssim  \Tilde{R}\left\|\ML_{k}h\right\|_{L^2}\left\|g\right\|_{\dot{H}^1}.
\end{equation*}
Therefore, (\ref{Ah delta g in app}) holds. 

\textbf{Step 3. Proof of (\rmnum{4}).}
By the definition of $\underline{A}_k$ and integration by parts, the left-hand side of (\ref{Ak key property 3 in app}) is expanded as 
\begin{equation*}
\begin{aligned}
   \lelan \underline{A}_kh,\Delta h\rilan=&\int\frac{1}{2\lambda_k}\Delta q \left(\frac{x-\boldsymbol{y}_k}{\lambda_k}\right)h(x)\Delta h(x)+\nabla q\left(\frac{x-\boldsymbol{y}_k}{\lambda_k}\right)\cdot \nabla h(x)\Delta h(x)dx\\
   =&\int-\frac{1}{4\lambda_k^3}\Delta^2 q\left(\frac{x-\boldsymbol{y}_k}{\lambda_k}\right)h^2(x)-\sum_{i,j=1}^4\frac{1}{\lambda_k}\partial_{ij}q\left(\frac{x-\boldsymbol{y}_k}{\lambda_k}\right)\partial_ih(x)\partial_jh(x)dx.
\end{aligned}
\end{equation*}
Since $q$ is a radial function, let $r=|x|$, it holds 
\begin{equation*}
    \begin{aligned}
  &-\int\sum_{i,j=1}^4\frac{1}{\lambda_k}\partial_{ij}q\left(\frac{x-\boldsymbol{y}_k}{\lambda_k}\right)\partial_ih(x)\partial_jh(x)dx\\
  =&-\frac{1}{\lambda_k}\int\frac{q'(r)}{r}|\nabla h|^2+\left(q''(r)-\frac{q'(r)}{r}\right)(\partial_r h)^2dx.
    \end{aligned}
\end{equation*}
Combining these with property (1), (3), (5) of Lemma \ref{function q} completes the proof of (\ref{Ak key property 3 in app}). Now we turn to the estimate (\ref{Ak key property 2 in app}). The definition of $\underline{A}_k$ yields
\begin{equation*}
    \begin{aligned}
        \lelan\left[\Delta , \underline{A}_k\right]h,\Delta h \rilan
        =&\int_{\mathbb R^4}\frac{1}{2}\frac{1}{\lambda_k^3}\Delta^2q(\frac{x-\boldsymbol{y}_k}{\lambda_k})h(x)\Delta h(x)dx+\int_{\mathbb R^4}\frac{2}{\lambda_k^2}\nabla \Delta q(\frac{x-\boldsymbol{y}_k}{\lambda_k})\cdot \nabla h(x)\Delta h(x)dx\\
        &+\int_{\mathbb R^4}\frac{1}{\lambda_k}\sum_{i,j=1}^4\partial_{ij}q(\frac{x-\boldsymbol{y}_k}{\lambda_k})\partial_{ij}h(x)\Delta h(x)dx:=\Rmnum{1}+\Rmnum{2}+\Rmnum{3}.
    \end{aligned}
\end{equation*}
For $\Rmnum{1}$, using integration by part we get
\begin{equation*}
    \begin{aligned}
        \Rmnum{1}=&-\int_{\mathbb R^4}\frac{1}{2}\frac{1}{\lambda_k^3}\Delta^2q(\frac{x-\boldsymbol{y}_k}{\lambda_k})\nabla h(x)\nabla h(x)dx-\int_{\mathbb R^4}\frac{1}{2}\frac{1}{\lambda_k^3}\nabla\Delta^2q(\frac{x-\boldsymbol{y}_k}{\lambda_k})\nabla \frac{h(x)^2}{2}dx\\
        =&-\int_{\mathbb R^4}\frac{1}{2}\frac{1}{\lambda_k^3}\Delta^2q(\frac{x-\boldsymbol{y}_k}{\lambda_k})|\nabla h(x)|^2dx+\int_{\mathbb R^4}\frac{1}{4}\frac{1}{\lambda_k^3}\Delta^3q(\frac{x-\boldsymbol{y}_k}{\lambda_k}){h(x)^2}dx.
    \end{aligned}
\end{equation*}
From property (4) and (5) of Lemma \ref{function q} , these two terms are below bounded by
\begin{equation*}
  -\int_{\mathbb R^4}\frac{1}{2}\frac{1}{\lambda_k^3}\Delta^2q(\frac{x-\boldsymbol{y}_k}{\lambda_k})|\nabla h(x)|^2dx\gtrsim -\frac{1}{\lambda_k}\int_{\mathbb R^4}  \frac{\epsilon|\nabla h(x)|^2}{|x|^2(1+|\log |x||)}dx,
\end{equation*}
and 
\begin{equation*}
 \int_{\mathbb R^4}\frac{1}{4}\frac{1}{\lambda_k^3}\Delta^3q(\frac{x-\boldsymbol{y}_k}{\lambda_k}){h(x)^2}dx\gtrsim -\frac{1}{\lambda_k}\int_{\mathbb R^4}\frac{\epsilon h(x)^2}{|x|^4(1+|\log |x||)^2}dx.   
\end{equation*}
Combining with the coercivity proposition \ref{Coercivity estimate with LG LG} we obtain
\begin{equation}\label{app B est of R1}
    \Rmnum{1}\gtrsim -\frac{\epsilon}{\lambda_k} \left\|\ML_k h\right\|_{L^2}^2.
\end{equation}
Then for $\Rmnum{2}$, applying property (6) of lemma \ref{function q} and Cauchy's inequality we have
\begin{equation*}
   \int_{\mathbb R^4}\frac{2}{\lambda_k^2}\nabla \Delta q(\frac{x-\boldsymbol{y}_k}{\lambda_k})\cdot \nabla h(x)\Delta h(x)dx\gtrsim -\frac{\epsilon}{\lambda_k}\left\|\frac{ \nabla h}{|x|}\right\|_{L^2}\left\|\Delta h\right\|_{L^2}.
\end{equation*}
Thus, again from coercivity proposition, it holds
\begin{equation}\label{app B est of R2}
    \Rmnum{2}\gtrsim -\frac{\epsilon}{\lambda_k} \left\|\ML_k h\right\|_{L^2}^2.
\end{equation}
Finally, we consider \Rmnum{3}. Since $q$ is a radial function, let $r:=|x-\boldsymbol{y}_k|/\lambda_k$, \Rmnum{3} can be rewritten as
\begin{equation*}
    \begin{aligned}
       &\int_{\mathbb R^4}\frac{1}{\lambda_k}\sum_{i,j=1}^4\partial_{ij}q(\frac{x-\boldsymbol{y}_k}{\lambda_k})\partial_{ij}h(x)\Delta h(x)dx\\
       =&\frac{1}{\lambda_k}\int_{\mathbb R^4}\frac{q'(r)}{r}(\Delta h(x))^2+\left(q''(r)-\frac{q'(r)}{r}\right)(\partial_{rr}h(x) \Delta h(x))dx.
    \end{aligned}
\end{equation*}
Recall that $q(x)=\frac{|x|^2}{2}$ when $|x|\leq R$ and property (3) from lemma \ref{function q}, the lower bound of $\Rmnum{3}$
\begin{equation}\label{app B est of R3}
    \Rmnum{3}\geq \frac{1}{\lambda_k}\int_{|x-\boldsymbol{y}_k|\leq \lambda_k R}(\Delta h(x))^2dx-\frac{2\epsilon}{\lambda_k}\left\|\ML_k h\right\|_{L^2}^2
\end{equation}
 holds. Subsequently, combining (\ref{app B est of R1}), (\ref{app B est of R2}) and (\ref{app B est of R3}) we conclude 
(\ref{Ak key property 2 in app}).

\textbf{Step 4. Proof of (\rmnum{5}).} Finally, we consider the estimate (\ref{A and Lambda W nabla W in app}). Due to the asymptotic form of \(q\) in the exterior region $|x|\geq \tilde{R}$, it suffices to estimate on $[R,\tilde{R}]$. Recall the notation in the proof of Lemma \ref{function q}, we have
\begin{equation*}
    \begin{aligned}
        \underline{A}_k \Lambda W=&\frac{1}{2}\Delta q(r)\Lambda W(r)+ q(r)'\partial_rW(r)\\
        =&\frac{p^{(2)}(\log r)+2p^{(1)}(\log r)}{2r^2}\Lambda W(r)+\frac{p^{(1)}(\log r)}{r}\partial_r W(r).
    \end{aligned}
\end{equation*}
Recall that $p$ is given by
\begin{equation*}
      p(x):=c_1(x)+c_2(x)x+c_3(x)e^{2x}+c_4(x)xe^{2x}+c_5(x)e^{-2x}+c_6(x)e^{-x}.
\end{equation*}
The functions $c_j(x)$ here satisfy
\begin{equation*}
\begin{aligned}
   |c_1(x)|\lesssim \alpha_1 e^{2x}(1+x)^{-1}&\text{ , }|c_2(x)|\lesssim \alpha_1 e^{2x}(1+x)^{-2}, \\
   |c_3(x)|\leq C&\text{ , }|c_4(x)|\lesssim \frac{\alpha_1}{x}, \\
   |c_5(x)|\lesssim \alpha_1 e^{4x}(1+x)^{-2}&\text{ , }|c_6(x)|\lesssim \alpha_1 e^{3x}(1+x)^{-2}.
\end{aligned} 
\end{equation*}
All these estimates have already been established in Lemma \ref{function q}. 
Plugging these back into the expression of $\underline{A}_k \Lambda W$, we obtain
\begin{equation*}
 \begin{aligned}
  \underline{A}_k \Lambda W(r)=2(c_3(\log r)+c_4(\log r)\log r)\underline{\Lambda}\Lambda W(r)+O\left(\frac{\alpha_1}{r^2\log r}\right).   
 \end{aligned}   
\end{equation*}
The decay of $\underline{\Lambda}\Lambda W$ then implies 
\begin{equation*}
\begin{aligned}
     \left\|\underline{A}_k \Lambda W\right\|_{L^2(R\leq |x|\leq \tilde{R})}\lesssim& \left\|\underline{\Lambda}\Lambda W\right\|_{L^2(R\leq |x|\leq \tilde{R})}+\alpha_1\left(\int_{R}^{\tilde{R}}\frac{1}{r^4(\log r)^2}r^3dr\right)^{\frac{1}{2}}\\
     \lesssim& \frac{1}{R^2}+\alpha_1 \sqrt{\frac{1}{\log R}-\frac{1}{\log \tilde{R}} }.
\end{aligned}  
\end{equation*}
As a result, choosing $\epsilon$ sufficiently small, we have
\begin{equation*}
    \left\|\underline{A}\Lambda W\right\|_{L^2(|x|\geq R)}\leq \left(\eta+\frac{1}{R}\right).
\end{equation*}
The estimate for $\nabla W$ is easier. Notice that $\nabla W(x)\sim |x|^{-3}$ and $|\nabla q(x)|\leq |x|$, $|\Delta q(x)|\leq C$, the exterior estimate follows directly. 
\end{proof}


\begin{thebibliography}{80}
\bibitem{Aubin}
T. Aubin,
Problèmes isopérimétriques et espaces de Sobolev,
J. Differential Geometry 11 (1976), no. 4, 573--598.

\bibitem{BeceanuCenterStable}
M. Beceanu,
A centre-stable manifold for the energy-critical wave equation in
\(\mathbb R^3\) in the symmetric setting,
J. Hyperbolic Differ. Equ. 11 (2014), no. 4, 821--852.

\bibitem{CDKM 2022} C. Collot, T. Duyckaerts, C. Kenig, and F. Merle. Soliton resolution for the radial quadratic wave equation in six space dimensions. {Viet. J. Math.}{52} (2024), 735–773 .

\bibitem{CDKM 2022.1} C. Collot, T. Duyckaerts, C. Kenig, and F. Merle. On channels of energy for the radial linearised energy critical wave equation in the degenerate case. {Int. Math. Res. Not. IMRN} {24} (2023), 21015–21067.

\bibitem{CDKM 2023} C. Collot, T. Duyckaerts, C. Kenig, and F. Merle. On classification of non-radiative solutions for various energy-critical wave equations. {Adv. Math.} {434} (2023), 91.

\bibitem{CombetMartel}V. Combet, Y. Martel. Construction of the multi-bubble solutions for the critical gKdV equation, SIAM J. Math. Anal. 50 (4) (2018) 3715-3790.

\bibitem{CortazarDelPinoMusso}
C. Cortazar, M. del Pino, and M. Musso,
Green's function and infinite-time bubbling in the critical nonlinear heat equation,
J. Eur. Math. Soc. (JEMS) 22 (2020), no. 1, 283--344.
   
\bibitem{CMM}R. Côte, Y. Martel, F. Merle, Construction of multi-soliton solutions for the $L^2$-supercritical gKdV and NLS equations, Rev. Mat. Iberoam. 27(1) (2011) 273-302.

\bibitem{CoteMunoz}
R. Côte and C. Muñoz,
Multi-solitons for nonlinear Klein--Gordon equations,
Forum Math. Sigma 2 (2014), e15, 38 pp.

\bibitem{DelPinoMussoWei}
M. del Pino, M. Musso, and J. Wei,
Type II blow-up in the 5-dimensional energy critical heat equation,
Acta Math. Sin. (Engl. Ser.) 35 (2019), no. 6, 1027--1042.

\bibitem{DelPinoMussoWei3DInfiniteHeat}
M. del Pino, M. Musso, and J. Wei,
Infinite time blow-up for the 3-dimensional energy critical heat equation,
Anal. PDE 13 (2020), no. 1, 215--274.

\bibitem{DonningerKreiger}R. Donninger, J. Krieger, Nonscattering solutions and blowup at infinity for the critical wave equation, Math. Ann. 357 (1) (2013) 89-163.
   
\bibitem{DuyckaertsJiaKenigMerle}
T. Duyckaerts, H. Jia, C. E. Kenig, and F. Merle,
Soliton resolution along a sequence of times for the focusing energy critical
wave equation,
Geom. Funct. Anal. 27 (2017), no. 4, 798--862.

\bibitem{DuyckaertsKenigMerleUniversality}
T. Duyckaerts, C. E. Kenig, and F. Merle, Universality of blow-up profile for small radial type II blow-up solutions of the energy-critical wave equation, J. Eur. Math. Soc. 13 (3) (2011) 533-599. 

\bibitem{DuyckaertsKenigMartelMerle} T. Duyckaerts, C. E. Kenig, Y. Martel, and F. Merle. Soliton resolution for critical co-rotational wave maps and radial cubic wave equation. {Commun. Math. Phys.} {391} (2022), no. 2, 779-871.

\bibitem{DuyckaertsKenigMerleClassification}
T. Duyckaerts, C. E. Kenig, and F. Merle, Classification of radial solutions of the focusing, energy-critical wave equation, Camb. J. Math. 1 (1) (2013) 75-144.

\bibitem{DuyckaertsKenigMerleSolitonResolution}
T. Duyckaerts, C. E. Kenig, and F. Merle,
Soliton resolution for the radial critical wave equation in all odd space
dimensions,
Acta Math. 230 (2023), no. 1, 1--92.

\bibitem{DM}T. Duyckaerts, F. Merle, Dynamics of threshold solutions for energy-critical wave equation. Int. Math. Res . Pap. IMRP, 2008.

\bibitem{HR}M. Hillairet, P. Rapha\"{e}l, Smooth type II blow-up solutions to the four-dimensional energy-critical wave equation, Anal. PDE 5 (2012), no. 4, 777-829.

\bibitem{HwangKimWaveMapsBubbleTower}
S. Hwang and K. Kim,
Construction of infinite time bubble tower solutions to critical wave maps equation, 
arXiv:2603.01793.

\bibitem{J2bNLS}J. Jendrej, Construction of two-bubble solutions for the energy-critical NLS. Anal. PDE, 10(8): 1923-1959, 2017.

\bibitem{Jtype2d5}J. Jendrej, Construction of type II blow-up solutions for the energy-critical wave equation in dimension 5, J. Funct. Anal. 272 (3) (2017) 866-917.

\bibitem{JendrejNonexistenceOppositeSigns}
J. Jendrej,
Nonexistence of radial two-bubbles with opposite signs for the energy-critical
wave equation,
Ann. Sc. Norm. Super. Pisa Cl. Sci. (5) 18 (2018), no. 2, 735--778.

\bibitem{Jtb}J. Jendrej, Construction of two-bubble solutions for energy-critical wave equations, Am. J. Math. 141 (1) (2019) 55-118.

\bibitem{JendrejKriegerWaveMapsBubbleTree}
J. Jendrej and J. Krieger,
Concentric bubbles concentrating in finite time for the energy critical wave maps equation,
arXiv:2501.08396.

\bibitem{JendrejLawrieExpansion}
J. Jendrej and A. Lawrie,
An asymptotic expansion of two-bubble wave maps,
Anal. PDE 15 (2022), no. 2, 327--403.

\bibitem{JendrejLawrieUniqueness}
J. Jendrej and A. Lawrie,
Uniqueness of two-bubble wave maps in high equivariance classes,
Comm. Pure Appl. Math. 75 (2022), no. 12, 2630--2685.

\bibitem{JL}J. Jendrej and A. Lawrie. Soliton resolution for the energy-critical nonlinear wave equation in the radial case. {Ann. PDE} {9} (2023) article no. 18 .
   
\bibitem{JM}J. Jendrej, Y. Martel, Construction of multi-bubble solutions for the energy-critical wave equation in dimension 5. J. Math. Pures Appl. 139 (2020), no. 9, 317–355.

\bibitem{JendrejZhangZhao} J. Jendrej, C. Zhang, L. Zhao, Rigidity of the multi-bubble solutions to the energy critical wave equation in dimension five. arXiv:2605.27994.

\bibitem{Kadar3DMultiSolitons}
I. Kadar,
Construction of multi-soliton solutions for the energy critical wave equation
in dimension 3,
preprint, arXiv:2409.05267, 2024.

\bibitem{KadarNoQuantization}
I. Kadar,
Smooth finite time singularity formation without quantization,
preprint, arXiv:2603.14985, 2026.

\bibitem{KenigMerle08}
C. E. Kenig and F. Merle,
Global well-posedness, scattering and blow-up for the energy-critical focusing
non-linear wave equation,
Acta Math. 201 (2008), no. 2, 147--212.

\bibitem{KenigMendelson}
C. E. Kenig and D. Mendelson,
The focusing energy-critical nonlinear wave equation with random initial data,
Int. Math. Res. Not. IMRN 2021, no. 19, 14508--14615.

\bibitem{KNSGlobal}
J. Krieger, K. Nakanishi, and W. Schlag, Global dynamics away from the ground state for the energy-critical nonlinear wave equation, Am. J. Math. 135 (4) (2013) 935-965.

\bibitem{KNSCenterStable}
J. Krieger, K. Nakanishi, and W. Schlag,
Center-stable manifold of the ground state in the energy space for the critical
wave equation,
Math. Ann. 361 (2015), no. 1-2, 1--50.

\bibitem{KriegerPalaciosWaveMapsBubbleTrees}
J. Krieger and J. M. Palacios,
Long finite time bubble trees for two co-rotational wave maps,
arXiv:2602.22825.

\bibitem{KriegerSchlagStableManifold}
J. Krieger and W. Schlag,
On the focusing critical semi-linear wave equation,
Amer. J. Math. 129 (2007), no. 3, 843--913.

\bibitem{KriegerSchlag}
J. Krieger and W. Schlag,
Full range of blow up exponents for the quintic wave equation in three
dimensions,
J. Math. Pures Appl. (9) 101 (2014), no. 6, 873--900.

\bibitem{KriegerSchlagTataruWavemap}
J. Krieger, W. Schlag, and D. Tataru,
Renormalization and blow up for charge one equivariant critical wave maps. Invent. Math., 171(3): 543-615, 2008.

\bibitem{KriegerSchlagTataruYangmils}
J. Krieger, W. Schlag, and D. Tataru,
Renormalization and blow up for the critical Yang-Mills problem. Adv. Math., 221(5): 1445-1521, 2009.

\bibitem{KriegerSchlagTataru}
J. Krieger, W. Schlag, and D. Tataru,
Slow blow-up solutions for the \(H^1(\mathbb R^3)\) critical focusing
semilinear wave equation,
Duke Math. J. 147 (2009), no. 1, 1--53.

\bibitem{Mnsgkdv}Y. Martel, Asymptotic N-soliton-like solutions of the subcritical and critical generalized Korteweg-de Vries equations, Am. J. Math. 127 (5) (2005) 1103-1140.

\bibitem{MartelMerle5D}
Y. Martel and F. Merle,
Construction of multi-soliton solutions for the energy-critical wave equation
in dimension 5,
Arch. Ration. Mech. Anal. 222 (2016), no. 3, 1113--1160.


\bibitem{MartelMerleInelasticity}
Y. Martel and F. Merle,
Inelasticity of soliton collisions for the 5D energy critical wave equation,
Invent. Math. 214 (2018), no. 3, 1267--1363.

\bibitem{MartelMerleAnyParameters}
Y. Martel and F. Merle,
Existence of multi-solitons with any parameters for the 5D energy critical wave
equation,
arXiv:2510.19609.


\bibitem{Mkps}F. Merle, Construction of solutions with exactly k blow-up points for the Sch\"{o}dinger equation with critical nonlinearity, Commun. Math. Phys. 129 (2) (1990) 223-240.

\bibitem{RaphaelRodnianski}
P. Raphaël and I. Rodnianski,
Stable blow up dynamics for the critical co-rotational wave maps and
equivariant Yang--Mills problems,
Publ. Math. Inst. Hautes Études Sci. 115 (2012), 1--122.

\bibitem{RS}P. Rapha\"{e}l, J. Szeftel, Existence and uniqueness of minimal blow-up solutions to an inhomogeneous mass critical NLS, J. Am. Math. Soc. 24 (2) (2011) 471-546.

\bibitem{Schweyer}
R. Schweyer,
Type II blow-up for the four dimensional energy critical semi linear heat equation,
J. Funct. Anal. 263 (2012), no. 12, 3922--3983.

\bibitem{Talenti}
G. Talenti,
Best constant in Sobolev inequality,
Ann. Mat. Pura Appl. (4) 110 (1976), 353--372.

\bibitem{wazewski}
        T.~Wa\.zewski.
        \newblock Sur un principe topologique de l'examen de l'allure asymptotique des
int\'egrales des \'equations diff\'erentielles ordinaires.
\newblock {\em Ann. Soc. Polon. Math.}, 20:279--313, 1947.


\end{thebibliography}
\end{document}